\documentclass{cedram-alco}
\usepackage{graphicx}
\usepackage{amssymb,amsmath,amsthm,amsfonts,amscd}
\usepackage{hyperref}
\usepackage{color}
\usepackage{booktabs}
\usepackage{tabularx}
\usepackage{enumitem}
\usepackage[retainorgcmds]{IEEEtrantools}

\usepackage{tikz}
\usetikzlibrary{positioning,automata,arrows,shapes,snakes}
\tikzset{main node/.style={circle,draw,minimum size=0.3em,inner
sep=0.5pt}}
\tikzset{state node/.style={circle,draw,minimum size=2em,fill=blue!20,inner
sep=0pt}}
\tikzset{small node/.style={circle,draw,minimum size=0.5em,inner
sep=2pt,font=\sffamily\bfseries}}
\usetikzlibrary{matrix, arrows, decorations.pathmorphing}


\newcommand{\N}{\mathbb{N}}

\newcommand{\Z}{\mathbb{Z}}

\newcommand{\cala}{\mathcal{A}}
\newcommand{\la}{\mathbf{a}}

\newcommand{\ra}{\rightarrow}
\newcommand{\se}{\subseteq}

\newcommand{\inverse}{^{-1}}

\newcommand{\abs}[1]{\lvert #1 \rvert}

\equalenv{corollary}{coro}
\equalenv{remark}{rema}
\equalenv{example}{exam}
\equalenv{definition}{defi}

\title[Classification of $\la(2)$-finite Coxeter groups]{Classification of Coxeter groups with finitely many elements of
$\la$-value 2}

\author[\initial{R.} \middlename{M.} Green]{\firstname{R.} \middlename{M.} \lastname{Green}}
\address{Department of Mathematics\\
University of Colorado Boulder, Campus Box 395\\
Boulder, Colorado\\
USA, 80309}
\email{rmg@colorado.edu}

\author[\initial{T.} Xu]{\firstname{Tianyuan} \lastname{Xu}}
\address{Department of Mathematics and Statistics\\
 Queen's University\\
Kingston, Ontario\\ Canada, K7L 3N6}
\email{tx7@queensu.ca}

\keywords{Coxeter groups, Hecke algebras, Lusztig's $\la$-function, fully
commutative elements, heaps, star operations}

\begin{document}

\begin{abstract}
  We consider Lusztig's $\la$-function on Coxeter groups (in the equal parameter
  case) and classify all Coxeter groups with finitely many elements of
  $\la$-value 2 in terms of Coxeter diagrams.  
\end{abstract}

\maketitle

\section{Introduction}
This paper concerns Lusztig's $\la$-function on Coxeter groups. The
$\la$-function was first defined for finite Weyl groups via their Hecke
algebras by Lusztig in
\cite{Mu}; subsequently, the definition was extended to affine Weyl groups in \cite{Lusztig_II} and to arbitrary Coxeter groups in 
\cite{LG}. The $\la$-function is intimately related to the study of
Kazhdan--Lusztig cells in Coxeter groups, the construction of Lusztig's asymptotic Hecke algebras, and the representation theory of Hecke
algebras; see, for example, \cite{Mu},\cite{Lusztig_II},\cite{LG},
\cite{Geck_a} and \cite{Geck_cellular}.

For any Coxeter group $W$ and $w\in W$, $\la(w)$ is a non-negative integer 
obtained from the structure constants of the Kazhdan--Lusztig basis of the
Hecke algebra of $W$. While $\la$-values are often difficult to compute
directly, it is known that $\la(w)=0$ if and only if $w$ is the identity
element and that $\la(w)=1$ if and only if $w$ is a non-identity element with a
unique
reduced word (see Proposition \ref{subregular}). If we define $W$ to be
\emph{$\la(n)$-finite} for $n\in \Z_{\ge 0}$ if $W$ contains
finitely many elements of $\la$-value $n$ and \emph{$\la(n)$-infinite}
otherwise, then it is also known that $W$ is $\la(1)$-finite if and only if each connected component
of the Coxeter diagram of $W$ is a tree
and contains at most one edge of weight higher than 3 (see Proposition
\ref{tree}). The goal of this paper is to obtain a similar classification of
$\la(2)$-finite Coxeter groups in terms of Coxeter diagrams. 

Our interest in $\la(2)$-finite Coxeter groups comes from considerations about
the asymptotic Hecke algebra $J$ of $W$. This is an associative algebra
which may be viewed as a ``limit'' of the Hecke algebra of $W$, and each two-sided
Kazhdan--Lusztig cell $E\se W$ gives rise to a subalgebra $J_E$ of $J$ (see
\cite{LG}, Section 18).
While $J$ has been interpreted geometrically for Weyl and affine Weyl
groups by Bezrukavnikov et al. in
\cite{Bez1}, \cite{Bez3} and \cite{Bez2}, it is not well understood for
other Coxeter groups, and one approach to understand $J$ in these cases is to
start with the subalgebras $J_E$ in the case $E$ is finite, whence $J_E$ is a
\emph{multi-fusion ring} in the sense of \cite{EGNO}. As the $\la$-function is known to be
constant on each cell, the presence of $\la(2)$-finite groups in our
classification that are not Weyl groups or affine Weyl groups potentially offers
interesting examples of multi-fusion rings of the form $J_E$ where $E$ is a
cell of $\la$-value 2. (For a study of algebras of the form $J_E$ where $E$ is a cell of
$\la$-value 1, see \cite{Xu}.)

We now state our main results. For any (undirected) graph $G$, we define a \emph{cycle} in $G$ to be a sequence
      $C=(v_1,v_2,\cdots, v_n,v_1)$ involving $n$ distinct vertices such that $n\ge 3$ and $\{v_1,v_2\},
      \cdots$, $\{v_{n-1},v_n\}$ and $\{v_n,v_1\}$ are all edges in $G$, and we
      say $G$ is \emph{acyclic} if it contains no
      cycle. Our first main theorem is the following. 
      \begin{thm}
  \label{first theorem}
  Let $W$ be an irreducible Coxeter group with Coxeter diagram $G$.
  \begin{enumerate}
    \item If $G$ contains a cycle, then $W$ is $\la(2)$-finite if and only if
      $G$ is a complete graph.
    \item If $G$ is acyclic, then
      $W$ is $\la(2)$-finite if and only if $G$ is one of the graphs in Figure
      \ref{fig:finite E}, where $n$ denotes the number of vertices in a graph whenever it appears as
  a subscript in the label of the graph and there are $q$ and $r$ vertices
  strictly to the left and the right of the trivalent vertex in $E_{q,r}$.

      \begin{figure}
    \begin{tikzpicture}

      \node(1) {$A_n$};
      \node[main node] (11) [right=0.5cm of 1] {};
      \node[main node] (12) [right=1cm of 11] {};
      \node[main node] (13) [right=1cm of 12] {};
      \node[main node] (15) [right=1.5cm of 13] {};
      \node[main node] (16) [right=1cm of 15] {};
      \node (17) [right=0.5cm of 16] {$(n\ge 1)$}; 

      \path[draw]
      (11)--(12)--(13)
      (15)--(16);

      \path[draw,dashed]
      (13)--(15);

      \node(2) [below=0.5cm of 1] {$B_n$};

      \node[main node] (21) [right=0.5cm of 2] {};
      \node[main node] (22) [right=1cm of 21] {};
      \node[main node] (23) [right=1cm of 22] {};
      \node[main node] (25) [right=1.5cm of 23] {};
      \node[main node] (26) [right=1cm of 25] {};
      \node (27) [right=0.5cm of 26] {$(n\ge 2)$};

      \path[draw]
      (21) edge node [above] {$4$} (22)
      (22)--(23)
      (25)--(26);

      \path[draw,dashed]
      (23)--(25);

      \node (c) [below=1cm of 2] {$\tilde C_n$};
      \node[main node] (c1) [right=0.5cm of c] {};
      \node[main node] (c2) [right=1cm of c1] {};
      \node[main node] (c3) [right=1cm of c2] {};
      \node[main node] (c5) [right=1.5cm of c3] {};
      \node[main node] (c6) [right=1cm of c5] {};
      \node (c7) [right=0.5cm of c6] {$(n\ge 5)$};

      \path[draw]
      (c1) edge node [above] {$4$} (c2)
      (c2)--(c3)
      (c5) edge node [above] {$4$} (c6);

      \path[draw,dashed]
      (c3)--(c5);

      \node (3) [below=1cm of c] {$E_{q,r}$};

      \node[main node] (31) [right=0.4cm of 3] {};
      \node[main node] (32) [right=1cm of 31] {};
      \node[main node] (34) [right=1.5cm of 32] {};
      \node[main node] (38) [right=1.5cm of 34] {};
      \node[main node] (39) [right=1cm of 38] {};
      \node (40) [right=0.5cm of 39] {($q,r\ge 1$)}; 
      \node[main node] (a) [above=0.8cm of 34] {};

      \path[draw]
      (34)--(a)
      (31)--(32)
      (38)--(39);
      \path[draw,dashed]
      (32)--(34)--(38);

      \node(4) [below=0.5cm of 3] {${F}_n$};

      \node[main node] (41) [right=0.5cm of 4] {};
      \node[main node] (42) [right=1cm of 41] {};
      \node[main node] (43) [right=1cm of 42] {};
      \node[main node] (45) [right=1.5cm of 43] {};
      \node[main node] (46) [right=1cm of 45] {};
      \node (47) [right=0.5cm of 46] {$(n\ge 4)$};

      \path[draw]
      (41)--(42)
      (42) edge node [above] {$4$} (43)
      (45)--(46);
      \path[draw,dashed]
      (43)--(45);

      \node(h) [below=0.5cm of 4] {$H_n$};

      \node[main node] (h1) [right=0.45cm of h] {};
      \node[main node] (h2) [right=1cm of h1] {};
      \node[main node] (h3) [right=1cm of h2] {};
      \node[main node] (h5) [right=1.5cm of h3] {};
      \node[main node] (h6) [right=1cm of h5] {};
      \node (h7) [right=0.5cm of h6] {$(n\ge 3)$};

      \path[draw]
      (h1) edge node [above] {$5$} (h2)
      (h2)--(h3)
      (h5)--(h6);
      \path[draw,dashed]
      (h3)--(h5);

      \node(i) [below=0.5cm of h] {$I_2(m)$};

      \node[main node] (i1) [right=0.25cm of i] {};
      \node[main node] (i2) [right=1cm of i1] {};
      \node (i3) [right=0.5cm of i2] {$(5\le m\le \infty)$};

      \path[draw]
      (i1) edge node [above] {$m$} (i2);
    \end{tikzpicture}
    \caption{Irreducible $\la(2)$-finite Coxeter groups with acyclic diagrams.}
    \label{fig:finite E}
  \end{figure}
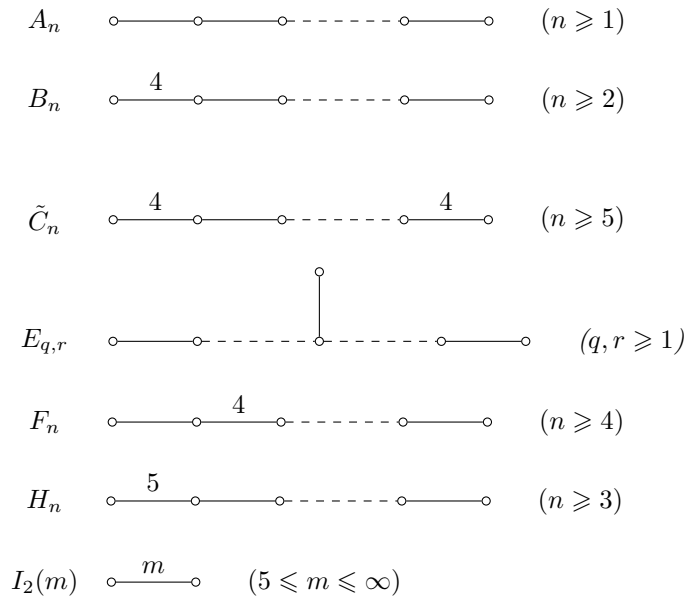

  \end{enumerate}
\end{thm}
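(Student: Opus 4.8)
The plan is to convert the statement into a combinatorial question about fully commutative elements and their heaps. The key input, refining Proposition \ref{subregular}, is a description of the set $\{w\in W:\la(w)=2\}$: in the cases that occur this is exactly the set of fully commutative $w$ whose heap poset has width $2$ (its largest antichain has two elements), the width-$\le 1$ case being precisely the $\la\le 1$ elements (unique reduced word). Since $\la$ is additive over the irreducible components of $W$ we may take $W$ irreducible, so $G$ is connected; and since the heap of $w$ — hence its width — is unchanged when $w$ is read inside a standard parabolic $W_J$ rather than inside $W$ (equivalently $\la_W(w)=\la_{W_J}(w)$), the property of being $\la(2)$-finite is inherited by induced subdiagrams. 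Thus it suffices to prove (i) the diagrams in the statement carry only finitely many width-$2$ fully commutative elements, and (ii) every other connected $G$ has an induced subdiagram carrying infinitely many.

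For (i), the ``if'' directions, the complete-graph case is immediate: no two generators commute, so every heap is a chain and there are no width-$2$ fully commutative elements at all; the same remark settles $I_2(m)$ for $m\ge 5$, and the finite members of $A_n, B_n, F_n, H_n$ are trivial. The work lies in the infinite members of the list, handled by induction on the rank. For each, a proper standard parabolic is either of smaller type already treated (so with finitely many width-$2$ fully commutative elements by induction) or, in the base cases — the minimal infinite members such as $F_5=\widetilde F_4$, $\widetilde E_7=E_{3,3}$, $\widetilde E_8=E_{2,5}$, and the $\widetilde C_n$ themselves — one whose few width-$2$ fully commutative elements can be listed by hand. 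The induction step requires showing that the width-$2$ fully commutative elements of full support are bounded in length: the terminal weight-$4$ edges of $\widetilde C_n$ and $F_n$, the weight-$5$ edge of $H_n$, and the unique length-$1$ arm of $E_{q,r}$ each pin down the shape of such a heap so that its two ``rows'' cannot be elongated.

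For (ii), the ``only if'' directions, I show every other connected $G$ contains an induced subdiagram carrying infinitely many width-$2$ fully commutative elements, then invoke heredity. If $G$ has a cycle but is not complete, a short structural argument produces an induced subdiagram that is either a cycle of length $\ge 4$ or one of the two non-complete cycle-containing graphs on four vertices (triangle-with-pendant, or $K_4$ minus an edge); for each I write down an explicit infinite family of width-$2$ fully commutative elements, essentially powers of a fixed word running once around the cycle or triangle with a commuting generator inserted (using that non-consecutive vertices of an induced cycle commute). If $G$ is a tree not in Figure \ref{fig:finite E}, I run through the possibilities: a vertex of degree $\ge 4$ (contains $\widetilde D_4$); two vertices of degree $\ge 3$ (contains some $\widetilde D_n$, $n\ge 5$); one branch vertex all of whose arms have length $\ge 2$ (contains $\widetilde E_6=T_{2,2,2}$); a branch vertex together with any edge of weight $\ge 4$ (contains a small affine-type diagram of $\widetilde B$- or $\widetilde C$-type); a path with a single weight-$4$ edge at distance $\ge 2$ from both ends, or a single weight-$5$ edge at distance $\ge 1$ from both ends, or a weight-$\ge 6$ edge with one further vertex; or a path with two or more edges of weight $\ge 4$ other than the $\widetilde C_n$ configurations of the Figure. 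For each of the finitely many resulting ``minimal bad'' diagrams, together with the family $\widetilde D_n$, I exhibit an infinite family of $\la$-value-$2$ elements, typically a fixed core heap carrying arbitrarily many stacked copies of a commuting pair that the core admits.

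I expect the main obstacle to be part (i) for the infinite families. Although $\widetilde C_n$, $F_n$, $H_n$ (large $n$) and $E_{q,r}$ can have infinitely many fully commutative elements, one must show the \emph{width-$2$} ones do not proliferate, which requires a tight structural description of their heaps rather than a soft argument; the two-parameter family $E_{q,r}$ and the affine base cases $\widetilde E_7,\widetilde E_8$ are the most delicate, and getting the threshold exactly where Figure \ref{fig:finite E} puts it — $\widetilde E_7=E_{3,3}$ is $\la(2)$-finite but $\widetilde E_6$ is not, $\widetilde C_n$ becomes $\la(2)$-finite only for $n\ge 5$, $F_5$ is already fine — is precisely what this analysis must achieve. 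On the ``only if'' side the remaining work is more routine once the right infinite families are found, the only real care being exhaustiveness of the case split over acyclic $G$. Finally, establishing the characterization of $\{\la=2\}$ used at the outset, if proved here rather than quoted, is itself a nontrivial input, resting on known values of $\la$ on finite parabolic subgroups and on the invariance of $\la$ under star operations.
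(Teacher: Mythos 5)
There is a genuine gap, and it sits at the foundation of your plan. You reduce everything to counting fully commutative elements of heap-width $2$, on the premise that $\{w:\la(w)=2\}$ coincides with this set. One inclusion is fine: $\la(w)=2$ forces $w$ to be fully commutative, and an antichain of size $k$ in a heap can be gathered into a consecutive block of $k$ commuting generators, giving $n(w)\le\la(w)$; so $\la(w)=2$ does imply width $2$. But the converse --- width $2$ implies $\la(w)\le 2$ --- is exactly the statement $\la(w)=n(w)$, which is Shi's theorem and is only known for Weyl and affine Weyl groups (Proposition \ref{a from heap}), extended in this paper to star reducible groups (Proposition \ref{star reducible a}); for general Coxeter groups it is an open problem, discussed as such in Section \ref{conclusion}. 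Your ``only if'' direction therefore does not go through as written: exhibiting infinitely many width-$2$ fully commutative elements in a bad subdiagram only gives $\la\ge 2$ for those elements, not $\la=2$, and many of the required bad subdiagrams are neither (affine) Weyl nor star reducible --- a weight-$5$ or weight-$\ge 6$ edge on a path, a weight-$5$ edge off a branch vertex, two high-weight edges in non-$\tilde C$ position, and non-complete cycles with arbitrary edge weights. For precisely these cases the paper cannot use the width criterion and instead proves $\la=2$ either by star-reducing each witness to a product of two commuting generators (Section \ref{star arguments}) or, in the hardest cases, by recursive computations of $\mu$-coefficients via Propositions \ref{star and mu} and \ref{star relation} to show the witness lies in the same two-sided cell as such a product (Section \ref{mu arguments}). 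Your closing remark that the characterization rests on ``invariance of $\la$ under star operations'' does not close this hole: not every fully commutative element of these groups is star reducible to a commuting product (that failure is what defines a non--star-reducible group), so star-invariance alone does not yield $\la\le n$.

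A secondary, smaller issue: your ``if'' direction is in principle salvageable, since by the inclusion that does hold it suffices to bound the width-$\le 2$ fully commutative elements combinatorially; this is a legitimately different and arguably more uniform route than the paper's (which cites Stembridge's FC-finiteness classification for most families, Ernst's result for $\tilde C_n$, and does a bespoke heap analysis only for $E_{q,r}$ with $\min(q,r)\ge 3$). But the rank induction you sketch is doing real work that you have not carried out --- the paper's treatment of $E_{q,r}$ alone occupies four structural lemmas on $d$-intervals and $h$-occurrences, and the $\tilde C_n$ case is outsourced entirely to Ernst --- so you should expect this part to be substantially longer than a paragraph, even though it is not conceptually blocked the way the ``only if'' direction is.
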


\begin{remark}
  \label{DE}
  When $q=1$, $E_{q,r}$ coincides with the Coxeter diagram for the
  Weyl group $D_{r+3}$. When $q=2$ and $r=2,3,4$, $E_{q,r}$ coincides with
  the Coxeter diagram of the Weyl group $E_6, E_7$ and $E_8$, respectively. More generally,
  for any larger value of $r$, $E_{2,r}$ coincides with $E_{r+4}$ in the
  notation of \cite{FC}, which is considered an extension of the type 
  $E$ Coxeter diagrams. In Section \ref{case 2}, we will recall a result from
  \cite{FC} which uses the notations $D_n$ and $E_n$.
\end{remark}
Our second main theorem reduces the classification of reducible
$\la(2)$-finite Coxeter groups to that of irreducible Coxeter groups in the
following sense:

\begin{thm}
  \label{second theorem}
  Let $W$ be a reducible Coxeter group with Coxeter diagram $G$. Let $G_1,G_2,\cdots, G_n$ be the
  connected components of $G$, and let $W_1,W_2,\cdots, W_n$ be their
  corresponding Coxeter groups, respectively.  Then the following are equivalent.
  \begin{enumerate}
    \item $W$ is $\la(2)$-finite.
    \item The number $n$ is finite, i.e. $G$ has finitely many connected components, and
    $W_i$ is both $\la(1)$-finite and
      $\la(2)$-finite for each $1\le i\le n$. 
    \item The number $n$ is finite, and for each $1\le i\le n$, $G_i$ is a graph of the form $A_n (n\ge 1), B_n
      (n\ge 2),
      E_{q,r} (q,r\ge 1), F_n (n\ge 4), H_n(n\ge 3)$ or $I_2(m) (5\le m\le
      \infty)$, i.e.
      $G_i$ is a graph from Figure \ref{fig:finite E} other than $\tilde{C}_n
      (n\ge 5)$.
  \end{enumerate}
\end{thm}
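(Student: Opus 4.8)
The plan is to reduce the problem to the behaviour of Lusztig's $\la$-function under direct products. The key input is the additivity formula $\la(w_1w_2)=\la(w_1)+\la(w_2)$, valid whenever $W=W'\times W''$ is a direct product of Coxeter groups and $w_1\in W'$, $w_2\in W''$: the Hecke algebra of $W$ is the tensor product of the Hecke algebras of $W'$ and $W''$, and under this isomorphism the Kazhdan--Lusztig basis of the former is the tensor product of the Kazhdan--Lusztig bases of the latter two, so the structure constants entering the definition of $\la$ multiply and their degrees add. Since every $w\in W$ has finite support, it lies in a finitely generated standard parabolic $W_F=\prod_{i\in F}W_i$ with $F\subseteq\{1,\dots,n\}$ finite, and since $\la$ is unchanged under passage to the standard parabolic generated by the support of an element, $\la(w)$ may be computed inside $W_F$. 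Iterating the product formula then yields $\la(w)=\sum_i\la(w_i)$ for every $w=(w_i)_i\in W$.

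The next step is to invoke Proposition \ref{subregular}, which asserts that $\la(v)=0$ iff $v=e$ and that $\la(v)=1$ iff $v$ has a unique reduced word, in order to unwind the equation $\la(w)=2$ for $w=(w_i)_i$. Since the $\la(w_i)$ are non-negative integers summing to $2$, this occurs precisely when either exactly one coordinate $w_i$ has $\la(w_i)=2$ and every other coordinate is trivial, or exactly two coordinates $w_i,w_j$ have $\la(w_i)=\la(w_j)=1$ and every other coordinate is trivial. Writing $a_i$ and $b_i$ for the number of elements of $W_i$ of $\la$-value $1$ and $2$ respectively, it follows that $W$ has exactly $\sum_i b_i+\sum_{i<j}a_ia_j$ elements of $\la$-value $2$. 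Each nontrivial $W_i$ contains a Coxeter generator, which has a unique reduced word, so $a_i\ge 1$ for all $i$; hence this count is finite if and only if $n$ is finite and each $a_i$ and $b_i$ is finite, i.e.\ $n$ is finite and each $W_i$ is both $\la(1)$-finite and $\la(2)$-finite. This establishes the equivalence of $(1)$ and $(2)$. Concretely, the three failure modes are: infinitely many components produce infinitely many distinct elements $s_1s_i$ (for $i\ge 2$ with $s_i$ a generator of $W_i$) of $\la$-value $2$; a $\la(1)$-infinite factor $W_1$ together with a generator $s$ of a different factor produces infinitely many elements $w_1s$ of $\la$-value $2$; and a $\la(2)$-infinite factor contributes infinitely many elements of $\la$-value $2$ outright.

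For the equivalence of $(2)$ and $(3)$ I would intersect the two classifications already in hand. By Theorem \ref{first theorem}, an irreducible $W_i$ is $\la(2)$-finite exactly when $G_i$ is a complete graph or one of $A_n,B_n,\tilde C_n,E_{q,r},F_n,H_n,I_2(m)$; by Proposition \ref{tree}, $W_i$ is $\la(1)$-finite exactly when $G_i$ is a tree having at most one edge of weight greater than $3$. A complete graph that contains a cycle must have at least three vertices and is then not a tree, since it contains a triangle; the diagram $\tilde C_n$ is a tree but carries two edges of weight $4$; and each of $A_n,B_n,E_{q,r},F_n,H_n,I_2(m)$ is a tree with at most one edge of weight exceeding $3$. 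Thus the diagrams $G_i$ for which $W_i$ is simultaneously $\la(1)$-finite and $\la(2)$-finite are precisely those listed in $(3)$, completing the argument.

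The only genuinely non-routine point is the first paragraph: it rests on the additivity of $\la$ under direct products together with its invariance under restriction to the standard parabolic generated by an element's support, and both should be stated carefully because $W$ may have infinite rank when $G$ has infinitely many components. Once these are in place, everything downstream is elementary counting together with a finite inspection of Figure \ref{fig:finite E}.
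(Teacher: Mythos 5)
Your proposal is correct, but your route to the equivalence of (1) and (2) is genuinely different from the paper's. You derive everything from the additivity $\la(w)=\sum_i\la(w_i)$ for elements of a direct product, justified by the factorization of the Kazhdan--Lusztig basis under the tensor-product decomposition of the Hecke algebra; this is a true and standard fact (the KL polynomials factor as $p_{(x',x''),(y',y'')}=p_{x',y'}\,p_{x'',y''}$ by uniqueness of the self-dual basis, hence the structure constants $h_{x,y,z}$ factor and their $v$-degrees add), but it is nowhere stated in the paper and is strictly stronger than what the paper actually uses. The paper gets by with two weaker inputs: Corollary \ref{product a} (a product of $k$ pairwise commuting generators has $\la$-value $k$) and the monotonicity of $\la$ along $\le_{LR}$ (Corollary \ref{weak order}). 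Concretely, for (1)$\Rightarrow$(2) the paper shows $\la(tw_1)=2$ for $w_1$ of $\la$-value $1$ not by additivity but by star-reducing $tw_1$ to $ts_1$ (Remark \ref{removal} together with Corollary \ref{star a}), and for (2)$\Rightarrow$(1) it rules out three nonempty components by exhibiting $r_1s_1t_1\le_{LR}w$ and bounds $\la(w_i)\le 2$ by monotonicity alone, never needing the exact equality $\la(w)=\la(w_i)+\la(w_j)$. Your approach buys a cleaner logical structure and, as a bonus, the exact count $\sum_i b_i+\sum_{i<j}a_ia_j$ of elements of $\la$-value $2$ (which the paper does not record), at the cost of having to state and prove the tensor-product compatibility of the KL basis and of $\la$ explicitly --- you correctly flag this as the one non-routine point, and your sketch of it is sound, but in a final write-up it deserves a standalone lemma since the paper provides no citation for it. Your handling of the infinite-rank case via Proposition \ref{a local} and finite support is fine, and your proof of the equivalence of (2) and (3) is the same as the paper's: intersect Proposition \ref{tree} with Theorem \ref{first theorem} and inspect Figure \ref{fig:finite E}.
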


Of the claims in the theorems, Part (2) of Theorem \ref{first theorem}, i.e.
the classification of $\la(2)$-finite Coxeter groups with acyclic Coxeter diagrams, turns
out to require the most amount of work. We describe our strategy for its
proof below. 
A key fact we shall use is that each element of
$\la$-value 2 in a Coxeter group must be \emph{fully commutative} in the sense of Stembridge
(see Section \ref{fc}). This implies, in particular, that we may
associate to any element $w$ with $\la(w)=2$ a poset called its \emph{heap}, a notion
well-defined for any fully commutative element. Heaps of fully commutative
elements will be a fundamental tool for this paper.

In showing that
$W$ is $\la(2)$-finite if $G$ is a graph in Figure \ref{fig:finite E}, the
full commutativity of elements of $\la$-value 2 will reduce our work to the
cases $G=I_2(\infty), G=\tilde C_n$ or
$G=E_{q,r}$ where $\min(q,r)\ge 3$. Indeed, thanks to a result of Stembridge's in
\cite{FC}, $W$ contains finitely many fully
commutative elements if $G$ is any other graph from Figure \ref{fig:finite E},
so $W$ must be $\la(2)$-finite in these cases. It will be easy to show that $W$ is
$\la(2)$-finite when $G=I_2(\infty)$, and the case $G=\tilde C_n$ will also be
easy thanks to a result of Ernst from \cite{Ernst} on the
generalized Temperley--Lieb algebra of type $\tilde C_n$, therefore the only
case requiring more work is $G=E_{q,r}$ where $\min(q,r)\ge 3$. We will prove $W$ is
$\la(2)$-finite in this case via a series of lemmas in Section \ref{case 2},
using arguments that involve heaps.

To show that $G$ must be a graph in Figure \ref{fig:finite E} if $W$ is
$\la(2)$-finite, we first prove that $W$ would be $\la(2)$-infinite whenever $G$ 
contains certain subgraphs, then show that to avoid these subgraphs $G$ has to
be in Figure \ref{fig:finite E}. For each of these subgraphs, we will construct
an infinite family of fully commutative elements that we call ``witnesses'' and verify that they have
$\la$-value 2. We will use three methods for these verifications:

\begin{enumerate}[leftmargin=2em]
    \item First, we
recall a powerful result of Shi from \cite{Shi} that says each fully
commutative element $w$ in a Weyl or affine Weyl group satisfies
$\la(w)=n(w)$, where $n$ is a statistic defined using heaps. We prove
the same result for \emph{star reducible} groups (in the sense of
\cite{star_reducible}, see Proposition \ref{star reducible a}), and use these results to show our witnesses have
$\la$-value 2 by showing they have $n$-value 2. 
\item In our second method, we recall
that the $\la$-function is constant on each two-sided Kazhdan--Lusztig cells of
$W$ and that each cell is closed under the so-called \emph{generalized star
operations} (see Section \ref{star}),
then show our witnesses have $\la$-value 2 by relating them to elements of
$\la$-value 2 by these operations. 
\item 
  In our third and most technical method, we
again show our witnesses have $\la$-value 2 by showing they are in the same
cell as some other element of $\la$-value 2, but the proof will require more
careful arguments involving certain leading coefficients, or
``$\mu$-coefficients'', from Kazhdan--Lusztig polynomials.
\end{enumerate}

The rest of the paper is organized as follows. In Section 2, we briefly recall
the background on 
Coxeter groups and Hecke algebras leading to the definition of the $\la$-function, as well as the
definition and some properties of Kazhdan--Lusztig cells. In Section 3, we introduce our main
technical tools for computing and verifying $\la$-values, namely, generalized
star operations and heaps of fully commutative elements. Sections 4 and 5 prove
the sufficiency and necessity of the diagram criterion of Theorem
\ref{first theorem}, respectively. The first three subsections of Section 5
contain a list of lemmas on the subgraphs that $G$ must avoid for $W$ to be
$\la(2)$-finite. The lemmas are grouped according to the method of verifying
the witnesses' $\la$-values, with \ref{mu arguments} being the most technical
part of the paper. We prove Theorem \ref{second theorem} in Section 6.
Finally, in Section \ref{conclusion} we briefly discuss several open problems naturally arising from
this paper, as well as their connections to other works in the literature. The
problems include generalizing the aforementioned result of Shi to arbitrary
Coxeter groups, enumerating 
elements of $\la$-value 2 in $\la(2)$-finite Coxeter groups, and the
classification of $\la(3)$-finite Coxeter groups.

\section{Preliminaries}
The $\la$-function arises from the Kazhdan--Lusztig theory of Coxeter groups. We
review the relevant basic notions and facts in this section. Besides defining $\la$, we will recall the definitions of Kazhdan--Lusztig cells and the
``$\mu$-coefficients'' of Kazhdan--Lusztig polynomials. Both these notions will be key
to the proofs of our main theorems.

\subsection{Coxeter groups} 
\label{Coxeter def}
Throughout the article, $W$ shall denote a Coxeter
group with a finite generating set $S$ and Coxeter matrix $M=[m(s,t)]_{s,t\in S}$. Thus,
$m(s,s)=1$ for all $s\in S$, $m(s,t)=m(t,s)\in \Z_{\ge 2}\cup \{\infty\}$ for all
distinct $s,t\in S$, and $W$ is generated by $S$ subject to the relations
$(st)^{m(s,t)}=1$ for all $s,t$ for which $m(s,t)$ is finite. 

The defining data of each Coxeter group can be encoded via its \emph{Coxeter diagram}.
This is the weighted, undirected graph with vertex set $S$ and edge set
$\{\{s,t\}:s,t\in S,m(s,t)\ge 3\}$ such that each edge $\{s,t\}$ has weight
$m(s,t)$.  Each edge is labelled by its weight except when the weight is $3$. 
A Coxeter group is called \emph{irreducible} if its Coxeter diagram is
connected; otherwise the group is \emph{reducible}. Note that any reducible
Coxeter group $W$ with Coxeter diagram $G$ is isomorphic to the direct product
of the Coxeter groups encoded by the connected components of $G$.

Let $S^*$ be the free monoid generated by $S$. For any $w\in W$, we define the
\emph{length} of $w$, written $l(w)$, to be the minimum length of all words in
$S^*$ that express $w$. We call any such minimum-length word a \emph{reduced word} of
$w$. For any distinct $s,t\in S$, we call the relation
\[
  sts\cdots=tst\cdots 
\] 
where both sides have $m(s,t)$ factors a \emph{braid relation}. Since
$s^2=(ss)^{m(s,s)}=1$ for all $s\in S$, the
braid relation is equivalent to
the relation $(st)^{m(s,t)}=1$ from the definition of $W$. When $m(s,t)=2$, we call the relation $st=ts$ a \emph{commutation
relation}, for $s$ and $t$ commute. 

We can now recall the useful Matsumoto--Tits Theorem.
\begin{prop}[\cite{Tits};{\cite[Theorem 1.9]{LG}}]
  \label{Matsumoto-Tits}
  Let $w\in W$. Then any pair of reduced words of $w$ can be obtained from each
  other by a finite sequence of braid relations.
\end{prop}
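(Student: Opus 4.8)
The plan is to argue by induction on $l(w)$, with the only real work occurring when two reduced words of $w$ begin with different generators. Say that two words in $S^*$ are \emph{braid-equivalent}, written $\mathbf{p}\approx\mathbf{q}$, if one can be transformed into the other by a finite sequence of braid relations; this is an equivalence relation, and $\mathbf{p}\approx\mathbf{q}$ implies $s\mathbf{p}\approx s\mathbf{q}$ for every $s\in S$ (prepending a letter). Let $\mathbf{p}=(p_1,\dots,p_n)$ and $\mathbf{q}=(q_1,\dots,q_n)$ be reduced words for $w$, where $n=l(w)$; the goal is $\mathbf{p}\approx\mathbf{q}$. The cases $n\le 1$ are immediate, so assume $n\ge 2$ and set $s=p_1$, $t=q_1$, so that $l(sw)=l(tw)=n-1$. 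If $s=t$, then $(p_2,\dots,p_n)$ and $(q_2,\dots,q_n)$ are reduced words for $sw$, hence braid-equivalent by the inductive hypothesis, and prepending $s$ gives $\mathbf{p}\approx\mathbf{q}$.

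The substance lies in the case $s\ne t$, and for it I would first establish the following lemma: \emph{if $s,t\in S$ are distinct with $l(sw)<l(w)$ and $l(tw)<l(w)$, then $m(s,t)<\infty$ and $w$ has a reduced word whose first $m(s,t)$ letters form the alternating word $sts\cdots$}. I would deduce this from the exchange (equivalently, deletion) condition for $W$ together with standard facts about parabolic subgroups: writing $W'=\langle s,t\rangle$, there is a unique element $v$ of minimal length in the coset $W'w$, and $w=xv$ for a unique $x\in W'$ with $l(w)=l(x)+l(v)$; moreover a generator in $\{s,t\}$ is a left descent of $w$ if and only if it is a left descent of $x$ (as $v$ has no left descent in $\{s,t\}$). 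Hence $x\in W'$ has both $s$ and $t$ as left descents. Since $W'$ is the dihedral group of order $2m(s,t)$, the only element of $W'$ with this property is its longest element, which exists precisely when $m(s,t)<\infty$ and is represented by the alternating word of length $m(s,t)$; so $x=\underbrace{sts\cdots}_{m(s,t)}$, and concatenating this word for $x$ with a reduced word for $v$ yields the asserted reduced word for $w$.

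Granting the lemma, I would finish the case $s\ne t$ as follows. Let $m=m(s,t)$ and choose a reduced word $\mathbf{c}$ for $w$ beginning with $\underbrace{sts\cdots}_{m}$. Since $\mathbf{c}$ begins with $s$, the argument of the first paragraph (for two reduced words sharing a first letter, via induction on $l(sw)$) gives $\mathbf{p}\approx\mathbf{c}$. Applying the single braid relation $\underbrace{sts\cdots}_{m}=\underbrace{tst\cdots}_{m}$ to the initial segment of $\mathbf{c}$ produces a reduced word $\mathbf{c}'$ for $w$ with $\mathbf{c}\approx\mathbf{c}'$ and with first letter $t$; since $\mathbf{c}'$ and $\mathbf{q}$ both begin with $t$, the same argument gives $\mathbf{c}'\approx\mathbf{q}$. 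Therefore $\mathbf{p}\approx\mathbf{c}\approx\mathbf{c}'\approx\mathbf{q}$, which closes the induction.

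The main obstacle is the lemma — equivalently, the different-first-letter case: it encapsulates the genuinely Coxeter-theoretic input, namely that two left descents force a finite dihedral parabolic subgroup together with a ``longest element'' prefix, which in turn rests on the exchange/deletion condition (proved via the geometric representation of $W$). Once that input is in place, the induction above is entirely formal.
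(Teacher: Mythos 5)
The paper does not prove this statement at all --- it is quoted as a classical theorem with citations to Tits and to \cite[Theorem 1.9]{LG} --- so there is no in-paper argument to compare against. Your proposal is a correct and complete version of the standard textbook proof (as in Bourbaki or Geck--Pfeiffer): induction on $l(w)$, reduction to the case of distinct first letters $s\neq t$, and the key lemma that two distinct left descents force $m(s,t)<\infty$ and a reduced word beginning with the alternating word of length $m(s,t)$. All the steps check out: the coset decomposition $w=xv$ with $l(w)=l(x)+l(v)$ correctly transfers left descents in $\{s,t\}$ from $w$ to $x$, and in the dihedral parabolic $\langle s,t\rangle$ the only element with both generators as left descents is the longest element, which exists precisely when $m(s,t)$ is finite (if $m(s,t)=\infty$ every nonidentity element of the dihedral group has a unique reduced word and hence a unique left descent, giving the required contradiction). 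The concluding three-step chain $\mathbf{p}\approx\mathbf{c}\approx\mathbf{c}'\approx\mathbf{q}$ closes the induction legitimately, since the shared-first-letter case only invokes the inductive hypothesis at length $l(w)-1$.
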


For more basic notions and facts about Coxeter groups such as 
the Bruhat order and its subword property, see \cite{BB}.

\subsection{The $\la$-function}
Let $W$ be an arbitrary Coxeter group. We recall Lusztig's definition of the function
$\la: W\ra \Z_{\ge 0}$ below.

Let $\cala=\Z[v,v\inverse]$. Following \cite{LG}, we define the \emph{Hecke
algebra} of $W$ to be the unital $\cala$-algebra $H$ generated by the set
$\{T_s:s\in S\}$ subject to the relations 
\begin{equation}
  (T_s-v)(T_s+v\inverse)=0
  \label{eq:our normalization}
\end{equation}
for all $s\in S$ and 
\[
  T_sT_tT_s\cdots = T_tT_sT_t\cdots
\]
for all $s,t\in S$, where both sides have $m(s,t)$ factors.

It is well-known
that $H$ has a \emph{standard basis} $\{T_w:w\in W\}$ where
$T_w=T_{s_1}\cdots T_{s_q}$ for any reduced word $s_1\cdots s_q$ ($s_1,\cdots,
s_q\in S$) of $w$, as well as a \emph{Kazhdan--Lusztig basis} $\{C_w:w\in W\}$
with remarkable properties (see \cite{EW}). Now let $h_{x,y,z} (x,y,z\in W)$ be the elements of $\cala$
such that
\[
  C_xC_y=\sum_{z\in W} h_{x,y,z}C_z
\]
for all $x,y$. By Lemma 13.5 of \cite{LG}, for each $z\in W$, there exists a
unique integer $\la(z)\ge 0$ that satisfies the conditions
\begin{enumerate}
  \item $h_{x,y,z}\in v^{\la(z)}\Z[v\inverse]$ for all $x,y\in W$,
  \item $h_{x,y,z}\not\in v^{\la(z)-1}\Z[v\inverse]$ for some $x,y\in W$.
\end{enumerate}
This defines the function $\la: W\ra\Z_{\ge 0}$.

Elements of $\la$-value 0 or 1 are well understood in the following sense. 
\begin{prop}[{\cite[Proposition 13.7]{LG}}, {\cite[Corollary 4.10]{Xu}}]
  \label{subregular}
  Let $W$ be an arbitrary Coxeter group, and let $1_W$ be the identity of
  $W$. For all $w\in W$, we have
  \begin{enumerate}
    \item $\la(w)=0$ if and only if
      $w=1_W$.
    \item $\la(w)=1$ if and only if $w\neq 1_W$ and $w$ has a
      unique reduced word.
  \end{enumerate}
\end{prop}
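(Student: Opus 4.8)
The plan is to deduce both parts from the defining inequalities for $\la$ together with the standard multiplication rules for the Kazhdan--Lusztig basis, invoking the constancy of $\la$ on two-sided cells only for the delicate direction of part (2); the technical heart of the statement is already available in \cite{LG} and \cite{Xu}, and the proposal below explains how it fits together. For part (1), the ``if'' direction is almost immediate: $C_{1_W}=T_{1_W}$ is the identity of $H$, so $C_xC_{1_W}=C_x$ for all $x$ and in particular $h_{1_W,1_W,1_W}=1$, which lies in $v^0\Z[v^{-1}]$ but not in $v^{-1}\Z[v^{-1}]$; the only substantive point is that $h_{x,y,1_W}\in\Z[v^{-1}]$ for \emph{all} $x,y$ (equivalently $\la(1_W)\le 0$), which is part of \cite[Proposition 13.7]{LG} and rests on properties of the symmetrizing trace of $H$ and of the expansions $C_z=\sum_y p_{y,z}T_y$. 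For the converse, given $w\neq 1_W$ choose $s\in S$ with $sw<w$; then $C_sC_w=(v+v^{-1})C_w$, so $h_{s,w,w}=v+v^{-1}$ lies in $v\Z[v^{-1}]$ but not in $v^0\Z[v^{-1}]$, and minimality in the definition of $\la$ forces $\la(w)\geq 1$. This also records the bound $\la(w)\ge 1$ for all $w\ne 1_W$ that part (2) needs.

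By part (1), proving part (2) reduces to showing, for $w\ne 1_W$, that $\la(w)\leq 1$ if and only if $w$ has a unique reduced word. For the ``if'' direction, suppose $w$ has a unique reduced word; by the Matsumoto--Tits theorem (Proposition \ref{Matsumoto-Tits}) no braid relation applies to it, and one checks that such a $w$ has only trivial Kazhdan--Lusztig polynomials, so that the only nonzero $\mu$-coefficients $\mu(y,w)$ occur for $y$ covered by $w$ and equal $1$. Feeding this into the rule $C_sC_{sw}=C_w+\sum_{sy<y}\mu(y,sw)C_y$ and inducting on $\ell(w)$ yields enough control of the $T$-expansion of $C_w$ to force $h_{x,y,w}\in v\Z[v^{-1}]$ for all $x,y$, hence $\la(w)\leq 1$; alternatively, and more in the spirit of this paper, one identifies the two-sided cell of $w$ as a ``subregular'' cell built out of generators and uses the constancy of $\la$ on cells together with $\la(s)=1$ for $s\in S$. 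This is the content of \cite[Corollary 4.10]{Xu}. For the ``only if'' direction, suppose $w$ has more than one reduced word. The model case is $w=st$ with $m(s,t)=2$, where $C_{st}=C_sC_t$ and $C_{st}C_{st}=C_s^2C_t^2=(v+v^{-1})^2C_{st}$, so $h_{st,st,st}=(v+v^{-1})^2$ has a nonzero $v^2$-coefficient and $\la(st)\geq 2$. For general $w$ with more than one reduced word one argues similarly, either by exhibiting a product $C_xC_y$ in which $C_w$ occurs with a nonzero coefficient of degree at least $2$, or by placing $w$ in the same two-sided cell as an element already known to have $\la$-value at least $2$.

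The main difficulty is the upper bound $\la(w)\leq 1$ for $w$ with a unique reduced word (and, likewise, the upper bound $\la(1_W)\le 0$). Because the defining conditions for $\la$ quantify over \emph{all} pairs $x,y\in W$, bounding $\la(w)$ from above demands uniform control of the structure constants $h_{x,y,w}$, and this is precisely where the detailed analysis of unique-reduced-word elements---equivalently, of subregular two-sided cells---in \cite{Xu} does the real work; everything else reduces to short manipulations with the Kazhdan--Lusztig multiplication rules.
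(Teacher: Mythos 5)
Your proposal is correct and matches the paper, which states Proposition \ref{subregular} as an imported result with no proof of its own beyond the citations to \cite{LG} and \cite{Xu}: the elementary bounds you work out ($h_{1_W,1_W,1_W}=1$; $h_{s,w,w}=v+v^{-1}$ when $sw<w$; and $\la(w)\ge 2$ whenever a braid relation applies to a reduced word of $w$, which in the paper's own toolkit follows from Corollary \ref{weak order} together with Propositions \ref{a local} and \ref{a for longest}) are the standard ones, and you correctly identify the only substantive content --- the uniform upper bounds $\la(1_W)\le 0$ and $\la(w)\le 1$ for unique-reduced-word $w$ --- as residing in exactly the two cited references. No discrepancy to report.
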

\noindent Here, the set of non-identity elements with a unique reduced word is
known to be a \emph{two-sided Kazhdan--Lusztig
cell} (which we will define in the next subsection), and is sometimes called the \emph{subregular cell} (see \cite{subregular}
and \cite{Xu}). 

The following result classifies $\la(1)$-finite Coxeter groups, i.e. Coxeter
groups with finite subregular cells, in
terms of Coxeter diagrams. We will use it in the proof of Theorem
\ref{second theorem} in Section \ref{reducible}.
\begin{prop}[{\cite[Proposition 3.8]{subregular}}]
  \label{tree}
  Let $W$ be an irreducible Coxeter group with Coxeter diagram $G$. Then $W$ is
  $\la(1)$-finite if and
  only if $G$ is a tree and there is at most one edge of weight higher than 3
  in $G$.
\end{prop}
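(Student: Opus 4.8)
The overall plan is to use Proposition \ref{subregular}(2) to convert the statement into a counting problem about words, and then to solve that problem combinatorially with the Matsumoto--Tits theorem (Proposition \ref{Matsumoto-Tits}). By Proposition \ref{subregular}(2), $W$ is $\la(1)$-finite if and only if only finitely many non-identity elements of $W$ have a unique reduced word, so I would first characterize such elements: a word $s_1\cdots s_q\in S^*$ with $q\ge 1$ is the unique reduced word of the element it represents if and only if (i) $m(s_i,s_{i+1})\ge 3$ for every $i$, and (ii) no segment $s_is_{i+1}\cdots s_{i+m-1}$ with $m=m(s_i,s_{i+1})<\infty$ equals the alternating word $s_is_{i+1}s_i\cdots$. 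Indeed, a commutation move gives a second reduced word when (i) fails and a braid move gives one when (ii) fails, while conversely a word satisfying (i) and (ii) admits no braid move at all, so it is automatically reduced (by Tits' solution of the word problem) and, by Proposition \ref{Matsumoto-Tits}, is the only reduced word of its element. Since (i) says precisely that $s_1\cdots s_q$ traces a walk in $G$ along edges of $G$, the statement reduces to showing that $G$ carries only finitely many walks satisfying (ii) exactly when $G$ is a tree with at most one edge of weight $>3$; here a heavy edge, if present, must have finite weight, as one of infinite weight would support the infinitely many alternating walks $a,b,a,b,\dots$ along it.

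For the ``if'' direction, suppose $G$ is a tree with at most one heavy edge $\{a,b\}$, of finite weight $m$. The crucial lemma is that in a tree a walk with no immediate backtracking (no $w_i=w_{i+2}$) visits each vertex at most once --- a repeated vertex would produce a closed sub-walk, hence a cycle --- so such a walk has length at most $|S|$. In a walk satisfying (ii), a backtrack $w,w',w$ is forbidden whenever $m(w,w')=3$, so backtracking can happen only along $\{a,b\}$, and there the alternating stretch has length at most $m-1$. Removing $\{a,b\}$ splits $G$ into subtrees $T_a\ni a$ and $T_b\ni b$, neither containing a heavy edge, and I would next show that a valid walk meets $\{a,b\}$ during at most one maximal alternating bounce: between two consecutive bounces the walk uses no $\{a,b\}$-edge and returns to its starting vertex, hence is a closed sub-walk trapped inside one subtree, forcing by the lemma a backtrack along a weight-$3$ edge, a contradiction. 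Thus every valid walk is a simple path in one subtree, then a single bounce of length $\le m-1$, then a simple path, so its length is at most $2|S|+m$ and there are only finitely many.

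For the ``only if'' direction I prove the contrapositive. If $G$ is not a tree with at most one heavy edge, then $G$ contains a cycle or has two distinct edges of weight $>3$, and in each case I exhibit infinitely many walks satisfying (i) and (ii). If $(v_1,\dots,v_n,v_1)$ is a cycle in $G$, the powers $(v_1v_2\cdots v_n)^k$ work: consecutive letters are adjacent in $G$, and as $n\ge 3$ no letter recurs at distance $2$, so there is no alternating segment of length $\ge 3$ and (ii) is automatic. If $G$ has two distinct heavy edges $e_1=\{a,b\}$ and $e_2=\{c,d\}$, I take a shortest path $Q$ in $G$ from an endpoint of $e_1$ to an endpoint of $e_2$; minimality forces $Q$ to avoid the other two endpoints, so, naming things so that $Q$ runs from $b$ to $c$, the closed walk $\mathbf{z}$ that reads $a$, then $Q$ from $b$ to $c$, then $d$, then $Q$ reversed from $c$ to $b$, uses only edges of $G$ and bounces off each heavy edge by a single step ($c,d,c$ and $b,a,b$). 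One checks that in $\mathbf{z}^k$ the only letters recurring at distance $2$ lie inside these two bounces, where the alternating segments have length $3<\min(m(a,b),m(c,d))$, so (ii) holds for all $k$. In all cases the words are reduced, of strictly increasing length, and so represent infinitely many distinct elements with a unique reduced word, whence $W$ is $\la(1)$-infinite.

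The step I expect to cause the most trouble is the ``at most one bounce'' claim in the ``if'' direction, together with the tree lemma supporting it: one must argue carefully that the part of the walk lying between two hypothetical bounces is genuinely a closed sub-walk confined to a single, heavy-edge-free subtree, so that the no-backtracking lemma applies and yields a contradiction. By contrast the constructions in the ``only if'' direction are fairly routine, the one point requiring care being the choice of $Q$ as a shortest path, which is exactly what prevents the doubled-back walk from creating an alternating segment of full braid length away from the two deliberate one-step bounces.
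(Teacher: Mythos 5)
The paper does not prove Proposition \ref{tree}; it simply quotes it from \cite{subregular}, so there is no internal proof to compare you against. Judged on its own, your argument is correct and complete, and it follows what I believe is the standard route: reduce via Proposition \ref{subregular}(2) to counting non-identity elements with a unique reduced word, characterize those as words satisfying your conditions (i) and (ii) (Matsumoto--Tits gives uniqueness once no braid move applies, and Tits' solution of the word problem gives reducedness, since adjacent letters are distinct), and then count walks on $G$. The ``at most one bounce'' argument in the finiteness direction and the shortest-path witness in the converse are both sound; the only step I would ask you to write out in full is the tree lemma, where ``a repeated vertex produces a closed sub-walk, hence a cycle'' needs the minimal-repeat argument to exclude the degenerate closed walks of length $2$ (which are exactly the backtracks you have forbidden) --- routine, but worth a line.

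Your parenthetical observation that the heavy edge must have finite weight is not a cosmetic adjustment: it shows the proposition as literally stated is false. For $G=I_2(\infty)$, $G$ is a tree with one edge of weight higher than $3$, yet every non-identity element of the infinite dihedral group has a unique reduced word, so by Proposition \ref{subregular}(2) there are infinitely many elements of $\la$-value $1$ and the group is $\la(1)$-infinite. The correct statement must require the (at most one) heavy edge to have finite weight, which is precisely the hypothesis your ``if'' direction uses when bounding the alternating bounce by $m-1$. This edge case also bears on the paper's Theorem \ref{second theorem}, whose condition (3) includes $I_2(\infty)$ in the list while condition (2) demands $\la(1)$-finiteness of each factor; your analysis shows these cannot both be taken at face value. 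So: the proof is right, and in proving it you have correctly isolated a hypothesis the quoted statement omits.
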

\noindent 

Besides the identity element and the elements in the subregular cell, it is
also easy to compute the $\la$-values
of products of commuting generators in a 
Coxeter group, thanks to the following two results of Lusztig.
\begin{prop}[{\cite[Section 14]{LG}}]
  \label{a local}
  Let $W$ be a Coxeter group with generating set $S$. Let $I\se S$ and let $W_I$ be the subgroup of $W$ generated by
  $I$. If $w\in W_I$, then $\la(w)$ computed in terms of $W_I$ is equal to
  $\la(w)$ computed in terms of $W$.
\end{prop}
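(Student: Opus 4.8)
The plan is to prove the two inequalities $\la_{W_I}(z)\le\la_W(z)$ and $\la_W(z)\le\la_{W_I}(z)$ for $z\in W_I$, where a subscript records in which group the $\la$-value --- equivalently, the relevant structure constant $h_{x,y,z}$ --- is computed; recall that $\la_W(z)$ is exactly the largest $v$-degree occurring among the $h_{x,y,z}$ as $x,y$ range over $W$.

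For the first inequality I would use that the Hecke algebra $H_I$ of $W_I$ sits inside $H$ via $T_s\mapsto T_s$ ($s\in I$): the quadratic and braid relations are clearly respected, and injectivity is immediate from the standard-basis decompositions together with the fact that an element of $W_I$ has the same reduced words in $W_I$ and in $W$. This embedding commutes with the bar involutions; combined with the standard fact that the Kazhdan--Lusztig polynomials of $W_I$ coincide with those of $W$ on pairs of elements of $W_I$, and with the characterization of the Kazhdan--Lusztig basis by bar-invariance and triangularity, this shows that the Kazhdan--Lusztig basis element of $H_I$ indexed by $w\in W_I$ maps to $C_w$. Hence for $x,y,z\in W_I$ the constant $h_{x,y,z}$ is the same whether computed in $H$ or in $H_I$, and since we are taking a maximum of $v$-degrees over the smaller index set $\{(x,y):x,y\in W_I\}$, the inequality $\la_{W_I}(z)\le\la_W(z)$ follows.

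For the reverse inequality I would fix $z\in W_I$ and aim to show $\deg_v h_{x,y,z}\le\la_{W_I}(z)$ for all $x,y\in W$. The first reduction is a descent-set constraint: if $s\in\mathcal L(x)$ then $C_sC_x=(v+v^{-1})C_x$, so multiplying the identity $C_xC_y=\sum_z h_{x,y,z}C_z$ on the left by $C_s$ and comparing coefficients of $C_z$ forces $h_{x,y,z}=0$ whenever $s\notin\mathcal L(z)$; symmetrically, $h_{x,y,z}\ne 0$ forces $\mathcal R(y)\subseteq\mathcal R(z)$. Since $\mathcal L(z),\mathcal R(z)\subseteq I$ for $z\in W_I$, we may assume $\mathcal L(x)\subseteq I$ and $\mathcal R(y)\subseteq I$. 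The next step is induction on $\ell(x)+\ell(y)$: the cases $x=1$ and $y=1$ are trivial, and otherwise one picks $s\in\mathcal L(x)\subseteq I$, writes $x=sx'$, substitutes $C_x=C_sC_{x'}-\sum_{v<x',\,sv<v}\mu(v,x')C_v$, and applies the standard rules for $C_sC_w$ to rewrite the coefficient of $C_z$ in $C_xC_y$ in terms of constants $h_{x',y,\cdot}$ and $h_{v,y,z}$ indexed by elements of strictly smaller length sum (note that $sz\in W_I$, since $s\in\mathcal L(z)\subseteq I$).

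The hard part will be controlling degrees in this induction: the expansion produces a term $(v+v^{-1})\,h_{x',y,z}$ whose $v$-degree is a priori one larger than the inductive hypothesis allows, so the naive induction does not close. Overcoming this --- showing that the top-degree part of $(v+v^{-1})h_{x',y,z}$ cancels against the companion terms $h_{x',y,sz}$ and those indexed by elements lying above $z$ in the Bruhat order --- is the technical core of the argument, and it is exactly the analysis Lusztig carries out in \cite[Section 14]{LG}. Once it is in place one obtains $\deg_v h_{x,y,z}\le\la_{W_I}(z)$, which together with the first inequality yields $\la_W(z)=\la_{W_I}(z)$.
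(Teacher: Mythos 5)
The paper does not actually prove Proposition \ref{a local}: it is quoted verbatim from \cite{LG}, and the Remark immediately following it warns that the statement is really part of Conjecture 14.2 there (property P12 in Lusztig's list P1--P15), known to hold only because we are in the equal-parameter (split) case, where the conjectures are established in \cite[Section 15]{LG} using positivity. So there is no in-paper argument to compare yours against; the question is whether your sketch would stand on its own as a proof, and it would not.

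Your first inequality, $\la_{W_I}(z)\le\la_W(z)$, is fine: the parabolic embedding of Hecke algebras carries the Kazhdan--Lusztig basis of $H_I$ to $\{C_w: w\in W_I\}$ (by agreement of the polynomials $p_{x,y}$ for $x,y\in W_I$ together with the bar-invariance/triangularity characterization), so the constants $h_{x,y,z}$ with $x,y,z\in W_I$ coincide in the two algebras and the supremum over the smaller index set can only drop. The gap is the reverse inequality, which is the entire content of the proposition. You set up an induction on $\ell(x)+\ell(y)$, correctly observe that the factor $(v+v^{-1})$ makes the naive degree bound fail to close, and then assert that the needed cancellation ``is exactly the analysis Lusztig carries out in \cite[Section 14]{LG}.'' That is not so: Section 14 of \cite{LG} contains no such analysis --- it is the place where the statement is \emph{conjectured}. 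The proof in the split case (\cite[Section 15]{LG}, extended to arbitrary Coxeter groups via the positivity results of \cite{EW}) does not run your induction at all; it derives P12 from the non-negativity of the coefficients of the Kazhdan--Lusztig polynomials and of the structure constants $h_{x,y,z}$, a deep external input with no elementary substitute. So the ``technical core'' you defer is not a routine cancellation left to the reader but the theorem itself, and the route you propose for it is not one that is known to work. As written, the proposal establishes only the easy half of the statement.
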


\begin{remark}
  The above statement appears as part of Conjecture 14.2 in \cite{LG}. However, it is
  known to hold in the setting of this paper, which is called the
  \emph{equal parameter} or the \emph{split} case (see \cite{LG}, Section 15).
  The same remark applies to Proposition \ref{constant}.
\end{remark}

\begin{prop}[{\cite[Proposition 13.8]{LG}}]
  \label{a for longest}
  Let $W$ be a finite Coxeter group, and let
  $w_0$ be the longest element of $W$. Then $\la(w_0)=l(w_0)$.
\end{prop}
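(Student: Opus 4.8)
The plan is to exploit that $w_0$ is the unique Bruhat-maximal element of $W$, which makes both $C_{w_0}$ and the structure constants $h_{x,y,w_0}$ very explicit. Since the Kazhdan--Lusztig polynomials relative to $w_0$ are all trivial, in the normalization of \eqref{eq:our normalization} one has the classical closed form $C_{w_0}=v^{-l(w_0)}\sum_{y\in W}v^{l(y)}T_y$. First I would check, by pairing each $y$ with $sy$ in this sum and applying the standard-basis multiplication rule for $T_s$, that $T_sC_{w_0}=vC_{w_0}$ for every $s\in S$; iterating along a reduced word gives $T_xC_{w_0}=v^{l(x)}C_{w_0}$ for all $x\in W$. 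Expanding $C_x=\sum_{y\le x}p_{y,x}T_y$ in the standard basis (with $p_{x,x}=1$ and $p_{y,x}\in v^{-1}\Z[v^{-1}]$ for $y<x$) then yields $C_xC_{w_0}=\bigl(\sum_{y\le x}p_{y,x}v^{l(y)}\bigr)C_{w_0}$, so that $h_{x,w_0,w_0}=\sum_{y\le x}p_{y,x}v^{l(y)}$ and $h_{x,w_0,z}=0$ for $z\ne w_0$. Taking $x=w_0$, where $p_{y,w_0}=v^{l(y)-l(w_0)}$, gives $h_{w_0,w_0,w_0}=v^{-l(w_0)}\sum_{y\in W}v^{2l(y)}$, whose term of highest degree in $v$ is $v^{l(w_0)}$ with coefficient $1$ (the longest element being unique); hence $h_{w_0,w_0,w_0}\notin v^{l(w_0)-1}\Z[v^{-1}]$, which already forces $\la(w_0)\ge l(w_0)$.

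For the reverse inequality I would show $h_{x,y,w_0}\in v^{l(w_0)}\Z[v^{-1}]$ for all $x,y\in W$. The main ingredient is an elementary degree bound on the standard-basis structure constants: writing $T_uT_{u'}=\sum_w f_{u,u'}^{w}T_w$, one proves by induction on $l(u')$---right-multiplying a reduced word for $u'$ one letter at a time and using $T_wT_s\in\{T_{ws},\,(v-v^{-1})T_w+T_{ws}\}$---that $\deg_v f_{u,u'}^{w}\le l(u)+l(u')-l(w)$. Since $w_0$ is Bruhat-maximal, every $C_z$ with $z\ne w_0$ has standard-basis support contained in $\{u:u\le z\}\not\ni w_0$, so $h_{x,y,w_0}$ is precisely the coefficient of $T_{w_0}$ in the product $C_xC_y$. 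Expanding $C_xC_y=\sum_{u\le x,\,u'\le y}p_{u,x}p_{u',y}T_uT_{u'}$, extracting the $T_{w_0}$-coefficient, and using $\deg_v p_{u,x}\le 0$, $\deg_v p_{u',y}\le 0$ together with $l(u)\le l(x)$, $l(u')\le l(y)$ gives $\deg_v h_{x,y,w_0}\le l(x)+l(y)-l(w_0)\le l(w_0)$. Together with the lower bound and the uniqueness clause in the definition of $\la$, this gives $\la(w_0)=l(w_0)$.

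I do not expect a genuine obstacle here: once $C_{w_0}$ is written down the argument is essentially bookkeeping. The two places that need actual (if routine) care are the identity $T_sC_{w_0}=vC_{w_0}$ and the inductive degree bound on the $f_{u,u'}^{w}$. The one conceptual point worth isolating is that the maximality of $w_0$ in Bruhat order lets one read $h_{x,y,w_0}$ directly off the standard basis, and it is exactly this that makes the crude degree bound on the $f_{u,u'}^{w}$ sharp enough to match the lower bound.
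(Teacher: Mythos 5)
The paper offers no proof of this proposition: it is imported verbatim from \cite[Proposition 13.8]{LG}, so there is no internal argument to compare against. Your proof is correct and is essentially the standard one (close to Lusztig's own, specialized to equal parameters). The identity $T_sC_{w_0}=vC_{w_0}$ checks out by the pairing you describe; it gives $h_{x,w_0,w_0}=\sum_{y\le x}p_{y,x}v^{l(y)}$ and $h_{x,w_0,z}=0$ for $z\neq w_0$, and the case $x=w_0$ produces the top term $v^{l(w_0)}$ with coefficient $1$, whence $\la(w_0)\ge l(w_0)$. The inductive bound $\deg_v f_{u,u'}^{w}\le l(u)+l(u')-l(w)$ is also correct (both branches of $T_wT_s$ respect it), and together with $\deg_v p_{u,x}\le 0$ and the observation that Bruhat-maximality of $w_0$ lets you read $h_{x,y,w_0}$ off the $T_{w_0}$-coefficient of $C_xC_y$, it gives the matching upper bound; the definition of $\la$ then forces equality. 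The only input you take on faith is the closed form for $C_{w_0}$ (equivalently $P_{y,w_0}=1$), which is classical; in fact it can be extracted from ingredients already in the paper, since Proposition \ref{KL basis mult} gives $C_sC_{w_0}=(v+v^{-1})C_{w_0}$, hence $T_sC_{w_0}=vC_{w_0}$, and comparing coefficients in this eigenvector equation forces $p_{sz,w_0}=v^{-1}p_{z,w_0}$, which pins down $C_{w_0}=v^{-l(w_0)}\sum_{y}v^{l(y)}T_y$ from the normalization $p_{w_0,w_0}=1$.
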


\begin{corollary}
  \label{product a}
  Let $W$ be a Coxeter group with generating set $S$. Let $I=\{s_1,s_2,\cdots$,
  $s_k\}$
  be a subset of $S$ such that $m(s_i,s_j)=2$ for all $1\le i<j\le k$, and let
  $w_0=s_1s_2\cdots s_k$. Then $\la(w_0)=k$.
\end{corollary}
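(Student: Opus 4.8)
The plan is to reduce the computation to the parabolic subgroup $W_I$ and then apply the two cited results of Lusztig. First I would observe that since $m(s_i,s_j)=2$ for all $i<j$, the subgroup $W_I$ generated by $I$ is the direct product of $k$ copies of the rank-one Coxeter group, i.e. $W_I\cong(\Z/2\Z)^k$; in particular $W_I$ is finite. Next I would identify $w_0=s_1s_2\cdots s_k$ as the longest element of $W_I$: the word $s_1s_2\cdots s_k$ is reduced because the $s_i$ are pairwise distinct and pairwise commuting (so no braid move can shorten it), giving $l(w_0)=k$, and every element of $W_I$ is a product of a subset of the $s_i$ and hence has length at most $k$, with equality only for $w_0$. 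Thus $w_0$ is indeed the unique longest element of the finite Coxeter group $W_I$.

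With this in hand, Proposition \ref{a for longest} applied to $W_I$ gives that $\la(w_0)$, computed with respect to $W_I$, equals $l(w_0)=k$. Finally, Proposition \ref{a local} (valid here since we are in the equal-parameter case) says that $\la(w_0)$ computed in $W_I$ agrees with $\la(w_0)$ computed in $W$. Combining the two equalities yields $\la(w_0)=k$, as claimed.

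There is essentially no hard step here; the only things to be careful about are checking that $s_1s_2\cdots s_k$ really is reduced and really is the longest element of $W_I$ (both immediate from the commutation relations and the classification of $W_I$ as $(\Z/2\Z)^k$), and invoking Proposition \ref{a local} in the form stated, which requires only that $w_0\in W_I$. So I expect the write-up to be just a few lines.

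\begin{proof}
  Since $m(s_i,s_j)=2$ for all $1\le i<j\le k$, the parabolic subgroup $W_I$
  generated by $I$ is isomorphic to $(\Z/2\Z)^k$; in particular, $W_I$ is
  finite. The word $s_1s_2\cdots s_k$ is reduced in $W_I$ (and in $W$) because
  the generators $s_i$ are pairwise distinct and pairwise commuting, so no braid
  relation can be applied to shorten it; hence $l(w_0)=k$. Moreover every element
  of $W_I$ is a product of some subset of $\{s_1,\dots,s_k\}$, so has length at
  most $k$, with equality only for $w_0$. Thus $w_0$ is the longest element of
  the finite Coxeter group $W_I$, and Proposition \ref{a for longest} applied to
  $W_I$ gives $\la(w_0)=l(w_0)=k$, where $\la$ is computed with respect to
  $W_I$. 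By Proposition \ref{a local}, this equals $\la(w_0)$ computed with
  respect to $W$, so $\la(w_0)=k$.
\end{proof}
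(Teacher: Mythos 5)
Your proof is correct and follows exactly the same route as the paper: identify $W_I$ as $(\Z/2\Z)^k$, recognize $w_0$ as its longest element, and apply Propositions \ref{a for longest} and \ref{a local}. The paper's version is just a more terse write-up of the same argument, leaving the reducedness and longest-element checks as "clearly."
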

\begin{proof}
  The elements of $I$ commute with each other since $m(s_i,s_j)=2$
  for all distinct $i,j$, therefore the subgroup $W_I$ of $W$ generated by
  $I$ is isomorphic to the direct product of $k$ copies of the cyclic group of
  order 2. In particular, $W_I$ is finite. Furthermore, $w_0$ is clearly the
  longest element of $W_I$, therefore $\la(w_0)=l(w_0)=k$ by propositions
  \ref{a local} and \ref{a for longest}.
\end{proof}

\subsection{Kazhdan--Lusztig cells}
We define the Kazhdan--Lusztig cells of a Coxeter group $W$ in this subsection. 
Let $H$ be the Hecke algebra of $W$, and let $\{C_w:w\in W\}$
be the Kazhdan--Lusztig basis of $H$. For each $x\in W$, let $D_x:H\ra \cala$
be the linear map such that 
\[
  D_{x}(C_y)=\delta_{x,y}
\]
for all $y\in W$, where $\delta$ is the Kronecker delta symbol. 
Furthermore, for $x,y\in W$,
\begin{enumerate}[leftmargin=2em]
  \item define $x\prec_L y$ if  $D_{x}(C_sC_y)\neq 0$ for some
    $s\in S$;
  \item define $x\le_L y$ if there is a sequence $x=z_1,z_2,\cdots,z_n=y$ in
    $W$ such that $z_{i}\prec_L z_{i+1}$ for all $1\le i\le n-1$;
  \item define $x\sim_L y$ if $x\le_L y$ and $y\le_L x$.
\end{enumerate}
By the construction, $\sim_L$ defines an equivalence
relation on $W$. 
We call the equivalence classes the
\emph{left Kazhdan--Lusztig cells}, or simply the \emph{left cells}, of $W$,
and we define the  \emph{right (Kazhdan--Lusztig) cells}
and $\emph{two-sided (Kazhdan--Lusztig) cells}$ of $W$ similarly. Here, to
define the two-sided cells, start by declaring $x\prec_{LR} y$ if either
$D_{x}(C_sC_y)\neq 0$ for some $s\in S$ or $D_x(C_yC_s)\neq 0$ for some $s$
(i.e. if either $x\prec_L y$ or $x\prec_R y$).
Note that each two-sided cell of $W$ must be a union of left cells as well as a
union of right cells. 

The Kazhdan--Lusztig cells of $W$ have the following key connection with the
$\la$-function on $W$.

\begin{prop}[{\cite[Section 14]{LG}}]
  \label{constant}
  Let $x,y\in W$. If $x\le_{LR} y$, then $\la(x)\ge \la(y)$. In particular, if
  $x\sim_{LR} y$, then $\la(x)=\la(y)$.
\end{prop}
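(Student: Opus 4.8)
The ``in particular'' clause is immediate from the first statement: if $x\sim_{LR}y$ then $x\le_{LR}y$ and $y\le_{LR}x$, so $\la(x)\ge\la(y)\ge\la(x)$. For the first statement I would begin with two reductions. Since $\le_{LR}$ is by definition the transitive closure of $\prec_{LR}$, and $x\prec_{LR}y$ means $x\prec_L y$ or $x\prec_R y$, it suffices to prove $\la(x)\ge\la(y)$ under the hypothesis $x\prec_L y$ or $x\prec_R y$; and since the $\cala$-linear anti-automorphism of $H$ with $T_s\mapsto T_s$ ($s\in S$) sends $C_w$ to $C_{w\inverse}$, one has $h_{x,y,z}=h_{y\inverse,x\inverse,z\inverse}$ for all $x,y,z$, hence $\la(w)=\la(w\inverse)$ and $x\prec_R y\iff x\inverse\prec_L y\inverse$. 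So the whole statement reduces to the claim: \emph{if $h_{s,y,x}\ne0$ for some $s\in S$ --- equivalently, if $C_x$ occurs with nonzero coefficient in $C_sC_y$ --- then $\la(x)\ge\la(y)$.}

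For this I would argue with $v$-degrees. For $p\in\cala$ write $\partial(p)$ for the top degree of $p$ in $v$; the defining property of $\la$ says exactly that $\la(w)=\max_{u,v}\partial(h_{u,v,w})$. Two facts, both valid in the equal-parameter (split) case, keep the bookkeeping under control: every $h_{u,v,w}$ lies in $\zz[v,v\inverse]$ (positivity, \cite{EW}), so top-degree terms never cancel in sums or products of such elements; and every $h_{u,v,w}$ is fixed by $v\mapsto v\inverse$ (because $\overline{C_uC_v}=C_uC_v$), so $\partial(h_{u,v,w})\ge0$ whenever $h_{u,v,w}\ne0$. Now pick $a,b\in W$ with $\partial(h_{a,b,y})=\la(y)$, expand the product $C_sC_aC_b$ both as $C_s(C_aC_b)$ and as $(C_sC_a)C_b$, and compare the coefficient of $C_x$:
\[
  \sum_{y'\in W}h_{a,b,y'}\,h_{s,y',x}\;=\;\sum_{e\in W}h_{s,a,e}\,h_{e,b,x}.
\]
By positivity, the $v$-degree of either side equals the largest $\partial$ among its summands. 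The left side has the summand $h_{a,b,y}\,h_{s,y,x}$, of degree $\la(y)+\partial(h_{s,y,x})\ge\la(y)$; hence some summand $h_{s,a,e}\,h_{e,b,x}$ on the right also has degree $\ge\la(y)$. Since $h_{s,a,e}$ is a coefficient of $C_sC_a$, it equals $0$, $1$, a $\mu$-coefficient, or $v+v\inverse$, so $\partial(h_{s,a,e})\le1$; combined with $\partial(h_{e,b,x})\le\la(x)$ this already yields $\la(x)\ge\la(y)-1$.

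The hard part is removing this gap of $1$. Observe that if $s$ is not a left descent of $a$, then every nonzero $h_{s,a,e}$ is $1$ or a $\mu$-coefficient, so $\partial(h_{s,a,e})=0$ and the computation above gives $\la(x)\ge\la(y)$ outright; thus the real issue is whether one can always choose a pair $(a,b)$ realizing $\la(y)=\partial(h_{a,b,y})$ with $sa>a$. Settling this (or otherwise closing the gap) is precisely where one needs the finer part of Lusztig's theory of the $\la$-function --- the leading coefficients $\gamma_{x,y,z}$ of the structure constants $h_{x,y,z}$, the distinguished involutions, and the associative asymptotic algebra $J$ they span, which in the equal-parameter case satisfy all of Lusztig's properties (P1)--(P15) and from which both the monotonicity of $\la$ under $\le_{LR}$ and its constancy on two-sided cells follow. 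So in practice I would carry out the reductions above and then invoke \cite[\S14]{LG} for this last step, or else simply cite \cite[\S14]{LG}, where the statement is part of the standard theory.
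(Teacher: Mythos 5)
The paper offers no proof of this proposition: it is quoted verbatim from \cite[Section 14]{LG}, and the remark following Proposition \ref{a local} explicitly notes that the statement is part of Lusztig's Conjecture 14.2, known to hold in the equal-parameter (split) case treated here. So there is no in-paper argument to compare yours against. That said, your sketch is an accurate account of the standard approach: the reduction to $x\prec_L y$ via the anti-automorphism $T_w\mapsto T_{w\inverse}$ is correct, as is the associativity/degree computation — using positivity of the $h_{u,v,w}$ and their bar-invariance, comparing the coefficient of $C_x$ in $C_s(C_aC_b)=(C_sC_a)C_b$ honestly yields only $\la(x)\ge\la(y)-1$, because $\partial(h_{s,a,e})$ can be $1$ when $sa<a$. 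You correctly identify that closing this gap is exactly Lusztig's property P4, which in the split bounded case is deduced from positivity together with the finer theory of the leading coefficients $\gamma_{x,y,z}$ and distinguished involutions; your heuristic of choosing $(a,b)$ with $sa>a$ is the right intuition, though Lusztig's actual argument routes through the $\gamma$'s rather than such a choice. Since you end by citing \cite[Section 14]{LG} for the last step, you land where the paper starts; the only thing I would add is the caveat (which the paper itself records) that the general statement has the status of a theorem only in the equal-parameter setting where P1--P15 are established.
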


By the proposition, one way to establish that an element $x\in W$ has a certain
$\la$-value is to find another element $y$ of that $\la$-value and prove that
$x$ and $y$ are in the same cell. We will repeatedly use this strategy in
sections \ref{star arguments} and \ref{mu arguments}.

As we may see from their construction, the key to understanding Kazhdan--Lusztig cells lies in
understanding the products of the form $C_sC_y$. These products are controlled
by the \emph{Kazhdan--Lusztig polynomials}, which are defined to be the elements $p_{x,y}\in \cala\, (x,y\in W)$
such that 
\[
  C_y=\sum_{x\in W} p_{x,y}T_x
\]
for all $y\in W$, where the elements $\{T_w:w\in W\}$ form the standard basis
of $T$. More precisely, for each $x,y\in W$, let $\mu_{x,y}$ be the coefficient of the term
$v^{-1}$ in $p_{x,y}$, then we have the following formulae.
\begin{prop}[{\cite[Theorem 6.6]{LG}}]
  \label{KL basis mult}
  Let $y\in W$, $s\in S$, and let $\le$ be the Bruhat order on $W$. Then in the
  Hecke algebra $H$ of $W$,
  \begin{eqnarray*}
    C_s C_y&=& 
    \begin{cases}
      (v+v\inverse) C_y&\quad\qquad\text{if}\quad sy<y,\\
      C_{sy}+\sum\limits_{x:sx<x<y} \mu_{x,y}C_x&\quad\qquad \text{if} \quad
      sy>y,
    \end{cases}\\
    &&\\
    C_y C_s&=& 
    \begin{cases}
      (v+v\inverse) C_y&\quad \text{if}\quad ys<y,\\
      C_{ys}+\sum\limits_{x:xs<x<y} \mu_{x^{-1},y^{-1}}C_x&\quad \text{if}
      \quad ys>y.
    \end{cases}
    \\
  \end{eqnarray*}
\end{prop}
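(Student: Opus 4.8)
The plan is to read both formulae off the recursive construction of the Kazhdan--Lusztig basis, inducting on $l(y)$, and then to obtain the right-multiplication formulae from the left-multiplication ones by a symmetry. First I would record the relevant facts about $C_s$. With the normalization $(T_s-v)(T_s+v\inverse)=0$, i.e. $T_s^2=(v-v\inverse)T_s+1$, one checks that $C_s=T_s+v\inverse$ is fixed by the bar involution (which sends $v\mapsto v\inverse$ and $T_s\mapsto T_s\inverse=T_s-v+v\inverse$), so $C_s$ is indeed the Kazhdan--Lusztig element attached to $s$, and a short computation gives $C_s^2=(v+v\inverse)C_s$ and $T_sC_s=C_sT_s=vC_s$. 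I would also use the standard properties of the full basis: each $C_w$ is bar-invariant and of the form $C_w=T_w+\sum_{x<w}p_{x,w}T_x$ with $p_{x,w}\in v\inverse\Z[v\inverse]$ and $p_{w,w}=1$; an element of $H$ is uniquely determined by being bar-invariant and having such a triangular shape; and $\mu_{x,y}$, the coefficient of $v\inverse$ in $p_{x,y}$, vanishes unless $x<y$.

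Now I induct on $l(y)$, the case $y=1_W$ being trivial. Assume the formulae hold for all elements of length less than $l(y)$, and first treat the case $sy>y$. Put $\tilde C:=C_sC_y-\sum_{x\,:\,sx<x<y}\mu_{x,y}C_x$; since each $\mu_{x,y}$ is an integer, $\tilde C$ is bar-invariant, so by the uniqueness recalled above it suffices to show that $\tilde C$ has triangular shape with leading term $T_{sy}$. Expanding $C_sC_y=T_sC_y+v\inverse C_y$ in the standard basis via $T_sT_x=T_{sx}$ when $sx>x$ and $T_sT_x=T_{sx}+(v-v\inverse)T_x$ when $sx<x$, one computes that the coefficient of $T_w$ in $C_sC_y$ equals $p_{sw,y}+v\inverse p_{w,y}$ if $sw>w$ and $p_{sw,y}+v\,p_{w,y}$ if $sw<w$; the subword property of the Bruhat order shows the support lies in $\{w:w\le sy\}$ and that the coefficient of $T_{sy}$ is $1$. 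For $w<sy$ with $sw>w$ this coefficient already lies in $v\inverse\Z[v\inverse]$; for $w<sy$ with $sw<w$ the only term of non-negative $v$-degree is the constant term $\mu_{w,y}$ of $v\,p_{w,y}$, and this is exactly cancelled by the $C_w$-summand of the correction term (note $sw<w<y$, so $C_w$ occurs there), the remaining summands $\mu_{x,y}C_x$ contributing to the coefficient of $T_w$ only elements of $v\inverse\Z[v\inverse]$. Hence $\tilde C=C_{sy}$, which is the asserted identity $C_sC_y=C_{sy}+\sum_{x:sx<x<y}\mu_{x,y}C_x$. The case $sy<y$ then follows quickly: applying the case just settled to $sy$, which has length $l(y)-1$, gives $C_y=C_sC_{sy}-\sum_{x:sx<x<sy}\mu_{x,sy}C_x$; left-multiplying by $C_s$ and using $C_s^2=(v+v\inverse)C_s$ together with the inductive identity $C_sC_x=(v+v\inverse)C_x$ for the (shorter) elements $x$ in the sum yields $C_sC_y=(v+v\inverse)C_y$.

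Finally, the formulae for $C_yC_s$ follow from the $\cala$-linear anti-automorphism $\natural$ of $H$ determined by $\natural(T_w)=T_{w\inverse}$: it fixes each $T_s$ and commutes with the bar involution, so $\natural(C_w)=C_{w\inverse}$ by uniqueness, and comparing coefficients of standard-basis elements then gives $p_{x,y\inverse}=p_{x\inverse,y}$, hence $\mu_{x,y\inverse}=\mu_{x\inverse,y}$. Applying $\natural$ to the two left-multiplication identities with $y$ replaced by $y\inverse$, and reindexing $x\mapsto x\inverse$ (so that $sx<x<y\inverse$ becomes $xs<x<y$ and the absorbing hypothesis $sy\inverse<y\inverse$ becomes $ys<y$), yields the two right-multiplication formulae, the coefficient $\mu_{x\inverse,y\inverse}$ arising naturally from the reindexing. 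The main obstacle, such as it is, lies in the Bruhat-order bookkeeping for the case $sy>y$ --- the subword-property argument for the support and the descent-set case split --- rather than in any deep computation; and if one does not grant the existence of the Kazhdan--Lusztig basis in advance, then proving that $\tilde C$ has the required triangular shape simultaneously with that existence is the genuine technical core, essentially the Kazhdan--Lusztig existence theorem itself.
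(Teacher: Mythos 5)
Your argument is correct: the computation of the coefficient of $T_w$ in $C_sC_y$ (the $p_{sw,y}+v^{-1}p_{w,y}$ versus $p_{sw,y}+v\,p_{w,y}$ case split), the cancellation of the constant terms $\mu_{w,y}$ by the correction sum, the reduction of the $sy<y$ case via $C_s^2=(v+v^{-1})C_s$, and the transfer to right multiplication via the anti-automorphism $T_w\mapsto T_{w^{-1}}$ all check out, and your caveat that the triangularity step is really part of the existence theorem for the basis is apt. Note, however, that the paper supplies no proof of this proposition at all --- it is quoted directly from Theorem~6.6 of Lusztig's book --- and your argument is essentially the standard one given there, so there is nothing to compare beyond observing that you have correctly reconstructed the cited proof.
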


\begin{remark}
  \label{symmetry}
  It is known that $\mu_{x,y}=\mu_{x\inverse,y\inverse}$ for any $x,y\in W$
  (see Section 5.6 and Corollary 6.5 of \cite{LG}),
  therefore the last formula in the proposition also holds with $\mu_{x,y}$ in
  place of $\mu_{x\inverse,y\inverse}$.
\end{remark}

\begin{remark}
  \label{conversion}
  The paper \cite{KL} uses a normalization of the Hecke algebra that is
  different from ours, namely, it uses the relation $(T_s+1)(T_s-q)=0$ in place
  of our Equation
  \eqref{eq:our normalization}. Consequently, the
  Kazhdan--Lusztig polynomials $P_{x,y}$ obtained in \cite{KL}---which are
  polynomials in $q$---do not exactly
  agree with our Kazhdan--Lusztig polynomial $p_{x,y}$. However, it is
  straightforward to 
  check that we may convert $P_{x,y}$ to $p_{x,y}$ by first substituting
  $q$ by $v^2$ in $P_{x,y}$ and then multiplying the result by $v^{l(x)-l(y)}$. 
  In particular, our definition of the numbers $\mu_{x,y}$ agrees with that in
  \cite{KL}.
\end{remark}

\noindent The $\mu$-coefficients are often called the ``leading coefficients
of Kazhdan--Lusztig polynomials'' in the literature. Note that the elements $C_s (s\in S)$ generate $H$ by Proposition
\ref{KL basis mult}, so in a sense the $\mu$-coefficients control the
multiplication of the Kazhdan--Lusztig basis elements in the Hecke algebra. As
such, they also lead to an alternative characterization of the relations
$\prec_L$ and $\prec_R$: for each $y\in W$, define the \emph{left descent set} and
\emph{right descent set} of $y$ to be the sets
\[
  \mathcal{L}(y)=\{s\in S:sy<y\},\qquad \mathcal{R}(y)=\{s\in S:ys<y\},
\]
respectively. Then the following proposition holds.

\begin{prop}
  \label{alternative prec}
  Let $x,y\in W$. Then
  \begin{enumerate}
    \item $x\prec_L y$ if and only if one of the following conditions holds:
      \textup{(}a\textup{)} $x=y\neq 1_W$;
      \textup{(}b\textup{)} $x=sy$ for some $s\notin
      \mathcal{L}(y)$; \textup{(}c\textup{)} $x<y$, $\mathcal{L}(x)\not\se \mathcal{L}(y)$, and
      $\mu_{x,y}\neq 0$.
    \item $x\prec_R y$ if and only if one of the following conditions holds:
      \textup{(}a\textup{)} $x=y\neq 1_W$;
      \textup{(}b\textup{)} $x=ys$ for some $s\notin
      \mathcal{R}(y)$; \textup{(}c\textup{)} $x<y$, $\mathcal{R}(x)\not\se \mathcal{R}(y)$, and
      $\mu_{x,y}\neq 0$.
  \end{enumerate}
\end{prop}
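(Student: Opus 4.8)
The plan is to prove part (1) by a direct case analysis built on the multiplication formula of Proposition \ref{KL basis mult}, and then to obtain part (2) by repeating the argument verbatim with the second formula of that proposition (the one for $C_y C_s$), using $\mathcal{R}$ in place of $\mathcal{L}$ and invoking $\mu_{x^{-1},y^{-1}} = \mu_{x,y}$ from Remark \ref{symmetry}; equivalently, part (2) transfers from part (1) under the anti-automorphism of $H$ fixing each $T_s$.

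For part (1), fix $y \in W$ and recall that $x \prec_L y$ means $D_x(C_s C_y) \neq 0$ for some $s \in S$. I would split on whether $s \in \mathcal{L}(y)$. If $s \in \mathcal{L}(y)$, then $C_s C_y = (v + v^{-1}) C_y$ by Proposition \ref{KL basis mult}, so $D_x(C_s C_y) \neq 0$ forces $x = y$; moreover $\mathcal{L}(y) \neq \emptyset$ forces $y \neq 1_W$ (the identity being the unique element of $W$ with empty left descent set; see \cite{BB}), so we are exactly in case (a). If $s \notin \mathcal{L}(y)$, then $C_s C_y = C_{sy} + \sum_{z:\, sz < z < y} \mu_{z,y} C_z$, and here $sy > y$ while every index $z$ in the sum satisfies $z < y$; in particular $sy$ differs from every such $z$, so the coefficient of each basis element in this expansion is read off unambiguously. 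Hence $D_x(C_s C_y) \neq 0$ forces either $x = sy$ with $s \notin \mathcal{L}(y)$, which is case (b), or $sx < x < y$ with $\mu_{x,y} \neq 0$, in which case $s \in \mathcal{L}(x) \setminus \mathcal{L}(y)$ gives $\mathcal{L}(x) \not\subseteq \mathcal{L}(y)$ and we are in case (c).

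For the converse direction of part (1), each condition supplies a witness $s$. In case (a), any $s \in \mathcal{L}(y)$ gives $D_y(C_s C_y) = v + v^{-1} \neq 0$. In case (b), the element $s$ with $x = sy$ and $s \notin \mathcal{L}(y)$ satisfies $sy > y$, so $C_{sy}$ appears with coefficient $1$ in the expansion of $C_s C_y$ and $D_{sy}(C_s C_y) = 1 \neq 0$. In case (c), pick $s \in \mathcal{L}(x) \setminus \mathcal{L}(y)$; then $sx < x$, $sy > y$, and $x < y$, so $x$ is one of the indices $z$ in the sum $\sum_{z:\, sz < z < y} \mu_{z,y} C_z$ and contributes $\mu_{x,y} C_x$, while $C_{sy}$ (with $sy > y > x$) contributes nothing at $x$; therefore $D_x(C_s C_y) = \mu_{x,y} \neq 0$.

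There is no serious obstacle here: the proposition is a routine unwinding of Proposition \ref{KL basis mult}. The only points needing care are bookkeeping ones — confirming that the terms $C_{sy}$ and $\{C_z : z < y\}$ in the $s \notin \mathcal{L}(y)$ expansion are pairwise distinct so that coefficients are well defined, and checking the degenerate case $y = 1_W$, where $\mathcal{L}(1_W) = \emptyset$, $C_s C_{1_W} = C_s$, and case (b) correctly accounts for exactly the elements $x \in S$. These are immediate once the main case split is set up, and no estimate or hard combinatorial input is needed.
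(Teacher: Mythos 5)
Your proposal is correct and follows essentially the same route as the paper: both unwind Proposition \ref{KL basis mult} by splitting on whether $s\in\mathcal{L}(y)$, read off the coefficient of $C_x$ in each case, and note for case (c) that any $s\in\mathcal{L}(x)\setminus\mathcal{L}(y)$ serves as the witness in both directions. Your extra bookkeeping (that $C_{sy}$ is distinct from the terms $C_z$ with $z<y$, and the degenerate case $y=1_W$) is implicit in the paper's argument but harmless to make explicit.
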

\begin{proof}
  By Proposition \ref{KL basis mult}, we have  $D_x(C_sC_y)\neq 0$ for some $s\in S$ if and only if one of the following occurs:
  \begin{enumerate}
    \item[(a)] $x=y\neq 1_W$, so that $\mathcal{L}(y)\neq \emptyset$ and $D_x(C_sC_y)\neq 0$ for each $s\in
      \mathcal{L}(y)$;
    \item[(b)] $C_x$ appears in $C_sC_y$ for some $s\not\in \mathcal{L}(y)$, with $x=sy$;
    \item[(c)] $C_x$ appears in $C_sC_y$ for some $s\not\in \mathcal{L}(y)$, and $x$
      satisfies $x<y, sx<x$ and $\mu_{x,y}\neq 0$. Note that in this case
        we have    $\mathcal{L}(x)\not\se\mathcal{L}(y)$ as $s\in
            \mathcal{L}(x)\setminus \mathcal{L}(y)$. Conversely, 
      if $x<y$, $\mathcal{L}(x)\not\se\mathcal{L}(y)$, and $\mu_{x,y}\neq 0$,
      then 
      $D_x(D_sD_y)\neq 0$ for each $s\in \mathcal{L}(x)\setminus
      \mathcal{L}(y)$ by Proposition \ref{KL basis mult}.
  \end{enumerate}
  Statement (1) now follows. The proof of (2)
  is similar.
\end{proof}

In propositions \ref{star and mu} and \ref{star relation}, we will describe
ways to compute certain
$\mu$-coefficients combinatorially without referring to the Hecke algebra.
This will allow us to avoid difficult computations of Kazhdan--Lusztig
polynomials and understand Kazhdan--Lusztig cells by using only the combinatorics of
Coxeter groups.

To end this section, we record several facts for future use.

\begin{corollary} \label{weak order} Let $x,y\in W$, and let $s\in
S$.  \begin{enumerate} \item We have $sy\le_L y$ if $sy>y$, and $ys\le_R y$ if $ys>y$;
    \item If there exist elements $u,v\in W$ such that $y=uxv$ and
      $l(y)=l(u)+l(x)+l(v)$ where $l$ is the length function on
      $W$ \textup{(}see Section \ref{Coxeter def} for the definition of
      $l$\textup{)}, then
      we have $y\le_{LR}x$ and $\la(y)\ge \la(x)$.  \end{enumerate} \end{corollary}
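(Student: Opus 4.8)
The plan is to prove both statements by reducing to the multiplication rules for the Kazhdan--Lusztig basis recorded in Proposition \ref{KL basis mult} and the definition of the preorders $\le_L$, $\le_R$, $\le_{LR}$, together with Proposition \ref{constant}.

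For part (1), I would argue as follows. Suppose $sy > y$. By the second case of the formula for $C_sC_y$ in Proposition \ref{KL basis mult}, we have $C_sC_y = C_{sy} + \sum_{x:sx<x<y}\mu_{x,y}C_x$, so the coefficient of $C_{sy}$ in $C_sC_y$ is $1$; hence $D_{sy}(C_sC_y) \neq 0$, which is precisely the condition $sy \prec_L y$. Taking the chain of length two $z_1 = sy$, $z_2 = y$ in the definition of $\le_L$ gives $sy \le_L y$. The statement $ys \le_R y$ when $ys > y$ follows by the symmetric argument using the formula for $C_yC_s$ in Proposition \ref{KL basis mult}.

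For part (2), suppose $y = uxv$ with $l(y) = l(u) + l(x) + l(v)$. The idea is to peel off the generators of $u$ on the left and the generators of $v$ on the right one at a time, applying part (1) repeatedly. Write a reduced word $u = s_1\cdots s_p$; then each prefix relation $s_i\cdots s_p \, x \, v$ has length strictly increasing as we prepend generators (because $l(y) = l(u)+l(x)+l(v)$ forces every such subword to be reduced), so part (1) gives $s_{i+1}\cdots s_p\, x\, v \le_L s_i\cdots s_p\, x\, v$ at each step, and transitivity of $\le_L$ yields $xv \le_L uxv = y$. Similarly, writing $v = t_1\cdots t_q$ reduced and applying the right-hand version of part (1) repeatedly gives $x \le_R xv$. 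Since both $\le_L$ and $\le_R$ are contained in $\le_{LR}$, we get $x \le_{LR} xv \le_{LR} y$, hence $y \le_{LR} x$ in the order where smaller means "generated from"—I will need to be careful that my inequality convention matches the paper's (the paper writes $x \le_{LR} y$ when $x$ is obtained from $y$, so the conclusion is $y \le_{LR} x$, i.e. $x$ sits above $y$). Finally, Proposition \ref{constant} applied to $y \le_{LR} x$ gives $\la(y) \ge \la(x)$.

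The main obstacle, such as it is, is purely bookkeeping: one must check that every intermediate word appearing when peeling off generators is genuinely reduced, so that the length strictly increases and part (1) applies at each step. This is immediate from the hypothesis $l(y) = l(u) + l(x) + l(v)$ together with the subword property of the Bruhat order, since any subword of a reduced word obtained by deleting a prefix and a suffix is again reduced. Once this is in hand, both parts are short; the only subtlety is tracking the direction of the preorder $\le_{LR}$ consistently with the paper's conventions.
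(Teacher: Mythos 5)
Your proof is correct and follows essentially the same route as the paper: part (1) is read off from the second case of Proposition \ref{KL basis mult} (the coefficient of $C_{sy}$ in $C_sC_y$ is $1$, so $D_{sy}(C_sC_y)\neq 0$ and $sy\prec_L y$), and part (2) is obtained by peeling off the letters of $u$ and $v$ one at a time via part (1) and then invoking Proposition \ref{constant} — exactly what the paper does. The one thing to fix is the direction of your intermediate inequalities in part (2): with the paper's convention, part (1) says the \emph{longer} element sits on the left of $\le_L$, so the correct chains are $y=uxv\le_L xv$ and $xv\le_R x$, giving $y\le_{LR}x$ directly, whereas you wrote $xv\le_L y$ and $x\le_R xv$ and then reversed at the end; you flag the convention issue yourself and land on the right conclusion, but as literally written the step ``$x\le_{LR}xv\le_{LR}y$, hence $y\le_{LR}x$'' is a non sequitur and should simply be stated with the inequalities the other way around from the start.
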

      \begin{proof} This is a simple corollary of propositions \ref{KL basis
        mult} and \ref{constant}. Note that (2) follows from repeated application of (1) and
        Proposition \ref{constant}, hence it suffices to prove (1). Suppose
        $sy>y$. Then $D_{sy}(C_sC_y)=1$  by Proposition \ref{KL basis mult},
        therefore $sy\prec y$ and $sy\le_L y$ by definition. Similarly,
        we have $ys\le_R y$ if $ys>y$.  \end{proof}

      \begin{prop}[{\cite[Proposition 5.4]{LG}}]
        \label{degree bound}
        Let $x,y\in W$. If $x\le y$, then $p_{x,y}=v^{-l(y)+l(x)}\mod
        v^{-l(y)+l(x)+1}\Z[v]$.
      \end{prop}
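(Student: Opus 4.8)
The plan is to prove this by induction on $l(y)$, using the multiplication formula of Proposition \ref{KL basis mult}. The statement is, in fact, a reformulation via the change of normalization recorded in Remark \ref{conversion} of two classical facts about the Kazhdan--Lusztig polynomials $P_{x,y}$ of \cite{KL}: that $P_{x,y}$ has constant term $1$ whenever $x\le y$, and that $\deg_q P_{x,y}\le\tfrac12(l(y)-l(x)-1)$ whenever $x<y$. One may simply quote these and transport them through Remark \ref{conversion}, but I will sketch a direct inductive argument in the present $v$-normalization, since the paper works with the polynomials $p_{x,y}$ throughout.

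The base case $y=1_W$ is immediate, since $C_{1_W}=T_{1_W}$ gives $p_{1_W,1_W}=1=v^{0}$. For the inductive step, assume $l(y)\ge 1$ and that the claim holds for all pairs $(w,z)$ with $l(z)<l(y)$. Pick $s\in\mathcal{L}(y)$ and set $y'=sy$, so that $l(y')=l(y)-1$ and $sy'=y>y'$; Proposition \ref{KL basis mult} then gives, after rearranging, $C_y=C_sC_{y'}-\sum_{z\,:\,sz<z<y'}\mu_{z,y'}C_z$. Expanding both sides in the standard basis, using the known form $C_s=T_s+v^{-1}T_{1_W}$ and the rule $T_sT_x=T_{sx}$ for $sx>x$, $T_sT_x=(v-v^{-1})T_x+T_{sx}$ for $sx<x$, one reads off, for any $w\le y$, the recursion
\begin{equation*}
  p_{w,y}=p_{sw,y'}+\big(\varepsilon+v^{-1}\big)\,p_{w,y'}-\sum_{z\,:\,sz<z<y'}\mu_{z,y'}\,p_{w,z},
\end{equation*}
where $\varepsilon=v-v^{-1}$ if $sw<w$ and $\varepsilon=0$ otherwise, and $p_{a,b}:=0$ when $a\not\le b$.

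Writing $d=l(w)-l(y)$, the goal becomes $p_{w,y}\equiv v^{d}\pmod{v^{d+1}\Z[v]}$. The inductive hypothesis gives $p_{w,z}\in v^{l(w)-l(z)}\Z[v]$ for $w\le z$ with $l(z)<l(y)$, with lowest-degree term $v^{l(w)-l(z)}$ (note that $p\equiv v^{e}\pmod{v^{e+1}\Z[v]}$ already entails $p\in v^{e}\Z[v]$). Since each $z$ in the sum satisfies $z<y'$, hence $l(z)\le l(y)-2$, every term $\mu_{z,y'}p_{w,z}$ lies in $v^{d+2}\Z[v]$, so the sum vanishes modulo $v^{d+1}\Z[v]$. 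It then remains to check, splitting into the cases $s\notin\mathcal{L}(w)$ and $s\in\mathcal{L}(w)$ and using the lifting property of the Bruhat order to locate $w$ and $sw$ relative to $y'$, that exactly one of $p_{sw,y'}$ and $(\varepsilon+v^{-1})p_{w,y'}$ contributes the leading term $v^{d}$ modulo $v^{d+1}\Z[v]$ while the other lies in $v^{d+2}\Z[v]$. This completes the induction.

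The main work is precisely this last bookkeeping: one must verify that the correction coming from $\sum_z\mu_{z,y'}C_z$, together with whichever of $p_{sw,y'}$, $p_{w,y'}$ is not the leading contribution, all land in $v^{d+1}\Z[v]$. This succeeds because the \emph{strict} inequality $z<y'$ forces $l(z)\le l(y)-2$, leaving a two-step margin in the exponent, and because the lifting property (see \cite{BB}) pins down whether $w$ and $sw$ lie below $y'$ in each case. No genuinely hard estimate is involved; the shortest route to the statement remains the classical degree and constant-term bounds for $P_{x,y}$ transported through Remark \ref{conversion}.
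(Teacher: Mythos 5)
The paper gives no proof of this proposition at all: it is imported as a black box, cited as Proposition 5.4 of \cite{LG}, so your proposal supplies an argument where the paper supplies only a reference. Your argument is the standard inductive proof and it is correct. The recursion you extract from $C_y=C_sC_{y'}-\sum_{z:\,sz<z<y'}\mu_{z,y'}C_z$ is the right one in this normalization, and the ``bookkeeping'' you defer does go through. Writing $d=l(w)-l(y)$: each $z$ in the correction sum has $l(z)\le l(y)-2$, so $\mu_{z,y'}p_{w,z}\in v^{d+2}\Z[v]$, as you say. If $s\notin\mathcal{L}(w)$, the lifting property gives $w\le y'$, so $v^{-1}p_{w,y'}\equiv v^{d}\pmod{v^{d+1}\Z[v]}$, while $p_{sw,y'}$ is either zero or lies in $v^{l(sw)-l(y')}\Z[v]=v^{d+2}\Z[v]$. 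If $s\in\mathcal{L}(w)$, then $\varepsilon+v^{-1}=v$; lifting applied to the pair $(sw,y)$ gives $sw\le y'$ with $l(sw)-l(y')=d$, so $p_{sw,y'}\equiv v^{d}$, while $v\,p_{w,y'}$ is either zero or lies in $v^{d+2}\Z[v]$. In every case exactly one term contributes $v^{d}$ and the rest land in $v^{d+1}\Z[v]$, closing the induction. Your opening remark is also the most economical route in practice: the statement is precisely the constant-term-$1$ and degree-bound facts for the polynomials $P_{x,y}$ of \cite{KL}, transported through the substitution recorded in Remark \ref{conversion}. Either way, nothing is missing beyond spelling out the two-case check above.
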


      \begin{corollary}
        \label{difference 1}
        Let $x,y\in W$. If $x\le y$ and $l(x)=l(y)-1$, then $p_{x,y}=v\inverse$
        and hence $\mu_{x,y}=1$.
      \end{corollary}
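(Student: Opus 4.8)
The plan is to reduce everything to two standard facts about the classical Kazhdan--Lusztig polynomials $P_{x,y}$ of \cite{KL} and then translate back via the dictionary of Remark \ref{conversion}. Recall from that remark that $p_{x,y} = v^{l(x)-l(y)}\,P_{x,y}(v^2)$, where $P_{x,y}\in\Z[q]$; the two facts I would use are that $P_{x,y}(0)=1$ whenever $x\le y$, and that $\deg_q P_{x,y}\le \tfrac12\bigl(l(y)-l(x)-1\bigr)$ when $x<y$. (The first of these is in fact equivalent, under the conversion, to the statement of Proposition \ref{degree bound}.)

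Now specialize to the hypothesis $l(x)=l(y)-1$; in particular $x<y$. The degree bound then gives $\deg_q P_{x,y}\le 0$, so $P_{x,y}$ is a constant polynomial, and since $P_{x,y}(0)=1$ we get $P_{x,y}=1$. Feeding this back through Remark \ref{conversion} yields $p_{x,y}=v^{l(x)-l(y)}=v\inverse$. Finally, $\mu_{x,y}$ was defined as the coefficient of $v\inverse$ in $p_{x,y}$, so $\mu_{x,y}=1$, as claimed.

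There is no real obstacle here; the only thing requiring care is the bookkeeping between the two normalizations of the Hecke algebra, i.e. checking that the upper degree bound and the ``coefficient of $v\inverse$'' notion transport correctly through the substitution $q\mapsto v^2$ and the twist by $v^{l(x)-l(y)}$. Remark \ref{conversion} already records precisely this compatibility, so it suffices to cite it. Alternatively, one could phrase the argument using only Proposition \ref{degree bound} as stated, by noting that it forces $p_{x,y}-v\inverse\in\Z[v]$ while the expansion $p_{x,y}=v^{l(x)-l(y)}P_{x,y}(v^2)$ shows every exponent of $v$ occurring in $p_{x,y}$ is at most $-1$; since $\Z[v]\cap v\inverse\Z[v\inverse]=\{0\}$, this again gives $p_{x,y}=v\inverse$.
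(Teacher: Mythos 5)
Your proof is correct and is essentially the paper's argument in the classical normalization: the strict degree bound $\deg_q P_{x,y}\le\tfrac12(l(y)-l(x)-1)$ is exactly the fact $p_{x,y}\in v\inverse\Z[v\inverse]$ for $x<y$ that the paper cites from Section 5.3 of \cite{LG}, and $P_{x,y}(0)=1$ is, as you note, Proposition \ref{degree bound}. The only difference is that the paper works entirely in the $v$-normalization and dispenses with the translation through Remark \ref{conversion}.
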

      \begin{proof}
        This is immediate from the
        well-known fact that $p_{x,y}\in \Z[v\inverse]$ (see Section 5.3 of
        \cite{LG}) and  Proposition \ref{degree bound}.
      \end{proof}

      \begin{prop}[{\cite[Fact 5]{Warrington}}]
        \label{extremal}
        Let $x,y\in W$ be such that $l(x)<l(y)-1$. If $\mathcal{L}(y)\not\se
        \mathcal{L}(x)$ or $\mathcal{R}(y)\not\se\mathcal{R}(x)$, then $\mu(x,y)=0$.
      \end{prop}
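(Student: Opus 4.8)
The plan is to reduce the statement to a degree count for a single Kazhdan--Lusztig polynomial. First I would dispose of the trivial cases: if $x\not\le y$ then $p_{x,y}=0$ and there is nothing to prove, so I may assume $x\le y$, and since $l(x)<l(y)-1$ this forces $x<y$. Next I would reduce the two hypotheses to one: if $\mathcal{R}(y)\not\se\mathcal{R}(x)$, then $\mathcal{L}(y\inverse)\not\se\mathcal{L}(x\inverse)$ because $\mathcal{R}(w)=\mathcal{L}(w\inverse)$, and $l(x\inverse)=l(x)<l(y)-1=l(y\inverse)-1$; since $\mu_{x,y}=\mu_{x\inverse,y\inverse}$ by Remark \ref{symmetry}, the right-descent case for $(x,y)$ follows from the left-descent case for $(x\inverse,y\inverse)$. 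So it suffices to treat the case in which there is an $s\in S$ with $sy<y$ and $sx>x$.

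In this case I would invoke the classical identity $p_{sx,y}=v\cdot p_{x,y}$ (equivalently $P_{x,y}=P_{sx,y}$ in the normalization of \cite{KL}), proved by induction on $l(y)$ using the multiplication rule of Proposition \ref{KL basis mult}. Since $s\notin\mathcal{L}(sy)$, that rule gives $C_sC_{sy}=C_y+\sum_{z:\,sz<z<sy}\mu_{z,sy}C_z$. Expanding both sides in the standard basis (one checks $C_s=T_s+v\inverse T_e$) and using the quadratic relation $T_s^2=(v-v\inverse)T_s+1$, I would compare the coefficients of $T_x$ and of $T_{sx}$ on the two sides; this produces two linear recursions among $p_{x,y},p_{sx,y},p_{x,sy},p_{sx,sy}$ and the polynomials $p_{x,z},p_{sx,z}$ for the $z$ occurring in the sum. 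Each such $z$ satisfies $s\in\mathcal{L}(z)$ and $l(z)<l(y)$, so the inductive hypothesis gives $p_{sx,z}=v\cdot p_{x,z}$; eliminating the common sum from the two recursions then yields exactly $p_{sx,y}=v\cdot p_{x,y}$. The base cases $l(y)\le 1$ are immediate.

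With this identity in hand the conclusion is a one-line degree count. By the lifting property of the Bruhat order, $x<y$ together with $sy<y$ gives $sx\le y$; since $l(sx)=l(x)+1<l(y)$ we in fact have $sx<y$, so $p_{sx,y}\in v\inverse\Z[v\inverse]$ by the degree bound for Kazhdan--Lusztig polynomials, i.e.\ $p_{sx,y}$ has zero constant term. But $\mu_{x,y}$ is by definition the coefficient of $v\inverse$ in $p_{x,y}=v\inverse p_{sx,y}$, which is exactly the constant term of $p_{sx,y}$; hence $\mu_{x,y}=0$.

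I expect the only real work to be the proof of $p_{sx,y}=v\cdot p_{x,y}$: one must carefully track which standard-basis terms of $C_sC_{sy}$ contribute to the coefficients of $T_x$ and $T_{sx}$ (four cases, according to whether left multiplication by $s$ raises or lowers each of the relevant group elements) and arrange the induction so that its hypothesis covers every $z$ appearing in the sum. As both this identity and the proposition itself are standard, the paper simply cites the result from \cite{Warrington}.
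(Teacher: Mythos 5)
Your argument is correct, but there is nothing in the paper to compare it against: Proposition \ref{extremal} is quoted verbatim from \cite{Warrington} (Fact 5 there) and the paper supplies no proof of its own. Your write-up is essentially the standard proof: reduce to the left-descent case via $\mu_{x,y}=\mu_{x\inverse,y\inverse}$, use the identity $p_{sx,y}=v\,p_{x,y}$ for $s\in\mathcal{L}(y)\setminus\mathcal{L}(x)$, note that the lifting property together with $l(sx)=l(x)+1<l(y)$ gives $sx<y$, and conclude that $\mu_{x,y}$ is the constant term of $p_{sx,y}$, which vanishes because $p_{sx,y}\in v\inverse\Z[v\inverse]$. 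One small simplification you could make: the identity $p_{sx,y}=v\,p_{x,y}$ does not require the induction through $C_sC_{sy}$ that you outline. Since $sy<y$, Proposition \ref{KL basis mult} gives $C_sC_y=(v+v\inverse)C_y$ directly; writing $C_s=T_s+v\inverse T_e$ and $C_y=\sum_z p_{z,y}T_z$, one computes $C_sT_z=T_{sz}+v\inverse T_z$ if $sz>z$ and $C_sT_z=T_{sz}+vT_z$ if $sz<z$, and comparing the coefficient of $T_x$ (for $sx>x$) on both sides yields $p_{sx,y}+v\inverse p_{x,y}=(v+v\inverse)p_{x,y}$, i.e.\ $p_{sx,y}=v\,p_{x,y}$, with no recursion on the $\mu_{z,sy}$ needed. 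Either route is fine; your version is just doing more bookkeeping than necessary for the one identity you actually use.
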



      \section{Tools for computation of $\la$}
      \label{sec:tools}
      We introduce our main tools for verification and computation of
      $\la$-values in this section. The first tool is the so-called
      generalized star operations, which we will often use to show two elements
      are in a same Kazhdan--Lusztig cell and hence of the same $\la$-value.
      The second tool involves heaps of fully commutative elements and will
      allow us to directly compute $\la$-values in certain cases.

      \subsection{Generalized star operations}
      \label{star}
      We review the notion of a
      \emph{generalized star operation} in this subsection. We highlight a
      direct connection between the operation and Kazhdan--Lusztig cells, then describe
      a more subtle recurrence relation involving the operation and the
      $\mu$-coefficients from Proposition \ref{KL basis mult}. 

      Let $W$ be an arbitrary Coxeter group, and let $s,t\in S$ be a pair of
      generators of $W$ with $3\le m(s,t)<\infty$. Set $I=\{s,t\}$, let $W_I=\langle
      s,t\rangle$, the subgroup of $W$ generated by $s$ and $t$, and set ${}^I W=\{w\in
        W: \mathcal{L}(w)\cap I=\emptyset\}$. It is known that every $w\in W$ admits a unique
        factorization $w=w_I\cdot {}^I w$, called a \emph{coset decomposition}, where
        ${}^I w\in {}^I W$ and $w_I\in W_I$; moreover, we have $l(w)=l(w_I) +
        l({}^Iw)$ in this case. (For proofs
        of these facts and an algorithm to compute the factors $w_I$ and
        ${}^I w$, see Proposition 2.4.4 of \cite{BB}.) Consider the following situations:
        \begin{enumerate}
          \item $w_I=1$;
          \item $w_I$ is the longest element $sts\cdots$ of length $m(s,t)$ in
            $W_I$;
          \item $w$ is one of the $(m-1)$ elements $s\cdot {}^Iw, ts\cdot {}^Iw,
            sts\cdot {}^I
            w,tsts\cdot {}^I w,\cdots$;
          \item $w$ is one of the $(m-1)$ elements $t\cdot {}^Iw, st\cdot {}^Iw,
            tst\cdot {}^I 
            w,stst\cdot {}^I w,\cdots$.
        \end{enumerate}
        We call the sequences appearing in (3) and (4) \emph{left
          $\{s,t\}$-strings} or $\emph{left $I$-strings}$, or simply
     \emph{left strings} if the pair $\{s,t\}$ is clear from context. For any element $w$ in a left
        $\{s,t\}$-string other than the longest, we define ${}^* w$ to be the
        element to the right of $w$. Otherwise, we leave ${}^* w$ undefined. We call
        the map $w\mapsto {}^* w$ the \emph{upper left star operation with respect
        to $I$}. 

        Similarly, we define the \emph{lower left
        star operation} to be the operation $w\mapsto {}_* w$ where $w$ is an element
        in a left string other than the shortest and ${}_* w$ is the element to the
        left of $w$ in the same string. In addition, we say $w$ is \emph{left star reducible to ${}_*
      w$ with respect to $I$} whenever the latter is defined. More generally,
      dropping the reference to a particular pair of generators,  we
      say $y$ is \emph{left star reducible} to $x$ for $x,y\in W$ if there is a sequence
      $x=z_1,z_2,\cdots,z_n=y$ in $W$ such that for each $1\le i\le n-1$, there is
      some pair $I_i=\{s_i,t_i\}\se S$ with $3\le m(s_i,t_i)<\infty$ such that
      $z_{i+1}$ is left star reducible to $z_i$ with respect to $I_i$.

      The concepts and notations above have obvious right-handed counterparts,
      where the coset decompositions to be considered are of the form
      $w=w^I\cdot w_I$ where $w_I\in W_I$ and the factor $w^I$ is from the set $W^I:=\{w\in
        W:\mathcal{R}(w)\cap I=\emptyset\}$. We
      refer to the two types of left star operations and their right-handed
      counterparts collectively as
      \emph{generalized star operations}. Finally, for $x,y\in W$, we say that $y$
      is \emph{star reducible} to $x$ if there is a sequence
      $x=z_1,z_2,\cdots,z_n=y$ in $W$ such that $z_{i+1}$ is either left reducible
      or right reducible to $z_i$ for each $1\le i\le n-1$.

      \begin{remark}
        \label{original star}
        For each pair $I=\{s,t\}\se S$ with $m(s,t)=3$ and each member of a left $\{s,t\}$-string, only one of the lower and upper left star
        operations is defined for
        each member of a left $\{s,t\}$-string. The one that does is simply called
        the \emph{left star operation} in the paper \cite{KL} where the operation
        was first introduced by Kazhdan and Lusztig. Similarly, it makes sense to
        simply speak of a \emph{right star operation} with respect to $I$.  
      \end{remark}

      \begin{example}
        \label{star example}
        Let $W$ be a Coxeter group with generating set $S=\{a,b,c\}$. Suppose
        $m(a,b)=3$, $m(b,c)=4$, $m(a,c)=2$, let $I=\{a,b\}, J=\{b,c\}$, and let $x=abcab$. Then with respect to $I$, the coset decompositions of
        $x$ are given by
        \[x= x_I\cdot {}^I x= aba\cdot cb, \quad x= x^I \cdot x_I = abc\cdot ab.\]
        It follows that $x$ is not in a left $I$-string but is in a right $I$-string.
        Moreover, as pointed out in Remark \ref{original star}, only one right star operation with respect
        to $I$ is defined on $x$ since $m(a,b)=3$: only the lower star
        operation is defined, and we have $x_*=abca$. With respect to
        $J$, the coset decompositions of $x$ are given by 
        \[
        x=x_J\cdot {}^J x=b\cdot abcb, \quad x=x^J\cdot x_J=ba\cdot bcb.\]
        It follows that $x$ is both in a left $J$-string and in a right $J$-string.
        Moreover, we have ${}^{*}x=cbabcb$ and $x_*=babc$, but ${}_*x$ and
        $x^*$ are not defined.
      \end{example}

      Generalized star operations are intimately related to Kazhdan--Lusztig
      cells:

      \begin{prop}     
        \label{star and cell} 
        Let $W$ be an arbitrary Coxeter group, and let  $I=\{s,t\}$ be a pair of
        generators of $W$ for which $3\le m(s,t)<\infty$. Then the following hold,
        where all star operations are performed with respect to $I$.
        \begin{enumerate} 
          \item Let $y$ be an element of a left $\{s,t\}$-string such that 
            ${}_*y$ makes sense, then $y\sim_{L}
            {}_* y$.  
          \item Let $y$ be an element of a right $\{s,t\}$-string such that 
            $y_*$ makes sense, then $y\sim_{R} y_*$.  
        \end{enumerate} 
      \end{prop}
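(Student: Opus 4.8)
The plan is to prove statement (1) directly from the characterization of $\prec_L$ in Proposition \ref{alternative prec} together with the structure of left $I$-strings, and then obtain (2) by the obvious left–right symmetry. So let me concentrate on (1). Suppose $y$ lies in a left $\{s,t\}$-string, say the string $s\cdot{}^Iw,\ ts\cdot{}^Iw,\ sts\cdot{}^Iw,\dots$ of length $m-1$ (the argument for the other string is identical with $s$ and $t$ interchanged). Write $y = p\cdot{}^Iw$ where $p$ is one of the alternating words $s, ts, sts,\dots$, and suppose ${}_*y$ makes sense, so $y$ is not the shortest element of the string; then ${}_*y = q\cdot{}^Iw$ where $q$ is the word one shorter than $p$, obtained by deleting the leftmost letter of $p$. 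In particular $l({}_*y) = l(y) - 1$ and ${}_*y < y$ in Bruhat order, with ${}_*y$ differing from $y$ by left multiplication by the leftmost letter of $p$.

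First I would establish the ``easy direction'' $y \le_L {}_*y$: since ${}_*y$ has one fewer letter of the $I$-part than $y$ and is obtained by deleting the leftmost generator, and since the remaining $I$-prefix of $y$ together with that deleted generator gives $y$ back, a single upper star operation sends ${}_*y$ to $y$, i.e. ${}^*({}_*y) = y$. Concretely, if $r$ is the leftmost letter of $p$, then $r \notin \mathcal{L}({}_*y)$ (because ${}_*y$ begins, as a reduced word, with the other generator of $I$, or with ${}^Iw$ itself if $q$ is empty), so ${}_*y \prec_L r\cdot{}_*y = y$ by part (b) of Proposition \ref{alternative prec}(1); hence $y \le_L {}_*y$.

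The substance is the reverse inequality ${}_*y \le_L y$. Here I would use Proposition \ref{alternative prec}(1): it suffices to show ${}_*y \prec_L y$. Condition (a) fails ($x \ne y$) and condition (b) may or may not hold depending on the length of the string, so the robust route is condition (c): I must check that ${}_*y < y$ (clear, as noted above), that $\mathcal{L}({}_*y) \not\subseteq \mathcal{L}(y)$, and that $\mu_{{}_*y,\,y} \ne 0$. For the descent sets: $y = p\cdot{}^Iw$ with $p$ a reduced alternating word in $s,t$, so $\mathcal{L}(y) \cap I$ is the single leftmost letter of $p$, call it $r$; and $\mathcal{L}({}_*y) \cap I$ is the leftmost letter of $q$, which is the \emph{other} generator of $I$ (since $q$ is $p$ with its first letter removed, it alternates starting from the second letter of $p$), hence $\mathcal{L}({}_*y) \ni r'$ with $r' \ne r$ and $r' \notin \mathcal{L}(y)$ — unless $q$ is empty, in which case $y = r\cdot{}^Iw$, ${}_*y = {}^Iw$, and $\mathcal{L}({}^Iw) \cap I = \emptyset \not\ni r \in \mathcal{L}(y)$, so condition (b) applies instead and we are done. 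Thus whenever $q$ is nonempty, $\mathcal{L}({}_*y) \not\subseteq \mathcal{L}(y)$ holds. Finally, $\mu_{{}_*y,\,y} \ne 0$ follows from Corollary \ref{difference 1}, since ${}_*y < y$ and $l({}_*y) = l(y) - 1$, giving $\mu_{{}_*y,\,y} = 1$. Therefore ${}_*y \prec_L y$ by Proposition \ref{alternative prec}(1)(c), so ${}_*y \le_L y$, and combined with $y \le_L {}_*y$ we get $y \sim_L {}_*y$.

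The main obstacle, and the one point deserving care in writing up, is bookkeeping the coset decomposition and the reduced words of $y$ and ${}_*y$: one must be sure that $p\cdot{}^Iw$ is genuinely a reduced word (which is exactly the content of $l(w) = l(w_I) + l({}^Iw)$ recalled from Proposition 2.4.4 of \cite{BB}), so that the left descent sets of $y$ and ${}_*y$ within $I$ really are just the leftmost letters of $p$ and $q$, and that deleting the first letter of $p$ drops the length by exactly one. Once this is pinned down, both inequalities are immediate from the cited results. Statement (2) then follows by applying (1) to $w^{-1}$, or equivalently by running the same argument with right coset decompositions $w = w^I\cdot w_I$ and $\mathcal{R}$ in place of $\mathcal{L}$, using the right-handed halves of Propositions \ref{KL basis mult} and \ref{alternative prec}.
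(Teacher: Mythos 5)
This is correct and is essentially the paper's own argument: $y\le_L {}_*y$ comes from the fact that $y$ is obtained from ${}_*y$ by left multiplication by a generator not in $\mathcal{L}({}_*y)$ (Corollary \ref{weak order}), and ${}_*y\le_L y$ from the descent-set switch within $I$ together with $\mu_{{}_*y,y}=1$ (Corollary \ref{difference 1}) fed into Proposition \ref{alternative prec}(1)(c). Two cosmetic slips: in the easy direction you wrote ${}_*y\prec_L y$ where you mean $y\prec_L {}_*y$ (condition (b) applies with $x=y$ and the role of $y$ played by ${}_*y$), and your ``$q$ empty'' case is vacuous, since ${}_*$ is undefined on the shortest element of the string, whose $I$-part already has length one---which is just as well, because condition (b) would not apply there in the way you claim (the relevant generator lies in $\mathcal{L}(y)$).
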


      \begin{remark}
        The above facts are well-known to experts, but we have
        not found a reference stating it explicitly in this way, so we include a
        brief proof below. 
      \end{remark}
      \begin{proof}
        We first prove (1). Without loss of generality, suppose $I\cap
        \mathcal{L}(y)=\{s\}$. Then the definition of left strings guarantees that
        $I\cap\mathcal{L}({}_*y)=\{t\}$. Since ${}_*y<y$, $t\in \mathcal{L}({}_*
        y)\setminus\mathcal{L}(y)$ and $\mu_{{}_*y,y}=1$ by Corollary
        \ref{difference 1}, we have ${}_* y\le_L y$ by Proposition \ref{alternative
        prec}. On the other hand, $y\le_L {}_* y$ by Corollary \ref{weak order},
        therefore $y\sim_L {}_* y$. The proof of (2) is similar.
      \end{proof} 

      \begin{corollary}
        \label{star a}
        Let $x,y\in W$. If $y$ is star reducible to $x$, then $\la(x)=\la(y)$.
      \end{corollary}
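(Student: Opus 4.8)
The plan is to reduce the statement to a single elementary star reduction and then chain equalities. By the definition of star reducibility given in Section \ref{star}, saying that $y$ is star reducible to $x$ means there is a sequence $x = z_1, z_2, \ldots, z_n = y$ in $W$ such that for each $1 \le i \le n-1$, the element $z_{i+1}$ is left star reducible or right star reducible to $z_i$ with respect to some pair $I_i = \{s_i,t_i\} \subseteq S$ with $3 \le m(s_i,t_i) < \infty$. So it suffices to treat the case $n = 2$, i.e. the case in which $y$ is obtained from $x$ by a single elementary star reduction; once we know $\la(z_i) = \la(z_{i+1})$ for every consecutive pair, $\la(x) = \la(y)$ follows by transitivity of equality.

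For the case $n = 2$, suppose first that $y$ is left star reducible to $x$ with respect to $I = \{s,t\}$. Then, in the notation of Section \ref{star}, $y$ is an element of a left $\{s,t\}$-string for which ${}_*y$ makes sense, and $x = {}_*y$. Proposition \ref{star and cell}(1) gives $y \sim_L {}_*y = x$, hence $x \sim_{LR} y$, and then Proposition \ref{constant} yields $\la(x) = \la(y)$. If instead $y$ is right star reducible to $x$, the same argument using Proposition \ref{star and cell}(2) together with the relation $\sim_R$ gives $\la(x) = \la(y)$. Chaining these equalities over $i = 1, \ldots, n-1$ completes the proof.

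I expect no real obstacle here: each elementary reduction step places the two elements involved in a common one-sided --- hence two-sided --- Kazhdan--Lusztig cell by Proposition \ref{star and cell}, and Proposition \ref{constant} converts cell membership into equality of $\la$-values. The only point worth flagging is that the constraint $3 \le m(s_i,t_i) < \infty$ needed to invoke Proposition \ref{star and cell} is exactly the one built into the definition of star reducibility, so it applies at every step without any additional verification.
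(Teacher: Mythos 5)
Your proof is correct and follows the same route as the paper: each elementary star reduction gives $\sim_L$ or $\sim_R$ (hence $\sim_{LR}$) via Proposition \ref{star and cell}, and Proposition \ref{constant} then gives equality of $\la$-values, chained over the reduction sequence. The paper's own proof is just a condensed version of exactly this argument.
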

      \begin{proof}
        Suppose $y$ is star reducible to $x$. Then $x\sim_{LR} y$ by repeated
        application of Proposition \ref{star and cell}, therefore
        $\la(x)=\la(y)$ by Proposition \ref{constant}.
      \end{proof}

      Generalized star operations are also connected with $\mu$-coefficients:

      \begin{prop}[{\cite[Theorem 4.2]{KL}}]
        \label{star and mu}
        Let $W$ be an arbitrary Coxeter group, and let $I=\{s,t\}$ be a pair of
        generators of $W$ for which $m(s,t)=3$. Then the following hold, where all
        star operations are performed with respect to $I$.
        \begin{enumerate}
          \item  Let $x,y\in W$ be elements of left $\{s,t\}$-strings such that
            $xy^{-1}\notin W_I$. Then $\mu(x,y)=\mu(*x,*y)$, where $*\alpha$ stands for the
            result of applying the left star operation on $\alpha$ for each string
            $\alpha$ \textup{(}see Remark \ref{original star}\textup{)};
          \item  Let $x,y\in W$ be elements of right $\{s,t\}$-strings such that
            $x^{-1}y\notin W_I$. Then $\mu(x,y)=\mu(x*,y*)$, where $\alpha*$ stands for the
            result of applying the right star operation on $\alpha$ for each string
            $\alpha$ \textup{(}see Remark \ref{original star}\textup{)}.
        \end{enumerate}

      \end{prop}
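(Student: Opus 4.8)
The assertion is the original star-operation theorem of Kazhdan and Lusztig, so I would essentially reconstruct their argument from \cite{KL}. Since $m(s,t)=3$, every left (and every right) $\{s,t\}$-string has exactly two elements; thus $*$ is an involution on the set of all elements lying in some string, and for such a $w$ the partner $*w$ differs from $w$ in length by exactly $1$ and is obtained from $w$ by left multiplication by a suitable element of $\{s,t\}$. I would first record that $xy^{-1}\in W_I$ holds precisely when $x$ and $y$ lie in the same left coset of $W_I$ (equivalently ${}^Ix={}^Iy$), because ${}^IW$ is a transversal for $W_I\backslash W$; consequently the hypothesis $xy^{-1}\notin W_I$ says exactly that $x$ and $y$ lie in \emph{different} $W_I$-cosets, hence in different strings, and this is precisely the condition that will keep the ``correction terms'' below from interfering with the comparison. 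Next I would reduce (2) to (1): inversion $w\mapsto w^{-1}$ carries right $\{s,t\}$-strings to left $\{s,t\}$-strings compatibly with the star operation, so that $(x*)^{-1}=*(x^{-1})$, and $\mu(x*,y*)=\mu((x*)^{-1},(y*)^{-1})$ by the symmetry $\mu_{a,b}=\mu_{a^{-1},b^{-1}}$ of Remark \ref{symmetry}; since the hypothesis of (2) is $x^{-1}y\notin W_I$, part (1) applied to $x^{-1},y^{-1}$ yields part (2).

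For (1) I would induct on $l(x)+l(y)$. The degenerate cases, in which $\mu(x,y)$ is forced to be $0$ or $1$ for elementary reasons, are disposed of using Corollary \ref{difference 1} and Proposition \ref{extremal} together with the explicit combinatorial description of $*$ and the subword property of the Bruhat order. For the main case, observe that the assertion $\mu(x,y)=\mu(*x,*y)$ is unchanged on replacing the pair $(x,y)$ by $(*x,*y)$, since $**=\mathrm{id}$; so I may assume without loss of generality that $y$ is the shorter element of its string, i.e.\ $*y=ry$ for the unique $r\in\{s,t\}\setminus\mathcal{L}(y)$. Writing $\{r,r'\}=\{s,t\}$, Proposition \ref{KL basis mult} then gives
\[
  C_rC_y \;=\; C_{*y}+\sum_{z\,:\,rz<z<y}\mu_{z,y}\,C_z .
\]

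The heart of the argument is to combine this identity with the single braid relation $C_rC_{r'}C_r=C_{r'}C_rC_{r'}$ in the parabolic subalgebra $H_I$ generated by $C_s$ and $C_t$ (equivalently, with $C_rC_{r'}C_r-C_r=C_{r'}C_rC_{r'}-C_{r'}$, both sides being the Kazhdan--Lusztig element of the longest element of $W_I$), and then to compare coefficients. Applying the recursion of Proposition \ref{KL basis mult} once more to $C_{r'}(C_rC_y)$ and to $C_{r'}C_{*y}$ and subtracting, one finds that every term produced that is not already of the displayed ``$\mu$'' shape is supported on the coset $W_I\cdot{}^Iy$; since $x$ and $*x$ lie in a different coset by the hypothesis $xy^{-1}\notin W_I$, those terms can never contribute $C_x$ or $C_{*x}$ and are harmless. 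What remains is an identification of the coefficient of $C_x$ (or of $C_{*x}$, according as $\mathcal{L}(x)\cap\{s,t\}$ is $\{r\}$ or $\{r'\}$) on one side with the coefficient of $C_{*x}$ (respectively $C_x$) on the other, in which the non-leading summands match up by the inductive hypothesis applied to the pairs $(z,y)$ with $l(z)<l(x)$; this gives $\mu(x,y)=\mu(*x,*y)$.

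I expect the genuine difficulty to lie entirely in the bookkeeping of this last step: one must run the comparison through the several cases determined by the values of $\mathcal{L}(x)\cap\{s,t\}$ and $\mathcal{L}(y)\cap\{s,t\}$ and by which elements of the relevant strings are the longer ones, verifying at each stage that the discarded terms really are supported on $W_I\cdot{}^Iy$ and that the Bruhat-order side conditions are respected (this is where the hypotheses of Proposition \ref{extremal}, and the description of $*$, are used repeatedly). The only algebraic input beyond the recursion of Proposition \ref{KL basis mult} is the one braid relation in $W_I$; all of the subtlety is combinatorial, which is no doubt why \cite{KL} treats this as a substantial theorem rather than a routine lemma.
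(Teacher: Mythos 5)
The paper offers no proof of this proposition: it is imported verbatim from \cite{KL} (Theorem 4.2), so there is no in-paper argument to compare yours against. Your peripheral reductions are all correct and agree with the standard treatment: for $m(s,t)=3$ each string has exactly two elements and $*$ is an involution; $xy^{-1}\in W_I$ is equivalent to ${}^Ix={}^Iy$; and part (2) does follow from part (1) via $w\mapsto w^{-1}$ together with $\mu_{a,b}=\mu_{a^{-1},b^{-1}}$ (Remark \ref{symmetry}).

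As a proof, however, the proposal has a genuine gap: the entire content of the theorem sits in the step you defer to ``bookkeeping,'' and the two specific claims you make about how that step will close do not hold as stated. First, when you expand $C_{r'}(C_rC_y)=C_{r'}C_{*y}+\sum_{z}\mu_{z,y}\,C_{r'}C_z$ and then expand each $C_{r'}C_z$, you produce terms $\mu_{z,y}\mu_{z',z}\,C_{z'}$ in which $z'$ ranges over arbitrary elements below the various $z$; these are not supported on the coset $W_I\cdot{}^Iy$, so they cannot be discarded on coset grounds. Determining exactly which of them survive in the coefficient of $C_x$ or $C_{*x}$, and how the rest cancel between the two expansions, is precisely where the case analysis of \cite{KL} lives, and it is absent here. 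Second, the induction on $l(x)+l(y)$ does not engage these leftover terms: they are products $\mu_{z,y}\mu_{*x,z}$ of two $\mu$-coefficients rather than single $\mu$'s of pairs to which an inductive hypothesis could apply, and the $z$ occurring in $C_rC_y=C_{*y}+\sum_z\mu_{z,y}C_z$ need not satisfy $l(z)<l(x)$, so the asserted ``matching by induction'' is not available in the form you describe. (A smaller slip: $C_rC_{r'}C_r\neq C_{r'}C_rC_{r'}$; the correct identity is the one in your parenthesis, namely $C_rC_{r'}C_r-C_r=C_{r'}C_rC_{r'}-C_{r'}$, both sides being the Kazhdan--Lusztig basis element of the longest element of $W_I$.) The skeleton you describe is indeed that of Kazhdan and Lusztig's argument, but without the coefficient comparison actually carried out the proposal is a statement of intent rather than a proof; given that the paper itself simply cites \cite{KL}, the honest options are either to do the same or to execute that comparison in full.
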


      \begin{prop}[{\cite[Section 10.4]{Mu}}; {\cite[Proposition
        5.9]{Green_Jones}}]
        \label{star relation}
        Let $W$ be an arbitrary Coxeter group, and let  $I=\{s,t\}$ be a pair of
        generators of $W$ for which $3\le m(s,t)<\infty$. Then the following
        hold,
        where all star operations are performed with respect to $I$.

        \begin{enumerate}
          \item 
            Let $x,y\in W$ be
            elements of left $\{s,t\}$-strings such that $\mathcal{L}(x)\cap I\neq
            \mathcal{L}(y)\cap I$. Then
            \[
              \mu({}_* x,y)+\mu({}^* x,y)=\mu(x,{}_* y)+\mu(x, {}^* y);
            \]

          \item 
            Let $x,y\in W$ be
            elements of right $\{s,t\}$-strings such that $\mathcal{R}(x)\cap I\neq
            \mathcal{R}(y)\cap I$. Then
            \[
              \mu(x_*,y)+\mu(x^*,y)=\mu(x,y_*)+\mu(x, y^*).
            \]
        \end{enumerate}
        Here, we define $\mu(\alpha,\beta)=0$ if either $\alpha$ or $\beta$ is an undefined symbol.
      \end{prop}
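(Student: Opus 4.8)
The plan is to deduce part (2) from part (1) by a symmetry argument, and to prove part (1) through the recursion for Kazhdan--Lusztig polynomials after making the relevant star operations explicit. Throughout I use that $\mu(\alpha,\beta)$ denotes the symmetric leading coefficient, so that $\mu(\alpha,\beta)=\mu_{\alpha,\beta}$ if $\alpha<\beta$, $\mu(\alpha,\beta)=\mu(\beta,\alpha)$ always, $\mu(\alpha,\beta)=0$ when $\alpha$ and $\beta$ are incomparable, and $\mu(\alpha,\beta)=\mu(\alpha^{-1},\beta^{-1})$ by Remark \ref{symmetry}.

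For the reduction of (2) to (1), I would use the $\cala$-linear anti-automorphism $\flat$ of $H$ with $T_w^{\flat}=T_{w^{-1}}$. Since $\flat$ commutes with the bar-involution, it fixes the Kazhdan--Lusztig basis in the sense $C_w^{\flat}=C_{w^{-1}}$, and comparing standard-basis coefficients gives $p_{a,b}=p_{a^{-1},b^{-1}}$. Hence $w\mapsto w^{-1}$ carries each left $\{s,t\}$-string onto a right $\{s,t\}$-string, preserving the order of the elements along it, so that $({}_*w)^{-1}=(w^{-1})_*$ and $({}^*w)^{-1}=(w^{-1})^*$; moreover it interchanges $\mathcal{L}(\cdot)\cap I$ with $\mathcal{R}(\cdot)\cap I$. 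Substituting $x\mapsto x^{-1}$ and $y\mapsto y^{-1}$ in (1) and applying the inversion-invariance of $\mu$ then turns the hypothesis and conclusion of (1) into those of (2), so (2) follows from (1).

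To prove (1), interchange $s$ and $t$ if necessary so that $\mathcal{L}(x)\cap I=\{s\}$ and $\mathcal{L}(y)\cap I=\{t\}$; then $sx<x<tx$ and $ty<y<sy$. Since $\mathcal{L}(\cdot)\cap I$ alternates between $\{s\}$ and $\{t\}$ along a left string, a direct check from the definitions shows that, whenever they are defined, ${}_*x=sx$, ${}^*x=tx$, ${}_*y=ty$, and ${}^*y=sy$; thus (1) is equivalent to the identity
\[
  \mu(sx,y)+\mu(tx,y)=\mu(x,ty)+\mu(x,sy),
\]
with the convention that a term is $0$ when the corresponding star operation is undefined. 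If $xy^{-1}\in W_I$ the entire identity lives inside the dihedral parabolic $W_I=\langle s,t\rangle$, where the $\mu$-coefficients involved are all $0$ or $1$ and the identity is checked by hand. Otherwise I would argue by induction on $l(y)$, using the recursion for Kazhdan--Lusztig polynomials (a consequence of Proposition \ref{KL basis mult} on passing to the standard basis) with respect to the descent $t\in\mathcal{L}(y)$ on the first three terms and with respect to $s\in\mathcal{L}(sy)$ on $\mu(x,sy)$. Each such application rewrites a $\mu$-coefficient in terms of $\mu$-coefficients with a strictly shorter second argument, together with a bilinear sum $\sum_{z}\mu(z,\cdot)\,\mu(\cdot,z)$ over certain intermediate elements and an extra coefficient of a Kazhdan--Lusztig polynomial that is not itself a $\mu$-coefficient; extracting the coefficient of $v^{-1}$ and invoking Proposition \ref{degree bound} and Proposition \ref{extremal} to make the intermediate and extra contributions vanish or cancel across the four terms, the identity for $(x,y)$ collapses to the same identity for $ty={}_*y$ in place of $y$, which closes the induction.

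The step I expect to be the main obstacle is precisely this last manipulation: controlling the degree shifts in the recursion, determining which intermediate elements $z$ survive the $v^{-1}$-coefficient comparison, showing that the extra Kazhdan--Lusztig coefficients cancel by means of the string combinatorics and the descent-based degree bounds, and handling the boundary configurations in which some star operation is undefined. When $m(s,t)=3$ the argument can be shortened considerably: Proposition \ref{star and mu}, applied to $(x,y)$ or to a pair obtained from it by one star operation, already gives the identity whenever $xy^{-1}\notin W_I$, leaving only the dihedral case. In any event the statement is a known fact of Kazhdan--Lusztig theory, being one of the compatibility conditions obeyed by the numbers $\mu(\alpha,\beta)$ when $W$ is regarded (via its Kazhdan--Lusztig structure) as a $W$-graph; this is why it has been quoted above rather than proved in detail.
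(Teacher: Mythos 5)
The paper itself does not prove Proposition~\ref{star relation}: it is imported verbatim from Lusztig \cite[Section 10.4]{Mu} and Green--Jones \cite[Proposition 5.9]{Green_Jones}, so there is no in-paper argument to compare yours against. Your closing remark---that the statement is a quoted fact of Kazhdan--Lusztig theory---is therefore exactly the paper's own stance, and the genuinely correct parts of your sketch (the reduction of (2) to (1) via the anti-automorphism $T_w\mapsto T_{w^{-1}}$, which is already implicit in Remark~\ref{symmetry}, and the explicit identification ${}_*x=sx$, ${}^*x=tx$, ${}_*y=ty$, ${}^*y=sy$ when $\mathcal{L}(x)\cap I=\{s\}$ and $\mathcal{L}(y)\cap I=\{t\}$) are accurate preprocessing.

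As a proof, however, the proposal has a genuine gap, and you flag it yourself: the entire content of the statement sits in the ``last manipulation'' that you defer---controlling the KL recursion, showing the non-$\mu$ coefficients and the intermediate sums cancel across the four terms, and handling the boundary positions of the strings (note that when ${}_*x$ is undefined the group element $sx={}^Ix$ still exists and $\mu(sx,y)$ need not vanish, so the convention requires an actual argument, not just bookkeeping). Two smaller points: the case $xy^{-1}\in W_I$ does not literally ``live inside the dihedral parabolic,'' since then $x$ and $y$ lie in the coset $W_I\cdot{}^Ix$ rather than in $W_I$ itself (unless ${}^Ix=1$), so one cannot simply check the identity in a dihedral group without an additional parabolic-reduction step; and the cited sources do not argue by brute-force induction on the KL recursion but by extracting coefficients from products of the form $C_sC_y$, $C_tC_y$ and exploiting the $W_I$-module structure on the span of a string, which is what makes the bookkeeping tractable. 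None of this affects the paper, which (like you, ultimately) treats the proposition as a black box.
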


      In Section \ref{star arguments}, we will frequently use the two propositions above to
      compute certain $\mu$-coefficients $\mu_{x,y}$ recursively, then use Proposition
      \ref{alternative prec} to conclude that $x\sim_{L}y$ or $x\sim_R y$.  This provides
      a very useful connection, albeit a less direct one than Proposition \ref{star
      and cell}, between generalized star operations and cells. 

      \subsection{Full commutativity and heaps} 
      \label{fc}
      Let $W$ be an arbitrary Coxeter group. In this subsection,
      we show that any element with $\la$-value 2 must be
      \emph{fully commutative} in the sense of \cite{FC}. We then recall a
      combinatorial characterization of the $\la$-values of fully commutative
      elements in a Weyl or affine Weyl group in terms of \emph{heaps}. This
      characterization will allow us to compute certain $\la$-values without
      recourse to Kazhdan--Lusztig theory in Section \ref{first lemmas}.

      An element $w\in W$ is said to be \emph{fully commutative} if any pair of
      reduced words of $w$ can be obtained from each other by means of only
      commutation relations.  It is well-known that $w$ is fully commutative if
      and only if no reduced word of $w$ contains a
      contiguous subword of $sts\cdots$ of length $m(s,t)$ where $s,t\in S$
      and $m(s,t)\ge 3$ (see \cite{FC}, Proposition 2.1).

      \begin{remark}
        \label{removal}
        Let $w$ be a fully commutative element with a reduced word $w=stw'$ where
        $l(w)=l(w')+2$ and $m(s,t)\ge 3$. Consider the coset decomposition
        $w=w_I\cdot {}^I w$ with respect to  the pair $I=\{s,t\}$. Since $w$ is
        fully commutative, $w_I$ cannot be the word $sts\cdots$ of length
        $m(s,t)$, therefore $w$ is an element of a left $\{s,t\}$-string, with ${}_* w=tw'$ with respect to $I$. That is, whenever a reduced word
        of a fully commutative element starts with a pair of letters $s,t\in S$
        with $m(s,t)\ge 3$, the lower left star operation with respect to
        $\{s,t\}$ simply removes the leftmost letter of $w$. Similarly,
        whenever a reduced word of a fully commutative element ends with a pair
        of letters $s,t\in S$ with $m(s,t)\ge 3$, the lower right star
        operation with respect to $\{s,t\}$ simply removes the rightmost letter
        of $w$.  \end{remark}

      Problems related to fully commutative elements, such as the classification of
      Coxeter groups with finitely many fully commutative elements, the
      enumeration of fully commutative elements for those groups, and 
      connections fully commutative elements have to Kazhdan--Lusztig cells and
      so-called \emph{generalized
      Temperley--Lieb algebras}, have been studied
      extensively; see, for example,  \cite{BJN},\cite{Fan}, \cite{FCcells}, 
      \cite{Shi}, \cite{FC} and \cite{FC2}.
      Fully commutative elements also provide a suitable framework for studying elements of $\la$-value 2 because of the following fact.

      \begin{prop}
        \label{a2 is fc}
        Let $w\in W$. If $\la(w)=2$, then $w$ is fully commutative. 
      \end{prop}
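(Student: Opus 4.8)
The plan is to prove the contrapositive: if $w$ is \emph{not} fully commutative, then $\la(w)\ge 3$. So suppose $w$ admits a reduced word containing a contiguous subword $sts\cdots$ of length $m=m(s,t)\ge 3$ for some $s,t\in S$. Using the Matsumoto--Tits theorem (Proposition \ref{Matsumoto-Tits}) and possibly rewriting, I would first arrange to extract from $w$ a factorization $w=u\cdot x\cdot v$ with $l(w)=l(u)+l(x)+l(v)$, where $x$ is an element containing the braid word $sts\cdots$ of length $m$. By Corollary \ref{weak order}(2), this gives $w\le_{LR}x$ and hence $\la(w)\ge\la(x)$, so it suffices to treat the case where $w$ itself is this ``bad'' element, or more precisely to find a suitable element $x$ of $\la$-value at least $3$ that sits below $w$ in the two-sided order.

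The key step is then to locate, below such a $w$, an element $w_0$ that is a product of three mutually commuting generators, since Corollary \ref{product a} then gives $\la(w_0)=3$ and Corollary \ref{weak order}(2) gives $\la(w)\ge\la(w_0)=3$. The natural candidate: since $m(s,t)\ge 3$, the dihedral parabolic $W_{\{s,t\}}$ contains elements of length up to $m\ge 3$; in particular the longest element $w_{\{s,t\}}$ of the rank-two parabolic (when $m<\infty$) has length $m\ge 3$ and, by Propositions \ref{a local} and \ref{a for longest}, satisfies $\la(w_{\{s,t\}})=m\ge 3$. When $m=\infty$ the word $sts$ of length $3$ is itself reduced and the element $sts$ lies in a parabolic isomorphic to an infinite dihedral group, where one computes directly that $\la(sts)\ge 3$ (e.g.\ $sts$ has $\la$-value at least that of any element below it, and a short Hecke-algebra computation of $C_sC_tC_s$ shows a $v^{\ge 3}$ term appears). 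Either way, the braid subword $sts\cdots$ of length $m\ge 3$ that occurs inside a reduced word of $w$ furnishes—after a coset decomposition with respect to $I=\{s,t\}$, exactly as in the discussion preceding Remark \ref{original star}—a subword expressing an element $x\in W_I$ with $l(x)\ge 3$ and $x\le_{LR} w$, and $\la(x)\ge 3$ by the dihedral computation. Combining, $\la(w)\ge 3$, contradicting $\la(w)=2$.

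The main obstacle is the bookkeeping needed to guarantee that the length-$m$ braid subword inside \emph{some} reduced word of $w$ actually yields an honest ``length-additive'' factorization $w=u\,x\,v$ with $x\in W_{\{s,t\}}$ of length at least $3$; a priori the braid occurrence is only contiguous within one reduced word, and one must check that passing to the coset decomposition $w=w_I\cdot{}^I w$ (or its right-handed analogue) with respect to $I=\{s,t\}$ picks up the full braid. Here one uses Proposition 2.4.4 of \cite{BB}: if a reduced word of $w$ begins with $sts\cdots$ of length $m$, then $w_I$ is the longest element of $W_I$ and $l(w)=l(w_I)+l({}^I w)$, which is exactly the length-additivity we need, and more generally any contiguous braid occurrence can be slid to the front by commutations past letters commuting with both $s$ and $t$ while a genuine obstruction (a non-commuting letter) only shortens the analysis. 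Once this reduction is in place, the rest is a direct application of Corollaries \ref{product a} and \ref{weak order} together with the dihedral $\la$-value computation, so I would expect the final statement to follow cleanly.
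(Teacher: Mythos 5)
Your core argument is exactly the paper's proof: take the contrapositive, extract from a reduced word of $w$ a length-additive factorization $w=uxv$ with $x=sts\cdots$ of length $m(s,t)\ge 3$, compute $\la(x)=m(s,t)\ge 3$ via Propositions \ref{a local} and \ref{a for longest}, and conclude $\la(w)\ge 3$ by Corollary \ref{weak order}. Two remarks. First, the ``bookkeeping'' you worry about is a non-issue: a contiguous subword of a reduced word is automatically a reduced expression for the corresponding factor, so $l(w)=l(u)+l(x)+l(v)$ holds on the nose, with no need for coset decompositions or sliding letters by commutations. Second, your treatment of the case $m(s,t)=\infty$ is both unnecessary and incorrect: unnecessary because a finite reduced word cannot contain a contiguous braid subword of infinite length, so a failure of full commutativity is always witnessed by a pair with $3\le m(s,t)<\infty$; and incorrect because in the infinite dihedral group every non-identity element has a unique reduced word, so $\la(sts)=1$ by Proposition \ref{subregular} rather than $\ge 3$ --- the claimed $v^{\ge 3}$ term in $C_sC_tC_s$ does not exist (one computes $C_sC_tC_s=C_{sts}+C_s$). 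Since that branch is vacuous, your proof still goes through as written.
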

      \begin{proof}
        We prove the contrapositive of the statement, i.e. that if $w$ is not fully
        commutative, then $\la(w)\neq 2$. 

        Suppose $w$ is not fully commutative. Then $w$ can be written in the form
        $w=uxv$ where $l(w)=l(u)+l(x)+l(v)$ and $x$ is of the form
        $x=sts\cdots$ with $s,t\in S, m(s,t)\ge 3$ and $l(x)=m(s,t)$. By propositions
        \ref{a local} and $\ref{a for longest}$, we have $\la(x)=m(s,t)$ in this case, therefore $\la(w)\ge \la(x)=m(s,t)\ge 3$ by
        Corollary \ref{weak order}.  This completes the proof.
      \end{proof}

      Next, we define the \emph{heap} of an arbitrary word $s_1s_2\cdots s_q$ in the free
      monoid $S^*$: this is the poset $([q],\preccurlyeq)$ where
      $[q]=\{1,2,\cdots,q\}$ and $\preccurlyeq$ is the partial order on
      $[q]=\{1,2,\cdots,q\}$ obtained via the reflexive transitive closure of the relations
      \[
        i\prec j \quad\text{if}\quad i<j\quad\text{and}\quad m(s_i,s_j)\neq 2.
      \]
      In particular, $i\prec j$ if $i<j$ and $s_i=s_j$. We refer to the
      generator $s_i$ as the \emph{label} of $i$ for each $i\in [q]$. It is well-known that the heaps of
      any two words in $S^*$ related by a commutation relation from $W$ are
      isomorphic as posets (see \cite{FC}, Section 2.2), therefore for any fully commutative element $w\in W$,
      it makes sense to define the \emph{heap of $w$} to be the heap of any reduced
      word of $w$.  In this case, we denote the heap of $w$ by $H(w)$. On the other
      hand, given any word in $S^*$, there is also a criterion for
      determining if its heap is that of a fully commutative element:

      \begin{prop}[{\cite[Proposition 3.3]{FC}}]
        \label{fc criterion}
        The heap $P$ of a word $s_1s_2\cdots s_q$ in $S^*$ is the heap of some fully commutative
        element in $W$ if and only if 
        \begin{enumerate}
          \item There is no covering relation $i\prec j$ such that $s_i=s_j$;
          \item There is no convex chain $i_1<i_2\cdots <i_m$ in $P$ such that
            $s_{i_1}=s_{i_3}=\cdots = s$ and $s_{i_2}=s_{i_4}=\cdots=\cdots=t$,
            where $s,t\in S$ and $m = m(s,t)\ge 3$.
        \end{enumerate}
      \end{prop}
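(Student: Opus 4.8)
The plan is to prove both implications by relating $P$ to its linear extensions, using the standard heap facts (see \cite{FC}, Section 2.2) that (i) two words in $S^*$ have isomorphic labelled heaps if and only if they are linked by commutation relations, so that the linear extensions of $P$ are exactly the words in the commutation class of $s_1\cdots s_q$, and (ii) any convex subset of a finite poset, in particular any convex chain, may be made to occupy a contiguous block of some linear extension, a chain appearing there in its own order. I will also use the trivial remark that in the heap of \emph{any} word two positions with the same label are comparable, the earlier one lying below (since $m(s,s)=1\neq 2$), so each generator has a unique earliest occurrence in $P$; thus condition (1) is equivalent to saying that between any two consecutive occurrences of a generator some other element lies in the heap. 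Finally I record a lemma I would use twice: if a contiguous block of a word has labels alternating between $s$ and $t$ with $m(s,t)\neq 2$, then that block is a convex chain of the heap — consecutive entries are comparable, and since $i\preccurlyeq j$ forces $i\leq j$ positionally, any heap element strictly between two entries of the block lies positionally between them, hence in the block.

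For the forward implication, suppose $P=H(w)$ with $w$ fully commutative, so every linear extension of $P$ is a reduced word of $w$. If (1) failed there would be a covering relation between positions with equal labels; making that pair consecutive in a linear extension (a special case of (ii)) gives a reduced word of $w$ with a repeated letter, contradicting reducedness. If (2) failed there would be a convex chain with labels alternating between $s$ and $t$, of length $m(s,t)\geq 3$; making it a contiguous block gives a reduced word of $w$ with a braid subword of length $m(s,t)$, contradicting full commutativity by Proposition 2.1 of \cite{FC}. Hence (1) and (2) hold.

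For the converse I would induct on $q=|P|$; the case $q\leq 1$ is trivial. Choose a maximal element $p$ of $P$, with label $s$. Deleting a maximal element preserves (1) and (2): a maximal element cannot lie strictly between the ends of a chain, so any offending covering relation or convex chain of $P\setminus\{p\}$ would already be one of $P$. By the inductive hypothesis $P\setminus\{p\}=H(w)$ for a fully commutative element whose reduced words are the linear extensions of $P\setminus\{p\}$; appending $s$ to these gives the linear extensions of $P$, so $P$ is the heap of $ws$ (writing $w$ also for a reduced word of it). I claim $l(ws)=l(w)+1$. Otherwise $s\in\mathcal R(w)$, so, $w$ being fully commutative, $H(w)=P\setminus\{p\}$ has a maximal element $p''$ labelled $s$; since $p''$ and $p$ carry the same label they are comparable in $P$, and maximality of $p$ forces $p''$ below $p$; moreover $p$ is the \emph{only} element of $P$ above $p''$ (anything else would survive in $P\setminus\{p\}$ and contradict maximality of $p''$ there, again because the maximal element $p$ cannot be interior to a chain), so there is a covering relation from $p''$ to $p$ between equal labels, contradicting (1). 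Hence $ws$ is reduced and $P$ is the heap of the reduced word $ws$.

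It remains to show $ws$ is fully commutative, and this is the step I expect to be the main obstacle, since full commutativity is not visibly determined by $P$ alone outside a single commutation class. I would use the standard local criterion that, for $w$ fully commutative with $s\notin\mathcal R(w)$, $ws$ fails to be fully commutative only when $w$ has a reduced word $w_1$ such that $w_1s$ ends in a contiguous braid word $\underbrace{tst\cdots}_{m(s,t)}$ for some $t$ with $m(s,t)\geq 3$; proving this criterion itself needs a short minimal-counterexample (or star-operation) argument. Granting it, if $ws$ were not fully commutative we would obtain such a $w_1$; since $w$ is fully commutative, $w_1$ is commutation-equivalent to our chosen reduced word of $w$, whence $w_1s$ is commutation-equivalent to $ws$ and $H(w_1s)\cong H(ws)=P$. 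As noted in the first paragraph, the braid suffix of $w_1s$ is then a convex chain of length $m(s,t)\geq 3$ in $P$ with alternating labels, contradicting (2). So $ws$ is fully commutative, closing the induction. The points left to fill in are the two heap facts (i) and (ii), the claim that deleting a maximal element preserves (1) and (2), and the local full-commutativity criterion invoked in the last step.
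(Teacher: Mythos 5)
This proposition is quoted from Stembridge \cite{FC}; the paper gives no proof of it, so there is no internal argument to compare yours against, and I will assess it on its own terms. Your forward direction is the standard one (contiguous factors of linear extensions correspond to convex subposets, and the linear extensions of $H(w)$ are exactly the commutation class of a reduced word of $w$), and your inductive organization of the converse is sound: in particular the reducedness step, via the observation that $s\in\mathcal{R}(w)$ would force a covering relation between two elements labelled $s$, is correct and has the virtue of needing only the Matsumoto--Tits theorem for reduced words.

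The genuine gap is the one you flagged yourself: full commutativity of $ws$. Two comments. First, the ``local criterion'' you invoke is stronger than necessary and is not obviously true as stated: it asks for a reduced word of $ws$ in which a braid factor appears as a \emph{suffix} $w_1s$, and arranging a convex chain of $H(ws)$ to occupy the final positions of a linear extension requires that no element outside the chain lie above its bottom element, which you have not verified and which need not hold for the chain you are handed. Second, you do not need it. Since $ws$ is already known to be reduced, suppose it is not fully commutative; then it has a reduced word not commutation-equivalent to $\hat{w}s$. By Matsumoto--Tits take a chain of braid moves from $\hat{w}s$ to such a word, and let $\mathbf{v}_j$ be the first word in the chain not commutation-equivalent to $\hat{w}s$: its predecessor $\mathbf{v}_{j-1}$ lies in the commutation class of $\hat{w}s$, hence is a linear extension of $P$, and the move $\mathbf{v}_{j-1}\to\mathbf{v}_j$ must be a genuine braid move, so $\mathbf{v}_{j-1}$ contains a contiguous factor $tut\cdots$ of length $m(t,u)\ge 3$. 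By your own preliminary lemma this factor is a convex alternating chain of $P$, violating condition (2) wherever it sits---no suffix position and no involvement of $p$ is needed. This closes the induction. The remaining items on your to-do list (the two heap facts and the preservation of (1) and (2) under deletion of a maximal element) are routine or citable from Section 2 of \cite{FC}.
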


      There is an intuitive way to visualize heaps of words in $S^*$. Consider the lattice $S\times \N$, with $S$
      indexing the \emph{columns} of the lattice and $\N$ indexing the \emph{levels}
      or
      \emph{heights}. We say two
      columns $s,t$ are \emph{adjacent} if the corresponding vertices are adjacent in the
      Coxeter graph, i.e. if  $m(s,t)\ge 3$.

      For any word $s_1s_2\cdots s_q\in S^*$, we may
      embed its heap $P$ as a set of lattice points in $S\times \N$ as follows:
      read the word from left to right, and drop a point in the column representing
      $s_i$ as we read each letter. Here, we envision each point as being under the
      influence of ``gravity'' in the sense that the point must fall to the lowest
      possible row in its column subject to one condition, namely, it must fall
      higher than every point that was placed before it in the same column
      or in an adjacent column. We define the index of this lowest possible row
      to be the \emph{level} of the point. 
      
        \begin{remark}
        A poset $P$ is said to be \emph{ranked} if there exists a function
        $\rho: P\ra \Z$, called a \emph{rank function for $P$}, such that
        $\rho(b)=\rho(a)+1$ whenever $a,b$ are elements in $P$ for which $a<b$
        is a covering relation in
        $P$. It is worth noting that for a fully commutative element
        $w$ in
        $W$, while we have described how to assign each element in the heap $P$
        of $w$ a
       well-defined level, the poset $P$ is not
       necessarily ranked, i.e. the level function may not be a rank function.
       For an example where $P$ is not ranked and for criteria for $P$ to be
       ranked, see \cite{Green_interval}.
      \end{remark}

      As we embed $P$ in the lattice $S\times \N$, after a point $k\in P$ ($1\le k\le q$) falls into position, it is customary to
      label the point with $s_k$ rather than $k$. Furthermore, to indicate the covering relations of the heap, we connect the point with
      edges to the highest existing points in its column and its adjacent
      columns. The resulting graph therefore recovers the Hasse diagram
      of the poset $P$. 
      For example, in Figure \ref{fig:example}, the picture on the right shows the heap of
      the element $abcabd$ in the Coxeter group whose Coxeter diagram is drawn on the
      left. Note that
      all reduced words of a fully commutative element result in an identical
      graph when we embed them in $S\times \N$, so
      we may identify the element with its embedding. For more on the lattice
      embeddings of heaps, see \cite{BJ}.

      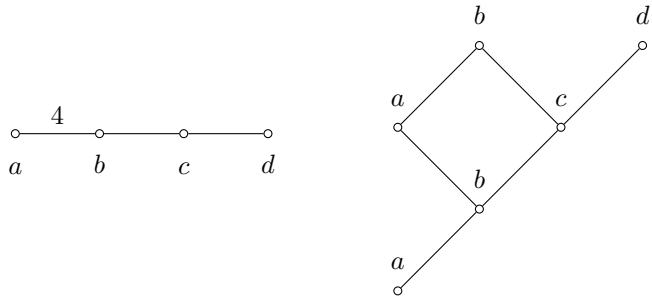
\begin{figure}
        \centering
        \begin{tikzpicture}
          \node[main node] (a) {};
          \node[main node] (b) [right=1cm of a] {};
          \node[main node] (c) [right=1cm of b] {};
          \node[main node] (d) [right=1cm of c] {};

          \node (aa) [below=0.2cm of a] {$a$};
          \node (bb) [below=0.1cm of b] {$b$};
          \node (cc) [below=0.2cm of c] {$c$};
          \node (dd) [below=0.1cm of d] {$d$};

          \path[draw]
          (b)--(c)--(d)
          (a) edge node [above] {$4$} (b);

          \node[main node] (1) [below right=2cm and 5cm of a] {};
          \node[main node] (3) [above right=1cm and 1cm of 1] {};
          \node[main node] (4) [above left=1cm and 1cm of 3] {};
          \node[main node] (5) [above right=1cm and 1cm of 3] {};
          \node[main node] (6) [above left=1cm and 1cm of 5] {};
          \node[main node] (7) [above right=1cm and 1cm of 5] {};

          \node (11) [above=0.1cm of 1] {$a$};
          \node (33) [above=0.1cm of 3] {$b$};
          \node (44) [above=0.1cm of 4] {$a$};
          \node (55) [above=0.1cm of 5] {$c$};
          \node (66) [above=0.1cm of 6] {$b$};
          \node (77) [above=0.1cm of 7] {$d$};

          \path[draw]
          (1)--(3)--(4)--(6)--(5)--(7)
          (3)--(5);

        \end{tikzpicture}

        \caption{Lattice embedding of a heap.}
        \label{fig:example}
      \end{figure}
    Note that the criterion from
      Proposition \ref{fc criterion} can now be translated as follows.
      \begin{prop}
        \label{fc heap criterion}
        The heap $P$ of a word in $S^*$ is the heap of a fully
        commutative element in $W$ if and only if in the lattice embedding of
        $P$ in $S\times \N$,
        \begin{enumerate} 
          \item No column contains two points connected by an edge.
          \item For every pair $s,t\in S$ such that $m(s,t)\ge 3$, whenever there is a
            chain of edges connecting a sequence $s,t,s,\cdots $ of $m(s,t)$
            points, there is another chain connecting two points in this sequence.
        \end{enumerate} 
      \end{prop}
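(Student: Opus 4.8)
The plan is to obtain Proposition~\ref{fc heap criterion} directly from Proposition~\ref{fc criterion}, by checking that conditions~(1) and~(2) of the former are word-for-word translations of conditions~(1) and~(2) of the latter. Two observations set up the dictionary. First, as recorded just before the statement, the graph drawn when $P$ is embedded in $S\times\N$ is exactly the Hasse diagram of $P$, so that an edge of the embedding joining two points is the same thing as a covering relation of $P$. Second, every covering relation of $P$ is in fact one of the defining relations ``$i\prec j$ if $i<j$ and $m(s_i,s_j)\neq 2$'': were a covering relation $i\prec j$ to arise only through transitivity via some third element $k$, then $k$ would lie strictly between $i$ and $j$, contradicting that $i\prec j$ is a covering relation. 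Consequently every edge of the embedding joins two points with $s_i=s_j$ or $m(s_i,s_j)\ge 3$, that is, lying in the same column or in adjacent columns.

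Granting this, condition~(1) requires no argument: two points of a single column joined by an edge correspond precisely to a covering relation $i\prec j$ of $P$ with $s_i=s_j$, so the two versions of~(1) say literally the same thing. For condition~(2), I would first observe that ``a chain of edges connecting a sequence $s,t,s,\dots$ of $m(s,t)$ points'' is, up to reversal, the same datum as a saturated chain $i_1\prec i_2\prec\cdots\prec i_m$ of $P$ (every step a covering relation) with labels alternating between $s$ and $t$. Indeed such an edge-path cannot turn at an interior vertex: at a hypothetical local maximum $i_k$ the neighbours $i_{k-1}$ and $i_{k+1}$ carry the same label, hence are comparable in $P$, so one of them lies strictly between $i_k$ and the other, contradicting that each of $i_{k-1}$ and $i_{k+1}$ is joined to $i_k$ by an edge; local minima are excluded symmetrically, so the path is monotone. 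Conversely, a convex chain with alternating labels is automatically saturated---an element strictly between two consecutive members would violate convexity---and so also yields such an edge-path. It then remains to show that a saturated alternating chain $i_1\prec\cdots\prec i_m$ is convex exactly when it admits no ``detour'', meaning no chain of edges joining two of its members $i_a,i_b$ other than the segment $i_a\prec\cdots\prec i_b$. If the chain fails to be convex there is some $c\notin\{i_1,\dots,i_m\}$ with $i_1<c<i_m$, and splicing a saturated chain from $i_1$ to $c$ with one from $c$ to $i_m$ produces a detour between $i_1$ and $i_m$. Conversely, a detour chosen with $b-a$ minimal has all of its interior vertices off the sequence---a shared vertex $i_q$ would split it into a strictly shorter detour or force the detour to coincide with the segment---and those interior vertices exhibit a failure of convexity. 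Hence a convex alternating chain on $m(s,t)$ elements exists if and only if an alternating edge-path on $m(s,t)$ points with no detour exists, so the two versions of~(2) agree, and the proposition follows from Proposition~\ref{fc criterion}.

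I expect the only genuine work to lie in the last paragraph---the ``no turning at an interior vertex'' observation and the equivalence between convexity and the absence of a detour, whose converse direction needs the minimality argument to be arranged with some care. Condition~(1) and the identification of edges with covering relations are immediate from the embedding conventions fixed before the statement.
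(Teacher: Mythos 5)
Your proposal is correct and follows the same route as the paper: the paper offers no proof of Proposition \ref{fc heap criterion} at all, presenting it merely as a translation of Proposition \ref{fc criterion} into the language of lattice embeddings, which is exactly the reduction you carry out. The details you supply (edges of the embedding are covering relations, an alternating edge-path cannot turn at an interior vertex, and convexity of the chain is equivalent to the absence of another saturated chain joining two of its members) are sound and indeed more than the paper records.
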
 
      \noindent For example, from Figure \ref{fig:example}, we easily see that
      the element $abcabd$ is fully commutative. In particular, although the
      heap contains the chains with labels $(a,b,a,b)$ and $(b,c,b)$ with $m(a,b)$ and $m(b,c)$ letters, respectively, each of these chains contains two points connected by the other chain.

      In addition to detecting fully commutative
      elements, heaps provide a convenient tool for visualizing generalized star operations on a fully commutative element
      $w$.  
       More precisely, note that
        for any $u\in S$, $w$ admits a reduced word starting with $u$ if and
        only if $H(w)$, as a poset,  contains a minimal element with label
        $u$. Thus, for each pair $I=\{s,t\}\se S$ of generators with
        $m(s,t)\ge 3$, we may perform a lower left star operation with respect
        to $I$ on
        $w$ if and only if the following
        conditions hold:
        \begin{enumerate}
          \item the heap $H(w)$
        contains a minimal element $i$ labeled by an element in $I$,
        \item  the element $i$ is connected to an
        element $j$ in $H(w)$ labeled by the other element in $I$,  
        \item upon removal of $i$ from
        $H(w)$, the element $j$ becomes a minimal element in the resulting poset.
        \end{enumerate}
        When these conditions are met, performing the lower left star operation with respect to
        $I$ on $w$ corresponds to removing the vertex $i$ and all edges
        incident to $i$ in
        $H(w)$. 
        Similarly, we can easily detect when we can perform lower right star
        opeartions on
        $w$ via $H(w)$, by examining the maximal elements of suitable heaps. We
        shall refer back to these visualizations frequently in Section
        \ref{star arguments}. 
        Upper star operations can also be described in terms of heaps, but
        we will not need them, so we omit the descriptions. 

      The final feature of heaps that we are interested in concerns the computation of $\la$-values. Thanks to a powerful result of Shi in \cite{Shi}, heaps can sometimes be used to
      compute $\la$-values of fully commutative elements in the following fashion.

      \begin{prop}[{\cite[Theorem 3.1]{Shi}}]
        \label{a from heap}
        Let $W$ be a Weyl group or an affine Weyl group. Let $w$ be a fully
        commutative element of $W$, let $\mathcal{AC}$ be the collection of all
        antichains in the heap $H(w)$, and let $n(w)=\max(\abs{A}:A\in
        \mathcal{AC})$, where $\abs{A}$ denotes the cardinality of $A$ for each
        antichain $A\in \mathcal{AC}$. Then $\la(w)=n(w)$.
      \end{prop}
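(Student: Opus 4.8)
The plan is to reduce the statement, via Proposition \ref{a local}, to the case of a fully commutative element $w$ that uses every generator of $W$, and then to exploit the interplay between heaps, star operations, and the two known pieces of information about $\la$-values in Weyl and affine Weyl groups: that $\la$ is constant on two-sided cells (Proposition \ref{constant}) and that $\la(w_0)=l(w_0)$ for the longest element $w_0$ of a finite parabolic subgroup (propositions \ref{a local}, \ref{a for longest}, hence Corollary \ref{product a}). The upper bound $\la(w)\ge n(w)$ is the easy direction: if $A=\{i_1,\dots,i_k\}$ is an antichain in $H(w)$ of size $k=n(w)$, its labels $s_{i_1},\dots,s_{i_k}$ pairwise commute (since incomparability in the heap forces $m(s_i,s_j)=2$), and by the exchange-type properties of heaps one can extract from a reduced word of $w$ a reduced word of the form $u\,(s_{i_1}\cdots s_{i_k})\,v$ with lengths adding; then Corollary \ref{product a} gives $\la(s_{i_1}\cdots s_{i_k})=k$, and Corollary \ref{weak order}(2) gives $\la(w)\ge k=n(w)$. (A small subtlety: one must check the antichain can be realized as a \emph{contiguous} reduced subword; this follows from the standard fact that a heap antichain can be linearly extended to occupy consecutive positions.)

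For the reverse inequality $\la(w)\le n(w)$ — the substantive direction — I would argue by induction on $l(w)$, using star operations to descend. If $w$ is not an antichain already, pick a pair $I=\{s,t\}$ with $m(s,t)\ge 3$ such that $w$ lies in a left (or right) $I$-string with ${}_*w$ (resp. $w_*$) defined; by Remark \ref{removal} this simply deletes a minimal (resp. maximal) vertex from $H(w)$. By Proposition \ref{star and cell}, $w\sim_L {}_*w$ (resp. $w\sim_R w_*$), so $\la(w)=\la({}_*w)$ by Proposition \ref{constant}; and deleting a vertex from a poset cannot increase the maximum antichain size, so $n({}_*w)\le n(w)$. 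By induction $\la({}_*w)=n({}_*w)\le n(w)$, giving $\la(w)\le n(w)$ — \emph{provided} such a star reduction exists whenever $w$ is not itself a single antichain. This is the crux: one needs that any fully commutative element all of whose reduced words fail to be a product of commuting generators admits a nontrivial lower star operation on the left or the right. This should follow from a careful look at the minimal and maximal elements of $H(w)$: if no lower-left star operation applies at any minimal vertex $i$ with a neighbor-labeled cover $j$, then either $i$ has no such neighbor (so $\{i\}$ together with the rest of the heap splits off a commuting factor, handled by induction on the number of connected components of $H(w)$ or by Proposition \ref{a local}), or $j$ fails to become minimal after removing $i$ (meaning $j$ has another lower cover, forcing a longer alternating chain below $j$ and contradicting full commutativity via Proposition \ref{fc criterion}).

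The main obstacle I anticipate is precisely establishing this reducibility lemma in the generality of Weyl and affine Weyl groups, i.e. showing the star-reduction process terminates only at heaps that are single antichains. Degenerate cases — heaps that are disconnected, heaps with a ranked but "tall thin" shape, and the boundary between the type $A$/affine $A$ situation and the others — will each need to be checked, and one likely has to invoke Shi's own structural analysis of admissible sign types or the classification of fully commutative elements in these groups rather than a purely self-contained heap argument. An alternative, cleaner route — which I would keep in reserve in case the inductive descent gets stuck — is to prove $\la(w)\le n(w)$ directly by bounding the degree of the structure constants $h_{x,y,w}$ using Proposition \ref{KL basis mult} together with the fact that in a fully commutative element the Bruhat interval $[1,w]$ is small enough to force $\mu$-coefficients and Kazhdan--Lusztig polynomials into a controllable range; but this seems more delicate than the star-operation approach, so the induction via Proposition \ref{star and cell} is the primary plan.
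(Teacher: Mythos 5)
The paper does not prove this statement; it is quoted verbatim from Shi (\cite[Theorem 3.1]{Shi}), with only a remark translating Shi's directed-graph formulation of $n(w)$ into the language of heaps. So your proposal must be judged as a from-scratch proof, and it has a genuine gap in the hard direction. Your easy direction $\la(w)\ge n(w)$ is fine: incomparability in the heap does force commuting labels, an antichain can be made consecutive in a linear extension, and corollaries \ref{product a} and \ref{weak order} finish it (this half needs no hypothesis on $W$).

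The gap is the ``reducibility lemma'' you yourself flag as the crux: the claim that every fully commutative element which is not a product of commuting generators admits a lower left or right star operation. That claim is precisely the assertion that $W$ is \emph{star reducible}, and it is false for many Weyl and affine Weyl groups --- which is exactly why the paper states Proposition \ref{star reducible a} as a separate result for star reducible groups and must cite Shi for the Weyl/affine Weyl case, and why the witnesses in lemmas \ref{two branches} and \ref{affine E6} are handled by heap arguments rather than star operations. A concrete counterexample to your lemma: in $\tilde D_4$ (central vertex $c$, leaves $a,b,d,e$) the element $w=ab\,c\,de\,c\,ab$ is fully commutative and is not an antichain, yet its heap has minimal elements $a,b$ whose unique cover $c$ has \emph{two} lower covers, so removing either of $a,b$ never makes $c$ minimal and no lower left star operation exists (dually on the right). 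Your attempted repair --- that a cover $j$ with a second lower cover forces a long alternating chain contradicting full commutativity --- is where the argument breaks: the two lower covers of $j$ may carry distinct, mutually commuting labels (as $a$ and $b$ above), which is perfectly compatible with Proposition \ref{fc criterion}. So the induction terminates at elements that are neither antichains nor star-reducible, and the inequality $\la(w)\le n(w)$ is left unproved exactly in the cases where it is needed. The fallback routes you mention (invoking Shi's structural analysis, or degree bounds on $h_{x,y,w}$) are not developed and would amount to reproving Shi's theorem; the hard direction genuinely requires input beyond star operations.
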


      \begin{remark}
        In \cite{Shi}, the author does not explicitly use heaps to describe the
        $\la$-values of fully commutative elements.
        Rather, he associates a directed graph ${G}(w)$ to each fully
        commutative element $w$, defines a number $n(w)$ using $G(w)$, then shows
        $\la(w)=n(w)$. However, as the author points out at the end of Section 2.2,
        ${G}(w)$ can be reformulated in terms of heaps, and it is not
        difficult to see that his definition of $n(w)$ is identical with ours.
      \end{remark}

      The equality $\la(w)=n(w)$ from Proposition \ref{a from heap} also holds in
      another situation: define a \emph{star reducible} Coxeter group to be a
      Coxeter group where each fully commutative
      element is star reducible to a product of mutually commuting generators, then
      the following holds.
      \begin{prop}
        \label{star reducible a}
        Let $W$ be a star reducible Coxeter group, and let $w\in W$ be a fully
        commutative element. Then $\la(w)=n(w)$, where $n(w)$ is defined as in
        Proposition \ref{a from heap}.
      \end{prop}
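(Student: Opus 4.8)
The plan is to reduce the statement to two facts already in hand: that $\la$ is invariant under star reducibility (Corollary \ref{star a}) and that a product of $k$ mutually commuting generators has $\la$-value $k$ (Corollary \ref{product a}), and then to show that the combinatorial statistic $n$ behaves in exactly the parallel way. Concretely, since $W$ is star reducible, $w$ is star reducible to some element $w_0=s_1s_2\cdots s_k$ with $m(s_i,s_j)=2$ for all $i\neq j$; fix a witnessing sequence $w_0=z_1,z_2,\cdots,z_n=w$, so that for each $i$ the element $z_i$ is obtained from $z_{i+1}$ by a single lower (left or right) star operation with respect to some pair $I_i=\{s_i,t_i\}$ with $3\le m(s_i,t_i)<\infty$. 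Combining Corollary \ref{star a} with Corollary \ref{product a} immediately gives $\la(w)=\la(w_0)=k$, so it remains only to prove $n(w)=k$.

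The heart of the argument is the following lemma: if $y\in W$ is fully commutative and $x$ is obtained from $y$ by a single lower left or lower right star operation with respect to some $I=\{s,t\}$ with $m(s,t)\ge 3$, then $x$ is fully commutative and $n(x)=n(y)$. For full commutativity of $x$, I would invoke Remark \ref{removal}: such a star operation deletes the leftmost (resp. rightmost) letter of a suitable reduced word of $y$, and prepending (resp. appending) that letter to any reduced word of $x$ again yields a reduced word of $y$; a forbidden contiguous subword $sts\cdots$ of length $m(s,t)$ in a reduced word of $x$ would therefore persist in $y$, a contradiction. For the equality of $n$-values, I would use the heap picture described just before Proposition \ref{a from heap}: the operation removes a minimal (resp. maximal) vertex $i$ of $H(y)$, joined to a vertex $j$ labelled by the other element of $I$ which becomes minimal (resp. maximal) in the induced subposet $H(y)\setminus\{i\}$, and this induced subposet is exactly $H(x)$. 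The inequality $n(x)\le n(y)$ is trivial, since any antichain of $H(x)$ is an antichain of $H(y)$. For $n(x)\ge n(y)$, take a maximum antichain $A$ of $H(y)$: if $i\notin A$ it already lies in $H(x)$, and if $i\in A$ then $(A\setminus\{i\})\cup\{j\}$ is an antichain of the same size contained in $H(x)$ — nothing in $A\setminus\{i\}$ lies above $j$ (it would lie above $i\in A$), nothing lies below $j$ (the fact that $j$ becomes minimal in $H(y)\setminus\{i\}$ forces $i$ to be the unique element below $j$ in $H(y)$), and $j\notin A$ because $i<j$; the maximal/lower-right case is symmetric.

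Granting the lemma, I would finish by applying it inductively along $z_1,\cdots,z_n$ to get $n(w)=n(z_n)=\cdots=n(z_1)=n(w_0)$, and then observe that $H(w_0)$ has no relations whatsoever, since $s_1,\cdots,s_k$ are distinct and pairwise commuting; hence $H(w_0)$ is an antichain on $k$ vertices and $n(w_0)=k$. Combining with the first paragraph gives $\la(w)=k=n(w_0)=n(w)$, as desired. (Note that, pleasantly, this route does not actually invoke Shi's theorem, Proposition \ref{a from heap}, only the \emph{definition} of $n$ recorded there.)

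I expect the only genuinely delicate point to be the heap bookkeeping inside the lemma: checking carefully that a lower star operation on a fully commutative element corresponds precisely to deleting one vertex of the heap (so that $H(x)$ is literally the induced subposet $H(y)\setminus\{i\}$) and that the promoted vertex $j$ is incomparable to every element of a maximum antichain once $i$ is removed. Everything after that is formal.
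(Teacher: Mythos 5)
Your proposal is correct and follows essentially the same route as the paper: reduce $\la(w)$ to $k$ via Corollaries \ref{star a} and \ref{product a}, observe $n(w_0)=k$ because the heap of a product of commuting generators is an antichain, and prove that a single lower star operation preserves $n$-values by the trivial containment of antichains in one direction and, in the other, by swapping the removed vertex $i$ for the uniquely-covered vertex $j$ that becomes extremal. The only cosmetic differences are that you make explicit the full commutativity of the intermediate elements and the identification of $H(x)$ with the induced subposet, both of which the paper leaves implicit.
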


      \begin{proof}
        Suppose $w$ can be reduced to a product $w'=s_1\cdots s_k$ of $k$ mutually
        commuting generators of $W$ via a series of lower star operations. Then
        $\la(w)=\la(w')=k$ by Corollary \ref{star a} and Corollary \ref{product a},
        and $n(w')=k$ since no two elements in the heap of $w'$ are comparable. Thus,
        to show $\la(w)=n(w)$, it suffices to show that $n(w')=n(w)$. We do so below
        by showing that lower star operations preserve $n$-values of fully
        commutative elements.

        Let $x\in W$ be fully commutative, and suppose $y=x_*$ with respect to some
        lower right star operation. Since any antichain in the heap $H(y)$ is
        also one in $H(x)$, $n(x)\ge n(y)$ by the definition of $n$. Meanwhile, by
        assumption, $x$ admits a reduced word $y=s_1s_2\cdots s_q$ such that
        $y=s_{1}s_2\cdots s_{q-1}$. Note that $H(x)$ must contain an element
        $p$ such that $q$ is the unique element in $H(x)$ larger than $p$, for
        otherwise the right star operation removing $s_q$ from
        $x$ would not be possible. Now, if an antichain $A$ in $H(x)$ contains
        $q$, then $p$ is not in $A$ since $A$ is an
        antichain. Furthermore, let $a\in A\setminus \{q\}$, then $p\not\le a$ since $q$ is the unique
        element larger than $p$ in $H(x)$, and $a\not\le p$ since otherwise
        $a\le q$ by transitivity, contradicting the fact that $A$ is an
        antichain. Thus, for any antichain $A$ of $H(x)$ that has length
        $n(x)$ and contains $q$, the set $(A\setminus\{q\})\cup\{p\}$ forms an
        antichain of the same length. This new
        antichain is also an antichain in $H(y)$, therefore we have $n(y)\ge n(x)$. We have thus proved $n(y)=n(x)$, i.e. that lower right star
        operations preserve $n$-values of fully commutative elements. A similar
        argument shows that the same is true for lower left star operations, so we
        are done. 
      \end{proof}
      \noindent For more on star reducible Coxeter
      groups, including the classification of all star reducible Coxeter groups, see
      \cite{star_reducible}.

      By propositions \ref{a2 is fc}, \ref{a from heap} and \ref{star reducible a}, to show
      $\{w\in W:\la(w)=2\}$ is infinite for a Weyl group, affine Weyl group or a
      star reducible Coxeter group, it
      suffices to produce infinitely many distinct fully commutative elements,
      examine their heaps, then
      use the antichain characterization to verify that the elements have
      $\la$-value 2. We will repeatedly use this strategy in Section
      \ref{first lemmas}.

      \section{Proof of Theorem \ref{first theorem}: Sufficiency of the diagram
    criteria}
      \label{sufficient}
      Let $W$ be an irreducible Coxeter group with Coxeter diagram $G$. We prove the
      ``if'' directions of the two parts
      of Theorem \ref{first theorem} in this section, i.e. we show that $W$ is $\la(2)$-finite
      if $G$ is as described in the theorem.

      \subsection{Case 1. $G$ contains a cycle} We first prove the ``if''
      direction of Theorem \ref{first theorem}.(1). Since $W$ is certainly
      irreducible and $G$ certainly contains a cycle when $G$ is a complete
      graph with 3 or more vertices, it suffices to prove the following.       
      \begin{prop}
        \label{sufficient cycle}
        If $G$ is a complete graph, then  $W$ is $\la(2)$-finite.
      \end{prop}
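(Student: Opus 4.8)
The plan is to show that a complete diagram $G$ forces $W$ to have \emph{no} fully commutative elements of $\la$-value $2$ at all, and then to quote Proposition \ref{a2 is fc}.

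First I would observe that if $G$ is complete, then every pair of distinct generators $s,t\in S$ is joined by an edge, so $m(s,t)\ge 3$ for all $s\neq t$. In particular $W$ possesses no commutation relations whatsoever. Consequently, if $w\in W$ is fully commutative, then by the very definition of full commutativity any two reduced words of $w$ are connected by a sequence of commutation relations; since no such relations exist in $W$, this forces all reduced words of $w$ to coincide, i.e. $w$ has a unique reduced word. (This includes the case $w=1_W$, whose unique reduced word is empty.)

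Next I would apply Proposition \ref{subregular}: a fully commutative $w$ as above satisfies $\la(w)=0$ when $w=1_W$ and $\la(w)=1$ when $w\neq 1_W$, so in no case does a fully commutative element have $\la$-value $2$. Combining this with Proposition \ref{a2 is fc}, which says that every element of $\la$-value $2$ is fully commutative, we conclude that $\{w\in W:\la(w)=2\}=\emptyset$. An empty set is finite, so $W$ is $\la(2)$-finite, as desired.

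There is no real obstacle in this direction: the argument is a short deduction from Propositions \ref{subregular} and \ref{a2 is fc}, and the only point requiring the slightest care is that it must cover the degenerate complete graphs $K_1$ and $K_2$ as well as complete graphs carrying edges of weight $\infty$ --- all of which are subsumed by the single observation that completeness of $G$ eliminates commutation relations entirely. (The substantive content of Theorem \ref{first theorem}.(1) lies in its converse, established later.)
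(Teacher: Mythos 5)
Your proposal is correct and is essentially the paper's own argument: both show that a complete diagram forces every fully commutative element to have a unique reduced word (no commutation relations exist), so Proposition \ref{subregular} gives $\la(w)\le 1$ for such elements, and Proposition \ref{a2 is fc} then rules out any element of $\la$-value $2$. The only cosmetic difference is that the paper phrases this as a proof by contradiction starting from a hypothetical $w$ with $\la(w)=2$, while you argue directly that the set of such elements is empty.
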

      \begin{proof}
        We claim that $W$ actually contains no element of $\la$-value 2 if $G$
        is complete. To see this, suppose $a(w)=2$ for some $w\in W$.
        Then $w$ is fully commutative by Proposition \ref{a2 is
        fc}. But as $G$ is complete, no two elements of the generating set
        of $W$ commute, therefore an element in $W$ is fully commutative if and only
        if it has a unique reduced word. Proposition \ref{subregular} then
        implies that $\la(w)\le 1$, a contradiction.
      \end{proof}

      \subsection{Case 2. $G$ is acyclic}
      \label{case 2} 
      We now prove
      the ``if'' part of Theorem \ref{first theorem}.(2),  which is restated
      below.  
  \begin{prop}
        \label{sufficient prop}
        Let $W$ be an irreducible Coxeter group with Coxeter diagram $G$. 
        If $G$ is one of the graphs shown in Figure \ref{fig:finite E}, 
        then $W$ is $\la(2)$-finite.
      \end{prop}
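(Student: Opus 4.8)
The plan is to reduce everything, via Proposition \ref{a2 is fc}, to a statement about heaps, and then to dispose of all but three infinite families using Stembridge's classification of Coxeter groups with finitely many fully commutative elements.

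First I would record the following general reduction: \emph{if $\la(w)=2$ then $w$ is fully commutative and $n(w)=2$}, where $n$ is the heap statistic of Proposition \ref{a from heap}. Full commutativity is Proposition \ref{a2 is fc}. For the bound on $n(w)$, suppose the heap $H(w)$ has an antichain $A=\{i_1<i_2<i_3\}$ of size $3$, relative to a fixed reduced word $s_1\cdots s_q$ of $w$. The labels $s_{i_1},s_{i_2},s_{i_3}$ are pairwise non-adjacent in $G$ (adjacent labels give comparable heap elements), hence pairwise commute. Letting $D$ be the set of heap elements lying strictly below some element of $A$, one checks that $D$ is an order ideal, that $A\cap D=\emptyset$, and that $i_1,i_2,i_3$ are minimal in $H(w)\setminus D$; hence there is a linear extension of $H(w)$ consisting of a linear extension of $D$, then $i_1,i_2,i_3$, then a linear extension of the rest, which yields a reduced word of $w$ in which $s_{i_1},s_{i_2},s_{i_3}$ are consecutive. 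Thus $w=u\cdot(s_{i_1}s_{i_2}s_{i_3})\cdot v$ with $l(w)=l(u)+3+l(v)$, so $\la(w)\ge\la(s_{i_1}s_{i_2}s_{i_3})=3$ by Corollaries \ref{product a} and \ref{weak order}, a contradiction. So $n(w)\le 2$; and $n(w)=0$ forces $w=1_W$, while $n(w)=1$ means $H(w)$ is a chain, i.e. $w$ has a unique reduced word, so $\la(w)=1$ by Proposition \ref{subregular}. Hence $\{w:\la(w)=2\}\subseteq\{w\text{ fully commutative}:n(w)=2\}$, and it suffices to show the latter set is finite for each $G$ in Figure \ref{fig:finite E}.

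Next I would clear most cases in one stroke. By Stembridge's classification \cite{FC}, $W$ has only finitely many fully commutative elements — a fortiori only finitely many with $n(w)=2$ — whenever $G$ is of type $A_n$, $B_n$, $F_n$, $H_n$, $I_2(m)$ with $m<\infty$, or $E_{q,r}$ with $\min(q,r)\le 2$, since in the last case $E_{1,r}$ is the diagram of $D_{r+3}$ and $E_{2,r}$ that of $E_{r+4}$ (Remark \ref{DE}), both of FC-finite type. For $G=I_2(\infty)$, $W$ is the infinite dihedral group, in which every element has a unique reduced word, so $\la(w)\le 1$ for all $w$ (Proposition \ref{subregular}) and there are no elements of $\la$-value $2$ at all. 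For $G=\tilde C_n$ with $n\ge 5$, I would appeal to Ernst's study of the generalized Temperley--Lieb algebra of type $\tilde C_n$ in \cite{Ernst}: it describes the fully commutative elements of $\tilde C_n$ precisely enough to see that all but finitely many of them have an antichain of size $\ge 3$ in their heaps, so $\{w\text{ fully commutative}:n(w)=2\}$ is again finite. (As a sanity check, $\tilde C_n$ is an affine Weyl group, so $\la(w)=n(w)$ by Proposition \ref{a from heap}, which matches this conclusion.)

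The remaining case, $G=E_{q,r}$ with $\min(q,r)\ge 3$, is the one that occupies the remaining lemmas of this section; here $E_{3,3}$ is affine of type $\tilde E_7$ but the larger diagrams are of indefinite type, so Proposition \ref{a from heap} is unavailable and we must argue directly with heaps. By the reduction above it suffices to prove that $E_{q,r}$ has only finitely many fully commutative elements $w$ with $n(w)\le 2$. The key combinatorial claim is that every sufficiently long fully commutative element of $E_{q,r}$ has a heap containing an antichain of size $3$. The intuition is that a long fully commutative element must weave repeatedly among the three branches meeting at the trivalent vertex $b$; the full commutativity criterion of Proposition \ref{fc heap criterion} rules out the short oscillations that would keep the heap of width $\le 2$, because revisiting a vertex adjacent to $b$ forces the word to pass through two distinct neighbours of $b$, planting incomparable pairs in the heap, and once both branches have length at least $3$ these accumulate into an antichain of size $3$. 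Turning this picture into a precise bound — classifying how width-$\le 2$ heaps can be embedded in the spider $E_{q,r}$ and extracting an explicit length bound — is the main obstacle, and is where almost all of the remaining work goes; throughout, the case analysis is best organized via the lattice embeddings of heaps introduced in Section \ref{fc}.
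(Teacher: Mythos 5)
Your overall strategy matches the paper's for most of the diagram types, and your opening reduction --- that $\la(w)=2$ forces $w$ to be fully commutative with $n(w)=2$, proved via the order-ideal/linear-extension argument showing $\la(w)\ge k$ whenever $H(w)$ contains a $k$-element antichain --- is correct and is a clean way to isolate the easy direction of Proposition \ref{a from heap}; the paper uses exactly this mechanism inside Proposition \ref{a2 is fc} and implicitly in its later lemmas, but never states it as a general fact. The Stembridge cases, $I_2(\infty)$, and the appeal to Ernst for $\tilde C_n$ all line up with the paper's proof (the paper quotes \cite{Ernst} as directly asserting $\la(2)$-finiteness in type $\tilde C_n$; see Proposition \ref{Ernst}).

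There are, however, two problems. The minor one: your gloss on the $\tilde C_n$ case --- that ``all but finitely many'' fully commutative elements have an antichain of size $\ge 3$ --- is false. Since $\tilde C_n$ has two edges of weight $4$, it is $\la(1)$-infinite by Proposition \ref{tree}, so it contains infinitely many fully commutative elements with a unique reduced word, whose heaps are chains with $n(w)=1$. What you actually need, and what Ernst (together with Shi's $\la=n$ for affine Weyl groups) gives, is finiteness of the stratum $n(w)=2$ alone. The serious one: the case $E_{q,r}$ with $\min(q,r)\ge 3$ is not proved. You correctly identify it as the crux and your intuition (a long element must weave through the trivalent vertex and accumulate a $3$-element antichain) points in the right direction, but you explicitly defer ``turning this picture into a precise bound,'' and that deferral is exactly where the content of this proposition lives. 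The paper spends Lemmas \ref{type A sandwich}, \ref{ae7}, \ref{d between h} and \ref{no three h} establishing that a fully commutative $w$ with $\la(w)\le 2$ can contain at most two occurrences of the short-branch vertex, and Proposition \ref{ae7 case} then concludes by writing $w=w_1sw_2s\cdots sw_n$ with $n\le 3$ and each $w_i$ ranging over a finite type-$A$ parabolic subgroup. Without an argument of this kind --- a bound on the number of occurrences of some fixed generator, or an explicit length bound derived from the width-$\le 2$ constraint on heaps --- your proposal does not establish the proposition in the one case that cannot be outsourced to \cite{FC} or \cite{Ernst}.
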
 
      It turns out that when $G$ is any graph from Figure \ref{fig:finite E} other than
      $E_{q,r}$ where $\min(q,r)\ge 3$, we may use two
      key results not yet stated in the paper to prove that $W$ is
      $\la(2)$-finite. The first of these results is the following
      classification of Stembridge. 

      \begin{prop}[{\cite[Theorem 5.1]{FC}}]
        \label{fc finite}
        An irreducible Coxeter group has finitely many fully commutative elements if
        and only if its Coxeter diagram is of the form $A_n (n\ge 1), B_n (n\ge 2),
        D_n (n\ge 4), E_n (n\ge 6), F_n (n\ge 4), H_n (n\ge 3)$ or $I_2(m)
        (5\le m\le\infty)$.
      \end{prop}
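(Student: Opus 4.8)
Although this proposition is quoted from \cite{FC}, we indicate the main steps of its proof. Call a connected Coxeter diagram \emph{FC-finite} if its Coxeter group has only finitely many fully commutative elements. The starting point is heredity: if $J\subseteq S$, then every reduced word of an element of the parabolic subgroup $W_J$ uses only letters of $J$ (a standard consequence of Proposition \ref{Matsumoto-Tits} together with the subword property of Bruhat order), so an element of $W_J$ is fully commutative in $W_J$ if and only if it is fully commutative in $W$; hence FC-finiteness passes to sub-diagrams. Thus the FC-finite diagrams are precisely those avoiding every member of a certain catalogue of minimal FC-infinite diagrams, and the proof splits into showing (a) each diagram on the stated list is FC-finite, and (b) each connected diagram not on the list is FC-infinite.

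\emph{Sufficiency.} For each listed family one produces a finite enumeration of the fully commutative elements. In type $A_n$ these are the $321$-avoiding permutations of $\{1,\dots,n+1\}$, which number the Catalan number $C_{n+1}$; types $B_n$ and $D_n$ have analogous descriptions via pattern-avoiding signed permutations. For $I_2(m)$ with $m$ finite the fully commutative elements are exactly $1$ together with the alternating words $s,t,st,ts,sts,\dots$ of lengths $1,\dots,m-1$, so there are $2m-2$ of them. For the remaining families $E_n$, $F_n$, $H_n$ a uniform argument suffices: using the heap criterion of Proposition \ref{fc heap criterion}, one checks that in the lattice embedding of the heap of any fully commutative element the number of lattice points in each column is bounded by a constant depending only on the diagram, so there are finitely many possible heaps and hence finitely many fully commutative elements. (Stembridge in fact computes the exact generating functions.)

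\emph{Necessity.} Suppose $W$ is FC-finite with connected diagram $G$. If $G$ contained a cycle on $k\ge 3$ vertices it would contain the diagram of the affine group $\tilde A_{k-1}$; but the powers of a Coxeter element read around the cycle are pairwise distinct, and the heap criterion shows their heaps are ever-taller ``staircases'' winding around the cycle that contain no forbidden configuration, so $\tilde A_{k-1}$, and hence $W$, would be FC-infinite. Thus $G$ is a tree. Similar explicit infinite families of fully commutative elements --- again staircase-like heaps, verified via Proposition \ref{fc heap criterion} --- rule out every bad local feature of $G$: a vertex of degree $\ge 4$ (whence $G$ contains $\tilde D_4$ or a heavier-edged relative); two vertices of degree $3$ (giving some $\tilde D_n$); a branch vertex at which the two shortest of the three legs do not have lengths $(1,1)$ or $(1,2)$ (giving $\tilde E_6$ or $\tilde E_7$); edges of weight $4$ at both ends of a path (giving $\tilde C_n$); a weight-$4$ edge that either coexists with a branch vertex (giving $\tilde B_n$) or lies more than one step from every end of a path (beyond type $F$); a weight-$5$ edge that is not pendant, or that coexists with a branch vertex or a weight-$4$ edge; and any edge of weight $\ge 6$ other than the unique edge of $G$ (giving $\tilde G_2$). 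Excluding all of these forces $G$ to be a path, or a path with a single fork whose two shortest leg lengths are $(1,1)$ or $(1,2)$ at one end, carrying admissible edge weights; a short case check then identifies $G$ as one of $A_n$ ($n\ge 1$), $B_n$ ($n\ge 2$), $D_n$ ($n\ge 4$), $E_n$ ($n\ge 6$), $F_n$ ($n\ge 4$), $H_n$ ($n\ge 3$) or $I_2(m)$ with $m$ finite.

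\emph{The main obstacle} is the necessity direction: one must compile the full catalogue of minimal FC-infinite diagrams, and for each exhibit an explicit infinite family of fully commutative elements and verify via the heap criterion of Proposition \ref{fc heap criterion} that the family really consists of fully commutative elements; one then has to check that a connected diagram avoiding the whole catalogue lies on the list. Each individual verification is routine, but the case analysis is lengthy. Finally, a boundary remark: every element of $I_2(\infty)$ has a unique reduced word and so is fully commutative, so $I_2(\infty)$ is FC-infinite; the case $m=\infty$ is therefore not covered by this proposition and is treated separately wherever it arises in the sequel.
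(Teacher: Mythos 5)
The paper offers no proof of this statement: it is imported verbatim from Stembridge \cite{FC} as Theorem 5.1, so there is no internal argument to compare yours against. Your outline is a faithful reconstruction of Stembridge's proof: heredity of FC-finiteness under passage to standard parabolic subgroups, finiteness for the listed families by bounding the number of occurrences of each generator (equivalently the column heights of heaps) or by explicit enumeration, and FC-infiniteness of everything else via a catalogue of minimal bad subdiagrams ($\tilde A_n$, $\tilde B_n$, $\tilde C_n$, $\tilde D_n$, $\tilde E_6$, $\tilde E_7$, the non-terminal weight-$4$ and weight-$5$ configurations, and the three-vertex diagrams with an edge of weight $\ge 6$), each witnessed by an infinite staircase-like family of heaps checked against Proposition \ref{fc heap criterion}. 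Your catalogue is complete and the concluding case analysis does force the listed diagrams, so the sketch is sound, with the usual caveat that the individual verifications are deferred. One minor slip: for $I_2(m)$ with $m$ finite the total count of fully commutative elements is $2m-1$ once the identity is included ($1$ plus two alternating words of each length $1,\dots,m-1$); this does not affect finiteness. More substantively, your closing remark is correct and worth emphasizing: $I_2(\infty)$ is FC-\emph{infinite}, since every element of the infinite dihedral group has a unique reduced word and is therefore fully commutative, so the bound ``$5\le m\le\infty$'' in the statement as printed is a typo for Stembridge's ``$5\le m<\infty$''. The paper tacitly corrects this in the proof of Proposition \ref{sufficient prop}, where Proposition \ref{fc finite} is invoked only for $5\le m<\infty$ and the case $G=I_2(\infty)$ is handled separately through Proposition \ref{sufficient cycle}.
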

      \noindent Recall that the graphs of type $D_n$ and  $E_n$ here are special
      cases of the graphs $E_{q,r}$ from Figure \ref{fig:finite E} (see Remark
      \ref{DE}).

      The second external result was established by D. Ernst in \cite{Ernst}. 

      \begin{prop}[{\cite[Corollary 5.16]{Ernst}}]
        \label{Ernst}
        Let $W$ be the affine Coxeter group of type $\tilde C_n$ for some $n\ge
        5$,
        i.e. suppose its Coxeter diagram is of the form $\tilde C_n$ from Figure
        \ref{fig:finite E}. Then $W$ is $\la(2)$-finite.
      \end{prop}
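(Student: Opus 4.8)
Since Proposition \ref{Ernst} is quoted from \cite{Ernst}, the natural plan is not to re-prove it from scratch but to indicate how it follows from Ernst's classification of the fully commutative elements of $\tilde C_n$, after a short reduction. By Proposition \ref{a2 is fc}, every $w\in W$ with $\la(w)=2$ is fully commutative, so it suffices to bound the set of fully commutative $w$ with $\la(w)=2$. Since $W=\tilde C_n$ is an affine Weyl group, Proposition \ref{a from heap} applies and gives $\la(w)=n(w)$ for every fully commutative $w$, so the task becomes: show that only finitely many fully commutative elements of $\tilde C_n$ have a heap whose largest antichain has size exactly $2$. Note that one genuinely needs the value $2$ here and cannot simply bound the elements with $n(w)\le 2$: by Proposition \ref{tree}, the group $\tilde C_n$ is $\la(1)$-infinite, since its diagram is a tree with two edges of weight $4$, so there are already infinitely many fully commutative elements with $n(w)=1$.

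The core of the matter is then purely combinatorial, about the fully commutative elements of $\tilde C_n$, and here I would invoke Ernst's diagram calculus from \cite{Ernst}. In that calculus the fully commutative elements of $\tilde C_n$ are encoded by certain planar diagrams, and the statistic $n(w)$ --- equivalently, by Dilworth's theorem, the minimal number of chains needed to cover the heap $H(w)$ --- corresponds to a ``through-degree''-type invariant of the diagram. Corollary 5.16 of \cite{Ernst} should then be read as the assertion that only finitely many such diagrams have this invariant equal to $2$; translated back through Proposition \ref{a from heap} and Proposition \ref{a2 is fc}, this says precisely that $W$ is $\la(2)$-finite. Combined with the reduction of the previous paragraph, this completes the argument.

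The step I expect to be the real obstacle --- and the one that Ernst's work is designed to handle --- is the explicit structural description of fully commutative elements near the two weight-$4$ ends of the $\tilde C_n$ diagram. Unlike the finite type $A$ ``interior'', $\tilde C_n$ has infinitely many fully commutative elements, and one must show that any sufficiently long such element is forced to contain a three-element antichain in its heap, so that $n(w)\ge 3$. A self-contained proof would amount to a careful case analysis of the ways in which a width-$2$ heap can be embedded in the lattice $S\times\N$ for the ``path with two weight-$4$ edges'' diagram, keeping track of the constraints imposed by condition (2) of Proposition \ref{fc heap criterion} at each $4$-edge; since exactly this analysis is carried out in \cite{Ernst}, I would simply cite Corollary 5.16 there rather than reproduce it.
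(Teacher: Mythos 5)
Your proposal matches the paper exactly: Proposition \ref{Ernst} is presented as a quoted result (Corollary 5.16 of \cite{Ernst}) and the paper supplies no proof of its own, so deferring to Ernst's work is precisely what the authors do. The extra reduction you sketch --- $\la(w)=2$ forces full commutativity by Proposition \ref{a2 is fc}, and Shi's theorem (Proposition \ref{a from heap}) applies since $\tilde C_n$ is an affine Weyl group, turning the problem into bounding the fully commutative elements with $n(w)=2$ --- is valid and consistent with the paper's toolkit, though the paper simply attributes the $\la(2)$-finiteness statement to Ernst directly.
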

      We now deal with the case where $G$ is of the form $E_{q,r}$ where
      $\min(q,r)\ge 3$. We will prove that $W$ is $\la(2)$-finite in this case
      in Proposition \ref{ae7 case}, after we prove a series of lemmas.
      We will then combine the external results and Proposition \ref{ae7 case}
      to finish the proof of Proposition
      \ref{sufficient prop}.

      Throughout the following four lemmas, let $w$ be a fully commutative element in
      $W$.       For any $s\in S$ that labels at least two elements in $H(w)$, define an \emph{open $s$-interval}  in $H(w)$ to be an interval
      $(i,j)=\{k\in H(w):i<k<j\}$ where $i$ and $j$ are consecutive elements
      labelled by $s$; similarly, define a \emph{closed $s$-interval} to be an
      interval
      of the form $[i,j]=\{k\in H(w):i\le k\le j\}$ where $i$ and $j$ are
      consecutive elements labelled by $s$.

      \begin{lemma}
Suppose $G$ is of type $A$, and let $s\in S$. Every open $s$-interval in
        $H(w)$ contains exactly two elements whose labels are adjacent to
        $s$ in $G$, and the labels of these elements are distinct. In
        particular, if $s$ is an endpoint of $G$, then $w$ contains at most one
        occurrence of $s$.
        \label{type A sandwich}
      \end{lemma}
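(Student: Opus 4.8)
The plan is to argue inside the heap $H(w)$, using only the full commutativity criterion of Proposition \ref{fc criterion} together with induction on $l(w)$. Identify $G$ with the path $1-2-\cdots-n$. Then every edge of $G$ has weight $3$, adjacent vertices differ by $1$, and hence any covering relation of $H(w)$ joins two elements whose labels differ by exactly $1$ (difference $0$ is forbidden by Proposition \ref{fc criterion}(1); difference $\ge 2$ makes the labels commute, so the elements would be incomparable). Fix consecutive $s$-labelled elements $i\prec j$. Then $i\prec j$ is not a covering relation, so the open interval $(i,j)$ is nonempty; moreover every element covering $i$, and dually every element covered by $j$, is labelled by a neighbour of $s$ and lies in $(i,j)$, and at most one such element carries each given label, since two equally-labelled elements are comparable and so cannot both cover $i$.

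First I reduce to a base case. If $H(w)$ has a maximal element not lying in $[i,j]$, then deleting it gives a reduced word of an element $w'\in W$ with $l(w')<l(w)$, still fully commutative, in whose heap $i\prec j$ is still a consecutive pair of $s$-elements and $(i,j)$ is unchanged; the inductive hypothesis then applies. So we may assume every maximal element of $H(w)$ lies in $[i,j]$, whence every element is $\preccurlyeq$ some maximal element $\preccurlyeq j$, so $j$ is the unique maximal element; dually $i$ is the unique minimal element. Thus $H(w)=[i,j]$, the elements $i,j$ are its only $s$-elements, and every non-$s$ element of $H(w)$ lies in $(i,j)$.

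The crux, and the step I expect to be the main obstacle, is the following descent claim: \emph{if $v$ is a vertex and $c_1\prec c_2$ are consecutive $v$-elements of $H(w)$ with $(c_1,c_2)$ containing no element labelled $\le v$, then a contradiction results.} Since $c_1\prec c_2$ is not a covering relation, some element covers $c_1$ and lies in $(c_1,c_2)$; its label is a neighbour of $v$ exceeding $v$, hence is $v+1$ — already impossible when $v=n$. There is exactly one such element $f$, and dually exactly one $g\prec c_2$ covered by $c_2$ and labelled $v+1$; one checks every element of $(c_1,c_2)$ lies in $[f,g]$. If $f=g$, then $(c_1,c_2)=\{f\}$ and $\{c_1,f,c_2\}$ is a convex chain labelled $v,v+1,v$, forbidden by Proposition \ref{fc criterion}(2). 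If $f\prec g$, choose consecutive $(v+1)$-elements $c_1'\prec c_2'$ with $f\preccurlyeq c_1'\prec c_2'\preccurlyeq g$; these are consecutive $(v+1)$-elements of $H(w)$ and $(c_1',c_2')$ contains no element labelled $\le v+1$, so the argument repeats with $v$ replaced by $v+1$. As the vertex index strictly increases and is bounded by $n$, this terminates in a contradiction.

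It remains to assemble the conclusion in the base case. Every element of $(i,j)$ that covers $i$ or is covered by $j$ is labelled by a neighbour of $s$. If $(i,j)$ had only one element $x$ with a label adjacent to $s$, then $x$ would be the unique such cover of $i$ and the unique such coveree of $j$, forcing $i\lessdot x\lessdot j$ and then $(i,j)=\{x\}$, so $\{i,x,j\}$ would be a convex chain labelled $s,t,s$ with $m(s,t)=3$, forbidden; hence $(i,j)$ has at least two elements with labels adjacent to $s$. Conversely $(i,j)$ has at most one element labelled $s+1$: if it had two, take consecutive $(s+1)$-elements $c_1\prec c_2$; then $(c_1,c_2)\subseteq(i,j)$ contains no $s$-element, the elements covering $c_1$ inside it are labelled $s$ or $s+2$ and hence $s+2$, and the label-change observation then forbids any element labelled $\le s$ in $(c_1,c_2)$, so the descent claim at $v=s+1$ yields a contradiction. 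Symmetrically $(i,j)$ has at most one element labelled $s-1$. Since $s-1$ and $s+1$ are the only labels adjacent to $s$ in type $A$, we conclude that $(i,j)$ contains exactly two elements with labels adjacent to $s$, one labelled $s-1$ and one labelled $s+1$; in particular both neighbours exist, so $s$ is an interior vertex. This proves the lemma in the base case, hence in general; and the final assertion follows at once, since an endpoint $s$ has a single neighbour, so no open $s$-interval could contain two elements with distinct neighbour labels — hence $w$ has at most one occurrence of $s$.
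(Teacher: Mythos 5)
Your proof is correct, but it is worth pointing out that the paper does not actually prove Lemma \ref{type A sandwich}: it declares the statement well known and cites Remark 3.3.7 of \cite{Green_interval}, so your self-contained argument is a genuinely different route from what the paper does. The argument checks out. Your opening observation that in a type $A$ heap every covering relation changes the label by exactly $\pm 1$ is right (a covering relation must arise from a single non-commuting generating relation, and equal labels are excluded by Proposition \ref{fc criterion}(1)); the fact that every cover of $i$ lands inside $(i,j)$ holds because a cover positioned after $j$ in the word would give $i<j<x$, contradicting covering — a one-line justification worth including. The reduction to the case $H(w)=[i,j]$ is valid provided you explicitly allow deleting minimal as well as maximal elements outside $[i,j]$; your ``dually'' is doing that work silently. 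The descent claim is the real content, and it is sound: the unique cover $f$ of $c_1$ and unique coveree $g$ of $c_2$ pin $(c_1,c_2)$ inside $[f,g]$, the case $f=g$ produces the forbidden convex $(v,v+1,v)$-chain, and the recursion strictly increases the vertex index, so it must terminate in one of the two contradictions. The intermediate-value argument along saturated chains (labels change by $\pm1$, so reaching a label $\le s-1$ from $s+1$ forces an intervening $s$, which is excluded) correctly verifies the hypothesis of the descent claim at $v=s+1$, and the ``at least two'' half via the forbidden convex chain $\{i,x,j\}$ is also fine. What your approach buys is a complete proof from first principles using only Proposition \ref{fc criterion}, at the cost of considerable length; what the paper's citation buys is brevity, by outsourcing the combinatorics of type $A$ heaps to \cite{Green_interval}.
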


      \begin{proof}
        This is well-known; see, for example, Remark 3.3.7 of
        \cite{Green_interval}.
      \end{proof}

      \begin{lemma}
        Suppose $G$ is the Coxeter diagram shown in Figure \ref{ae7 graph}, and suppose
        $\la(w)\le 2$.   Then any open $d$-interval in $H(w)$ contains exactly two
        elements with labels from the set $\{c,e,h\}$, and their labels are
        distinct.

        \begin{figure}
          \begin{tikzpicture}

      \node[main node] (31) {}; 
      \node[main node] (32) [right=1cm of 31] {};
      \node[main node] (33) [right=1cm of 32] {};
      \node[main node] (34) [right=1cm of 33] {};
      \node[main node] (38) [right=1cm of 34] {};
      \node[main node] (39) [right=1cm of 38] {};
      \node[main node] (40) [right=1cm of 39] {}; 
      \node[main node] (41) [above=0.8cm of 34] {};

      \node (a) [below=0.2cm of 31] {a};
      \node (b) [below=0.1cm of 32] {b};
      \node (c) [below=0.2cm of 33] {c};
      \node (d) [below=0.1cm of 34] {d};
      \node (e) [below=0.2cm of 38] {e};
      \node (f) [below=0.1cm of 39] {f};
      \node (g) [below=0.2cm of 40] {g};
      \node (h) [above=0.1cm of 41] {h};
      
      \path[draw]
      (31)--(32)--(33)--(34)--(38)--(39)--(40)
      (34)--(41);
    \end{tikzpicture}
    \caption{}
    \label{ae7 graph}
  \end{figure}
  \label{ae7}
                \end{lemma}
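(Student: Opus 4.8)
The plan is to analyze an open $d$-interval $(i,j)$ in $H(w)$ purely combinatorially, using the local structure of the Coxeter diagram near $d$ together with the hypothesis $\la(w) \le 2$. The vertex $d$ is the trivalent vertex, adjacent precisely to $c$, $e$, and $h$. First I would recall the general heap fact (analogous to Lemma \ref{type A sandwich}, and provable by the same reasoning from Proposition \ref{fc heap criterion}) that in the lattice embedding each element of $H(w)$ labelled $d$ is connected by edges to the highest element below it in every adjacent column, and to the lowest element above it in every adjacent column; applied to two consecutive $d$-labelled elements $i < j$, this shows that the open interval $(i,j)$ contains \emph{at least} one element with a label adjacent to $d$ in each of the three branches through which a chain from $i$ to $j$ would otherwise run. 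The full commutativity criterion (Proposition \ref{fc heap criterion}(1)) rules out a covering relation between $i$ and $j$ directly, so $(i,j)$ is nonempty, and an element of $(i,j)$ with label in $\{c,e,h\}$ must exist — otherwise $i \prec j$ would be a covering relation with equal labels, which is forbidden. So the content is the upper bound: $(i,j)$ contains \emph{exactly} two such elements, with distinct labels.

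Next I would argue that $(i,j)$ cannot contain two elements with the same label from $\{c,e,h\}$, nor three elements with labels in $\{c,e,h\}$. The key point is that the minimal elements of $H(w)$ below $j$ that are comparable to both $i$ and $j$ — namely any $c$-, $e$-, or $h$-labelled element strictly between them — together with $i$ and $j$ themselves, will let me exhibit a large antichain in $H(w)$ unless the interval is small. Concretely, if $(i,j)$ contained elements $p, q$ with labels in $\{c, e, h\}$ both lying in two distinct branches, say $p$ labelled $c$ and $q$ labelled $e$, plus a third element $r$ in the $h$-branch, then I would locate, within each branch, a chain from $i$ up to $p$ (resp. $q$, $r$) and observe that by walking one step further into each branch I can produce pairwise incomparable elements: the branches are disjoint subdiagrams meeting only at $d$, so elements strictly inside distinct branches are incomparable in $H(w)$, and combined with a suitable element outside $[i,j]$ this gives an antichain of size $\ge 3$, whence $n(w) \ge 3$. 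Since type $E_{q,r}$ with $\min(q,r) \ge 3$ is a star reducible Coxeter group (or more simply, since we only need $\la(w) = n(w)$ here, which I would invoke via Proposition \ref{star reducible a} after checking star reducibility of these diagrams, citing \cite{star_reducible}), this forces $\la(w) \ge 3$, contradicting the hypothesis. The same antichain construction handles the case of two elements of $(i,j)$ sharing a label, since consecutive same-labelled elements in one branch are separated (by Lemma \ref{type A sandwich} applied to that branch, which is of type $A$) by an element deeper in the branch, again incomparable to elements in the other two branches and to a suitable element outside $[i,j]$.

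The main obstacle I anticipate is making the antichain bookkeeping precise: I must verify that the "extra" element I add outside $[i,j]$ — for instance the nearest $d$-labelled element below $i$, or a branch element below $i$ — is genuinely incomparable to the interior branch elements I have selected, and this requires care about which elements of $H(w)$ are forced to exist and where they sit relative to $i$ and $j$. I would organize this by fixing the lattice embedding, using that any chain between two comparable elements must pass through the $d$-column whenever it crosses between branches (since $d$ is a cut vertex of $G$), and reducing everything to the type-$A$ "sandwich" behavior of Lemma \ref{type A sandwich} within each branch. A secondary point to get right is the appeal to $\la(w) = n(w)$: I should confirm that the relevant $E_{q,r}$ diagrams fall under Proposition \ref{star reducible a}, or else phrase the argument so that only the easy inequality $\la(w) \ge n(w)$ is needed (which follows from Corollary \ref{weak order} applied to the subword spanned by an antichain of mutually commuting generators, via Corollary \ref{product a}). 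Once the incomparability claims are in hand, the count "exactly two, with distinct labels" follows immediately by combining the lower bound (at least one per branch that a crossing chain uses, hence the structure of $H(w)$ near $d$) with the upper bound (no three, no repeat) established above.
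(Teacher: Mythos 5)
Your overall strategy is the same as the paper's: treat the three branches at $d$ as type-$A$ subdiagrams to bound occurrences of $c,e,h$, and rule out all three labels appearing by exhibiting an antichain of size $3$ and converting $n(w)\ge 3$ into $\la(w)\ge 3$. However, three points in your execution need repair. First, your lower bound only establishes that the open $d$-interval contains \emph{at least one} element with label in $\{c,e,h\}$ (no covering relation between equal labels); to get \emph{at least two with distinct labels} you must invoke condition (2) of Proposition \ref{fc criterion}: if only one such element $x$ occurred, then $i,x,j$ would be a convex chain $d,x,d$ of length $3=m(d,s_x)$, contradicting full commutativity. Second, your antichain construction is both over-engineered and, as stated, unsound: the three elements of $(i,j)$ labelled $c$, $e$, $h$ \emph{already} form an antichain, because any chain between two of them lies entirely inside $(i,j)$, must pass through a $d$-labelled element (as $d$ is a cut vertex separating the branches), and $(i,j)$ contains no such element. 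There is no need to ``walk one step further'' or to adjoin ``a suitable element outside $[i,j]$'' --- indeed such an outside element would typically be comparable to your chosen interior elements, so that step would not go through. Your blanket claim that elements in distinct branches are incomparable in $H(w)$ is false in general (e.g.\ a $c$-element below a $d$-element below an $e$-element); it is only the absence of $d$ inside the open interval that saves it. Third, you need not agonize over star reducibility: the diagram in Figure \ref{ae7 graph} is exactly $\tilde E_7$, an affine Weyl group, so Proposition \ref{a from heap} gives $\la(w)=n(w)$ directly --- that is what the paper does --- although your fallback, deriving only $\la(w)\ge n(w)$ from corollaries \ref{weak order} and \ref{product a} applied to the commuting product spanned by an antichain, is also valid and would suffice. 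Finally, for the ``no repeated label'' claim, the direct route (which the paper takes) is cleaner than your antichain detour: restricted to $(i,j)$ the labels lie in $S\setminus\{d\}$, and in the resulting type-$A$ components $c$, $e$, $h$ are endpoints, so each occurs at most once by the last sentence of Lemma \ref{type A sandwich}.
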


                \begin{proof}
                  Let $I$ be an open $d$-interval.  Deleting $d$ from $G$ produces a union of three subgraphs of
                  type $A$ in which $c,e$ and $h$ appear as endpoints,
                  therefore Lemma \ref{type A sandwich} implies that each of $c,e$ and
                  $h$ can appear at most once (as the label of an element) in
                  $I$. Since $w$ is fully commutative, at least two of them
                  must appear in $I$. Finally, since $G$ is the Coxeter diagram
                 $\tilde E_7$, an affine Weyl group of type $E$, $c,e,h$ cannot all appear in
                  $I$ because otherwise the corresponding elements would form an
                  antichain of length 3 and we would have $\la(w)=n(w)\ge 3$ by
                  Proposition \ref{a from heap}. It follows that $I$ contains exactly two
                  elements with distinct labels from $\{c,e,h\}$.
                \end{proof}

                \begin{lemma}
                  Let $G$ and $w$ be as in Lemma \ref{ae7}. Then any open $h$-interval in
                  $H(w)$ must contain precisely two occurrences of $d$.
                  \label{d between h}
                \end{lemma}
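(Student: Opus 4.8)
The plan is to use that type $\tilde E_7$ is an affine Weyl group, so $\la(w)=n(w)$ by Proposition~\ref{a from heap}; since $\la(w)\le 2$, the heap $H(w)$ contains no antichain of size $3$. I will also use the full-commutativity criteria of Propositions~\ref{fc criterion} and~\ref{fc heap criterion}. Fix an open $h$-interval $(h_1,h_2)$ in $H(w)$, with $h_1<h_2$ consecutive occurrences of $h$. Since $h$ is a leaf of $G$ adjacent only to $d$, no element of $(h_1,h_2)$ is labelled $h$, and the occurrences of $d$ lying in $(h_1,h_2)$ are consecutive occurrences of $d$ in $H(w)$; write them $d_1<\dots<d_k$. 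I want to prove $k=2$.

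For the lower bound I would argue as follows. If $k=0$, a saturated chain from $h_1$ to $h_2$ is either the single covering relation $h_1\lessdot h_2$---impossible by Proposition~\ref{fc criterion}(1), as $h_1$ and $h_2$ share the label $h$---or it contains an element of $(h_1,h_2)$ whose label is adjacent to or equal to $h$, hence labelled $d$ or $h$, which is again impossible. So $k\ge 1$. If $k=1$, let $d_0$ be the unique $d$ in $(h_1,h_2)$; a short analysis of saturated chains (any saturated chain from $h_1$ to $d_0$ or from $d_0$ to $h_2$ must have length one, since otherwise its second element would lie in $(h_1,h_2)$ and be labelled $d$ or $h$) shows $(h_1,h_2)=\{d_0\}$. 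Then $h_1<d_0<h_2$ is a convex chain with labels $h,d,h$ and $m(h,d)=3$, contradicting Proposition~\ref{fc criterion}(2). Hence $k\ge 2$.

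The substantial part is the upper bound, which I would prove by contradiction. Assume $k\ge 3$ and pick three consecutive occurrences $d_1<d_2<d_3$ of $d$ inside $(h_1,h_2)$. The open $d$-intervals $(d_1,d_2)$ and $(d_2,d_3)$ lie inside $(h_1,h_2)$, hence contain no $h$, so by Lemma~\ref{ae7} each contains exactly one element of label $c$ and one of label $e$; call them $c_i$ and $e_i$ ($i=1,2$). I would then show that $c_i$ and $e_i$ are incomparable: any chain in $H(w)$ joining a $c$-labelled element to an $e$-labelled element passes through a $d$-labelled element, which here would have to lie in the $d$-free open interval $(d_i,d_{i+1})$. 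A further chase of covering relations then gives $(d_i,d_{i+1})=\{c_i,e_i\}$ exactly---an element of any other label there would be forced both above and below the incomparable pair $c_i,e_i$. In particular $c_1,c_2$ are consecutive occurrences of $c$ and $c_1<d_2<c_2$. Now I split into cases. If $(c_1,c_2)=\{d_2\}$, then $c_1<d_2<c_2$ is a convex chain with labels $c,d,c$ and $m(c,d)=3$, contradicting Proposition~\ref{fc criterion}(2). Otherwise $(c_1,c_2)$ contains an element besides $d_2$, and following a saturated chain upward from $c_1$ towards it yields a $b$-labelled element $b_\ast$ with $c_1\lessdot b_\ast$ and $b_\ast<d_3$; since a $b$-labelled element cannot lie in $(d_1,d_2)=\{c_1,e_1\}$ or $(d_2,d_3)=\{c_2,e_2\}$, this forces $b_\ast$ incomparable with $d_2$. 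The symmetric argument (swapping $c,b,a$ with $e,f,g$) produces an $f$-labelled element $f_\ast$ with $e_1\lessdot f_\ast$, $f_\ast<d_3$, and $f_\ast$ incomparable with $d_2$. Finally $b_\ast$ and $f_\ast$ are incomparable, since a chain between them would pass through a $d$-labelled element strictly between $d_1$ and $d_3$, necessarily $d_2$, which would make $b_\ast$ and $d_2$ comparable. Hence $\{b_\ast,f_\ast,d_2\}$ is an antichain of size $3$ in $H(w)$, so $n(w)\ge 3$ and $\la(w)\ge 3$ by Proposition~\ref{a from heap}, a contradiction. This forces $k\le 2$, and so $k=2$.

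The main obstacle is the upper bound, and in particular the fact that the naive approach fails: three $d$'s between two $h$'s do not directly yield a forbidden convex chain of type $c,d,c$, because full commutativity itself forces an extra $b$-labelled element (and, symmetrically, an $f$-labelled element) between $c_1$ and $c_2$, destroying convexity. The point of the argument is to turn these interleaving elements, together with $d_2$, into the size-$3$ antichain ruled out by $\la(w)\le 2$. The supporting claims---$c_i$ and $e_i$ incomparable, $(d_i,d_{i+1})=\{c_i,e_i\}$, and the three incomparabilities among $b_\ast$, $f_\ast$, $d_2$---are routine but demand careful tracking of covering relations and saturated chains in the lattice embedding of $H(w)$.
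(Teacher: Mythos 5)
Your argument is correct and follows essentially the same route as the paper's: use Lemma~\ref{ae7} to locate the unique $c$'s and $e$'s in the open $d$-intervals, produce a $b$-labelled element between the two $c$'s and (symmetrically) an $f$-labelled element between the two $e$'s, and exhibit the size-$3$ antichain containing $d_2$ to contradict $\la(w)=n(w)\le 2$ via Proposition~\ref{a from heap}. The only loose point is the claim that a saturated chain from $c_1$ into $(c_1,c_2)$ immediately yields a $b$-labelled cover of $c_1$ --- its first step could instead be $d_2$ --- but the existence of \emph{some} $b$ in $(c_1,c_2)$, which is all your antichain needs, still follows from your own observation that otherwise every element of $(c_1,c_2)$ would be forced to equal $d_2$.
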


                \begin{proof}
                  Let $I$ be an open $h$-interval.
                Since $w$ is fully commutative and $d$ is the only vertex
                adjacent to $h$ in $G$, $I$ must contain at least two
                occurrences of $d$, so it suffices to show that $I$ cannot
                contain three or more occurrences of $d$. 

                For a contradiction, suppose $I$ contains three elements
                $d_1,d_2,d_3$ with label $d$. By definition, all elements in $I$
                are labelled by vertices from the subgraph of type $A$ induced
                by $a,b,\cdots g$, therefore the interval $(d_1,d_2)$ contains
                exactly one element with label $c$ and exactly one element
                with label $e$ by Lemma \ref{type A sandwich}; call them $c_1$ and
                $e_1$, respectively. Similarly, $(d_2,d_3)$ contains unique
                elements $c_2$ and $e_2$ with labels $c$ and $e$, respectively.
                Thus, the interval $(d_1,d_3)$ contains the sequence 
                \[
                d_1, c_1,e_1, d_2, c_2,e_2,d_3
                \]
                in weakly increasing order.

             Now consider the interval $J=(c_1,c_2)$. Since it contains exactly
             one occurrence of $d$ and $w$ is fully commutative,
             $J$ contains at least
                one occurrence of $b$. Moreover, since deletion of $c$ from
                $G$ leaves $b$ as an endpoint on a subgraph of type $A$,
                the appearance of $b$ in $J$ must be unique by Lemma \ref{type
                A sandwich}. Similarly, by considering the interval
                $J'=(e_1,e_2)$, we may conclude that $J'$ contains a unique
                occurrence of $f$. But then the elements with labels $b,d,f$
                in $(d_1,d_3)$ form an antichain of length $3$ in $H(w)$,
                therefore $\la(w)= n(w)\ge 3$ since $G$ is of type $\tilde E$.
                This contradicts our assumption that $\la(w)\le 2$, therefore
                $I$ contains precisely two occurrences of $d$. 
              \end{proof}

              \begin{lemma}
                Let $G$ and $w$ be as in Lemma \ref{ae7}. Then $w$ cannot
                contain three or more occurrences of $h$.
                \label{no three h}
              \end{lemma}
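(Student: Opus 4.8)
The proof will be by contradiction: assuming $w$ has three or more occurrences of $h$ I will exhibit a three-element antichain in $H(w)$, and since $\tilde E_7$ is an affine Weyl group, Proposition \ref{a from heap} then gives $\la(w)=n(w)\ge 3$, contradicting the hypothesis $\la(w)\le 2$. So suppose $w$ has at least three occurrences of $h$ and pick three that are consecutive among the $h$-labelled elements of $H(w)$, say $h_1<h_2<h_3$, so that $(h_1,h_2)$ and $(h_2,h_3)$ are open $h$-intervals. By Lemma \ref{d between h} each of these contains exactly two occurrences of $d$; write $d_1<d_2$ for the two in $(h_1,h_2)$ and $d_3<d_4$ for the two in $(h_2,h_3)$, so that $h_1<d_1<d_2<h_2<d_3<d_4<h_3$. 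A short check (using, e.g., that $d_2$ is the last $d$ before $h_2$ and $d_3$ the first after it) shows that $(d_1,d_2)$, $(d_2,d_3)$ and $(d_3,d_4)$ are all open $d$-intervals. By Lemma \ref{ae7}, the intervals $(d_1,d_2)$ and $(d_3,d_4)$ (which contain no $h$, being inside open $h$-intervals) each contain exactly one element labelled $c$ and one labelled $e$, whereas $(d_2,d_3)$ contains $h_2$ together with exactly one further element, labelled $c$ or $e$. Applying the diagram automorphism of $\tilde E_7$ interchanging the arms $\{a,b,c\}$ and $\{e,f,g\}$ and fixing $d$ and $h$ (this preserves both $\la$ and the number of occurrences of $h$), we may assume that element is labelled $c$; call it $c_0$, and let $c_1,c_3$ denote the $c$-labelled elements of $(d_1,d_2),(d_3,d_4)$.

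Next I build a short chain of auxiliary elements, paralleling the method of Lemma \ref{d between h}. Now $c_1<c_0<c_3$ are three consecutive $c$'s, and $(c_1,c_0)$, $(c_0,c_3)$ are open $c$-intervals containing exactly one $d$ each, namely $d_2$ and $d_3$. Since $w$ is fully commutative, the covering chain in $H(w)$ with labels $c,d,c$ running through $c_1,d_2,c_0$ (of length $m(c,d)=3$) must, by Proposition \ref{fc heap criterion}, admit a second connecting chain, which forces an element labelled $b$ in $(c_1,c_0)$; by Lemma \ref{type A sandwich}, applied in the parabolic obtained by deleting $c$ (where $b$ is an endpoint of a type-$A$ component of the diagram), this $b$ is unique. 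Call it $b_1$, and let $b_3$ be the analogous unique $b$ in $(c_0,c_3)$. Then $c_1<b_1<c_0<b_3<c_3$, and one checks that $b_1,b_3$ are consecutive $b$'s, so $(b_1,b_3)$ is an open $b$-interval. Running the same argument one step further—the chain through $b_1,c_0,b_3$ with labels $b,c,b$ forces a second connecting chain, hence an element labelled $a$ in $(b_1,b_3)$, unique since $a$ is isolated after deleting $b$—produces an element $a_1$ labelled $a$ with $b_1<a_1<b_3$.

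I then claim $\{a_1,c_0,h_2\}$ is an antichain in $H(w)$. The incomparabilities use the principle already exploited in Lemma \ref{d between h}: two heap elements whose labels lie at distance $\ge 2$ in $G$ are incomparable as soon as no label on the connecting $G$-path occurs strictly between them in $w$. Indeed, $c_0$ and $h_2$ both lie in the open $d$-interval $(d_2,d_3)$ and every $c$–$h$ path in $G$ meets $d$; $a_1$ and $c_0$ both lie in the open $b$-interval $(b_1,b_3)$ and every $a$–$c$ path in $G$ meets $b$; finally, comparability of $a_1$ and $h_2$ would force a $c$-labelled element strictly between them in $w$, but the only $c$'s in the stretch between $d_1$ and $d_4$ are $c_1,c_0,c_3$, and $c_1$ occurs before both $a_1$ and $h_2$ (since $c_1<b_1<a_1$ and $c_1<d_2<h_2$), $c_3$ occurs after both, while $c_0$ cannot occur strictly between $a_1$ and $h_2$ without in turn forcing a $b$-labelled element strictly between $a_1$ and $c_0$—impossible inside the open $b$-interval $(b_1,b_3)$. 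Thus $n(w)\ge 3$ and we reach the promised contradiction.

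I expect the main obstacle to be this last incomparability, $a_1$ versus $h_2$: unlike the other two pairs it is not an instance of two elements sitting in one common open interval, and the argument needs the precise positions of $c_0$ and of the $b$'s relative to $a_1$ and $h_2$. A secondary point to handle carefully throughout is the legitimacy of the "pass to an open subinterval" moves—namely that a contiguous subword of a fully commutative element is again fully commutative and has heap the induced subposet—so that Lemma \ref{type A sandwich} and Proposition \ref{fc heap criterion} may be applied inside the subwords spanning $(c_1,c_0)$ and $(b_1,b_3)$.
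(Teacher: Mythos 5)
Your proposal is correct and follows essentially the same route as the paper's proof: three consecutive $h$'s force the same configuration of $d$'s, $c$'s, and $b$'s via Lemmas \ref{d between h}, \ref{ae7} and \ref{type A sandwich}, culminating in the same three-element antichain $\{a_1,c_0,h_2\}$ (the paper's $\{a_1,c_2,h_2\}$) and the same contradiction with Proposition \ref{a from heap}. The only difference is that you spell out the pairwise incomparabilities—in particular the $a_1$ versus $h_2$ case—which the paper asserts without detail; your verification of that step is sound.
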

              \begin{proof}
                Suppose for a contradiction that $H(w)$ contains three
                consecutive elements $h_1, h_2$ and $h_3$ with label $h$. By
                Lemma \ref{d between h}, there are precisely two elements
                $d_1, d_2$ with label $d$ in $(h_1,h_2)$ and  two elements
                $d_3,d_4$ with label $d$ in $(h_2,h_3)$. Moreover, by Lemma
                \ref{ae7}, $(d_1,d_2)$ and $(d_3,d_4)$ each contains exactly
                one occurrence each of $c$ and $e$, and $(d_2,d_3)$ contains
                one occurrence of $c$ or $e$. Without loss of generality,
                suppose $(d_2,d_3)$ contains an element labelled by $c$. Then the
                interval $[h_1,h_3]$ contains the sequence
                \[
                h_1,d_1,c_1,e_1,d_2,c_2, h_2, d_3,c_3, e_2,d_4, h_3
                \]
                in weakly increasing order. In this sequence, each $c_i$ is
                labelled by $c$, each $e_i$ is labelled by
                $e$, and all elements in $[h_1,h_3]$ with labels $c,d,e$ or
                $h$ have been listed.

                Arguing as in the proof of  Lemma \ref{d between h}, we see
                that $(c_1,c_2)$ must contain a unique element, say $b_1$, with
                label $b$, and $(c_2,c_3)$ must contain a unique element, say
                $b_2$, with label $b$.  Moreover, any two occurrences of
                $b$ must be separated by an occurrence of $c$, therefore
                $b_1$ and $b_2$ are consecutive elements with label $b$. But
                then there must be an element, say $a_1$, with label $a$ in
                $(b_1,b_2)$. The elements $a_1, c_2,h_2$ now form an antichain
                of length $3$ in $H(w)$, therefore $\la(w)\ge 3$, contradicting
                the assumption that $\la(w)\le 2$.
              \end{proof}

              \begin{prop}
                \label{ae7 case}
                Let $G$ be of the form $E_{q,r}$ from Figure \ref{fig:finite E}, and suppose
                $\min(q,r)\ge 3$. Then $W$ is $\la(2)$-finite.
              \end{prop}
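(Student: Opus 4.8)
The plan is as follows. First I would fix notation: relabel the vertices of $G=E_{q,r}$ so that $d$ is the trivalent vertex, $h$ the leaf adjacent to $d$, and the two arms are $c=l_1\sim b=l_2\sim a=l_3\sim l_4\sim\cdots\sim l_q$ and $e=r_1\sim f=r_2\sim g=r_3\sim r_4\sim\cdots\sim r_r$; then $\{a,b,c,d,e,f,g,h\}$ spans a parabolic subgroup of type $\tilde E_7$ (the setting of Lemmas \ref{ae7}, \ref{d between h}, \ref{no three h}), and deleting $h$ from $G$ leaves the spine $l_q-\cdots-l_1-d-r_1-\cdots-r_r$, a path of type $A_{q+r+1}$. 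Since $E_{q,r}$ is a tree with no edge of weight exceeding $3$, $W$ is both $\la(0)$-finite and $\la(1)$-finite by Propositions \ref{subregular} and \ref{tree}, so it suffices to show that $\{w\in W:\la(w)\le 2\}$ is finite; by Proposition \ref{a2 is fc} every such $w$ is fully commutative, so the goal becomes: bound $l(w)$ over all fully commutative $w$ with $\la(w)\le 2$.

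Second, I would record a general principle valid in any Coxeter group: if $w$ is fully commutative and $H(w)$ contains an antichain $A$ of size $k$, then $\la(w)\ge k$. Indeed, two elements of a heap with equal or adjacent labels are $\preccurlyeq$-comparable, so the labels occurring in $A$ are pairwise distinct and pairwise commuting; and $A$, being an antichain, is a convex subset of $H(w)$, so $\downarrow A\setminus A$ is an order ideal whose complement in $\downarrow A$ is exactly the set of maximal elements of $\downarrow A$, namely $A$. Hence $w$ admits a length-additive factorization $w=u\cdot w_A\cdot v$ in which $w_A$ is the product of the $k$ commuting generators labelling $A$; by Corollary \ref{product a} we have $\la(w_A)=k$, so $\la(w)\ge k$ by Corollary \ref{weak order}. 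This observation lets the proofs of Lemmas \ref{ae7}, \ref{d between h} and \ref{no three h} be re-run verbatim for $G=E_{q,r}$ with $\min(q,r)\ge 3$: the only use of the hypothesis ``$G=\tilde E_7$'' in those proofs is to convert a size-$3$ antichain in $H(w)$ into the bound $\la(w)\ge 3$ via Proposition \ref{a from heap}, which we now obtain directly, while the single type-$A$ subdiagram invoked through Lemma \ref{type A sandwich} ($A_7$ in the $\tilde E_7$ case) is replaced by the spine $A_{q+r+1}$ and by the two arms. Carrying this out yields: every fully commutative $w$ with $\la(w)\le 2$ has at most two elements of $H(w)$ labelled $h$.

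Third comes a short ``peeling'' argument. Put $N=\binom{q+r+2}{2}$, the length of the longest element of the finite group $W_{S\setminus\{h\}}\cong W(A_{q+r+1})$, and let $w$ be fully commutative with at most two occurrences of $h$ (in particular, any $w$ with $\la(w)\le 2$). If $h$ does not occur in $w$ then $w\in W_{S\setminus\{h\}}$, so $l(w)\le N$. Otherwise let $p$ be the $\preccurlyeq$-minimal element of $H(w)$ labelled $h$ and $J=\{x\in H(w):x\preccurlyeq p\}$; this is an order ideal with unique maximal element $p$, so $w=w_J\cdot w'$ with lengths adding, $w_J$ and $w'$ being the fully commutative elements with heaps $J$ and $H(w)\setminus J$. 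As $p$ is the unique maximal element of $J$, we have $\mathcal{R}(w_J)=\{h\}$, so $w_J=vh$ with $v$ fully commutative of heap $J\setminus\{p\}$; this heap has no element labelled $h$, so $v\in W_{S\setminus\{h\}}$ and $l(w_J)\le N+1$. Moreover $w'$ has at most one element of its heap labelled $h$ (the other one, $p$, lies in $J$), so one further application of the same step gives $l(w')\le (N+1)+N$ and hence $l(w)\le 2(N+1)+N$. Thus there are only finitely many fully commutative $w$ with $\la(w)\le 2$, and $W$ is $\la(2)$-finite.

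I expect the genuine obstacle to be the second step, i.e.\ establishing the bound of two occurrences of $h$; this is essentially the content of Lemmas \ref{ae7}--\ref{no three h}, whose proofs are delicate (they track, inside one heap, interleavings of occurrences of $d$, of $c$ and $e$, and of $b$ and $f$, repeatedly extracting size-$3$ antichains by exploiting the tree structure of $G$). The extra work needed beyond the stated lemmas is merely to verify that those arguments survive the passage from $\tilde E_7$ to an arbitrary $E_{q,r}$ with $\min(q,r)\ge 3$, for which the general antichain principle above replaces the appeal to Shi's theorem (Proposition \ref{a from heap}) available only in the affine case; once two occurrences of $h$ are in hand, the peeling argument is routine. (A tempting shortcut---showing that $w$ with $\la(w)\le 2$ can only involve the $\tilde E_7$-part of $G$---fails, since e.g.\ the element with reduced word $l_q\cdots l_1\,d\,h$ has a chain heap and hence $\la$-value $1$.)
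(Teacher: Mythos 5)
Your proof is correct and follows essentially the same route as the paper: bound the number of occurrences of the short-arm leaf $h$ by two via the chain of interval lemmas (Lemmas \ref{ae7}--\ref{no three h}), then conclude from the finiteness of the type-$A$ parabolic subgroup obtained by deleting $h$. Your explicit ``antichain of size $k$ in $H(w)$ implies $\la(w)\ge k$'' principle is a worthwhile addition rather than a detour: the paper's lemmas are stated only for $\tilde E_7$ and their proofs invoke Shi's theorem (Proposition \ref{a from heap}, valid only for Weyl and affine Weyl groups), so your principle is exactly what is needed to justify applying those lemmas to a general $E_{q,r}$ with $\min(q,r)\ge 3$, a point the paper leaves implicit.
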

              \begin{proof}
            Denote the top vertex on the shortest branch of $G$ by $s$, and let
            $W'$ be the Coxeter group generated by $S\setminus \{s\}$. Any
            element $w\in W$ with $\la(w)=2$ can be written in the form $w=w_1
            sw_2s\cdots sw_n$ for some $n\ge 0$ and $w_1, \cdots, w_n\in W'$.
            By Lemma \ref{no three h}, we must have $n\le 3$ if $\la(w)=2$. Since $W'$ is of type $A$ and thus finite, this implies that $W$ is $\la(2)$-finite. 
                \end{proof}

                We can now prove Proposition \ref{sufficient
                prop}, i.e. the ``if'' direction of Theorem  \ref{first
                theorem}.(2).

                \begin{proof}[Proof of Proposition \ref{sufficient prop}]
        Recall that any element of $\la$-value 2 is necessarily fully commutative by
        Proposition \ref{a2 is fc}. Thus, Proposition \ref{fc finite} implies that
        $W$ is $\la(2)$-finite if $G$ is of type $A, B, E_{1,r}, E_{2,r}, F, H$ or
        $I_2(m)$ where $5\le m<\infty$ from
        Figure \ref{fig:finite E}. If $G=I_2(\infty)$, $W$ is $\la(2)$-finite
        by Proposition \ref{sufficient cycle}. If $G$ is of type
        $\tilde C$,
        $W$ is $\la(2)$-finite by Proposition
        \ref{Ernst}. Finally, Proposition \ref{ae7 case} says that $W$ is
        $\la(2)$-finite if $G$ is of the form $E_{q,r}$ where
        $q,r\ge 3$. This completes the proof.
      \end{proof}

      \section{Proof of Theorem \ref{first theorem}: Necessity of the diagram
    criteria}
    \label{sec:necessity}
     Let $W$ be an irreducible Coxeter group with Coxeter diagram $G$. We now
     prove the ``only if'' direction of Theorem \ref{first theorem}.
To do so, we first prove a series of lemmas that
      each says that $W$ is $\la(2)$-infinite if $G$ contains a
      certain subgraph. We then argue in the
      last subsection that
      in order for $G$ not to contain these subgraphs, it has to be a graph from Figure \ref{fig:finite E}.

We shall call the elements of $\la$-value 2 in our lemmas
      \emph{witnesses}. Based on the method we use to prove that the witnesses have $\la$-value
      2, we will group our lemmas into three subsections.

      By Proposition \ref{a local}, to show that $W$ is $\la(2)$-infinite when $G$ contains a certain subgraph
      $G'$, it suffices to find infinitely many witnesses of $\la$-value 2 in the Coxeter group with $G'$ as its Coxeter
      diagram. We will use this fact without comment throughout the rest of the
      paper.

    \subsection{Lemmas with heap arguments}
      \label{first lemmas}
     For our first set of lemmas, the proofs that the
      witnesses have $\la$-value 2 will rely only on propositions
      \ref{a from heap} and \ref{star reducible a} from Section \ref{fc}. 
      In particular, no star operations will be involved in the arguments.


      \begin{lemma}
        \label{affine B}
        If $G$ contains a subgraph of the form
        \begin{center}
          \begin{tikzpicture}
            \node[main node] (1) {};
            \node[main node] (8) [above left = 0.5cm and 0.8cm of 1] {};
            \node[main node] (9) [below left = 0.5cm and 0.8cm of 1] {};
            \node[main node] (2) [right=1cm of 1] {};
            \node[main node] (5) [right=1.5cm of 2] {};
            \node[main node] (6) [right=1cm of 5] {};

            \node (88) [left=0.1cm of 8] {$a$};
            \node (99) [left=0.1cm of 9] {$b$};
            \node (11) [below=0.1cm of 1] {$v_0$};
            \node (22) [below=0.1cm of 2] {$v_1$};
            \node (55) [below=0.1cm of 5] {$v_{n-1}$};
            \node (66) [below=0.1cm of 6] {$v_n$};

            \path[draw]
            (8) edge node {} (1)
            (9) edge node {} (1)
            (1) edge node {} (2)
            (5) edge node [above] {$4$} (6);
            \path[draw,dashed]
            (2)--(5);
          \end{tikzpicture}
        \end{center}
\noindent where $n\ge 1$ and all edges other than $\{v_{n-1},v_n\}$
        have weight 3. Then $W$ is $\la(2)$-infinite. 

             \end{lemma}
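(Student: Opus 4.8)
The plan is to recognize the displayed subgraph as the Coxeter diagram of the affine Weyl group $\tilde B_{n+2}$ (a fork at $v_0$, the path $v_0, v_1, \dots, v_{n-1}$, and the weight-$4$ bond $\{v_{n-1}, v_n\}$, giving $n+3$ vertices in total), and then to exploit Proposition~\ref{a from heap}: in an affine Weyl group one has $\la(w) = n(w)$ for every fully commutative $w$. By Proposition~\ref{a local} it is enough to produce infinitely many fully commutative elements of the parabolic subgroup generated by $\{a, b, v_0, \dots, v_n\}$ whose heaps have maximum antichain size exactly $2$ but unbounded cardinality.

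For the witnesses I would use the following ``chain of diamonds'' family. Let $\sigma$ be the word reading $v_0, v_1, \dots, v_n$ and then $v_{n-1}, v_{n-2}, \dots, v_1$ (so $\sigma = v_0 v_1$ when $n=1$), put $\pi = a\,b\,\sigma$, and set
\[
  w_k \;=\; v_0 \cdot (\pi\, v_0)^k \qquad (k \ge 1).
\]
In the lattice embedding, the heap of $w_k$ consists of $k$ copies of the four-element diamond poset --- each with bottom and top labelled $v_0$ and with the two incomparable middle elements labelled $a$ and $b$ --- and consecutive diamonds are joined by a saturated chain, namely the ``snake'' through $v_1, \dots, v_{n-1}, v_n, v_{n-1}, \dots, v_1$, running from the top of one diamond to the bottom of the next. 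In particular $l(w_k) = 1 + k(2n+3) \to \infty$, so the $w_k$ are pairwise distinct.

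Two verifications then remain. First, using Proposition~\ref{fc heap criterion}, I would check that each $w_k$ is reduced and fully commutative: no column of the embedding contains two edge-connected points (consecutive occurrences of $v_0$ are always separated by an occurrence of $a$, of $b$, or of $v_1$, and similarly for the other columns), the weight-$3$ chains with labels $a, v_0, a$ and $b, v_0, b$ are never convex because the branch letters lie strictly between, and although chains with labels $v_{n-1}, v_n, v_{n-1}$ occur they have length $3 < 4$, while no chain with labels $v_{n-1}, v_n, v_{n-1}, v_n$ is convex since any two consecutive occurrences of $v_n$ are separated by an entire diamond. Second, I would show $n(w_k) = 2$: within one diamond the only $2$-element antichain is $\{a, b\}$, while any two elements lying in distinct diamonds are comparable because the connecting snake is a saturated chain acting as a bottleneck, and each spine element (an occurrence of some $v_i$) is comparable to every element of $H(w_k)$; equivalently $H(w_k)$ is visibly a disjoint union of two chains, so an antichain meets each at most once and has size at most $2$, and any diamond's pair $\{a, b\}$ shows $n(w_k) = 2$. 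Then Proposition~\ref{a from heap} gives $\la(w_k) = 2$ in $\tilde B_{n+2}$, and Proposition~\ref{a local} transports this to $W$, so $W$ is $\la(2)$-infinite. The step I expect to be the main obstacle is the full-commutativity bookkeeping: one must simultaneously control the three weight-$3$ edges at the fork and the weight-$4$ bond at the far end of the branch, uniformly in both $n$ and $k$, and this is precisely where the explicit periodic structure of $w_k$ combined with Proposition~\ref{fc heap criterion} has to do the work.
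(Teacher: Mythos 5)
Your proposal is correct and follows essentially the same route as the paper: the paper's witnesses are $w_k=(abv_0v_1\cdots v_{n-1}v_nv_{n-1}\cdots v_1v_0)^k$, which differ from yours only by the prepended $v_0$, and the paper likewise verifies full commutativity and $n(w_k)=2$ from the heap and then invokes Shi's result (Proposition \ref{a from heap}) for the affine Weyl group of type $B$ together with Proposition \ref{a local}. Your partition of the heap into two chains is a slightly more explicit justification of the antichain bound than the paper's remark that consecutive levels are comparable, but the argument is the same.
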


      \begin{proof}
Let \[
    w_k=(abv_0v_1\cdots
  v_{n-1}v_nv_{n-1}\cdots v_1v_0)^k.
\]
for $k\in \Z_{\ge 1}$.
The heap of $w_k$ is shown below, where the dashed rectangles correspond to the parenthesized
        expression in $w_k$ and are repeated $k$ times.
  
\begin{center}
          \resizebox{0.5\textwidth}{!}{
            \begin{tikzpicture} 
              \node[main node] (1) {};
              \node[main node] (0) [below left = 1cm and 1cm of 1] {};
              \node (00) [above=0.01cm of 0] {$b$};
              \node[main node] (000) [left=1cm of 0] {};
              \node (0000) [above=0.01cm of 000] {$a$};
              \node[main node] (2) [above right = 0.5cm and 1.5cm of 1] {};
              \node[main node] (3) [above right = 0.5cm and 1.5cm of 2] {};
              \node[main node] (4) [above right = 0.5cm and 1.5cm of 3] {};
              \node[main node] (5) [above left = 0.5cm and 1.5cm of 4] {};
              \node[main node] (6) [above left = 0.5cm and 1.5cm of 5] {};
              \node[main node] (7) [above left = 0.5cm and 1.5cm of 6] {};
              \node[main node] (9) [above left = 1cm and 1cm of 7] {};
              \node[main node] (8) [left = 1cm of 9] {};
              \node[main node] (b1) [above right = 1cm and 1cm of 9] {};
              \node[main node] (b2) [above right = 0.5cm and 1.5cm of b1] {};
              \node[main node] (b3) [above right = 0.5cm and 1.5cm of b2] {};
              \node[main node] (b4) [above right = 0.5cm and 1.5cm of b3] {};
              \node[main node] (b5) [above left = 0.5cm and 1.5cm of b4] {};
              \node[main node] (b6) [above left = 0.5cm and 1.5cm of b5] {};
              \node[main node] (b7) [above left = 0.5cm and 1.5cm of b6] {};
              \node[main node] (b9) [above left = 1cm and 1cm of b7] {};
              \node[main node] (b8) [left = 1cm of b9] {};
              \node[main node] (bb1) [above right = 1cm and 1cm of b9] {};
              \node (bb2) [above right=0.5cm and 1.5cm of bb1] {};

              \node (c1) [above=0.01cm of 1] {$v_0$};
              \node (c2) [above=0.01cm of 2] {$v_1$};
              \node (c3) [above=0.01cm of 3] {$v_{-1}$};
              \node (c4) [above=0.01cm of 4] {$v_n$};
              \node (c5) [above=0.01cm of 5] {$v_{n-1}$};
              \node (c6) [above=0.01cm of 6] {$v_1$};
              \node (c7) [above=0.01cm of 7] {$v_0$};
              \node (c8) [above=0.01cm of 8] {$a$};
              \node (c9) [above=0.01cm of 9] {$b$};
              \node (bc1) [above=0.01cm of b1] {$v_0$};
              \node (bc2) [above=0.01cm of b2] {$v_1$};
              \node (bc3) [above=0.01cm of b3] {$v_{-1}$};
              \node (bc4) [above=0.01cm of b4] {$v_n$};
              \node (bc5) [above=0.01cm of b5] {$v_{n-1}$};
              \node (bc6) [above=0.01cm of b6] {$v_1$};
              \node (bc7) [above=0.01cm of b7] {$v_0$};
              \node (bc8) [above=0.01cm of b8] {$a$};
              \node (bc9) [above=0.01cm of b9] {$b$};
              \node (bbb1) [above=0.01cm of bb1] {$v_0$};

              \path[draw]
              (000)--(1)
              (0)--(1)--(2)
              (3)--(4)--(5)
              (6)--(7)--(9)--(b1)--(b2)
              (b3)--(b4)--(b5)
              (b6)--(b7)--(b9)--(bb1)
              (7)--(8)--(b1)
              (b7)--(b8)--(bb1);

              \path[draw,dashed]
              (2)--(3)
              (5)--(6)
              (b2)--(b3)
              (b5)--(b6)
              (bb1)--(bb2);

              \node (r1) [below left = 0.2cm and 1cm of 000] {};
              \node (r2) [below right = 0.2cm and 6.5cm of 0] {};
              \node (r3) [above = 5.5cm of r1] {};
              \node (r4) [above = 5.5cm of r2] {};
              \node (r5) [above = 5.4cm of r3] {};
              \node (r6) [above = 5.4cm of r4] {};
              \node (r7) [above = 3cm of r5] {};
              \node (r8) [above = 3cm of r6] {};
              \node (r9) [above right=1.2cm and 1.3 cm of bb1] {$\vdots$};

              \path[draw,dashed,blue]
              (r1)--(r2)--(r4)--(r3)--(r1)
              (r4)--(r6)--(r5)--(r3)
              (r5)--(r7)
              (r6)--(r8);
            \end{tikzpicture}
          }
        \end{center}

        For each $k\ge 1$, it is clear from the figure that $w_k$ is
  reduced and fully commutative by Proposition \ref{fc heap criterion}. Furthermore,
        observe that any two elements from consecutive levels of $H(w_k)$ are
        comparable, hence any antichain of maximal length in $H(w_k)$ must contain
        exactly the two elements labelled by $a$ and $b$ on a same level,
        therefore $n(w_k)=2$. 
  Since the subgraph in question is the Coxeter
  diagram of an affine Weyl group of type $B$, it follows from
  Proposition
  \ref{a from heap} that $\la(w_k)=2$ for all $k\ge 1$, therefore $W$ is
  $\la(2)$-infinite.
      \end{proof}

      \begin{lemma}
        \label{affine C3}
       If $G$ contains a subgraph of the following form, then $W$ is $\la(2)$-infinite. 
        \begin{center}
\begin{tikzpicture}
            \node[main node] (1) {};
            \node[main node] (2) [right = 1cm of 1] {};
            \node[main node] (3) [right = 1cm of 2] {};

            \node (11) [below = 0.18cm of 1] {$a$};
            \node (22) [below = 0.1cm of 2] {$b$};
            \node (33) [below = 0.18cm of 3] {$c$};

            \path[draw]
            (1) edge node [above] {$4$} (2)
            (2) edge node [above] {$4$} (3);
          \end{tikzpicture}
        \end{center}

      \end{lemma}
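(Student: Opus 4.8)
The plan is to argue exactly as in the proof of Lemma~\ref{affine B}. The displayed subgraph is the Coxeter diagram of an affine Weyl group (namely $\tilde C_2$, equivalently $\tilde B_2$), so Proposition~\ref{a from heap} applies to it, and it suffices to exhibit infinitely many pairwise distinct fully commutative elements $w$ of the parabolic subgroup $W_{\{a,b,c\}}$ with $n(w)=2$.

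I would take the witnesses to be $w_k=(acb)^k$ for $k\ge 1$, with underlying word $acb\,acb\cdots acb$ ($k$ blocks). Using $m(a,c)=2$ together with $m(a,b)=m(b,c)=4$, one sees that in the lattice embedding the heap of this word is a ``ladder'': writing $a_j,c_j,b_j$ for the letters of the $j$-th block, the elements $a_j$ and $c_j$ are incomparable and both covered by $b_j$, while $b_j$ is in turn covered by both $a_{j+1}$ and $c_{j+1}$. The two conditions of Proposition~\ref{fc heap criterion} are then routine to check: no column contains two points joined by an edge (consecutive points of column $a$ or column $c$ are separated by a point of column $b$, and consecutive points of column $b$ are separated by points of columns $a$ and $c$), and every chain of edges realizing a length-$4$ alternating sequence for the pair $\{a,b\}$ (respectively $\{b,c\}$) is short-circuited, since two of its points are additionally joined by an edge-chain through a point of column $c$ (respectively column $a$). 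Hence each $w_k$ is reduced and fully commutative, and the $w_k$ are pairwise distinct because $l(w_k)=3k$.

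It remains to compute $n(w_k)$. Assigning rank $2j-1$ to $a_j$ and $c_j$ and rank $2j$ to $b_j$ gives a rank function on $H(w_k)$, and a short case analysis on block indices shows that every element of $H(w_k)$ other than $a_j$ and $c_j$ is comparable to both of them (indeed lies either below both or above both); thus the only incomparable pairs in $H(w_k)$ are the pairs $\{a_j,c_j\}$, so every antichain has at most two elements, while $\{a_1,c_1\}$ has exactly two. Therefore $n(w_k)=2$, and since the ambient group is an affine Weyl group, Proposition~\ref{a from heap} gives $\la(w_k)=n(w_k)=2$ for all $k\ge 1$. Consequently $W_{\{a,b,c\}}$, and hence $W$, has infinitely many elements of $\la$-value $2$, so $W$ is $\la(2)$-infinite.

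The step I expect to require the most care is the verification of full commutativity --- equivalently, that no alternating $a,b,a,b$- or $b,c,b,c$-chain climbing the ladder becomes a forbidden convex chain of length $4$. This is precisely where the relation $m(a,c)=2$ intervenes, and it is the reason the witnesses are built from the block $acb$ rather than from, say, $ab$ or $abcb$, which would instead yield heaps that are single chains and hence have $n$-value $1$.
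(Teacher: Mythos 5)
Your proposal is correct and follows essentially the same route as the paper: the paper uses the identical witnesses $w_k=(acb)^k$, verifies reducedness and full commutativity from the ladder-shaped heap via Proposition \ref{fc heap criterion}, observes $n(w_k)=2$, and invokes Proposition \ref{a from heap} for the affine Weyl group with this diagram. Your extra details (the explicit rank function and the comparability case analysis) only flesh out what the paper leaves as an observation from the figure.
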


      \begin{proof}
        Let $w_k=(acb)^k$ for $k\in \Z_{\ge 1}$. The heap of $w_k$ is shown
   below.
           \begin{center}
          \resizebox{0.4\textwidth}{!}{
            \begin{tikzpicture}
              \node[main node] (1) {};
              \node[main node] (1a) [left=3cm of 1] {};
              \node[main node] (2) [above left=0.75cm and 1.5cm of 1] {};
              \node[main node] (3) [above left=0.75cm and 1.5cm of 2] {};
              \node[main node] (4) [above right=0.75cm and 1.5cm of 2] {};
              \node[main node] (5) [above right=0.75cm and 1.5cm of 3] {};
              \node (6) [above right=0.75cm and 1.5cm of 5] {};
              \node (2a) [left=3cm of 6] {};

              \node (1a1a) [above=0.01cm of 1a] {$a$};
              \node (11) [above=0.01cm of 1] {$c$};
              \node (22) [above=0.01cm of 2] {$b$};
              \node (33) [above=0.01cm of 3] {$a$};
              \node (44) [above=0.01cm of 4] {$c$};
              \node (55) [above=0.01cm of 5] {$b$};

              \path[draw]
              (1)--(2)--(3)--(5)
              (1a)--(2)--(4)--(5);

              \path[draw,dashed]
              (6)--(5)--(2a);

              \node (r1) [below left = 0.2cm and 0.5cm of 1a] {};
              \node (r2) [below right = 0.2cm and 0.5cm of 1] {};
              \node (r3) [above = 1.55cm of r1] {};
              \node (r4) [above = 1.55cm of r2] {};
              \node (r5) [above = 1.4cm of r3] {};
              \node (r6) [above = 1.4cm of r4] {};
              \node (r7) [above = 1cm of r5] {};
              \node (r8) [above = 1cm of r6] {};
              \node (r9) [above = 0.7cm of 5] {$\vdots$};

              \path[draw,dashed,blue]
              (r1)--(r2)--(r4)--(r3)--(r1)
              (r4)--(r6)--(r5)--(r3)
              (r5)--(r7)
              (r6)--(r8);
            \end{tikzpicture}
          }
        \end{center}
       
        As in the previous lemma, it is clear that $w_k$ is reduced and fully commutative by Proposition \ref{fc heap
        criterion} and that $n(w_k)=2$ for each $k$. Since the subgraph in
        question is the Coxeter diagram of $\tilde{C}_3$,
        an affine Weyl group of type $C$, Proposition \ref{a from heap}
        implies that $\la(w_k)=2$ for all $k\ge 1$, therefore $W$ is
        $\la(2)$-infinite.  
      \end{proof}


      \begin{lemma}
        \label{affine C4}
        If $G$ contains a subgraph of the following form, then $W$ is $\la(2)$-infinite. 
        \begin{center}
          \begin{tikzpicture}
            \node[main node] (1) {};
            \node[main node] (2) [right = 1cm of 1] {};
            \node[main node] (a) [right = 1cm of 2] {};
            \node[main node] (3) [right = 1cm of a] {};

            \node (11) [below = 0.18cm of 1] {$a$};
            \node (22) [below = 0.1cm of 2] {$b$};
            \node (33) [below = 0.18cm of a] {$c$};
            \node (44) [below = 0.1cm of 3] {$d$};

            \path[draw]
            (1) edge node [above] {$4$} (2)
            (a) edge node [above] {$4$} (3);

            \path[draw]
            (2)--(a);
\label{ac4}
          \end{tikzpicture}
        \end{center}

      \end{lemma}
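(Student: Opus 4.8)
The plan is to mimic the proofs of Lemmas~\ref{affine B} and~\ref{affine C3}. The displayed four-vertex path $a\stackrel{4}{-}b-c\stackrel{4}{-}d$ is the Coxeter diagram of the affine Weyl group $\tilde{C}_4$ (of type $C$), so by Proposition~\ref{a local} it suffices to produce infinitely many fully commutative elements of $\la$-value~$2$ in the Coxeter group $W'$ on $\{a,b,c,d\}$. I would take the witnesses to be
\[
  w_k=(acbd)^k\qquad(k\ge 1).
\]
Reading this word from left to right and dropping points as in the lattice embedding of heaps, one sees that $H(w_k)$ is a width-$2$ ladder whose levels, from the bottom, are $\{a,c\},\{b,d\},\{a,c\},\{b,d\},\dots$, the $a$'s and $b$'s accumulating in the two leftmost columns and the $c$'s and $d$'s in the two rightmost. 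I would include the corresponding heap picture, as in the previous two lemmas.

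The verification then has three steps. First, Proposition~\ref{fc heap criterion} shows each $w_k$ is reduced and fully commutative: no column of $H(w_k)$ carries an edge, since any two consecutive occurrences of a single generator are separated in the heap by a non-commuting generator (for instance a $b$ lies strictly between two consecutive $a$'s), and for each weighted edge a short case check shows that every alternating chain of the critical length $m(s,t)$ in $H(w_k)$ has two of its points joined by a further chain running through a point of the ``opposite side'' — a $c$ for the pair $\{a,b\}$, a $b$ for the pair $\{c,d\}$, and a $d$ or an $a$ for the pair $\{b,c\}$. Second, $n(w_k)=2$: the poset $H(w_k)$ is the union of the chain on its odd-indexed letters (labelled $a,b,a,b,\dots$, consecutive ones non-commuting) and the chain on its even-indexed letters (labelled $c,d,c,d,\dots$), so any antichain contains at most one element from each chain and hence has size at most $2$, while the first two letters $a,c$ commute and form an antichain of size~$2$. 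Third, since $\tilde{C}_4$ is an affine Weyl group, Proposition~\ref{a from heap} gives $\la(w_k)=n(w_k)=2$ for all $k$. As the $w_k$ have pairwise distinct lengths, this exhibits infinitely many elements of $\la$-value~$2$ in $W'$, so $W$ is $\la(2)$-infinite by Proposition~\ref{a local}.

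I expect the only genuine subtlety to be the choice of witness. With the letters of each block arranged in the order $a,c,b,d$, the heap splits transparently into the two chains above and the $n$-value is forced to equal $2$ with no extra argument; for other natural candidates such as $(abcd)^k$ the heap is less symmetric and ruling out an antichain of size~$3$ takes noticeably more care. Once the witness is fixed, the remaining work is the pair-by-pair case analysis establishing full commutativity, which is routine.
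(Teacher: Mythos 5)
Your proposal is correct and matches the paper's proof essentially verbatim: the paper also takes the witnesses $w_k=(acbd)^k$, verifies via the heap and Proposition \ref{fc heap criterion} that they are reduced and fully commutative with $n(w_k)=2$, and concludes $\la(w_k)=2$ from Proposition \ref{a from heap} since the diagram is affine of type $C$. Your write-up merely fills in details (the two-chain decomposition bounding antichains, the pair-by-pair convexity check) that the paper leaves as ``evident from the heap.''
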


      \begin{proof}
        The proof is similar to that of Lemma \ref{affine C3}: the graph in
        question is still of type $\tilde C_n$, and the elements $w_k=(acbd)^k$
        where
        $k\in \Z_{\ge 1}$ now suffice as our witnesses. The fact that
        $\la(w_k)=n(w_k)=2$ for each $k\ge 1$ is evident from the heap of
        $w_k$, which is shown below.
        \begin{center}
          \resizebox{0.4\textwidth}{!}{
            \begin{tikzpicture}
              \node[main node] (1) {};
              \node[main node] (1a) [left=3cm of 1] {};
              \node[main node] (2) [above left=0.75cm and 1.5cm of 1] {};
              \node[main node] (d) [above right=0.75cm and 1.5cm of 1] {};
              \node[main node] (3) [above left=0.75cm and 1.5cm of 2] {};
              \node[main node] (4) [above right=0.75cm and 1.5cm of 2] {};
              \node[main node] (5) [above right=0.75cm and 1.5cm of 3] {};
              \node[main node] (ddd) [above right=0.75cm and 1.5cm of 4] {};
              \node (6) [above right=0.75cm and 1.5cm of 5] {};
              \node (2a) [left=3cm of 6] {};

              \node (1a1a) [above=0.01cm of 1a] {$a$};
              \node (11) [above=0.01cm of 1] {$c$};
              \node (22) [above=0.01cm of 2] {$b$};
              \node (dd) [above=0.01cm of d] {$d$};
              \node (33) [above=0.01cm of 3] {$a$};
              \node (44) [above=0.01cm of 4] {$c$};
              \node (55) [above=0.01cm of 5] {$b$};
              \node (dddd) [above=0.01cm of ddd] {$d$};

              \path[draw]
              (1)--(2)--(3)--(5)
              (1)--(d)--(4)--(ddd)
              (1a)--(2)--(4)--(5);

              \path[draw,dashed]
              (6)--(5)--(2a)
              (ddd)--(6);

              \node (r1) [below left = 0.2cm and 0.5cm of 1a] {};
              \node (r2) [below right = 0.2cm and 2cm of 1] {};
              \node (r3) [above = 1.55cm of r1] {};
              \node (r4) [above = 1.55cm of r2] {};
              \node (r5) [above = 1.4cm of r3] {};
              \node (r6) [above = 1.4cm of r4] {};
              \node (r7) [above = 1cm of r5] {};
              \node (r8) [above = 1cm of r6] {};
              \node (r9) [above right= 1cm and 0.5cm of 5] {$\vdots$};

              \path[draw,dashed,blue]
              (r1)--(r2)--(r4)--(r3)--(r1)
              (r4)--(r6)--(r5)--(r3)
              (r5)--(r7)
              (r6)--(r8);
            \end{tikzpicture}
          }
        \end{center}
      \end{proof}
      \begin{lemma}
        \label{two branches}
        If $G$ contains a subgraph of the form

        \begin{center}
          \begin{tikzpicture}
            \node[main node] (1) {};
            \node[main node] (8) [above left = 0.5cm and 0.8cm of 1] {};
            \node[main node] (9) [below left = 0.5cm and 0.8cm of 1] {};
            \node[main node] (2) [right=1cm of 1] {};
            \node[main node] (5) [right=1.5cm of 2] {};
            \node[main node] (6) [right=1cm of 5] {};
            \node[main node] (10) [above right = 0.5cm and 0.8cm of 6] {};
            \node[main node] (12) [below right = 0.5cm and 0.8cm of 6] {};

            \node (88) [left=0.1cm of 8] {$a$};
            \node (99) [left=0.1cm of 9] {$b$};
            \node (11) [below=0.1cm of 1] {$v_1$};
            \node (22) [below=0.1cm of 2] {$v_2$};
            \node (55) [below=0.1cm of 5] {$v_{n-1}$};
            \node (66) [below=0.1cm of 6] {$v_n$};
            \node (1010) [right=0.1cm of 10] {$c$};
            \node (1212) [right=0.1cm of 12] {$d$};

            \path[draw]
            (8) edge node {} (1)
            (9) edge node {} (1)
            (1) edge node {} (2)
            (5) edge node {} (6)
            (6) edge node {} (10)
            (6) edge node {} (12);
            \path[draw,dashed]
            (2)--(5);
          \label{tb}
          \end{tikzpicture}
        \end{center}
        \noindent where $n\in \Z_{\ge 1}$ and all edges have weight 3, then $W$ is $\la(2)$-infinite. 
 
      \end{lemma}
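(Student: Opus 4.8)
The plan is to imitate the proofs of Lemmas \ref{affine B}, \ref{affine C3} and \ref{affine C4}: exhibit an explicit infinite family of fully commutative elements, read off their heaps, check that each has $n$-value $2$, and then invoke Proposition \ref{a from heap}. This is legitimate because the displayed subgraph is exactly the Coxeter diagram of the affine Weyl group of type $\tilde D_{n+3}$: when $n=1$ it is the four-pointed star $\tilde D_4$ with centre $v_1$, and when $n\ge 2$ the vertices $v_1$ and $v_n$ are the two branch nodes of $\tilde D_{n+3}$. So once we have suitable witnesses $w_k$ with $n(w_k)=2$, Proposition \ref{a from heap} immediately gives $\la(w_k)=2$.

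Concretely, I would take
\[
  w_k=\bigl(ab\,v_1v_2\cdots v_n\,cd\,v_nv_{n-1}\cdots v_2v_1\bigr)^k\qquad(k\in\Z_{\ge 1}),
\]
whose heap $H(w_k)$ is a ``snake'' made of $k$ identical blocks: within one block one drops the commuting pair $a,b$ on a single level, climbs the path $v_1,\dots,v_n$, drops the commuting pair $c,d$ on a single level, and descends $v_n,\dots,v_1$ again, the next block then beginning one level higher. I would include a picture of this heap analogous to those in the preceding three lemmas. From the picture it is immediate, via Proposition \ref{fc heap criterion}, that $w_k$ is reduced and fully commutative: no column contains two adjacent equal labels (condition (1)), and the only chains of the form $s,t,s$ with $m(s,t)=3$ are at the two ``branch turns'' of the snake --- namely $v_1,a,v_1$ and $v_1,b,v_1$ between consecutive blocks, and $v_n,c,v_n$ and $v_n,d,v_n$ inside each block --- where in every instance the two outer $v$-vertices are also joined by the parallel chain through the other branch vertex, so condition (2) holds.

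Next I would verify $n(w_k)=2$ by the same bookkeeping as for the witnesses of Lemma \ref{affine B}: every level of $H(w_k)$ contains at most two elements, any two elements on consecutive levels are comparable, and every element that is alone on its level (in particular every $v_i$, and indeed the minimal $a^{(j)}$ or $b^{(j)}$ of each block) lies below everything occurring after it along the snake. Consequently a maximal antichain can only be one of the two-element levels, i.e. $\{a^{(j)},b^{(j)}\}$ or $\{c^{(j)},d^{(j)}\}$ for some block $j$, so $n(w_k)=2$ for all $k$. Proposition \ref{a from heap} then yields $\la(w_k)=2$, and since $w_k$ is reduced of length $k(2n+4)$ the elements $w_k$ are pairwise distinct; hence the Coxeter group on the displayed subgraph, and therefore $W$, is $\la(2)$-infinite.

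The only steps requiring genuine care --- although both are routine and become transparent from the heap diagram --- are the full-commutativity check at the turning points of the snake and the argument that no antichain of $H(w_k)$ has size $3$; this is why I would draw the heap explicitly, exactly as in the three preceding lemmas.
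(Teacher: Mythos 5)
Your proposal is correct and takes essentially the same approach as the paper: the paper's witnesses are $w_k=ab(v_1v_2\cdots v_ncdv_n\cdots v_2v_1ab)^k$, which differ from yours only by a trailing $ab$, and the paper likewise reads off reducedness, full commutativity and $n(w_k)=2$ from the snake-shaped heap and then applies Proposition \ref{a from heap} using the fact that the subgraph is the Coxeter diagram of an affine Weyl group of type $D$.
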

      \begin{proof}
        Consider the elements 
        \[
          w_k=ab(v_1v_2\cdots v_ncdv_n\cdots v_2v_1ab)^k
        \]
        for $k\in \Z_{\ge 0}$. The heap of $w_k$ is shown below.        
        \begin{center}
          \resizebox{0.45\textwidth}{!}{
            \begin{tikzpicture}
              \node[main node] (1) {};
              \node[main node] (2) [left  = 1cm of 1] {};
              \node[main node] (3) [above right = 1cm and 1cm of 1] {};
              \node[main node] (4) [above right = 0.5cm and 1.5cm of 3] {};
              \node[main node] (5) [above right = 0.5cm and 1.5cm of 4] {};
              \node[main node] (6) [above right= 0.5cm and 1.5cm of 5] {};
              \node[main node] (7) [above right = 1cm and 1cm of 6] {};
              \node[main node] (8) [right = 1cm of 7] {};
              \node[main node] (9) [above left = 1cm and 1cm of 7] {};
              \node[main node] (10) [above left = 0.5cm and 1.5cm of 9] {};
              \node[main node] (11) [above left = 0.5cm and 1.5cm of 10] {};
              \node[main node] (12) [above left = 0.5cm and 1.5cm of 11] {};
              \node[main node] (b1) [above left = 1cm and 1cm of 12] {};
              \node[main node] (b2) [left  = 1cm of b1] {};
              \node[main node] (b3) [above right = 1cm and 1cm of b1] {};
              \node[main node] (b4) [above right = 0.5cm and 1.5cm of b3] {};
              \node[main node] (b5) [above right = 0.5cm and 1.5cm of b4] {};
              \node[main node] (b6) [above right= 0.5cm and 1.5cm of b5] {};
              \node[main node] (b7) [above right = 1cm and 1cm of b6] {};
              \node[main node] (b8) [right = 1cm of b7] {};
              \node[main node] (b9) [above left = 1cm and 1cm of b7] {};
              \node[main node] (b10) [above left = 0.5cm and 1.5cm of b9] {};
              \node[main node] (b11) [above left = 0.5cm and 1.5cm of b10] {};
              \node[main node] (b12) [above left = 0.5cm and 1.5cm of b11] {};
              \node[main node] (bb1) [above left = 1cm and 1cm of b12] {};
              \node[main node] (bb2) [left  = 1cm of bb1] {};
              \node[main node] (bb3) [above right = 1cm and 1cm of bb1] {};
              \node (bb4) [above right = 0.5cm and 1.5cm of bb3] {};

              \node (c1) [above=0.01cm of 1] {$b$};
              \node (c2) [above=0.01cm of 2] {$a$};
              \node (c3) [above=0.01cm of 3] {$v_{1}$};
              \node (c4) [above=0.01cm of 4] {$v_2$};
              \node (c5) [above=0.01cm of 5] {$v_{n-1}$};
              \node (c6) [above=0.01cm of 6] {$v_n$};
              \node (c7) [above=0.01cm of 7] {$c$};
              \node (c8) [above=0.01cm of 8] {$d$};
              \node (c9) [above=0.01cm of 9] {$v_n$};
              \node (c10) [above=0.01cm of 10] {$v_{n-1}$};
              \node (c11) [above=0.01cm of 11] {$v_2$};
              \node (c12) [above=0.01cm of 12] {$v_1$};
              \node (bc1) [above=0.01cm of b1] {$b$};
              \node (bc2) [above=0.01cm of b2] {$a$};
              \node (bc3) [above=0.01cm of b3] {$v_{1}$};
              \node (bc4) [above=0.01cm of b4] {$v_2$};
              \node (bc5) [above=0.01cm of b5] {$v_{n-1}$};
              \node (bc6) [above=0.01cm of b6] {$v_n$};
              \node (bc7) [above=0.01cm of b7] {$c$};
              \node (bc8) [above=0.01cm of b8] {$d$};
              \node (bc9) [above=0.01cm of b9] {$v_n$};
              \node (bc10) [above=0.01cm of b10] {$v_{n-1}$};
              \node (bc11) [above=0.01cm of b11] {$v_2$};
              \node (bc12) [above=0.01cm of b12] {$v_1$};
              \node (bbc1) [above=0.01cm of bb1] {$b$};
              \node (bbc2) [above=0.01cm of bb2] {$a$};
              \node (bbc3) [above=0.01cm of bb3] {$v_1$};

              \path[draw]
              (1)--(3)
              (2)--(3)--(4)
              (5)--(6)--(7)--(9)--(10)
              (6)--(8)--(9)
              (11)--(12)--(b1)
              (12)--(b2)
              (b1)--(b3)
              (b2)--(b3)--(b4)
              (b5)--(b6)--(b7)--(b9)--(b10)
              (b6)--(b8)--(b9)
              (b11)--(b12)--(bb1)
              (b12)--(bb2)
              (bb1)--(bb3)
              (bb2)--(bb3);

              \path[draw,dashed]
              (4)--(5)
              (10)--(11)
              (b4)--(b5)
              (b10)--(b11)
              (bb3)--(bb4);

              \node (r1) [above left = 0.5cm and 1.5cm of 1] {};
              \node (r2) [above right = 0.5cm and 8.5cm of 1] {};
              \node (r3) [above = 7.7cm of r1] {};
              \node (r4) [above = 7.7cm of r2] {};
              \node (r5) [above = 7.6cm of r3] {};
              \node (r6) [above = 7.6cm of r4] {};
              \node (r7) [above = 3cm of r5] {};
              \node (r8) [above = 3cm of r6] {};
              \node (r9) [above right=4cm and 0.75 cm of b11] {$\vdots$};

              \path[draw,dashed,blue]
              (r1)--(r2)--(r4)--(r3)--(r1)
              (r4)--(r6)--(r5)--(r3)
              (r5)--(r7)
              (r6)--(r8);
            \end{tikzpicture}
          }
        \end{center}

        From the figure, it is clear that $w_k$ is reduced
        and fully commutative for each $k\ge 0$ by Proposition \ref{fc heap
criterion}. As in Lemma \ref{affine C3}, it is also clear that $n(w_k)=2$ for
all $k\ge 0$. Since the subgraph in question is the Coxeter diagram of an affine Weyl group of type
        $D$, Proposition \ref{a from heap} implies that $\la(w_k)=2$
        for all $k\ge 2$ , therefore $W$ is $\la(2)$-infinite.  
      \end{proof}

      \begin{lemma}
        \label{affine E6}
        If $G$ contains a subgraph of the following form, then $W$ is $\la(2)$-infinite. 
 
        \begin{center}
          \begin{tikzpicture}

            \node[main node] (1) {};
            \node[main node] (0) [left=1cm of 1] {};
            \node[main node] (2) [right=1cm of 1] {};
            \node[main node] (3) [right=1cm of 2] {};
            \node[main node] (6) [right=1cm of 3] {};
            \node[main node] (8) [above=1cm of 2] {};
            \node[main node] (9) [above=1cm of 8] {};

            \node (00) [below=0.2cm of 0] {$a$};
            \node (11) [below=0.1cm of 1] {$b$};
            \node (22) [below=0.2cm of 2] {$c$};
            \node (33) [below=0.1cm of 3] {$d$};
            \node (66) [below=0.2cm of 6] {$e$};
            \node (88) [left=0.1cm of 8] {$f$};
            \node (99) [left=0.1cm of 9] {$g$};

            \path[draw]
            (0)--(1)--(2)--(3)--(6)
            (2)--(8)--(9);
          \label{ae6}
          \end{tikzpicture}
        \end{center}
      \end{lemma}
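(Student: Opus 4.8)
Following the pattern of Lemmas \ref{affine B}--\ref{two branches}, the plan is to exhibit an explicit infinite sequence of fully commutative elements $w_k$ ($k\in\Z_{\ge 1}$) in the Coxeter group whose diagram is the one displayed (the affine diagram $\tilde E_6$) and to check that $\la(w_k)=2$ for every $k$; since the $w_k$ will be pairwise distinct, it follows from Proposition \ref{a local} that $W$ is $\la(2)$-infinite. As in the earlier lemmas, I would take each $w_k$ to be a fixed ``period'' word raised to the $k$-th power, possibly with a fixed prefix and suffix attached, chosen so that the lattice embedding of $H(w_k)$ in $S\times\N$ is a tall, thin, periodic picture.

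Once such a family is in hand, the argument breaks into three routine steps. First, read $H(w_k)$ off its lattice embedding and apply Proposition \ref{fc heap criterion}: one checks that no column of the embedding contains two points joined by an edge, and that whenever a chain of edges runs through a sequence $s,t,s,\dots$ of $m(s,t)$ points there is a second chain joining two of them; the delicate case is near the occurrences of the three leaf generators of $\tilde E_6$, where one must point to the parallel element that breaks the would-be $sts$-chain. Second, show $n(w_k)=2$ by the heap inspection used in the earlier lemmas: exhibit an antichain of size $2$ and, using that any two elements on consecutive levels of the embedding are comparable, conclude that no antichain has size $3$. Third, since the displayed diagram is that of an affine Weyl group, Proposition \ref{a from heap} gives $\la(w_k)=n(w_k)=2$.

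The real work -- and the step I expect to be the main obstacle -- is constructing the witnesses. Every proper standard parabolic subgroup of $\tilde E_6$ is finite: deleting any generator from the diagram leaves a Coxeter diagram all of whose connected components are of finite type. Hence any infinite family of fully commutative elements must involve all three leaf generators, the endpoints of the three branches, with unbounded multiplicity. On the other hand, requiring $n(w_k)=2$ forces the occurrences of these three leaves to be pairwise comparable in $H(w_k)$ rather than to form antichains, so the heap has to thread the three branches repeatedly through the trivalent vertex; and full commutativity rules out the naive way of achieving this, since a leaf has only one neighbour, so a heap that ``bounces off'' a leaf produces a forbidden $sts$-chain with no parallel element to cut it. Finding a periodic word that simultaneously has unbounded length, heap width exactly $2$, and full commutativity while visiting all three branches is the crux; once the right period is identified, the three verification steps above are the same kind of heap bookkeeping as in Lemmas \ref{affine B}, \ref{affine C3} and \ref{two branches}.
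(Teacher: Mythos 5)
Your strategy is exactly the one the paper uses: produce a periodic family of fully commutative elements in the affine Weyl group $\tilde E_6$, check reducedness and full commutativity from the lattice embedding of the heap via Proposition \ref{fc heap criterion}, check $n(w_k)=2$ by exhibiting a width-two heap in which consecutive levels are comparable, and invoke Proposition \ref{a from heap} to get $\la(w_k)=2$. Your structural analysis of why the witness is constrained is also sound: every proper parabolic of $\tilde E_6$ is finite, so all three leaves $a,e,g$ must occur with unbounded multiplicity, and avoiding a $3$-antichain forces the heap to thread the three branches through the trivalent vertex $c$ in a staggered fashion.

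The gap is that you never actually produce the witness, and for this lemma the witness \emph{is} the proof; everything else is, as you say yourself, routine heap bookkeeping. The paper takes $w_k=(acbfcgdfecdb)^k$ for $k\ge 1$. Note how this period realizes your constraints: each leaf $a,e,g$ occurs exactly once per period, the trivalent vertex $c$ occurs three times per period (once just before each visit to a leaf's branch), and the interior vertices $b,d,f$ each occur twice, which is what breaks every potential $scs$-chain at the leaves and keeps the heap of width exactly $2$. Until you write down such a word (or an equivalent one) and actually run the three verification steps on its heap, you have a correct proof outline but not a proof; an examiner could not reconstruct the argument from what you have written, because the existence of a period with all the properties you list is precisely the nontrivial claim.
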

      \begin{proof}
        Consider the elements
        \[
          w_k=(acbfcgdfecdb)^k
        \]
        for $k\in \Z_{\ge 1}$. The heap of $w_k$ is shown below.         
        
        \begin{center}
          \resizebox{0.4\textwidth}{!}{
            \begin{tikzpicture}
              \node[main node] (1) {};
              \node[main node] (2) [right=3cm of 1] {};
              \node[main node] (3) [above right = 0.75cm and 1.5cm of 1] {};
              \node[main node] (4) [above right = 0.75cm and 1.5cm of 2] {};
              \node[main node] (5) [above right =0.75cm and 1.5cm of 3] {};
              \node[main node] (6) [above right =0.75cm and 1.5cm of 4] {};
              \node[main node] (7) [above left =0.75cm and 1.5cm of 5] {};
              \node[main node] (8) [above left =0.75cm and 1.5cm of 6] {};
              \node[main node] (9) [above left =0.75cm and 1.5cm of 7] {};
              \node[main node] (10) [above left =0.75cm and 1.5cm of 8] {};
              \node[main node] (11) [above right =0.75cm and 1.5cm of 9] {};
              \node[main node] (12) [above right =0.75cm and 1.5cm of 10] {};
              \node[main node] (b1) [above left= 0.75cm and 1.5cm of 11] {};
              \node[main node] (b2) [above left= 0.75cm and 1.5cm of 12] {};
              \node[main node] (b3) [above right = 0.75cm and 1.5cm of b1] {};
              \node[main node] (b4) [above right = 0.75cm and 1.5cm of b2] {};
              \node[main node] (b5) [above right =0.75cm and 1.5cm of b3] {};
              \node[main node] (b6) [above right =0.75cm and 1.5cm of b4] {};
              \node[main node] (b7) [above left =0.75cm and 1.5cm of b5] {};
              \node[main node] (b8) [above left =0.75cm and 1.5cm of b6] {};
              \node[main node] (b9) [above left =0.75cm and 1.5cm of b7] {};
              \node[main node] (b10) [above left =0.75cm and 1.5cm of b8] {};
              \node[main node] (b11) [above right =0.75cm and 1.5cm of b9] {};
              \node[main node] (b12) [above right =0.75cm and 1.5cm of b10] {};
              \node[main node] (bb1) [above left=0.75cm and 1.5cm of b11] {}; 
              \node[main node] (bb2) [above left=0.75cm and 1.5cm of b12] {};
              \node (bb3) [above right=0.75cm and 1.5cm of bb1] {}; 
              \node (bb4) [above right=0.75cm and 1.5cm of bb2] {};

              \node (c1) [above=0.01cm of 1] {$a$};
              \node (c2) [above=0.01cm of 2] {$c$};
              \node (c3) [above=0.01cm of 3] {$b$};
              \node (c4) [above=0.01cm of 4] {$f$};
              \node (c5) [above=0.01cm of 5] {$c$};
              \node (c6) [above=0.01cm of 6] {$g$};
              \node (c7) [above=0.01cm of 7] {$d$};
              \node (c8) [above=0.01cm of 8] {$f$};
              \node (c9) [above=0.01cm of 9] {$e$};
              \node (c10) [above=0.01cm of 10] {$c$};
              \node (c11) [above=0.01cm of 11] {$d$};
              \node (c12) [above=0.01cm of 12] {$b$};
              \node (d1) [above=0.01cm of b1] {$a$};
              \node (d2) [above=0.01cm of b2] {$c$};
              \node (d3) [above=0.01cm of b3] {$b$};
              \node (d4) [above=0.01cm of b4] {$f$};
              \node (d5) [above=0.01cm of b5] {$c$};
              \node (d6) [above=0.01cm of b6] {$g$};
              \node (d7) [above=0.01cm of b7] {$d$};
              \node (d8) [above=0.01cm of b8] {$f$};
              \node (d9) [above=0.01cm of b9] {$e$};
              \node (d10) [above=0.01cm of b10] {$c$};
              \node (d11) [above=0.01cm of b11] {$d$};
              \node (d12) [above=0.01cm of b12] {$b$};
              \node (dd1) [above=0.01cm of bb1] {$a$};
              \node (dd2) [above=0.01cm of bb2] {$c$};

              \path[draw]
              (1)--(3)--(5)--(7)--(9)--(11)--(b1)
              (2)--(4)--(6)--(8)--(10)--(12)--(b2)
              (2)--(3)
              (4)--(5)--(8)
              (7)--(10)--(11)--(b2)
              (b1)--(b3)--(b5)--(b7)--(b9)--(b11)--(bb1)
              (b2)--(b4)--(b6)--(b8)--(b10)--(b12)--(bb2)
              (b2)--(b3)
              (b4)--(b5)--(b8)
              (b7)--(b10)--(b11)--(bb2);

              \path[draw,dashed]
              (bb1)--(bb3)--(bb2)--(bb4);

              \node (r1) [below left = 0.2cm and 0.5cm of 1] {};
              \node (r2) [below right = 0.2cm and 6.5cm of 1] {};
              \node (r3) [above = 4.9cm of r1] {};
              \node (r4) [above = 4.9cm of r2] {};
              \node (r5) [above = 4.8cm of r3] {};
              \node (r6) [above = 4.8cm of r4] {};
              \node (r7) [above = 3cm of r5] {};
              \node (r8) [above = 3cm of r6] {};
              \node (r9) [above = 1.5cm of bb2] {$\vdots$};

              \path[draw,dashed,blue]
              (r1)--(r2)--(r4)--(r3)--(r1)
              (r4)--(r6)--(r5)--(r3)
              (r5)--(r7)
              (r6)--(r8);
            \end{tikzpicture} 
          }
        \end{center}

As in the previous lemmas, we may observe from the above figure that $w_k$ is reduced and fully commutative, and that $n(w_k)=2$,
        for each $k\ge
        1$.  
        Since the subgraph in question is the Coxeter diagram of $\tilde E_6$, an affine Weyl
        group of type $E$, Proposition \ref{a from heap} implies that $\la(w_k)=2$ for all
        $k\ge 1$, therefore $W$ is $\la(2)$-infinite.  
      \end{proof}

      Our next lemma will rely on the following non-trivial result from  \cite{star_reducible}.

      \begin{prop}[{\cite[Lemma 5.5]{star_reducible}}]
        \label{affine F5}
        The Coxeter group with the following Coxeter diagram is star reducible.
        \begin{center}
          \begin{tikzpicture}

            \node[main node] (1) {};
            \node[main node] (0) [left=1cm of 1] {};
            \node[main node] (2) [right=1cm of 1] {};
            \node[main node] (3) [right=1cm of 2] {};
            \node[main node] (6) [right=1cm of 3] {};
            \node[main node] (7) [right=1cm of 6] {};

            \node (00) [below=0.2cm of 0] {$a$};
            \node (11) [below=0.1cm of 1] {$b$};
            \node (22) [below=0.2cm of 2] {$c$};
            \node (33) [below=0.1cm of 3] {$d$};
            \node (66) [below=0.2cm of 6] {$e$};
            \node (77) [below=0.1cm of 7] {$f$};

            \path[draw]
            (0)--(1)
            (6)--(7)
            (1) edge node {} (2)
            (2) edge node [above] {$4$} (3)
            (3) edge node {} (6);
\label{af5}
          \end{tikzpicture}

        \end{center}
      \end{prop}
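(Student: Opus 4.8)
The plan is to prove the statement in the form required by the definition preceding Proposition \ref{star reducible a}: by induction on the length $\ell(w)$, I would show that every fully commutative $w\in W$ is star reducible to a product of pairwise commuting generators. When $\ell(w)\le 1$ there is nothing to prove, and when the heap $H(w)$ is an antichain $w$ is already such a product, so the inductive step reduces to the claim: \emph{if $w$ is fully commutative and $H(w)$ is not an antichain, then some lower left or lower right star operation with respect to a pair $I=\{s,t\}$ with $m(s,t)\in\{3,4\}$ applies to $w$}. Granting the claim, the resulting element $w'$ (equal to ${}_*w$ or $w_*$) satisfies $\ell(w')<\ell(w)$ and is again fully commutative — this is a quick check from Proposition \ref{fc heap criterion}, since deleting a minimal or maximal vertex of a heap satisfying that criterion yields a heap still satisfying it — so the induction hypothesis applies to $w'$, and star reducibility being transitive, we are done.

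To establish the claim I would first handle the case $\mathrm{supp}(w)\subsetneq S$. Then $w$ lies in a proper standard parabolic subgroup $W_J$, and because the diagram of $W$ is a path carrying a single weight-$4$ edge, every connected component of the subdiagram on $J$ is, up to reversal, a sub-path of type $A_m$, $B_m$ ($m\le 4$), or $F_m$ ($m\in\{4,5\}$); each of these is star reducible (a fact about smaller diagrams that one may invoke at this point), and since coset decompositions with respect to a pair $I\subseteq J$, and hence the relevant star operations, are computed identically in $W_J$ and in $W$, the element $w$ is star reducible inside $W$ as well.

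The substantive case is $\mathrm{supp}(w)=S$, and here I would work directly with the lattice embedding of $H(w)$. Since $H(w)$ is not an antichain it has a covering relation, and by descending one finds a minimal element $i$ of $H(w)$ covered by some $j$; because a covering relation cannot join two vertices with equal labels (Proposition \ref{fc heap criterion}(1)) while comparable vertices must have equal or adjacent labels, the labels $s$ of $i$ and $t$ of $j$ are adjacent, so $m(s,t)\in\{3,4\}$. If $i$ is the unique vertex lying below $j$, the lower left star operation with respect to $\{s,t\}$ applies and we are finished; symmetrically one argues from the top using maximal elements and the lower right operation. The only obstruction is that $j$ may also cover a second vertex, necessarily labelled by the other neighbour $u$ of $t$ in the path; the heart of the argument is to show this obstruction cannot survive every choice. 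For this one runs a finite descent, sliding the pair $I$ one step along the column of $t$ or replacing it by $\{t,u\}$, and uses the path structure of the diagram together with Proposition \ref{fc heap criterion}(2) — which forbids a long alternating chain $\dots,s,t,s,\dots$ of length $m(s,t)$ that is not short-circuited — to conclude that after finitely many steps the uniqueness condition holds, either at the bottom or at the top of $H(w)$.

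I expect this last step — the combinatorial control of the minimal and maximal vertices of $H(w)$ in the full-support case — to be the main obstacle. The difficulty is precisely that the weight-$4$ edge sits in the \emph{interior} of the path rather than at an endpoint, as it does in the star-reducible types $B_n$ and $F_n$, so neither of those cases can be quoted directly and one must instead manage the interaction of the two ``$F$-like halves'' of the diagram meeting at the $4$-edge, which is where full commutativity has to be used most carefully.
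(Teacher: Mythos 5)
This statement is not proved in the paper at all: it is quoted verbatim from \cite{star_reducible} (Lemma 5.5 there), and the authors explicitly flag it as a ``non-trivial result'' that they import. So there is no in-paper proof to compare against; what matters is whether your blind argument actually establishes the result, and as written it does not.

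Your reduction is sound: star reducibility of the group is equivalent to the claim that every fully commutative $w$ whose heap is not an antichain admits a lower left or lower right star operation (the result of such an operation is again fully commutative and shorter, so induction on length closes the argument). The problem is that this claim is exactly the content of the lemma, and it is precisely the part you leave as a sketch. The claim is \emph{false} for a general Coxeter diagram --- non-star-reducible groups exist, i.e.\ there are fully commutative elements with non-antichain heaps on which no lower star operation acts --- so any correct proof must exploit the specific shape of this six-vertex path with its interior weight-$4$ edge. Your ``finite descent, sliding the pair $I$ along the column of $t$ or replacing it by $\{t,u\}$'' never identifies what terminates the descent or where full commutativity and the diagram enter; you acknowledge this yourself (``I expect this last step to be the main obstacle''), which is an admission that the heart of the proof is missing. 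A secondary gap: in the proper-support case you invoke star reducibility of the parabolic subdiagrams of types $B_m$ and $F_m$ (e.g.\ the five-vertex subdiagram on $b,c,d,e,f$) as known facts, but these are themselves theorems of the same cited paper and are of comparable difficulty to the full-support case; in a from-scratch proof they cannot simply be quoted.
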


      \begin{lemma}
        \label{affine F5 lemma}
        If $G$ contains a subgraph of the form shown in Proposition
        \ref{affine F5}, then
        $W$ is $\la(2)$-infinite.  
      \end{lemma}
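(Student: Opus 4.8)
The plan follows the template of lemmas \ref{affine B}--\ref{affine E6}: reduce to the diagram in question, use its star reducibility to replace $\la$ by the combinatorial invariant $n$, and then display an explicit infinite family of fully commutative witnesses with $n$-value $2$.

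By Proposition \ref{a local} we may assume $G$ is exactly the diagram of Proposition \ref{affine F5}; label its vertices $a,b,c,d,e,f$ along the path, so $m(c,d)=4$ and every other edge has weight $3$. By Proposition \ref{affine F5} the group $W$ is star reducible, so Proposition \ref{star reducible a} gives $\la(w)=n(w)$ for every fully commutative $w\in W$. Thus it suffices to produce infinitely many fully commutative elements whose heaps have largest antichain of size exactly $2$. I would take these to be powers $w_k=u_0u^k$ of a fixed word $u$ with full support $\{a,\dots,f\}$, chosen so that $H(w_k)$ is a \emph{width-two strip}: a poset presented as a union of two chains, in which the commuting pairs $\{a,c\}$ (sandwiched around a $b$) and $\{d,f\}$ (sandwiched around an $e$) supply the $2$-element antichains, while the weight-$4$ edge $c$-$d$ acts as a bridge joining the ``$\{a,b,c\}$-side'' and the ``$\{d,e,f\}$-side''. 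In spirit $u$ performs one oscillation in $\langle a,b,c\rangle$, crosses the edge $cd$, performs one oscillation in $\langle d,e,f\rangle$, and crosses back. Once $u$ is fixed one reads off from the heap that $w_k$ is reduced, that $H(w_k)$ is a union of two chains so $n(w_k)\le 2$, and that some $2$-element antichain occurs so $n(w_k)=2$; since the lengths $\ell(w_k)$ are unbounded the $w_k$ are pairwise distinct, and $\la(w_k)=n(w_k)=2$, whence $W$ is $\la(2)$-infinite.

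The substantive work, and the step I expect to be hardest, is choosing $u$ and verifying via Proposition \ref{fc heap criterion} that $w_k$ is fully commutative. Since the only weight-$4$ edge is $cd$, one must exclude every convex chain of length $3$ alternating between the endpoints of $ab$, $bc$, $de$ or $ef$, as well as every convex chain of length $4$ alternating over $cd$. In contrast to the affine $\tilde B$, $\tilde C$ and $\tilde D$ lemmas, these alternating chains genuinely appear in $H(w_k)$; they are harmless only because each of them is non-convex, being broken by the partner element in the companion chain of the strip --- the same mechanism by which the commuting pair $\{c,d\}$ rescues the witnesses in Lemma \ref{two branches}. Engineering $u$ so that all of these convexity-breakings happen simultaneously, while the mutually commuting triples $\{a,c,e\}$ and $\{b,d,f\}$ never form a $3$-element antichain (which would force $n(w_k)\ge 3$ and hence $\la(w_k)\ge 3$) and the support stays equal to $\{a,\dots,f\}$ (so that $w_k$ does not collapse into a finite parabolic of type $A$ or $B$), is the delicate bookkeeping the proof must carry out.
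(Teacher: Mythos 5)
Your strategy is exactly the one the paper uses: reduce to the diagram of Proposition \ref{affine F5} via Proposition \ref{a local}, invoke star reducibility together with Proposition \ref{star reducible a} to replace $\la$ by the heap statistic $n$, and then exhibit infinitely many fully commutative elements whose heaps are width-two strips. Your description of the intended heaps --- two parallel chains joined across the weight-$4$ edge $cd$, with all potentially dangerous alternating chains rendered non-convex by the partner chain --- is also the correct picture of what the paper's witnesses look like.

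However, the proposal stops exactly at the step that constitutes the proof. You never produce the word $u$, and you yourself identify its construction and the verification of full commutativity as ``the substantive work'' and ``the step I expect to be hardest.'' Asserting that a word with all the required properties exists, without exhibiting it, is the gap: the whole content of the lemma is that such a word can actually be engineered so that (i) every alternating chain of length $3$ over a weight-$3$ edge and every alternating chain of length $4$ over $cd$ is broken by a convexity violation (Proposition \ref{fc heap criterion}), and (ii) no antichain of size $3$ appears. For the record, the paper takes $w_k=(bdacbdcedfce)^k$, i.e.\ $u$ is the concatenation of the commuting pairs $bd\cdot ac\cdot bd\cdot ce\cdot df\cdot ce$; its heap is precisely the two-chain strip you describe, from which reducedness, full commutativity and $n(w_k)=2$ are read off directly. (Two minor further remarks: full support of $u$ is not actually needed --- pairwise distinctness of the $w_k$ already follows from their unbounded lengths --- and no prefix $u_0$ is required.) So the approach is right and coincides with the paper's, but as written the argument is a plan rather than a proof until the explicit witness and its heap verification are supplied.
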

      \begin{proof}
        Consider the elements
        \[
          w_k=(bdacbdcedfce)^k
        \]
        for $k\in \Z_{\ge 1}$. The heap of $w_k$ is shown below.
        \begin{center}
          \resizebox{0.5\textwidth}{!}{
            \begin{tikzpicture}
              \node[main node] (1) {};
              \node[main node] (2) [right=3cm of 1] {};
              \node[main node] (3) [above left = 0.75cm and 1.5cm of 1] {};
              \node[main node] (4) [above right = 0.75cm and 1.5cm of 1] {};
              \node[main node] (5) [above right =0.75cm and 1.5cm of 3] {};
              \node[main node] (6) [above right =0.75cm and 1.5cm of 4] {};
              \node[main node] (7) [above right =0.75cm and 1.5cm of 5] {};
              \node[main node] (8) [above right =0.75cm and 1.5cm of 6] {};
              \node[main node] (9) [above right =0.75cm and 1.5cm of 7] {};
              \node[main node] (10) [above right =0.75cm and 1.5cm of 8] {};
              \node[main node] (11) [above left =0.75cm and 1.5cm of 9] {};
              \node[main node] (12) [above left =0.75cm and 1.5cm of 10] {};
              \node[main node] (b1) [above left= 0.75cm and 1.5cm of 11] {};
              \node[main node] (b2) [above left= 0.75cm and 1.5cm of 12] {};
              \node[main node] (b3) [above left = 0.75cm and 1.5cm of b1] {};
              \node[main node] (b4) [above right = 0.75cm and 1.5cm of b1] {};
              \node[main node] (b5) [above right =0.75cm and 1.5cm of b3] {};
              \node[main node] (b6) [above right =0.75cm and 1.5cm of b4] {};
              \node[main node] (b7) [above right =0.75cm and 1.5cm of b5] {};
              \node[main node] (b8) [above right =0.75cm and 1.5cm of b6] {};
              \node[main node] (b9) [above right =0.75cm and 1.5cm of b7] {};
              \node[main node] (b10) [above right =0.75cm and 1.5cm of b8] {};
              \node[main node] (b11) [above left =0.75cm and 1.5cm of b9] {};
              \node[main node] (b12) [above left =0.75cm and 1.5cm of b10] {};
              \node[main node] (bb1) [above left=0.75cm and 1.5cm of b11] {}; 
              \node[main node] (bb2) [above left=0.75cm and 1.5cm of b12] {};
              \node (bb3) [above left=0.75cm and 1.5cm of bb1] {}; 
              \node (bb4) [above left=0.75cm and 1.5cm of bb2] {};

              \node (c1) [above=0.01cm of 1] {$b$};
              \node (c2) [above=0.01cm of 2] {$d$};
              \node (c3) [above=0.01cm of 3] {$a$};
              \node (c4) [above=0.01cm of 4] {$c$};
              \node (c5) [above=0.01cm of 5] {$b$};
              \node (c6) [above=0.01cm of 6] {$d$};
              \node (c7) [above=0.01cm of 7] {$c$};
              \node (c8) [above=0.01cm of 8] {$e$};
              \node (c9) [above=0.01cm of 9] {$d$};
              \node (c10) [above=0.01cm of 10] {$f$};
              \node (c11) [above=0.01cm of 11] {$c$};
              \node (c12) [above=0.01cm of 12] {$e$};
              \node (d1) [above=0.01cm of b1] {$b$};
              \node (d2) [above=0.01cm of b2] {$d$};
              \node (d3) [above=0.01cm of b3] {$a$};
              \node (d4) [above=0.01cm of b4] {$c$};
              \node (d5) [above=0.01cm of b5] {$b$};
              \node (d6) [above=0.01cm of b6] {$d$};
              \node (d7) [above=0.01cm of b7] {$c$};
              \node (d8) [above=0.01cm of b8] {$e$};
              \node (d9) [above=0.01cm of b9] {$d$};
              \node (d10) [above=0.01cm of b10] {$f$};
              \node (d11) [above=0.01cm of b11] {$c$};
              \node (d12) [above=0.01cm of b12] {$e$};
              \node (dd1) [above=0.01cm of bb1] {$b$};
              \node (dd2) [above=0.01cm of bb2] {$d$};

              \path[draw]
              (1)--(4)--(6)--(8)--(10)--(12)--(b2)--(b4)--(b6)--(b8)--(b10)--(b12)
              (1)--(3)--(5)--(7)--(9)--(11)--(b1)--(b3)--(b5)--(b7)--(b9)--(b11)
              (2)--(4)--(5)
              (6)--(7)
              (8)--(9)--(12)
              (11)--(b2)
              (b1)--(b4)--(b5)
              (b6)--(b7)
              (b8)--(b9)--(b12)
              (b12)--(bb2)--(b11)--(bb1);

              \path[draw,dashed]
              (bb1)--(bb3)
              (bb1)--(bb4)
              (bb2)--(bb4);

              \node (r1) [below left = 0.2cm and 2cm of 1] {};
              \node (r2) [below right = 0.2cm and 6.8cm of 1] {};
              \node (r3) [above = 4.8cm of r1] {};
              \node (r4) [above = 4.8cm of r2] {};
              \node (r5) [above = 4.8cm of r3] {};
              \node (r6) [above = 4.8cm of r4] {};
              \node (r7) [above = 3cm of r5] {};
              \node (r8) [above = 3cm of r6] {};
              \node (r9) [above right= 2.2cm and 0.5cm of b11] {$\vdots$};

              \path[draw,dashed,blue]
              (r1)--(r2)--(r4)--(r3)--(r1)
              (r4)--(r6)--(r5)--(r3)
              (r5)--(r7)
              (r6)--(r8);

            \end{tikzpicture}
          } 
        \end{center}

As in the previous lemmas, it is clear from the above figure that
$w_k$ is reduced and fully commutative and that $n(w_k)=2$ for each $k\ge 1$.
Since the subgraph in question corresponds to a star reducible Coxeter group by Proposition
        \ref{affine F5}, it follows from Proposition \ref{star reducible a} that
        $\la(w_k)=2$ for all $k\ge 1$, therefore $W$ is $\la(2)$-infinite. 
 \end{proof}

 \subsection{Lemmas with star operation arguments} 
 \label{star arguments}
 For our second set of lemmas, the proofs that our witnesses have
 $\la$-value 2 will involve the star operations introduced in Section
 \ref{star}. Our main tools will be Corollary \ref{star a}
 and Remark \ref{removal}.

The first lemma in this set deals with the case where $G$ contains a cycle. It
will be used to prove the ``only if'' direction of Theorem \ref{first
theorem}.(1).
     \begin{lemma} 
        \label{cycle witness} Suppose $G$ contains a cycle
      $C=(v_1,v_2,\cdots,v_n,v_1)$ for some $n\ge 3$.  \begin{enumerate} \item  If
            $G$ contains a vertex $v$ that does not not appear in $C$ and is not
          adjacent to all vertices in $C$, then $W$ is $\la(2)$-infinite.  \item If $C$
            contains two vertices that are not adjacent, then $W$ is
            $\la(2)$-infinite.
        \end{enumerate} 
      \end{lemma}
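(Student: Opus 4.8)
The plan is, for each of the two parts, to exhibit an infinite family $w_1,w_2,\dots$ of pairwise distinct fully commutative elements lying inside a suitable subgraph of $G$ and to show $\la(w_k)=2$ for every $k$; by Proposition~\ref{a local} this suffices. Each $w_k$ will be checked to be reduced and fully commutative by drawing its heap and applying Proposition~\ref{fc heap criterion}, and to have $\la$-value $2$ either by Proposition~\ref{a from heap} (when the relevant standard parabolic subgroup is a Weyl or affine Weyl group) or, in the remaining cases, by the star-operation method of this subsection: using Remark~\ref{removal} one peels letters off the two ends of $w_k$ one at a time --- at each stage the outermost two letters of the current reduced word form a pair of \emph{adjacent} generators, so the relevant lower star operation simply deletes the outer letter --- until only two commuting generators are left, whose $\la$-value is $2$ by Corollary~\ref{product a}; Corollary~\ref{star a} then gives $\la(w_k)=2$.

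For part~(2), note first that the hypothesis forces $n\ge 4$, since a $3$-cycle is a clique. One reduces, by induction on the number of vertices of $C$ and by splitting $C$ along a chord, to the situation where $G$ contains either a \emph{chordless} cycle with at least $4$ vertices or the graph $K_4$ with one edge deleted. In the chordless case the parabolic subgroup generated by the cycle is of affine type $\tilde A$, and one takes for $w_k$ a ``ladder'' element --- roughly, a word whose heap is a stack of two-element antichains, obtained by alternating the two colour classes of the cycle (with a small modification for odd cycles) --- so that $n(w_k)=2$ and Proposition~\ref{a from heap} applies; in the remaining case one writes down explicit witnesses and peels them down to a commuting pair as above. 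Part~(1) is handled in the same spirit: if $C$ is not a clique we are in the situation of part~(2); otherwise, by examining the adjacencies between $v$ and $C$ --- or, if $v$ has no neighbour on $C$, a shortest path in $G$ from $v$ to $C$ --- one extracts either a triangle with a pendant vertex or, again, $K_4$ minus an edge, and for each of these one produces an explicit family $w_k$ (for the triangle on $\{a,b,c\}$ with a pendant vertex $d$ attached to $a$, one may take $w_k=dcb(acb)^{k-1}$) and peels it down to a two-letter commuting product such as $\{b,d\}$.

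The main obstacle is the construction of the witnesses together with the verification that they behave as claimed: one must check full commutativity for every $k$ via Proposition~\ref{fc heap criterion}, and --- crucially --- exhibit for each $w_k$ a reduced word whose successive prefixes and suffixes during the peeling are genuinely pairs of adjacent generators, so that the lower star operations of Remark~\ref{removal} apply all the way down to a two-element product. A secondary difficulty is that $C$ need not be chordless, so the argument for part~(2) cannot simply quote the affine type $\tilde A$; one must either carry out the chord-splitting reduction with care or argue directly from the heaps.
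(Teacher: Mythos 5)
Your general toolkit (infinite families of fully commutative witnesses verified either through $n$-values via Proposition~\ref{a from heap} or by peeling letters off with lower star operations down to a commuting pair) is the right one, but the specific route you commit to has genuine gaps. First, for part~(2) you send the chordless case through ``the parabolic subgroup generated by the cycle is of affine type $\tilde A$'' and then invoke Proposition~\ref{a from heap}; this fails whenever the cycle carries an edge of weight at least $4$, since the parabolic subgroup is then neither a Weyl nor an affine Weyl group and Shi's theorem is unavailable. The lemma must hold for arbitrary edge weights on $C$. Second, the ``ladder'' element obtained by ``alternating the two colour classes'' of a chordless $n$-cycle does not have $n$-value $2$ once $n>4$: the colour classes are independent sets of size roughly $n/2$, so the levels of such a heap are antichains of that size, and odd chordless cycles are not bipartite at all, so the construction as described is not well defined. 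Third, in part~(1) your fallback of taking a shortest path from $v$ to $C$ presupposes that $v$ and $C$ lie in the same connected component of $G$; but the lemma is applied in the paper (see the remark following Proposition~\ref{necessary cycle}) precisely to rule out disconnected diagrams containing a cycle, so that case cannot be discarded. Finally, the chord-splitting induction is delicate (a chord can separate the two non-adjacent vertices of interest into different sub-cycles) and, as it turns out, entirely unnecessary.

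All of these complications disappear with the paper's choice of witness, which is the idea you are missing: for part~(1), with $v\not\sim v_j$, one takes
$x_k=vv_j\,(v_{j+1}v_{j+2}\cdots v_nv_1\cdots v_{j-1}v_j)^k$,
i.e.\ the commuting pair followed by powers of the \emph{full} cycle word; part~(2) is handled by the same word with $v_i$ in place of $v$. Here $x_k$ is checked to be a reduced word of a fully commutative element via Proposition~\ref{fc criterion}, and since every pair of consecutive letters of the cycle word consists of adjacent generators, Remark~\ref{removal} lets one strip the rightmost letter $n$ times in succession (with respect to the pairs $\{v_j,v_{j-1}\},\{v_{j-1},v_{j-2}\},\dots,\{v_{j+1},v_j\}$), reducing $x_k$ to $x_{k-1}$ and ultimately to $x_0=vv_j$, whence $\la(x_k)=\la(vv_j)=2$ by Corollary~\ref{star a} and Corollary~\ref{product a}. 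This single family needs no chordlessness, no restriction on edge weights, no appeal to Shi's theorem, and no connectivity between $v$ and $C$ --- it is exactly the peeling strategy you announce in your first paragraph, applied to a witness you did not construct. I would encourage you to redo the proof with this witness and carry out the full-commutativity check of Proposition~\ref{fc criterion} explicitly, paying particular attention in part~(2) to the occurrences of $v_i$ coming from the prefix and from the cycle word.
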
 
      \begin{proof}

        (1) Suppose $v$ is not adjacent to $v_j$ for some $1\le j\le n$. Consider the
        elements 
        \[
          x_k=vv_j(v_{j+1}v_{j+2}\cdots v_nv_1\cdots v_{j-1}v_j)^k
        \]
        for  $k\in \Z_{\ge 0}$. For each $k\ge 1$, note that $x_k$ is reduced (and is actually a reduced word of a fully
        commutative element) by Proposition \ref{fc criterion}. Moreover, by
        Remark \ref{removal}, we may
        reduce $x_k$ to $x_{k-1}$ via $n$ lower right star operations, successively
        with respect to the
        pairs
        \[
          \{v_j,v_{j-1}\},\{v_{j-1},v_{j-2}\},\cdots,\{v_{n},v_{n-1}\},\cdots,
        \{v_{j+1},v_{j}\}. \]
        It follows that $\la(x_k)=\la(x_0)$ for all $k\ge 0$. Since
        $\la(x_0)=\la(vv_j)=2$ by Corollary \ref{product a}, it further follows that
        $\la(x_k)=2$ for all
        $k\ge 0$, therefore $W$  is $\la(2)$-infinite.

        (2) Suppose $v_i,v_j$ are not adjacent for some $1\le i, j\le n$. Let
        \[
          y_k=v_iv_j(v_{j+1}v_{j+2}\cdots v_nv_{1}\cdots v_{j-1}v_{j})^k
        \]
        for $k\in \Z_{\ge 0}$. Then by an argument similar to the one in (1), $y_k$ is right star reducible to $y_0$ and
        $\la(y_k)=\la(y_0)=2$ for all $k\ge 0$, therefore $W$ is $\la(2)$-infinite.
      \end{proof}






      \begin{lemma}
        \label{two strong bonds}
        If $G$ contains a subgraph of the form 
        \begin{center}
          \begin{tikzpicture}
            \node[main node] (1) {};
            \node[main node] (2) [right=1cm of 1] {};
            \node[main node] (3) [right=1cm of 2] {};
            \node[main node] (6) [right=1.5cm of 3] {};
            \node[main node] (7) [right=1cm of 6] {};

            \node (11) [below=0.1cm of 1] {$v_0$};
            \node (22) [below=0.1cm of 2] {$v_1$};
            \node (33) [below=0.1cm of 3] {$v_2$};
            \node (66) [below=0.1cm of 6] {$v_n$};
            \node (77) [below=0.1cm of 7] {$v_{n+1}$};

            \path[draw]
            (1) edge node [above] {$m_1$} (2)
            (2) edge node {} (3)
            (6) edge node [above] {$m_2$} (7);     
            \path[draw,dashed]
            (3)--(6);
          \end{tikzpicture}
        \end{center}
        \noindent where $n\ge 1$, $m_1\ge 5, m_2\ge 4$ and all the middle edges have weight 3, then
$W$ is $\la(2)$-infinite.
 
      \end{lemma}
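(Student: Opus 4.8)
The plan is to follow the template of the other results in Section~\ref{star arguments}: exhibit infinitely many pairwise distinct fully commutative elements $w_k$, show that each $w_k$ is star reducible to a fixed element of $\la$-value $2$, and conclude $\la(w_k)=2$ for all $k$ by Corollary~\ref{star a}. By Proposition~\ref{a local} we may assume $G$ is exactly the displayed path, with vertices $v_0,v_1,\dots,v_n,v_{n+1}$, $m(v_0,v_1)=m_1\ge 5$ and $m(v_n,v_{n+1})=m_2\ge 4$. The target element will be a product of two commuting path generators (for example $v_3v_1$ when $n\ge 2$, and $v_0v_2$ when $n=1$), which has $\la$-value $2$ by Corollary~\ref{product a}.

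For $n\ge 2$ I would take $w_k$ to be a periodic word in which one ``period'' performs an alternating segment of length $4$ at the strong bond $\{v_0,v_1\}$, runs across the path to $v_{n+1}$, performs the short ``bounce'' $v_nv_{n+1}v_n$ at the strong bond $\{v_n,v_{n+1}\}$, and returns; schematically,
\[
  w_k \;=\; v_3v_1\,\bigl(\, v_0v_1v_0v_1\; v_2v_3\cdots v_nv_{n+1}v_n\cdots v_3v_2 \,\bigr)^k .
\]
Drawing the heap of $w_k$ and applying Proposition~\ref{fc heap criterion}, one checks that $w_k$ is reduced and fully commutative: between any two occurrences of a generator $v_i$ there is a neighbour $v_{i\pm 1}$ that cannot be commuted past them, so the alternating segments at $\{v_0,v_1\}$ and $\{v_n,v_{n+1}\}$ have length exactly $4$ and $3$, which is legal precisely because $m_1\ge 5$ and $m_2\ge 4$. (This is exactly why the analogous path with $m_1=m_2=4$, i.e.\ type $\tilde C$, is \emph{not} covered, in accordance with Proposition~\ref{Ernst}.) Since $l(w_k)$ is strictly increasing in $k$, the $w_k$ are pairwise distinct.

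Next I would carry out the star reductions. Because $w_k$ is fully commutative and its reduced word ends in the adjacent pair $v_3,v_2$, Remark~\ref{removal} lets me delete its last letter by a single lower right star operation; iterating this down the returning half of the final period, through the bounce, and up the outgoing half—each intermediate word is a prefix of $w_k$ and hence still fully commutative, so Remark~\ref{removal} applies at every step—I strip off one whole period and reach $w_{k-1}$. By induction $w_k$ is star reducible to $w_0=v_3v_1$, so Corollary~\ref{star a} and Corollary~\ref{product a} give $\la(w_k)=2$, and $W$ is $\la(2)$-infinite. The case $n=1$ (the rank-$3$ diagram on $v_0,v_1,v_2$) should be treated separately, since $v_0,v_2$ is the only commuting pair and the ``base followed by periods'' scheme fails to stay reduced; here I would instead start from $v_0v_2$ and generate the witnesses by repeatedly applying a fixed cycle of upper star operations—growing on the left along the $\{v_0,v_1\}$-string, then on the right along the $\{v_1,v_2\}$-string, then on the left along the $\{v_1,v_2\}$-string, and so on—these operations keeping the element in the two-sided cell of $v_0v_2$ by Proposition~\ref{star and cell} while strictly increasing its length.

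I expect the main obstacle to be the design of the period (for $n\ge 2$) and of the star-operation cycle (for $n=1$). They must simultaneously (a) keep both strong-bond alternations strictly below $m_1$ and $m_2$, which is where the hypotheses $m_1\ge 5$ and $m_2\ge 4$ are genuinely used and is the boundary separating this lemma from the finite type $\tilde C$; (b) close up so that consecutive periods concatenate into a reduced fully commutative word (equivalently, the period is a closed walk in the diagram whose first letter is adjacent to its last); and (c) mesh with the chosen product of commuting generators at the junction and throughout the iterated star reductions. Verifying full commutativity at every intermediate stage—especially ruling out the ``sliding'' phenomena that could silently shorten an alternating segment and thereby make the construction work even for $m_1=4$—is the delicate point, and is what makes the $n=1$ case worth isolating.
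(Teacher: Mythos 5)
Your high-level plan is the same as the paper's for this lemma (a periodic family of fully commutative witnesses, star-reduced via Remark \ref{removal} to a product of two commuting generators), but your specific witness is broken, and the failure is exactly at the point you yourself flag as delicate. In
\[
 w_k \;=\; v_3v_1\,\bigl(\,v_0v_1v_0v_1\; v_2v_3\cdots v_nv_{n+1}v_n\cdots v_3v_2\,\bigr)^k,
\]
the generator $v_0$ commutes with \emph{every} letter of the traverse $v_2v_3\cdots v_{n+1}\cdots v_3v_2$, because its only neighbour in the diagram is $v_1$. Hence the leading $v_0$ of each period slides left past the entire traverse of the preceding period and lands immediately after that period's trailing $v_1$; likewise the first $v_0$ of the first period sits immediately after your prefix letter $v_1$. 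Already for $k=1$ the letters in positions $2$ through $6$ read $v_1v_0v_1v_0v_1$, a contiguous alternating subword of length $5$, and across every period boundary one gets a convex alternating $\{v_0,v_1\}$-chain $v_0v_1v_0v_1v_0$ of length $5$ in the heap (the traverse elements are above the trailing $v_1$ but not below the next $v_0$, so they do not break convexity). Your assertion that ``the alternating segments at $\{v_0,v_1\}$ have length exactly $4$'' is therefore false: for $m_1=5$ (and similarly $m_1=6$, using the prefix) the word $w_k$ is not fully commutative, and by the argument of Proposition \ref{a2 is fc} it satisfies $\la(w_k)\ge m_1\ge 5$, so these elements witness nothing. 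Note that your own design rule (b) is violated by your period, which begins with $v_0$ and ends with $v_2$ --- a non-adjacent pair --- and this is precisely what allows the $v_0$ to slide back.

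The repair, which is what the paper does, is to spend only \emph{three} letters per period at the strong bond and prepend a single extra $v_0$: take $w_k=v_0\,(v_{n+1}v_n\cdots v_1v_0v_1\cdots v_n)^k$. Here the maximal convex alternating $\{v_0,v_1\}$-chains have length $4$ (occurring only once, at the bottom of the heap, after the initial $v_0$ slides up to meet the first $v_1v_0v_1$) and the $\{v_n,v_{n+1}\}$-chains have length $3$, so full commutativity holds exactly under the hypotheses $m_1\ge 5$ and $m_2\ge 4$; one then strips letters from both ends by lower star operations (Remark \ref{removal}) down to $v_2v_0$ and concludes by Corollaries \ref{star a} and \ref{product a}. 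This single family already covers $n=1$, so no separate case is needed there; your $n=1$ sketch via iterated upper star operations is in any event not carried out and would need the same care about alternation lengths.
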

      \begin{proof}
        Consider the elements
        \[
          w_k=v_0(v_{n+1}v_nv_{n-1}\cdots v_1v_0v_1\cdots v_{n})^k
        \]
        for $k\in \Z_{\ge 1}$. The heap of $w_k$ is shown below. Note that by Proposition
        \ref{fc heap criterion}, it is clear from the figure that $w_k$ is reduced for each $k\ge 1$. 

        \begin{center}
          \resizebox{0.5\textwidth}{!}{
            \begin{tikzpicture}
              \node[main node] (1) {};
              \node (v1) [above=0.01cm of 1] {$v_0$};
              \node[main node] (2) [above right=1.5cm and 1.5cm of 1] {};
              \node (v2) [above= 0.01cm of 2] {$v_1$};
              \node[main node] (3) [below right=0.5cm and 1.5cm  of 2] {};
              \node (v3) [above=0.01cm of 3]  {$v_2$};
              \node[main node] (4) [below right=0.5cm and 1.5cm  of 3] {};
              \node (v4) [above=0.1cm of 4] {$v_{n}$};
              \node[main node] (5) [below right=0.5cm and 1.5cm of 4] {};
              \node (v5) [above=0.01cm of 5] {$v_{n+1}$};
              \node[main node] (6) [above=2cm of 1] {};
              \node (v6) [above=0.01cm of 6] {$v_0$};
              \node[main node] (7) [above right=0.5cm and 1.5cm of 6] {};
              \node (v7) [above=0.01cm of 7] {$v_1$};
              \node[main node] (8) [above right=0.5cm and 1.5cm  of 7] {};
              \node (v8) [above=0.01cm of 8] {$v_2$};
              \node[main node] (9) [above right=0.5cm and 1.5cm  of 8] {};
              \node (v9) [above=0.01cm of 9] {$v_{n}$};
              \node[main node] (10) [above right=0.5cm and 1.5cm  of 9] {};
              \node (v10) [above=0.01cm of 10] {$v_{n+1}$};
              \node[main node] (11) [above left=0.5cm and 1.5cm  of 10] {};
              \node (v11) [above=0.1cm of 11] {$v_{n}$};
              \node[main node] (12) [above left=0.5cm and 1.5cm of 11] {};
              \node (v12) [above=0.01cm of 12] {$v_2$};
              \node[main node] (13) [above left=0.5cm and 1.5cm  of 12] {};
              \node (v13) [above=0.01cm of 13] {$v_1$};
              \node[main node] (14) [above left=0.5cm and 1.5cm  of 13] {};
              \node (v14) [above=0.01cm of 14] {$v_{0}$};
              \node[main node] (15) [above right=0.5cm and 1.5cm  of 14] {};
              \node (v15) [above=0.01cm of 15] {$v_{1}$};
              \node[main node] (16) [above right=0.5cm and 1.5cm  of 15] {};
              \node (v16) [above=0.01cm of 16] {$v_2$};
              \node[main node] (17) [above right=0.5cm and 1.5cm  of 16] {};
              \node (v17) [above=0.01cm of 17] {$v_n$};
              \node[main node] (18) [above right=0.5cm and 1.5cm  of 17] {};
              \node (v18) [above=0.01cm of 18] {$v_{n+1}$};
              \node (a19) [above left=0.5cm and 1.5cm  of 18] {};

              \node (19) [below left = 0.1cm and 0.2cm of 6] {};
              \node (20) [below right = 0.5cm and 0.3cm of 5] {};
              \node (21) [above=4.5cm of 20] {};
              \node (22) [above=4.5cm of 19] {};
              \node (23) [above=4.5cm of 21] {};
              \node (24) [above=4.5cm of 22] {};
              \node (25) [above=2cm of 23] {};
              \node (26) [above=2cm of 24] {};
              \node (27) [above=3cm of 16] {$\vdots$};

              \path[draw]
              (1)--(2);
              \path[draw]
              (2)--(3)
              (4)--(5)
              (2)--(6)--(7)--(8)
              (9)--(10)--(11)
              (12)--(13)--(14)--(15)--(16)
              (17)--(18);

              \path[draw,dashed]
              (3)--(4)
              (8)--(9)
              (11)--(12)
              (16)--(17)
              (18)--(a19);

              \path[draw,blue,dashed]
              (19)--(20)--(21)--(22)--(19)
              (21)--(23)--(24)--(22)
              (23)--(25)
              (24)--(26);
            \end{tikzpicture}
          }
        \end{center}

        We may commute
        $v_0$ past the first occurrences of $v_{n+1}, v_n,\cdots, v_2$ to write 
        \[
          w_k=v_{n+1}v_n\cdots v_3v_2\cdot v_0\cdot v_1v_0v_1v_2\cdots v_{n}\cdot
          (v_{n+1}v_{n}\cdots v_1v_0v_1\cdots v_n)^{k-1}.
        \]
        By Remark \ref{removal},
        we may then use suitable lower star operations to remove letters from the left
        and right of $w_k$ to obtain $v_2v_0$. (Note that by the discussion following
        Proposition \ref{fc heap criterion}, these operations can be visualized
        as the successive removal of the vertices $v_{n+1}, v_n, \cdots, v_3$
        in the bottom right of the heap $H(w_k)$ and of the vertices
        $v_n,v_{n-1},\cdots, v_1,v_0,v_1,\cdots, v_1,\cdots$ along the top
        ``zig-zag'' part of $H(w_k)$.) This implies that $\la(w_k)=\la(v_2v_0)=2$ for all $k\ge 1$ by corollaries \ref{star a} and
        \ref{product a}, therefore 
$W$ is $\la(2)$-infinite.
      \end{proof}


      \begin{lemma}
        \label{nonextreme 4s}
        If $G$ contains a subgraph of the form 
        \begin{center}
          \begin{tikzpicture}

            \node[main node] (1) {};
            \node[main node] (2) [right=1cm of 1] {};
            \node[main node] (3) [right=1cm of 2] {};
            \node[main node] (4) [right=1cm of 3] {};
            \node[main node] (6) [right=1.5cm of 4] {};
            \node[main node] (7) [right=1cm of 6] {};

            \node (11) [below=0.1cm of 1] {$a$};
            \node (22) [below=0.1cm of 2] {$v_0$};
            \node (33) [below=0.1cm of 3] {$v_1$};
            \node (44) [below=0.1cm of 4] {$v_2$};
            \node (66) [below=0.1cm of 6] {$v_n$};
            \node (77) [below=0.1cm of 7] {$v_{n+1}$};

            \path[draw]
            (1) edge node {} (2)
            (2) edge node [above] {$4$} (3)
            (3) edge node {} (4)
            (6) edge node [above] {$4$} (7);     
            \path[draw,dashed]
            (4)--(6);
          \end{tikzpicture}
        \end{center}
        \noindent where $n\ge 1$ and all edges between $v_1$ and $v_n$ have
        weight 3, then $W$ is $\la(2)$-infinite.  
           \end{lemma}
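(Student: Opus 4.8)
The plan is to imitate the proof of Lemma~\ref{two strong bonds}, with the role played there by the leading generator $v_0$ (and its bond of weight at least $5$) now played by the leaf $a$ together with the weight-$4$ bond at $v_0$. If $n=1$ then $G$ contains the subgraph on $\{v_0,v_1,v_2\}$ with $m(v_0,v_1)=m(v_1,v_2)=4$, which is the Coxeter diagram of Lemma~\ref{affine C3}, so $W$ is $\la(2)$-infinite by that lemma; hence I may assume $n\ge 2$.

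For $k\in\Z_{\ge 1}$ set
\[
  w_k = a\,(v_{n+1}v_n\cdots v_1\, v_0\, v_1 v_2\cdots v_n)^k .
\]
Since $a$ occurs only once and its only neighbour is $v_0$, and (as $n\ge 2$) any two consecutive occurrences of $v_0$ in $w_k$ are separated by letters other than just a single $v_1$, one checks from the lattice embedding of its heap, via Proposition~\ref{fc heap criterion}, that $w_k$ is reduced and fully commutative; the $w_k$ are clearly pairwise distinct. (As in Lemma~\ref{two strong bonds}, one may display the heap of $w_k$ to make this transparent.)

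Next I would show that each $w_k$ is star reducible to $av_{n+1}$, using Remark~\ref{removal} throughout: a lower right star operation peels off the rightmost letter of a fully commutative element whenever one of its reduced words ends in a pair of letters $s,t$ with $m(s,t)\ge 3$. Peeling the last block of $w_k$ letter by letter, in the order $v_n,v_{n-1},\cdots,v_1,v_0,v_1,\cdots,v_n,v_{n+1}$, carries $w_k$ to $w_{k-1}$, so $w_k$ reduces to $w_1 = a v_{n+1}v_n\cdots v_1 v_0 v_1\cdots v_n$. Peeling the trailing segment $v_1v_2\cdots v_n$ and then $v_0$ off $w_1$ leaves $a v_{n+1}v_n\cdots v_1$, which equals $v_{n+1}v_n\cdots v_1\, a$ since $a$ commutes with every $v_i$ with $i\ge 1$. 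On this element, commuting $a$ leftward past $v_1$ and $v_2$ yields a reduced word ending in $v_2v_1$, so a lower right star operation with respect to $\{v_1,v_2\}$ removes $v_1$; iterating---each time shuffling $a$ out of the way to expose the pair $\{v_i,v_{i+1}\}$ at the right end---removes $v_1,v_2,\cdots,v_{n-1}$ in turn and leaves $v_{n+1}v_n\, a = a v_{n+1}v_n$, and one final lower right star operation with respect to $\{v_n,v_{n+1}\}$ removes $v_n$, leaving $a v_{n+1}$.

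Finally, $a$ and $v_{n+1}$ are non-adjacent in $G$, hence commute, so $\la(a v_{n+1})=2$ by Corollary~\ref{product a}; therefore $\la(w_k)=2$ for all $k\ge 1$ by Corollary~\ref{star a}. Since the $w_k$ form an infinite family of distinct elements, $W$ is $\la(2)$-infinite by Proposition~\ref{a local}. I expect the main obstacle to be verifying that the whole star-reduction chain is legitimate: each intermediate word must stay fully commutative (so that Remark~\ref{removal} applies) and must be rearrangeable to end in the intended adjacent pair---the delicate stretch being the final one, where the ``stray'' generator $a$, which commutes with all of $v_1,\cdots,v_{n+1}$, must be repeatedly commuted aside to uncover the relevant pair at the right end; a short separate argument is also needed for the full commutativity of $w_k$ itself.
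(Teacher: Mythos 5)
Your proof is correct and takes essentially the same approach as the paper's: the paper uses the witnesses $w_k=av_1(v_0v_1\cdots v_nv_{n+1}v_n\cdots v_2v_1)^k$ and strips the appended block via lower right star operations (Remark \ref{removal}), landing at $av_1$ and concluding with Corollaries \ref{star a} and \ref{product a} exactly as you do. Your witness family and terminal element $av_{n+1}$ differ only cosmetically (at the cost of having to shuffle the commuting generator $a$ aside in the last stretch), and your separate treatment of $n=1$ via Lemma \ref{affine C3} is a valid shortcut, though the paper's single family handles all $n\ge 1$ uniformly.
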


      \begin{proof}

        Consider the elements
        \[
          w_k=av_{1}(v_0v_{1}\cdots v_{n}v_{n+1}v_n\cdots v_{2}v_{1})^k
        \]
        for $k\in \Z_{\ge 0}$. The heap of $w_k$ is shown below. Observe that $w_k$ is reduced by Proposition
        \ref{fc heap criterion}. 

        \begin{center}
          \resizebox{0.5\textwidth}{!}{
            \begin{tikzpicture}
              \node[main node] (1) {};
              \node[main node] (2) [right=3cm of 1] {};
              \node[main node] (3) [above right=1cm and 1.5cm of 1] {};
              \node[main node] (4) [above right=0.5cm and 1.5cm of 3] {};
              \node[main node] (5) [above right=0.5cm and 1.5cm of 4] {};
              \node[main node] (6) [above right=0.5cm and 1.5cm of 5] {};
              \node[main node] (7) [above right=0.5cm and 1.5cm of 6] {};
              \node[main node] (8) [above left=0.5cm and 1.5cm of 7] {};
              \node[main node] (9) [above left=0.5cm and 1.5cm of 8] {};
              \node[main node] (10) [above left=0.5cm and 1.5cm of 9] {};
              \node[main node] (b3) [above left=0.5cm and 1.5cm of 10] {};
              \node[main node] (b4) [above right=0.5cm and 1.5cm of b3] {};
              \node[main node] (b5) [above right=0.5cm and 1.5cm of b4] {};
              \node[main node] (b6) [above right=0.5cm and 1.5cm of b5] {};
              \node[main node] (b7) [above right=0.5cm and 1.5cm of b6] {};
              \node[main node] (b8) [above left=0.5cm and 1.5cm of b7] {};
              \node[main node] (b9) [above left=0.5cm and 1.5cm of b8] {};
              \node[main node] (b10) [above left=0.5cm and 1.5cm of b9] {};
              \node[main node] (bb3) [above left=0.5cm and 1.5cm of b10] {};
              \node (bb4) [above right=0.5cm and 1.5cm of bb3] {};

              \node [above=0.01cm of 1] {$a$}; 
              \node [above=0.01cm of 2] {$v_1$}; 
              \node [above=0.01cm of 3] {$v_0$}; 
              \node [above=0.01cm of 4] {$v_1$}; 
              \node [above=0.01cm of 5] {$v_2$}; 
              \node [above=0.01cm of 6] {$v_n$}; 
              \node [above=0.01cm of 7] {$v_{n+1}$}; 
              \node [above=0.01cm of 8] {$v_n$}; 
              \node [above=0.01cm of 9] {$v_2$}; 
              \node [above=0.01cm of 10] {$v_1$}; 
              \node [above=0.01cm of b3] {$v_0$}; 
              \node [above=0.01cm of b4] {$v_1$}; 
              \node [above=0.01cm of b5] {$v_2$}; 
              \node [above=0.01cm of b6] {$v_n$}; 
              \node [above=0.01cm of b7] {$v_{n+1}$}; 
              \node [above=0.01cm of b8] {$v_n$}; 
              \node [above=0.01cm of b9] {$v_2$}; 
              \node [above=0.01cm of b10] {$v_1$}; 
              \node [above=0.01cm of bb3] {$v_0$}; 

              \path[draw]
              (1)--(3)
              (2)--(3)--(4)--(5)
              (6)--(7)--(8)
              (9)--(10)--(b3)--(b4)--(b5)
              (b6)--(b7)--(b8)
              (b9)--(b10)--(bb3);

              \path[draw,dashed]
              (5)--(6)
              (8)--(9)
              (b5)--(b6)
              (b8)--(b9)
              (bb3)--(bb4);

              \node (r1) [below left = 0.2cm and 2cm of 3] {};
              \node (r2) [below right = 0.2cm and 7cm of 3] {};
              \node (r3) [above = 4.7cm of r1] {};
              \node (r4) [above = 4.7cm of r2] {};
              \node (r5) [above = 4.4cm of r3] {};
              \node (r6) [above = 4.4cm of r4] {};
              \node (r7) [above = 2.5cm of r5] {};
              \node (r8) [above = 2.5cm of r6] {};
              \node (r9) [above right=1.5cm and 0.75cm of b10] {$\vdots$};

              \path[draw,dashed,blue]
              (r1)--(r2)--(r4)--(r3)--(r1)
              (r4)--(r6)--(r5)--(r3)
              (r5)--(r7)
              (r6)--(r8);
            \end{tikzpicture}
          }
        \end{center}

        By Remark \ref{removal}, we may easily obtain $w_k$ from
        $w_{k+1}$        
        via suitable lower right star operations for any $k\ge 0$. (As in the
        proof of the previous lemma, these operations can be easily visualized
      in terms of the heap $H(w_{k+1})$, this time as the successive removal of the
      suitable vertices from the top, wedge-shaped part of the heap.)
        Corollaries \ref{star a} and \ref{product a} then imply that
        $\la(w_k)=\la(w_0)=\la(av_1)=2$ for all $k\ge 0$, therefore $W$ is
        $\la(2)$-infinite.  
      \end{proof}


      \subsection{Lemmas with $\mu$-coefficient computations} 
      \label{mu arguments} 
      For our third
      set of lemmas, the proofs will all
      involve showing $x\prec_R y$ for some elements $x,y$ by using Proposition
      \ref{alternative prec}. The proofs will be more technical than those for
      the previous lemmas, as we will frequently need to use
      propositions \ref{star and mu} and \ref{star relation} to deduce
      $\mu$-values. 

      \begin{lemma}
        \label{strength 6}
        If $G$ contains a subgraph of the form

        \begin{center}
          \begin{tikzpicture}
            \node[main node] (1) {};
            \node[main node] (2) [right = 1cm of 1] {};
            \node[main node] (3) [right = 1cm of 2] {};

            \node (11) [below = 0.18cm of 1] {$a$};
            \node (22) [below = 0.1cm of 2] {$b$};
            \node (33) [below = 0.18cm of 3] {$c$};

            \path[draw]
            (1) edge node [above] {$m$} (2)
            (2) edge node {} (3);
          \end{tikzpicture}
        \end{center}

        \noindent where $m\ge 6$, then 
$W$ is $\la(2)$-infinite.
      \end{lemma}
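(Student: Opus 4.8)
The plan is to reduce to a rank-three computation and then produce an explicit infinite family of fully commutative witnesses of $\la$-value $2$. By Proposition \ref{a local} we may assume $W=\langle a,b,c\rangle$ is the Coxeter group whose diagram is exactly the displayed graph, so $m(a,b)=m\ge 6$, $m(b,c)=3$ and $m(a,c)=2$. For $k\ge 2$ I would take the witnesses to be
\[
  w_k=(ababc)^k .
\]
Using the lattice picture of the heap of $w_k$ together with Proposition \ref{fc heap criterion}, one checks that each $w_k$ is reduced and fully commutative: the heap is a ``zig-zag'' in which the only antichains of size $2$ are the pairs $\{a,c\}$ lying on a common level, and every long alternating $\{a,b\}$-chain is interrupted by an intervening occurrence of $c$, so no forbidden convex chain arises. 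The same picture gives $n(w_k)=2$, $\mathcal{L}(w_k)=\{a\}$ and $\mathcal{R}(w_k)=\{c\}$, and the $w_k$ are pairwise distinct.

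For the lower bound $\la(w_k)\ge 2$, observe that the defining word of $w_k$ factors as $w_k=(abab)\cdot(ca)\cdot(babc)(ababc)^{k-2}$ with lengths adding, so Corollary \ref{weak order}(2) gives $w_k\le_{LR}ca$ and $\la(w_k)\ge\la(ca)$; and $\la(ca)=2$ by Corollary \ref{product a}, since $a$ and $c$ commute. It therefore remains to prove $\la(w_k)\le 2$, for which by Proposition \ref{constant} it suffices to establish the reverse relation $ca\le_{LR}w_k$, i.e. that $w_k$ and $ca$ lie in the same two-sided Kazhdan--Lusztig cell. Since $ca<w_k$ in the Bruhat order and $\mathcal{L}(ca)=\{a,c\}\not\subseteq\{a\}=\mathcal{L}(w_k)$, Proposition \ref{alternative prec}(1) shows that $ca\prec_L w_k$, hence $ca\le_{LR}w_k$, as soon as the leading coefficient $\mu_{ca,w_k}$ is nonzero.

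Proving $\mu_{ca,w_k}\neq 0$ is the heart of the argument, and I expect it to be the main obstacle. The idea is an induction on $k$: the base case $k=2$ can be done directly (indeed $w_2$ is already star reducible to $ca$ by Remark \ref{removal}), and the inductive step should peel off one copy of the block $ababc$ by applying the star-operation recurrences of Proposition \ref{star and mu} (for the pair $\{b,c\}$, where $m(b,c)=3$) and Proposition \ref{star relation} (for the pair $\{a,b\}$), using Corollary \ref{difference 1} to evaluate the length-difference-one $\mu$-coefficients that appear, until $\mu_{ca,w_k}$ is expressed through $\mu_{ca,w_{k-1}}$ after passing to suitable star-reductions of $w_k$. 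Granting this, $\la(w_k)=2$ for all $k\ge 2$, so $W$ — and hence any Coxeter group whose diagram contains the displayed subgraph — is $\la(2)$-infinite. (If $m=\infty$ the pair $\{a,b\}$ is no longer a braid pair and Proposition \ref{star relation} does not apply to it; in that case one should invoke instead the separate treatment of diagrams containing an $\infty$-labelled edge, or replace the witnesses by ones avoiding long $\{a,b\}$-alternations.)
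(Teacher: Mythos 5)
Your setup is sound as far as it goes: restricting to the rank-three parabolic via Proposition \ref{a local}, checking that the $w_k=(ababc)^k$ are reduced, fully commutative and pairwise distinct, and extracting the lower bound $\la(w_k)\ge\la(ca)=2$ from Corollary \ref{weak order} and Corollary \ref{product a} are all correct, and reducing the upper bound to a statement of the form ``$ca\le_{LR}w_k$'' is the right general shape. The genuine gap is the central claim $\mu_{ca,w_k}\neq 0$, which you acknowledge is the heart of the matter but do not prove, and which I do not believe can be rescued as stated. First, the analogous claim already fails at $k=1$: with respect to $I=\{b,c\}$ (where $m(b,c)=3$) Proposition \ref{star and mu}(2) gives $\mu(ca,\,ababc)=\mu(cab,\,abab)$, and $cab\not\le abab$ in Bruhat order since no reduced word of $abab$ contains $c$, so this coefficient is $0$. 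Second, the proposed recursion does not run: Proposition \ref{star and mu} requires $m(s,t)=3$, so it applies only to the pair $\{b,c\}$, and a single application sends $(ca,w_k)$ to $(cab,\,(ababc)^{k-1}abab)$, after which $\mathcal{R}(x)\cap\{a,b\}=\mathcal{R}(y)\cap\{a,b\}=\{b\}$, so the hypothesis $\mathcal{R}(x)\cap I\neq\mathcal{R}(y)\cap I$ of Proposition \ref{star relation}(2) fails for $I=\{a,b\}$ and there is no way to ``peel off a block.'' Third, your base case conflates two different notions: $w_2$ being star reducible to $ca$ gives $\la(w_2)=\la(ca)$ by Corollary \ref{star a}, but says nothing whatsoever about $\mu_{ca,w_2}$; and the star-reduction route collapses from $k=3$ onward anyway, since the reduction reaches $c(ababc)^{k-2}a$, an element admitting no lower star operation at all (every relevant coset factor $w_I$ has length $1$) that is not a product of commuting generators.

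For contrast, the paper never tries to bridge the large length gap between $ca$ and a witness with a single $\mu$-coefficient. It takes $w_k=(cabab)^k$ and proves the cyclic chain $w_kca\le_R w_{k+2}a\le_R w_{k+2}\le_R w_{k+1}\le_R w_kca$, in which three of the four relations are free from Corollary \ref{weak order} and the only $\mu$-coefficient that must be shown nonzero is $\mu(w_kca,\,w_{k+2}a)$ --- a pair engineered so that the grid $[i,j]=\mu(x^I\alpha_i,\,y^I\alpha_j)$ of $\mu$-values along the $\{a,b\}$-strings closes up under propositions \ref{star and mu}, \ref{star relation} and \ref{extremal} and evaluates to $1$ via Corollary \ref{difference 1}. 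To salvage your witnesses you would need to build an analogous closed system of $\le_R$ relations rather than attack $\mu_{ca,w_k}$ head-on. (As an aside, for $m=6$ exactly the group is the affine Weyl group of type $\tilde G_2$, so Proposition \ref{a from heap} gives $\la(w_k)=n(w_k)=2$ at once; that shortcut is unavailable for $m\ge 7$, which is precisely where the $\mu$-coefficient work is unavoidable.)
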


      \begin{proof}
        Let $w_k=(cabab)^k$ for each $k\in \Z_{\ge 0}$.
        The heap of $w_k$ is shown below. Observe that for each $k\ge 0$, $w_k$, $w_ka$
        and $w_kca$  are all reduced
        by Proposition \ref{fc heap criterion}. 

        \begin{center}
          \resizebox{0.4\textwidth}{!}{
            \begin{tikzpicture}
              \node[main node] (4) {};
              \node (44) [above = 0.01cm of 4] {$b$};
              \node[main node] (0) [below right= 0.75cm and 1.5cm of 4] {};
              \node (00) [above = 0.01cm of 0] {$c$};
              \node[main node] (1) [below left= 0.75cm and 1.5cm of 4] {};
              \node (11) [above = 0.01cm of 1] {$a$};
              \node[main node] (5) [above left = 0.75cm and 1.5cm of 4] {};
              \node (55) [above = 0.01cm of 5] {$a$};
              \node[main node] (6) [above right =0.75cm and 1.5cm of 5] {};
              \node (66) [above = 0.01cm of 6] {$b$};
              \node[main node] (7) [above left =0.75cm and 1.5cm of 6] {};
              \node (77) [above=0.01cm of 7] {$a$};
              \node[main node] (8) [above right =0.75cm and 1.5cm of 6] {};
              \node (88) [above=0.01cm of 8] {$c$};
              \node[main node] (a4) [above right =0.75cm and 1.5cm of 7] {};
              \node (a44) [above=0.01cm of a4] {$b$};
              \node[main node] (a5) [above left = 0.75cm and 1.5cm of a4] {};
              \node (a55) [above = 0.01cm of a5] {$a$};
              \node[main node] (a6) [above right =0.75cm and 1.5cm of a5] {};
              \node (a66) [above = 0.01cm of a6] {$b$};
              \node[main node] (a7) [above left =0.75cm and 1.5cm of a6] {};
              \node (a77) [above=0.01cm of a7] {$a$};
              \node[main node] (a8) [above right =0.75cm and 1.5cm of a6] {};
              \node (a88) [above=0.01cm of a8] {$c$};
              \node (b4) [above right =0.75cm and 1.5cm of a7] {};

              \node (r1) [below left = 0.2cm and 0.5cm of 1] {};
              \node (r2) [below right = 0.2cm and 0.5cm of 0] {};
              \node (r3) [above = 3.2cm of r1] {};
              \node (r4) [above = 3.2cm of r2] {};
              \node (r5) [above = 3.1cm of r3] {};
              \node (r6) [above = 3.1cm of r4] {};
              \node (r7) [above = 2.1cm of r5] {};
              \node (r8) [above = 2.1cm of r6] {};
              \node (r9) [above = 2cm of a6] {$\vdots$};

              \path[draw] 
              (1)--(4)
              (0)--(4)--(5)--(6)--(7)--(a4)--(8)--(6)
              (a4)--(a5)--(a6)--(a7)
              (a8)--(a6);

              \path[draw,dashed]
              (a7)--(b4)--(a8);

              \path[draw,dashed,blue]
              (r1)--(r2)--(r4)--(r3)--(r1)
              (r3)--(r4)--(r6)--(r5)--(r3)
              (r5)--(r7)
              (r6)--(r8);
            \end{tikzpicture}
          }
        \end{center}

        We shall prove that
        \begin{equation}
          \label{eq:X chain}
          w_kca\le_R w_{k+2}a\le_R w_{k+2}\le_R w_{k+1}\le_R w_kca
        \end{equation}
        for all $k\ge 0$. This implies that $w_k\sim_R ca$ and hence
        $\la(w_k)=\la(ca)=2$ for all $k\ge 1$ by Proposition \ref{constant}
        and Corollary \ref{product a}. It then follows that
$W$ is $\la(2)$-infinite.

        Now let $k\ge 0$ be fixed. To prove \eqref{eq:X chain}, first note that
        $w_{k+2}a\le_R
        w_{k+2}\le_R w_{k+1}\le_R
        w_kca$ by Corollary \ref{weak order}, therefore it suffices to show
        that $w_kca\le_R
        w_{k+2}a$. Let $x=w_kca$ and $y=w_{k+2}a$. We will show that in fact
        $x\prec_R y$. Since
        $x< y$ and $c\in \mathcal{R}(x)\setminus \mathcal{R}(y)$, it further suffices
        to show that $\mu(x,y)\neq 0$ by Proposition \ref{alternative prec}. We do so below.

        Consider the coset decompositions of $x$ and $y$ with respect to $I=\{a,b\}$,
        where
        \begin{IEEEeqnarray*} {RRRLLLLLL}
          x&=& \cdots cababca =  x^I\cdot {x}_I &\quad & \text{with} & \quad
          x^I= w_kc,
          &x_I= a,\\
          y&=& \cdots cababcababa = y^I\cdot y_I & \quad &\text{with} &\quad
          y^I=w_{k+1}c, & y_I= ababa.
        \end{IEEEeqnarray*}
        For any integer $0\le i\le m$, let $\alpha_i$ be the word $ab\cdots$ that
        alternates in $a$ and $b$, starts in $a$, and has length $i$, then let
        $x_i=x^I\cdot \alpha_i$ and $y_i=y^I\cdot \alpha_i$. Set
        \[
          [i,j]=\mu(x_i, y_j)
        \]
        for all $0\le i,j\le m$. Then
        \begin{enumerate}
          \item $[3,1]=0$ by Proposition \ref{extremal}, since $l(x_3)<l(y_1)-1$ and
            $c\in \mathcal{R}(y_1)\setminus\mathcal{R}(x_3)$;
          \item $[2,2]=[1,1]$ by Proposition \ref{star and mu}, since $x_1=x_2*,
            y_1=y_2*$ with respect to the pair $I=\{b,c\}$ and $x_2^{-1}y_2=abcab\notin
            W_{I}$;
          \item $[2,2]=[1,3]+[1,1]$ by Proposition \ref{star relation}, hence
            $[1,3]=0$ by (2);
          \item $[1,3]+[3,3]=[2,4]+[2,2]$, hence $[3,3]=[2,4]+[2,2]$ by (3);
          \item $[4,2]+[2,2]=[3,3]+[3,1]$, hence $[3,3]=[4,2]+[2,2]$ by (1);
          \item $[2,4]=[4,2]$ by (4) and (5);
          \item $[2,4]=[1,5]+[1,3]$ by Proposition \ref{star relation}, hence
            $[1,5]=[2,4]$ by (3);
          \item
            $[4,2]=\mu(w_{k+1}c,w_{k+1}ca)=1$,
            where the second equation follows from Corollary \ref{difference 1}
            and
            the first equation holds by Proposition \ref{star and mu} because with respect to $I=\{b,c\}$, 
            $(w_{k+1}c)*=x_4$, $(w_{k+1}ca)*=y_2$ and
            $(w_{k+1}c)^{-1}(w_{k+1}ca)=a\notin W_I$;     
          \item $\mu(x,y)=[1,5]=[2,4]=[4,2]=1$ by (7), (6), and (8).
        \end{enumerate}
        We have now shown that $\mu(x,y)\neq 0$, and our proof is complete.
      \end{proof}

      \begin{lemma}
        \label{middle 5}
        If $G$ contains a subgraph of the following form, then $W$ is $\la(2)$-infinite. 
 
        \begin{center}
          \begin{tikzpicture}
            \node[main node] (1) {};
            \node[main node] (2) [right=1cm of 1] {};
            \node[main node] (3) [right=1cm of 2] {};
            \node[main node] (6) [right=1cm of 3] {};

            \node (11) [below=0.2cm of 1] {$a$};
            \node (22) [below=0.1cm of 2] {$b$};
            \node (33) [below=0.2cm of 3] {$c$};
            \node (66) [below=0.1cm of 6] {$d$};

            \path[draw]
            (1) edge node {} (2)
            (2) edge node [above] {$5$} (3)
            (3) edge node {} (6);
          \end{tikzpicture}
        \end{center}
      \end{lemma}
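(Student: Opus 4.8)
The plan is to follow the proof of Lemma \ref{strength 6} closely. Write $a,b,c,d$ for the vertices of the displayed diagram, so that $m(a,b)=m(c,d)=3$ and $m(b,c)=5$. As witnesses I would take
\[
  w_k=(a\,d\,b\,c\,b\,c)^k\qquad(k\in\Z_{\ge 0}),
\]
or a closely related periodic word in $a,b,c,d$ involving all four generators. Using Proposition \ref{fc heap criterion} one checks that $w_k$, $w_ka$ and $w_kad$ are reduced words of fully commutative elements and that the $w_k$ are pairwise distinct; here the weight $5$ on $\{b,c\}$ is exactly what is needed, since the longest alternating $\{b,c\}$-subword that occurs is $bcbc$, of length $4<5$. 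As $a$ and $d$ commute, Corollary \ref{product a} gives $\la(ad)=2$, so it suffices to prove $w_k\sim_R ad$; then $\la(w_k)=2$ by Proposition \ref{constant}, and $W$ is $\la(2)$-infinite because $\{w_k:k\ge 1\}$ is infinite.

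To prove $w_k\sim_R ad$, I would exhibit for each fixed $k\ge 0$ a cycle of $\le_R$-relations
\[
  w_k\,ad\;\le_R\;w_{k+2}\,a\;\le_R\;w_{k+2}\;\le_R\;w_{k+1}\;\le_R\;w_k\,ad.
\]
The last three links are immediate from Corollary \ref{weak order}, since in each case the element on the left is obtained from its right-hand neighbour by appending letters on the right. The cycle forces $w_{k+1}\sim_R w_{k+2}$ for every $k\ge 0$, and together with $w_1\sim_R w_0\,ad=ad$ this yields $w_k\sim_R ad$ for all $k\ge 1$. The one substantive link is $w_k\,ad\le_R w_{k+2}\,a$. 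Putting $x=w_k\,ad$ and $y=w_{k+2}\,a$, we have $x<y$ and $d\in\mathcal{R}(x)\setminus\mathcal{R}(y)$, so by Proposition \ref{alternative prec} it suffices to show $\mu(x,y)\neq 0$, and in fact this value will turn out to be $1$.

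Computing $\mu(x,y)$ is the heart of the proof and the step I expect to be the main obstacle. Mirroring Lemma \ref{strength 6}, I would take coset decompositions of $x$ and $y$ with respect to $I=\{b,c\}$ and study the two-parameter array $[i,j]=\mu(x_i,y_j)$, where $x_i$ and $y_j$ are obtained from $x$ and $y$ by replacing their $W_I$-factors with the alternating word $bcb\cdots$ of length $i$ and $j$ respectively, $0\le i,j\le 5$. Then one combines: Proposition \ref{extremal}, to make certain $[i,j]$ vanish when $l(x_i)<l(y_j)-1$ and the relevant descent sets are not nested; Proposition \ref{star and mu}, applied to the weight-$3$ pairs $\{a,b\}$ and $\{c,d\}$, to identify various $[i,j]$ with one another and with $\mu$-coefficients of shorter elements; Proposition \ref{star relation}, applied to the weight-$5$ pair $\{b,c\}$, to produce linear relations among the $[i,j]$; and Corollary \ref{difference 1}, to evaluate a boundary term $\mu(u,u')=1$ for a suitable pair with $u<u'$ and $l(u')=l(u)+1$. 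Solving the resulting linear system should give $\mu(x,y)=1$. Compared with Lemma \ref{strength 6}, the bookkeeping is heavier: since $m(b,c)=5$, the $\{b,c\}$-strings are longer and each instance of Proposition \ref{star relation} relates four $\mu$-values rather than two, and there are two pendant edges whose star operations must be tracked rather than one; confirming that the enlarged system still forces $\mu(x,y)\neq 0$ is where the real difficulty lies.
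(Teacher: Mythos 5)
Your overall strategy is the paper's: produce an infinite family of periodic fully commutative witnesses, place them in the same right cell as $ad$ via a cycle of $\le_R$-relations whose only nontrivial link is certified by a nonvanishing $\mu$-coefficient, and compute that coefficient using Propositions \ref{star and mu} and \ref{star relation}. (The paper instead uses the alternating blocks $X=acbc$, $Y=bdcb$ and consequently needs two chains, one for even and one for odd $k$; your single-period word would avoid that case split.) However, the step you yourself identify as the heart of the proof is not carried out, and the setup you describe for it fails as stated. With $x=w_k\,ad$ and $I=\{b,c\}$, the right coset decomposition of $x$ has \emph{trivial} $W_I$-factor: the terminal $a$ and $d$ block both $b$ and $c$ from reaching the right end (the last $c$ is followed by $d$ with $m(c,d)=3$, the last $b$ by $c$), so $\mathcal{R}(x)\cap I=\emptyset$ and $x=x^I=x_0$. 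Thus $x$ is not a member of any right $\{b,c\}$-string, and neither Proposition \ref{star and mu} nor Proposition \ref{star relation} --- both of which require their arguments to lie in strings, i.e. to have $W_I$-factors of length between $1$ and $m(b,c)-1$ --- ever produces a relation involving $[0,j]$. The linear system you propose therefore cannot reach $\mu(x,y)$ at all, let alone show it is nonzero.

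The paper avoids this by arranging for \emph{both} endpoints of the hard link to carry nontrivial $\{b,c\}$-factors: with its witnesses one has $x=w_kac=(w_ka)\cdot c=x_1$ and $y=w_{k+2}c=(w_{k+1}d)\cdot bcbc=y_4$, so the target $[1,4]$ lies inside the string range $1\le i,j\le 4$ and the recursion from Proposition \ref{star relation} carries it to the boundary value $[4,1]$, which Corollary \ref{difference 1} evaluates to $1$. If you want to keep your witnesses, you must either choose chain elements ending in $b$ or $c$, or first apply Proposition \ref{star and mu} with respect to one of the weight-$3$ pairs $\{a,b\}$ or $\{c,d\}$ to transport $\mu(w_kad,w_{k+2}a)$ to a pair of genuine $\{b,c\}$-string elements --- exactly the preliminary move the paper makes in Lemmas \ref{small 5} and \ref{large 5}. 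In either case the assertion $\mu(x,y)=1$ still has to be proved rather than predicted, so as written the proposal has a genuine gap at its decisive step.
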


      \begin{proof}
        Let $X=acbc$ and $Y=bdcb$. For each $k\in \Z_{\ge 0}$, let $w_k=XY\cdots$ be
        the string that starts in
        $X$ and contains
        $k$ alternating occurrences of $X$ and $Y$. The heap of $w_k$ is shown
        below, where the dashed rectangles alternately correspond
        to the
        expression $X$ and $Y$ and appear a total of $k$ times. Observe that
        $w_k$ is reduced for each $k$ by Proposition \ref{fc heap criterion}.
        \begin{center}
          \resizebox{0.4\textwidth}{!}{
            \begin{tikzpicture}
              \node[main node] (3) {};
              \node[main node] (4) [right = 3cm of 3] {};
              \node[main node] (5) [above right=0.75cm and 1.5cm of 3] {};
              \node[main node] (6) [above right=0.75cm and 1.5cm of 5] {};
              \node[main node] (7) [above left=0.75cm and 1.5cm of 6] {};
              \node[main node] (8) [above right=0.75cm and 1.5cm of 6] {};
              \node[main node] (9) [above right=0.75cm and 1.5cm of 7] {};
              \node[main node] (10) [above left=0.75cm and 1.5cm of 9] {};
              \node[main node] (11) [above left=0.75cm and 1.5cm of 10] {};
              \node[main node] (12) [above right=0.75cm and 1.5cm of 10] {};
              \node[main node] (13) [above right=0.75cm and 1.5cm of 11] {};
              \node[main node] (14) [above right=0.75cm and 1.5cm of 13] {};
              \node[main node] (15) [above left=0.75cm and 1.5cm of 14] {};
              \node[main node] (16) [above right=0.75cm and 1.5cm of 14] {};
              \node (17) [above right=0.75cm and 1.5cm of 15] {};
              \node (18) [above left=0.75cm and 1.5cm of 17] {};

              \node (33) [above=0.01cm of 3] {$a$};
              \node (44) [above=0.01cm of 4] {$c$};
              \node (55) [above=0.01cm of 5] {$b$};
              \node (66) [above=0.01cm of 6] {$c$};
              \node (77) [above=0.01cm of 7] {$b$};
              \node (88) [above=0.01cm of 8] {$d$};
              \node (99) [above=0.01cm of 9] {$c$};
              \node (1010) [above=0.01cm of 10] {$b$};
              \node (1111) [above=0.01cm of 11] {$a$};
              \node (1212) [above=0.01cm of 12] {$c$};
              \node (1313) [above=0.01cm of 13] {$b$};
              \node (1414) [above=0.01cm of 14] {$c$};
              \node (1515) [above=0.01cm of 15] {$b$};
              \node (1616) [above=0.01cm of 16] {$d$};

              \path[draw]
              (3)--(5)--(6)--(8)--(9)--(10)--(11)--(13)--(14)--(16)      
              (4)--(5)
              (6)--(7)--(9)
              (10)--(12)--(13)
              (14)--(15);

              \path[draw,dashed]
              (15)--(17)--(16);

              \node (r1) [below left=0.2cm and 0.5cm of 3] {};
              \node (r2) [below right=0.2cm and 2cm of 4] {};
              \node (r3) [above = 2.4cm of r1] {};
              \node (r4) [above = 2.4cm of r2] {};
              \node (r5) [above = 2.3cm of r3] {};
              \node (r6) [above = 2.3cm of r4] {};
              \node (r7) [above = 2.2cm of r5] {};
              \node (r8) [above = 2.2cm of r6] {};
              \node (r9) [above = 2.3cm of r7] {};
              \node (r10) [above = 2.3cm of r8] {};
              \node (r11) [above right=1.3cm and 0.5cm of 15] {$\vdots$};

              \path[draw,dashed,blue]
              (r1)--(r2)--(r4)--(r3)--(r1)
              (r4)--(r6)--(r5)--(r3)
              (r5)--(r7)--(r8)--(r6)
              (r7)--(r9)
              (r8)--(r10);

            \end{tikzpicture}
          }
        \end{center}

        We shall prove that
        \begin{equation}
          \label{eq:even}
          w_kac\le_R w_{k+2}c\le_R w_{k+2}\le_R w_{k+1} \le_R w_kac
        \end{equation}
        for all even integers $k\ge 0$ and that
        \begin{equation}
          \label{eq:odd}
          w_kbd\le_R w_{k+2}b\le_R w_{k+2}\le_R w_{k+1}\le_R w_kbd
        \end{equation}
        for all odd integers $k\ge 1$. It then follows that $w_k\sim_R ac$ and hence
        $\la(w_k)=\la(ac)=2$ for all $k\ge 1$, therefore $W$ is $\la(2)$-infinite.

        To prove \eqref{eq:even}, let $k\ge 0$ be an even integer. Note that
        $w_{k+2}c\le_R w_{k+2}\le_R w_{k+1} \le_R w_kac$ follows from Corollary
        \ref{weak order}, therefore it suffices to show that $w_kac\le_R
        w_{k+2}c$. Let $x=w_kac$ and $y=w_{k+2}c$. We will show  in fact
        $x\prec_R y$. Since $x<y$ and $a\in
        \mathcal{R}(x)\setminus\mathcal{R}(y)$, it
        further suffices to show that $\mu(x,y)\neq 0$ by Proposition
        \ref{alternative prec}.

        To compute $\mu(x,y)$, we consider the coset decompositions of $x$ and
        $y$ with respect to $I=\{b,c\}$, where 
        \begin{IEEEeqnarray*} {RRRLLLLLL}
          x&=& \cdots bdcbac =  x^I\cdot {x}_I &\quad & \text{with} & \quad
          x^I= w_ka,
          & x_I= c,\\
          y&=& \cdots  acbcdbcbc= y^I\cdot y_I & \quad &\text{with} &\quad
          y^I=w_{k+1}d, &{}^I
          y= bcbc.
        \end{IEEEeqnarray*}
        For any integers $0\le i,j \le 4$, let $p_i$ be the word $cb\cdots$ that
        alternates in $b$ and $c$, starts in $c$, and has length $i$, and similarly let
        $q_j$ be the alternating word $bc\cdots$ of length $j$. Let
        $x_i=x^I\cdot p_i$ and $y_j=y^I\cdot q_j$, and set
        \[
          [i,j]=\mu(x_i, y_j)
        \]
        for all $0\le i,j\le 4$. We have
        \begin{eqnarray*}
          [1,4]&=& -[3,4]+[2,3]\\
          &=& -[3,4]+(-[4,3]+[3,2]+[3,4])\\
          &=& -[4,3]+[3,2]\\
          &=& -[4,3]+([4,1]+[4,3])\\
          &=& [4,1],
        \end{eqnarray*}
        where the first, second, and fourth equality follow from applications of Part (1) of
        Proposition \ref{star relation} with $(x_2,y_4), (x_3,y_3)$ and $(x_4,y_2)$ in
        place of the pair $(x,y)$, respectively. Now, 
        \[
          [4,1]=\mu(x_4,y_1)=\mu(w_kacbcb,w_kacbcdb)=1,
        \]
        where the last equality follows from Corollary \ref{difference 1},
        therefore $\mu(x,y)=[1,4]=[4,1]\neq 0$, and we have proved \eqref{eq:even}. 

        The proof of \eqref{eq:odd} is similar to that of $\eqref{eq:even}$,
        thanks to the symmetry $a\leftrightarrow d, b\leftrightarrow c$ in
        the subgraph in question. We have now completed our proof. 
      \end{proof}

      \begin{lemma}
        \label{small 5}
        If $G$ contains a subgraph of the following form, then $W$ is $\la(2)$-infinite. 

        \begin{center}
          \begin{tikzpicture}
            \node[main node] (1) {};
            \node[main node] (8) [above left = 0.5cm and 0.8cm of 1] {};
            \node[main node] (9) [below left = 0.5cm and 0.8cm of 1] {};
            \node[main node] (2) [right=1cm of 1] {};

            \node (88) [left=0.1cm of 8] {$a$};
            \node (99) [left=0.1cm of 9] {$b$};
            \node (11) [below=0.11cm of 1] {$c$};
            \node (22) [below=0.01cm of 2] {$d$};

            \path[draw]
            (8) edge node {} (1)
            (9) edge node {} (1)
            (1) edge node [above] {$5$} (2);
          \end{tikzpicture}
        \end{center}
      \end{lemma}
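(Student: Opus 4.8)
The plan is to follow the template of lemmas \ref{strength 6} and \ref{middle 5}. As witnesses I would take the elements $w_k=(abcdc)^k$ for $k\in\Z_{\ge 0}$ (a minor variant of the block, e.g.\ $(abcdcd)^k$, may be more convenient for the coset bookkeeping below, and either can be used). Drawing the heap of $w_k$ --- a stack of $k$ copies of the layer ``$abcdc$'' --- one checks by Proposition \ref{fc heap criterion} that each $w_k$ is reduced and fully commutative, and that every antichain of $H(w_k)$ has at most two elements, the size-two antichains being exactly the pairs of adjacent $a$- and $b$-labelled points of a layer, so that $n(w_k)=2$. Since the Coxeter group on $\{a,b,c,d\}$ with this diagram is not (or at least we do not invoke that it is) a Weyl group, an affine Weyl group, or a star reducible group, we cannot read $\la(w_k)$ off $n(w_k)$ via Proposition \ref{a from heap} or Proposition \ref{star reducible a}; instead I would use the cell criterion of Proposition \ref{constant}, showing that all the $w_k$ lie in a single right cell that also contains a fixed element $\xi$ with $\la(\xi)=2$ (for instance $\xi=ad$, which has $\la(ad)=2$ by Corollary \ref{product a} since $m(a,d)=2$).

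Concretely, for each $k\ge 0$ I would exhibit a cyclic chain of $\le_R$-relations of the shape $w_k\alpha\le_R z_k\le_R w_{k+2}\le_R w_{k+1}\le_R w_k\alpha$, where $\alpha$ is a short word appended on the right with $w_0\alpha=\xi$, and $z_k$ is a short right extension of $w_{k+2}$. All of these inequalities except the first follow at once from Corollary \ref{weak order}, since each element is obtained from the preceding one by a length-additive right multiplication (and $w_{k+2}$, $w_{k+1}$, $w_k\alpha$ are nested in this way because consecutive $w$'s differ by a full block). For the first inequality I would set $x=w_k\alpha$ and $y=z_k$, observe that $x<y$ and that some generator (e.g.\ $a$) lies in $\mathcal R(x)\setminus\mathcal R(y)$, and invoke Proposition \ref{alternative prec} to reduce the claim $x\prec_R y$ to proving $\mu(x,y)\ne 0$.

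To compute $\mu(x,y)$ I would take coset decompositions $x=x^I\cdot x_I$ and $y=y^I\cdot y_I$ with respect to the weight-$5$ pair $I=\{c,d\}$, arranged --- as in the proof of Lemma \ref{middle 5}, exploiting that $a$ and $b$ commute with $d$ to rewrite the tail of $y$ --- so that $x_I$ is short and $y_I$ has length $4$. Writing $x_i=x^I p_i$ and $y_j=y^I q_j$, where $p_i,q_j$ are the alternating words in $c,d$ of length $i,j$ occurring along the two right $\{c,d\}$-strings through $x^I$ and $y^I$, and setting $[i,j]=\mu(x_i,y_j)$, I would transport the relevant entry around a short loop in the $4\times 4$ array using part (2) of Proposition \ref{star relation} (the four-term relation; because $m(c,d)=5$ these strings have only $m-1=4$ members, which supplies the boundary vanishing such as $[i,5]=0$), together with part (2) of Proposition \ref{star and mu} applied to a weight-$3$ pair ($\{a,c\}$ or $\{b,c\}$) and Proposition \ref{extremal} for any corner terms, until the entry is identified with a value $\mu(u,v)$ with $u<v$ and $l(v)=l(u)+1$; this value equals $1$ by Corollary \ref{difference 1}. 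Hence $\mu(x,y)=1\ne 0$, which gives $x\prec_R y$, closes the cycle, and therefore $w_k\sim_R w_0\alpha=\xi$ for all $k\ge 1$; so $\la(w_k)=2$ for all $k\ge 1$, and $W$ is $\la(2)$-infinite.

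The step I expect to be the main obstacle is, just as it makes Lemma \ref{strength 6} the most technical lemma so far, arranging $\alpha$, $z_k$ and the block of $w_k$ so that both (i) the $\le_R$-chain genuinely closes into a cycle and (ii) the star relations compose around a loop in the $4\times 4$ array of $\mu$-values and bottom out at two elements differing by a single letter (or at a $\mu$-value pinned down through a weight-$3$ star operation). This is more delicate here than in Lemma \ref{middle 5}: the branch vertex $c$ forces the $a$'s and $b$'s into the configuration $\cdots c\,d\,c\ a\,b\ c\,d\,c\cdots$, so the choices of $\alpha$, $z_k$ and the identification of $x^I,y^I$ are tightly constrained, and at each step one must verify the side hypotheses of propositions \ref{star and mu}, \ref{star relation} and \ref{extremal} --- that the relevant products do not lie in $W_I$, that the descent sets differ in the prescribed way, and that the Bruhat comparabilities needed to apply Corollary \ref{difference 1} hold.
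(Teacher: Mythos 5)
Your high-level strategy is the paper's: fully commutative witnesses obtained by stacking blocks, a cyclic chain of $\le_R$-relations closed by a single nonzero $\mu$-coefficient, that coefficient computed by combining Proposition \ref{star and mu} (for a weight-$3$ pair) with the four-term recurrence of Proposition \ref{star relation} on the weight-$5$ $\{c,d\}$-strings, terminating at Corollary \ref{difference 1}. However, the step you yourself flag as the main obstacle is exactly where the content of the lemma lies, and the concrete choices you do commit to would fail, so the proposal has a genuine gap rather than just missing routine detail.

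Two specific problems. First, $\xi=ad$ cannot play the role of $w_0\alpha$: for $w_{k+1}\le_R w_k\alpha$ to follow from Corollary \ref{weak order}, $\alpha$ must be a left factor of the next block with lengths adding, i.e.\ an order ideal of that block's heap; in $abcdc$ the letter $d$ sits above a $c$, so $(ad)^{-1}\cdot abcdc=bdcdc$ has length $5$, not $3$, and the chain breaks. The unique length-two prefix of $abcdc$ is $ab$, which is the element the paper uses. Second, and more seriously, with the single repeated block $w_k=(abcdc)^k$ the $\mu$-computation does not reduce to the one in Lemma \ref{middle 5}. After the $\{a,c\}$ reduction one is led to $x=w_kabc$ and $y=w_{k+2}$, and the coset decompositions with respect to $I=\{c,d\}$ give $x_I=c$ but $y_I=cdc$: both strings start with $c$, $y$ is not the longest element of its string, the pairs whose $I$-descent sets differ now have the opposite index parity, and the walk through the array $[i,j]$ neither starts from $[1,4]$ nor bottoms out at a length-difference-one corner (for instance the natural corner $\mu(x_4,y_1)$ here has length difference $2$ and vanishes by Proposition \ref{extremal}). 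This is precisely why the paper alternates two blocks, $X=abcdc$ and $Y=bdcdc$: then $y_I=dcdc$ has length $4=m(c,d)-1$, the array has the same shape as in Lemma \ref{middle 5} and the identical computation $[1,4]=[4,1]=1$ applies, while the appended words $ab$ (even $k$) and $bd$ (odd $k$) are genuine prefixes of the next block and products of two commuting generators. You have named all the right tools, but the construction that makes them mesh --- the alternation of blocks and the resulting parity bookkeeping --- is missing, and without it the argument as outlined does not close.
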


      \begin{proof}
        We present a proof similar to that of Lemma \ref{middle 5}. Let
        $X=abcdc$ and $Y=bdcdc$. For each $k\in \Z_{\ge 0}$, let
        $w_k=XY\cdots$ be the alternating string starting with $X$ and
        containing $k$ total occurrences of $X$ and $Y$. The heap of $w_k$ is shown
        below, from which we observe that $w_k$ is reduced for each $k$. 

  \begin{center}
          \resizebox{0.4\textwidth}{!}{
            \begin{tikzpicture}

              \node[main node] (1) {};
              \node[main node] (2) [left = 3cm of 1] {};
              \node[main node] (3) [above right=0.75cm and 1.5cm of 2] {};
              \node[main node] (4) [above right=0.75cm and 1.5cm of 3] {};
              \node[main node] (5) [above left=0.75cm and 1.5cm of 4] {};
              \node[main node] (6) [above left=0.75cm and 1.5cm of 5] {};
              \node[main node] (7) [above right=0.75cm and 1.5cm of 5] {};
              \node[main node] (8) [above right=0.75cm and 1.5cm of 6] {};
              \node[main node] (9) [above right=0.75cm and 1.5cm of 8] {};
              \node[main node] (10) [above left=0.75cm and 1.5cm of 9] {};
              \node[main node] (b1) [above right=0.75cm and 1.5cm of 10] {};
              \node[main node] (b2) [left = 3cm of b1] {};
              \node[main node] (b3) [above right=0.75cm and 1.5cm of b2] {};
              \node[main node] (b4) [above right=0.75cm and 1.5cm of b3] {};
              \node[main node] (b5) [above left=0.75cm and 1.5cm of b4] {};
              \node[main node] (b6) [above left=0.75cm and 1.5cm of b5] {};
              \node[main node] (b7) [above right=0.75cm and 1.5cm of b5] {};
              \node (b8) [above right=0.75cm and 1.5cm of b6] {};

              \node (11) [above=0.01cm of 1] {$a$};
              \node (22) [above=0.01cm of 2] {$b$};
              \node (33) [above=0.01cm of 3] {$c$};
              \node (44) [above=0.01cm of 4] {$d$};
              \node (55) [above=0.01cm of 5] {$c$};
              \node (66) [above=0.01cm of 6] {$b$};
              \node (77) [above=0.01cm of 7] {$d$};
              \node (88) [above=0.01cm of 8] {$c$};
              \node (99) [above=0.01cm of 9] {$d$};
              \node (1010) [above=0.01cm of 10] {$c$};
              \node (b11) [above=0.01cm of b1] {$a$};
              \node (b22) [above=0.01cm of b2] {$b$};
              \node (b33) [above=0.01cm of b3] {$c$};
              \node (b44) [above=0.01cm of b4] {$d$};
              \node (b55) [above=0.01cm of b5] {$c$};
              \node (b66) [above=0.01cm of b6] {$b$};
              \node (b77) [above=0.01cm of b7] {$d$};

              \path[draw]
              (2)--(3)--(4)--(5)--(7)--(8)--(9)--(10)--(b2)
              (1)--(3)
              (5)--(6)--(8)
              (10)--(b1)--(b3)
              (b2)--(b3)--(b4)--(b5)
              (b6)--(b5)--(b7);

              \path[draw,dashed]
              (b8)--(b6)
              (b8)--(b7);

              \node (r1) [below right=0.2cm and 0.8cm of 1] {};
              \node (r2) [below left=0.2cm and 0.8cm of 2] {};
              \node (r3) [above = 3.2cm of r1] {};
              \node (r4) [above = 3.2cm of r2] {};
              \node (r5) [above = 3.1cm of r3] {};
              \node (r6) [above = 3.1cm of r4] {};
              \node (r7) [above = 3.1cm of r5] {};
              \node (r8) [above = 3.1cm of r6] {};
              \node (r9) [above = 2cm of r7] {};
              \node (r10) [above = 2cm of r8] {};
              \node (r11) [above =2cm of b5] {$\vdots$};

              \path[draw,dashed,blue]
              (r1)--(r2)--(r4)--(r3)--(r1)
              (r4)--(r6)--(r5)--(r3)
              (r5)--(r7)--(r8)--(r6)
              (r7)--(r9)
              (r8)--(r10);

            \end{tikzpicture}
          }
        \end{center}

        We shall prove that 
        \begin{equation}
          w_{k}ab\le_R w_{k+2}a\le_R w_{k+2}\le_R {w_{k+1}} \le_R w_kab
          \label{eq:ab}
        \end{equation}
        for all even integers $k\ge 0$ and that
        \begin{equation}
          w_kbd\le_R w_{k+2}d\le_R w_{k+2}\le_R w_{k+1}\le_R w_kbd
          \label{eq:bd}
        \end{equation}
        for all odd integers $k\ge 1$. It then follows that $w_k\sim_R ab$ and
        hence $\la(w_k)=\la(ab)=2$ for all $k\ge 1$, therefore $W$ is
        $\la(2)$-infinite.

        We first prove \eqref{eq:ab}. As in Lemma \ref{middle 5}, this
        is easily reduced to showing $w_kab\le_R w_{k+2}a$ and then to
        showing $\mu(w_kab,w_{k+2}a)\neq 0$. By Proposition \ref{star and mu},
        we have $\mu(w_kab,w_{k+2}a)=\mu(w_kabc,w_{k+2})$ by considerations
        with respect to the pair $\{a,c\}$, therefore it further suffices to
        show that $\mu(x,y)\neq 0$ for $x=w_kabc$ and $y=w_{k+2}$. As before,
        we do so by considering the coset decomposition of $x$ and $y$ with respect to
        $I=\{c,d\}$, where 

        \begin{IEEEeqnarray*} {RRRLLLLLL}
          x&=& \cdots abc =  x^I\cdot {x}_I &\quad & \text{with} & \quad
          x^I= w_kab,
          & x_I= c,\\
          y&=& \cdots  bdcdc= y^I\cdot y_I & \quad &\text{with} &\quad
          y^I=w_{k+1}b, &{}^I
          y= dcdc.
        \end{IEEEeqnarray*}
        For any integers $0\le i,j \le 4$, let $p_i$ be the word
        $cd\cdots$ that
        alternates in $c$ and $d$, starts in $c$, and has length $i$, and similarly let
        $q_j$ be the alternating word $dc\cdots$ of length $j$. Let
        $x_i=x^I\cdot p_i$ and $y_j=y^I\cdot q_j$, and set
        \[
          [i,j]=\mu(x_i, y_j)
        \]
        for all $0\le i,j\le 4$. By the same calculations as in Lemma
        \ref{middle 5}, we have
        \[
          \mu(x,y)=[1,4]=[4,1]=\mu(w_kabcdcd,w_{k}abcdcbd)=1,
        \]
        by Corollary \ref{difference 1}, which proves \eqref{eq:ab}.

        We now prove \eqref{eq:bd}. As usual, it suffices to
        show that $\mu(w_kbd,w_{k+2}d)\neq 0$. By setting
        $x=w_kbd, y=w_{k+2}d$, considering their coset decompositions with
        respect to $I=\{c,d\}$, and defining $[i,j]$ for $0\le i,j\le 4$ in the usual way, we may again conclude that
        \[
          \mu(x,y)=[1,4]=[4,1]=\mu(w_kbdcdc,w_{k+1}abc)=\mu(w_{k+1},w_{k+1}abc).
        \]
        Proposition \ref{star and mu}, applied with respect to the
        pair $\{a,c\}$, then implies that
        \[
          \mu(x,y)=\mu(w_{k+1},w_{k+1}abc)=\mu(w_{k+1}a,w_{k+1}ab)=1\neq 0,
        \]
        where the last equality follows from Corollary \ref{difference 1}.
  Our proof is now complete.
      \end{proof}

      \begin{lemma}
        \label{large 5}
        If $G$ contains a subgraph of the form

        \begin{center}
          \begin{tikzpicture}
            \node[main node] (1) {};
            \node[main node] (8) [above left = 0.5cm and 0.8cm of 1] {};
            \node[main node] (9) [below left = 0.5cm and 0.8cm of 1] {};
            \node[main node] (2) [right=1cm of 1] {};
            \node[main node] (5) [right=1.5cm of 2] {};
            \node[main node] (6) [right=1cm of 5] {};

            \node (88) [left=0.1cm of 8] {$a$};
            \node (99) [left=0.1cm of 9] {$b$};
            \node (11) [below=0.1cm of 1] {$v_0$};
            \node (22) [below=0.1cm of 2] {$v_1$};
            \node (55) [below=0.1cm of 5] {$v_{n-1}$};
            \node (66) [below=0.1cm of 6] {$v_n$};

            \path[draw]
            (8) edge node {} (1)
            (9) edge node {} (1)
            (1) edge node {} (2)
            (5) edge node [above] {$5$} (6);
            \path[draw,dashed]
            (2)--(5);
          \end{tikzpicture}
        \end{center}

        \noindent where $n\ge 2$ and all edges other than $\{v_{n-1},v_n\}$
        have weight 3, then $W$ is $\la(2)$-infinite. 
      \end{lemma}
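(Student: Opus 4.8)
The plan is to mimic the proof of Lemma~\ref{small 5}, which is essentially the $n=1$ case of the present statement; the only new feature here is the path $v_1,\dots,v_{n-2}$ separating the branch vertex $v_0$ from the weight-$5$ edge $\{v_{n-1},v_n\}$. First I would introduce infinitely many witnesses $w_k$ $(k\in\Z_{\ge 0})$ as an alternating product of two blocks $X$ and $Y$, each block running from the leaves $a,b$ down the path $v_0,v_1,\dots,v_{n-1}$, performing a short zig-zag $v_{n-1}v_nv_{n-1}$ on the weight-$5$ edge, and returning along the path to $v_0$; as in Lemma~\ref{small 5}, one block should begin with $ab$ and the other with $b$ so that consecutive blocks do not create a vertical covering relation in the heap. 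I would then display $H(w_k)$ and read off from Proposition~\ref{fc heap criterion} that each $w_k$ is reduced and fully commutative. Since the diagram is hyperbolic rather than affine, Proposition~\ref{a from heap} is not available, which is why we argue via $\mu$-coefficients instead of $n$-values.

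Second, I would set up the usual cyclic chain of $\le_R$-relations. For even $k$ this takes the form
\[
  w_k\,ab\ \le_R\ w_{k+2}\,a\ \le_R\ w_{k+2}\ \le_R\ w_{k+1}\ \le_R\ w_k\,ab,
\]
and for odd $k$ the analogous chain obtained by replacing $ab,a$ with $bv_n,b$; note that $m(a,b)=2$ and, because $n\ge 2$, also $m(b,v_n)=2$, so $ab$ and $bv_n$ each have $\la$-value $2$ by Corollary~\ref{product a}. Every inequality except the first is immediate from Corollary~\ref{weak order}. The first one, $w_k\,ab\le_R w_{k+2}\,a$, I would deduce from Proposition~\ref{alternative prec} by checking that $w_k\,ab<w_{k+2}\,a$ with $b\in\mathcal R(w_k ab)\setminus\mathcal R(w_{k+2}a)$ and $\mu(w_k\,ab,w_{k+2}\,a)\neq 0$. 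Chaining these relations, and using $w_0=1_W$ so that the $k=0$ chain gives $ab\sim_R w_1\sim_R w_2$, forces $w_k\sim_R ab$ for all $k\ge 1$, whence $\la(w_k)=2$ by Proposition~\ref{constant} and $W$ is $\la(2)$-infinite.

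The substance is the computation $\mu(w_k\,ab,w_{k+2}\,a)\ne 0$. I would first apply Proposition~\ref{star and mu} repeatedly to ``peel off'' the leaf $a$ and then the path letters $v_0,v_1,\dots,v_{n-2}$, one weight-$3$ pair at a time, each step rewriting the $\mu$-coefficient as one between two elements closer to the weight-$5$ edge (verifying at each step that $x^{-1}y\notin W_I$ so that the proposition applies). This reduces the problem to a $\mu(x,y)$ whose coset decompositions with respect to $I=\{v_{n-1},v_n\}$ have $I$-parts equal to alternating words of length at most $4$ in $v_{n-1},v_n$. For those elements I would run the same five-term telescoping argument as in Lemma~\ref{middle 5}: writing $[i,j]=\mu(x_i,y_j)$ for the elements obtained by substituting alternating words $p_i,q_j$ of lengths $i,j$ for the $I$-parts, repeated use of Proposition~\ref{star relation} yields $[1,4]=[4,1]$, and $[4,1]=1$ by Corollary~\ref{difference 1}, possibly after one more application of Proposition~\ref{star and mu}. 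The odd-$k$ case is handled symmetrically.

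I expect the main obstacle to be bookkeeping rather than any new idea: choosing the blocks $X,Y$ so that the $w_k$ are simultaneously reduced, fully commutative, and admit the clean coset decompositions needed, and then verifying the hypotheses of Proposition~\ref{star and mu} at each peeling step. The hypothesis $n\ge 2$ enters precisely here --- it is what guarantees that $b$ is not adjacent to $v_n$ and that there is room on the path to perform the successive star operations --- while the telescoping itself, once reached, is verbatim the computation already carried out in Lemmas~\ref{middle 5} and~\ref{small 5}.
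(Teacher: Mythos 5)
Your overall architecture coincides with the paper's: alternating-block witnesses, the cyclic chain of $\le_R$-relations in which only the first inequality requires work, Proposition \ref{alternative prec} to reduce that inequality to the nonvanishing of a $\mu$-coefficient, and the computation of that coefficient by peeling off the weight-$3$ pairs $\{a,v_0\},\{v_0,v_1\},\dots,\{v_{n-2},v_{n-1}\}$ via Proposition \ref{star and mu} before running the $[1,4]=[4,1]$ telescoping from Proposition \ref{star relation} at the weight-$5$ edge. However, there is a genuine gap in the step you dismiss as bookkeeping: the blocks of Lemma \ref{small 5} do not generalize in the way you describe. If both blocks start at the $\{a,b\}$ end and return along the path to $v_0$, with the second block beginning with $b$, then at each junction between blocks the heap contains the chain $v_0<b<v_0$ (the terminal $v_0$ of one block, the initial $b$ of the next, and that block's first $v_0$), and for $n\ge 2$ this chain is convex with $m(v_0,b)=3$ elements; by Proposition \ref{fc criterion} the resulting elements are not fully commutative, hence cannot have $\la$-value $2$ by Proposition \ref{a2 is fc}, and they cannot serve as witnesses. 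In Lemma \ref{small 5} this chain is rescued precisely because the letter $d=v_n$ placed immediately after the $b$ in $Y=bdcdc$ is adjacent to $c=v_0$ and therefore lies inside the open interval between the two occurrences of $c$, destroying convexity; for $n\ge 2$ the vertex $v_n$ is no longer adjacent to $v_0$ and this protection disappears.

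The paper's construction is genuinely asymmetric: $X=abv_0v_1\cdots v_{n-1}v_nv_{n-1}$ travels from the branch end to the weight-$5$ edge and stops there, while $Y=v_{n-2}v_nv_{n-1}v_nv_{n-1}\cdots v_1v_0$ begins at the far end with the commuting pair $v_{n-2},v_n$ (this is one place where $n\ge 2$ is genuinely used) and travels back to $v_0$. Correspondingly, the odd-$k$ witnesses are $w_kv_{n-2}v_n$, not $w_kbv_n$: with any block structure in which $b$ occurs only near the beginning of a block, $w_kbv_n$ is not obtained from $w_{k+1}$ by deleting a right factor, so the inequality $w_{k+1}\le_R w_kbv_n$ in your odd-$k$ chain would not follow from Corollary \ref{weak order}. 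Once the blocks and the odd-$k$ witnesses are replaced by these, the remainder of your outline goes through essentially as written.
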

      \begin{proof}
        We present a proof similar to that of Lemma \ref{small 5}. Let
  \[
 X=abv_0v_1\cdots
  v_{n-1}v_nv_{n-1},\quad Y= v_{n-2}v_nv_{n-1}v_{n}v_{n-1}\cdots v_1v_0,
\]
and define $w_k=XYX\cdots$ 
for each $k\in \Z_{\ge 0}$ as in the previous lemma. The heap of $w_k$ is shown below, from which we
observe that $w_k$ is reduced for each
  $k$.

  \begin{center}
          \resizebox{0.6\textwidth}{!}{
            \begin{tikzpicture}
              \node[main node] (8) {};
              \node[main node] (9) [right=3cm of 8] {};
              \node[main node] (10) [above right=0.5cm and 1.5cm of 8] {};
              \node[main node] (11) [above right=0.5cm and 1.5cm of 10] {};
              \node[main node] (12) [above right=0.5cm and 1.5cm of 11] {};
              \node[main node] (13) [above right=0.5cm and 1.5cm of 12] {};
              \node[main node] (14) [above left=0.5cm and 1.5cm of 13] {};
              \node[main node] (15) [above left=0.5cm and 1.5cm of 14] {};
              \node[main node] (16) [above right=0.5cm and 1.5cm of 14] {};
              \node[main node] (b3) [above right=0.5cm and 1.5cm of 15] {};
              \node[main node] (b4) [above right=0.5cm and 1.5cm of b3] {};
              \node[main node] (b5) [above left=0.5cm and 1.5cm of b4] {};
              \node[main node] (b6) [above left=0.5cm and 1.5cm of b5] {};
              \node[main node] (b7) [above left=0.5cm and 1.5cm of b6] {};
              \node[main node] (b8) [above left=0.5cm and 1.5cm of b7] {};
              \node[main node] (b9) [above right=0.5cm and 1.5cm of b7] {};
              \node[main node] (b10) [above right=0.5cm and 1.5cm of b8] {};
              \node[main node] (b11) [above right=0.5cm and 1.5cm of b10] {};
              \node[main node] (b12) [above right=0.5cm and 1.5cm of b11] {};
              \node[main node] (b13) [above right=0.5cm and 1.5cm of b12] {};
              \node[main node] (b14) [above left=0.5cm and 1.5cm of b13] {};
              \node[main node] (b15) [above left=0.5cm and 1.5cm of b14] {};
              \node[main node] (b16) [above right=0.5cm and 1.5cm of b14] {};
              \node (b17) [above right=0.5cm and 1.5cm of b15] {};
              \node [above=0.01cm of 8] {$a$}; 
              \node [above=0.01cm of 9] {$b$}; 
              \node [above=0.01cm of 10] {$v_0$}; 
              \node [above=0.01cm of 11] {$v_{1}$}; 
              \node [above=0.01cm of 12] {$v_{n-1}$}; 
              \node [above=0.01cm of 13] {$v_n$}; 
              \node [above=0.01cm of 14] {$v_{n-1}$}; 
              \node [above=0.01cm of 15] {$v_{n-2}$}; 
              \node [above=0.01cm of 16] {$v_{n}$}; 
              \node [above=0.01cm of b3] {$v_{n-1}$}; 
              \node [above=0.01cm of b4] {$v_{n}$}; 
              \node [above=0.01cm of b5] {$v_{n-1}$}; 
              \node [above=0.01cm of b6] {$v_1$}; 
              \node [above=0.01cm of b7] {$v_0$}; 
              \node [above=0.01cm of b8] {$a$}; 
              \node [above=0.01cm of b9] {$b$}; 
              \node [above=0.01cm of b10] {$v_0$}; 
              \node [above=0.01cm of b11] {$v_{1}$}; 
              \node [above=0.01cm of b12] {$v_{n-1}$}; 
              \node [above=0.01cm of b13] {$v_n$}; 
              \node [above=0.01cm of b14] {$v_{n-1}$}; 
              \node [above=0.01cm of b15] {$v_{n-2}$}; 
              \node [above=0.01cm of b16] {$v_{n}$}; 
              \node [above=1.2cm of b15] {$\vdots$};

              \path[draw]
             
              (8)--(10)--(11)
              (9)--(10)
              (12)--(13)--(14)--(15)--(b3)
              (14)--(16)--(b3)

              (b3)--(b4)--(b5)
              (b6)--(b7)--(b8)--(b10)
              (b7)--(b9)--(b10)--(b11)
              
              (b12)--(b13)--(b14)
              (b15)--(b14)--(b16);

              \path[draw,dashed]
              (11)--(12)
              (b11)--(b12)
              (b5)--(b6)
              (b17)--(b16)
              (b17)--(b15);

              \node (r1) [below left = 0.2cm and 1.2cm of 8] {};
              \node (r2) [below right = 0.2cm and 4.5cm of 9] {};
              \node (r3) [above = 3.5cm of r1] {};
              \node (r4) [above = 3.5cm of r2] {};
              \node (r5) [above = 3.3cm of r3] {};
              \node (r6) [above = 3.3cm of r4] {};
              \node (r7) [above = 3.25cm of r5] {};
              \node (r8) [above = 3.25cm of r6] {};
              \node (r9) [above = 2.4cm of r7] {};
              \node (r10) [above = 2.4cm of r8] {};

              \path[draw,dashed,blue]
              (r1)--(r2)--(r4)--(r3)--(r1)
              (r4)--(r6)--(r5)--(r3)
              (r5)--(r7)--(r9)
              (r6)--(r8)
              (r7)--(r8)--(r10);
            \end{tikzpicture}
          }
        \end{center}
  
        \hspace{12pt} We shall prove that
        \begin{equation}
          w_kab\le_R w_{k+2}a\le_R w_{k+2}\le_R w_{k+1}\le_R w_kab 
          \label{eq:ab1}
        \end{equation}
        for all even integers $k\ge 0$ and that
        \begin{equation}
          w_kv_{n-2}v_n\le_R w_{k+2}v_{n}\le_R w_{k+2}\le_R w_{k+1}\le_R
          w_kv_{n-2}v_n
          \label{eq:n-2n}
        \end{equation}
        for all odd integers $k\ge 1$.  It then follows that $w_k\sim_R ab$ and
        hence $\la(w_k)=\la(ab)=2$ for all $k\ge 1$, therefore $W$ is
        $\la(2)$-infinite.

        We first prove \eqref{eq:ab1}. As in the previous lemma, we
        may reduce this to
        showing $\mu(w_kab,w_{k+2}a)\neq 0$. Applying Proposition \ref{star and
        mu} with respect to the pairs $\{a,v_0\}, \{v_0,v_1\},\cdots,
        \{v_{n-2},v_{n-1}\}$ successively, we get
        $\mu(w_kab,w_{k+2}a)=\mu(x,y)$ where
        \[
          x=w_kabv_0v_1\cdots v_{n-2}v_{n-1},
          y=w_{k+1}v_{n-2}v_nv_{n-1}v_nv_{n-1}.
        \]
        Furthermore, like before, but this time using the coset decomposition of
        $x$ and $y$ with respect to $I=\{v_{n-1},v_n\}$, we get 
        \[
          \mu(x,y)=\mu(w_kabv_0v_1\cdots
          v_{n-2}v_{n-1}v_nv_{n-1}v_{n},w_{k+1}v_{n-2}v_n)=\mu(w_{k+1}v_n,w_{k+1}v_{n-2}v_n).
        \]
        Corollary \ref{difference 1} then implies that $\mu(x,y)=1\neq 0$,
        which proves \eqref{eq:ab1}.

        We now prove \eqref{eq:n-2n}. We may reduce this to showing that
        $\mu(w_kv_{n-2}v_n,w_{k+2}v_{n})\neq 0$. By setting
        $x=w_kv_{n-2}v_n, y=w_{k+2}v_n$, considering their coset decompositions with
        respect to $I=\{v_{n-1},v_n\}$, and defining $[i,j]$ for $0\le i,j\le
        4$ in the usual way, we again have
        \[
          \mu(x,y)=[1,4]=
          [4,1]=\mu(w_kv_{n-2}v_nv_{n-1}v_nv_{n-1},w_{k+1}abv_0v_1\cdots
          v_{n-2}v_{n-1}).
        \]
        Finally, applying Proposition \ref{star and mu} repeatedly with respect
        to the suitable pairs of vertices, we have
        \[
       \mu(w_kv_{n-2}v_nv_{n-1}v_nv_{n-1},w_{k+1}abv_0v_1\cdots
        v_{n-2}v_{n-1})=\mu(w_{k+1}a,w_{k+1}ab)=1
        \]
where the last equality follows from Corollary \ref{difference 1}. This
implies that $\mu(x,y)\neq 0$, and our proof is now complete.
       \end{proof}


      \subsection{Finishing the proof}
      We may now combine the lemmas to finish the proof of the ``only if''
      directions of Theorem \ref{first
      theorem}.(2). We first deal with the case where $G$ contains a cycle:
      \begin{prop}
        \label{necessary cycle}
        Let $W$ be a Coxeter group with Coxeter diagram $G$.
        If $G$ contains a cycle and $W$ is $\la(2)$-finite, then $W$ is a
        complete graph.
      \end{prop}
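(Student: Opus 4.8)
The plan is to deduce this directly from Lemma \ref{cycle witness}: assuming $W$ is $\la(2)$-finite and $G$ contains a cycle, I will show that any two vertices of $G$ are adjacent, hence that $G$ is complete. The two parts of Lemma \ref{cycle witness}, together with Proposition \ref{a local}, say that a $\la(2)$-finite $W$ cannot have a diagram in which a cycle $C$ either contains a non-adjacent pair of vertices, or coexists with a vertex outside $C$ that is not adjacent to some vertex of $C$. I will arrange the argument so that a hypothetical non-adjacent pair of vertices in $G$ always produces one of these two forbidden configurations.

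First I would produce a triangle. Let $C=(v_1,\dots,v_n,v_1)$ be any cycle in $G$. Since $W$ is $\la(2)$-finite, Lemma \ref{cycle witness}(2) forces all vertices of $C$ to be pairwise adjacent; in particular $v_1,v_2,v_3$ are pairwise adjacent, so they span a triangle $T=(v_1,v_2,v_3,v_1)$ in $G$; write $a,b,c$ for its three vertices. Next, any vertex $v$ of $G$ not lying in $T$ must be adjacent to each of $a,b,c$: otherwise $v$ is a vertex outside the cycle $T$ not adjacent to some vertex of $T$, and Lemma \ref{cycle witness}(1) would force $W$ to be $\la(2)$-infinite, a contradiction. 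Finally, any two vertices $v,w$ of $G$ outside $T$ are adjacent: by the previous step $v$ is adjacent to both $a$ and $b$, so $(a,b,v,a)$ is a triangle not containing $w$; were $w$ not adjacent to $v$, then $w$ would be a vertex outside this triangle not adjacent to all of its vertices, and Lemma \ref{cycle witness}(1) would again give a contradiction.

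Putting these three cases together, every pair of vertices of $G$ --- both inside $T$, one inside and one outside, or both outside --- is adjacent, so $G$ is a complete graph, as claimed. (The argument uses no connectedness hypothesis; it also shows, as a byproduct, that a $\la(2)$-finite Coxeter group whose diagram contains a cycle is necessarily irreducible.) The only genuinely substantial input is Lemma \ref{cycle witness}, which supplies the infinite families of witnesses; granting that, the proof is just the short positional case analysis above, and I expect no serious obstacle beyond setting up the three cases correctly.
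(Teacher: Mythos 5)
Your proof is correct, and it reaches the conclusion by a slightly different route than the paper. The paper picks a cycle $C$ of \emph{maximal length} in $G$, uses Part (1) of Lemma \ref{cycle witness} to show that any vertex outside $C$ would have to be adjacent to all of $C$ and hence would extend $C$ to a longer cycle (contradicting maximality), concludes that $C$ exhausts the vertex set, and then applies Part (2) to get pairwise adjacency. You instead fix a triangle $T$ inside any given cycle (pairwise adjacency within the cycle coming from Part (2)), show every outside vertex is adjacent to all of $T$ via Part (1), and then propagate: two outside vertices $v,w$ must be adjacent because otherwise $w$ sits outside the new triangle $(a,b,v,a)$ without being adjacent to all of it. Both arguments rest entirely on Lemma \ref{cycle witness}; the paper's is marginally shorter, while yours avoids the extremal choice of a maximal cycle (whose existence silently uses the finiteness of $S$) and makes the three adjacency cases fully explicit. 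Your closing observation that no irreducibility hypothesis is needed, and that the diagram is automatically connected, matches the remark the paper makes immediately after the proposition.
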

      

      \begin{proof}
Consider a cycle $C=(v_1,v_2,\cdots, v_n,v_1)$ of maximal length in $G$. We
        claim that $v_1,v_2,\cdots,v_n$ must be all the vertices of $G$. To see
        this, suppose otherwise and let $v$ be any other vertex of $G$ not in $C$.
        Then $v$ must be adjacent to all vertices in $C$ by Part (1) of Lemma
        \ref{cycle witness}. However, in this case
        $C'=(v,v_1,v_2,\cdots,v_n,v)$ would
        form a longer cycle in $G$ than $C$, contradicting our maximality assumption. 

        To prove $G$ is complete, it now suffices to show that $v_i$ and $v_j$ are
        adjacent for all $1\le i,j\le n$. This follows from Part (2) of Lemma
        \ref{cycle witness}, which says that otherwise $W$ would be
        $\la(2)$-infinite.
      \end{proof}

      \begin{remark}
      Note that the above proposition is
      slightly stronger than the ``only if'' condition of Theorem
      \ref{first theorem}.(1) since
 we do not need to assume $W$ is irreducible in its
 statement or proof. This is because Lemma
 \ref{cycle witness} implies that the diagram of any $\la(2)$-finite
        Coxeter group must be connected if it contains a cycle. 
      \end{remark}

      Next, we deal with Part (2) of the theorem. For
      convenience, we define a \emph{path graph} to be a weighted graph such that
      the underlying unweighted graph looks like a ``straight line'', i.e. a graph
      of type $A_n$ from Figure \ref{fig:finite E}.

      \begin{prop}
        \label{necessary prop}
        Let $W$ be an irreducible Coxeter group with Coxeter diagram $G$. If
        $G$ is acyclic and $W$ is $\la(2)$-finite, then
        $G$ is one of the graphs from Figure \ref{fig:finite E}.
      \end{prop}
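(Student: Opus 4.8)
The plan is to prove the contrapositive packaging of all the forbidden‑subgraph lemmas: if $W$ is $\la(2)$‑finite then $G$ contains none of the diagrams occurring in Lemmas~\ref{affine B}, \ref{affine C3}, \ref{affine C4}, \ref{two branches}, \ref{affine E6}, \ref{affine F5 lemma}, \ref{two strong bonds}, \ref{nonextreme 4s}, \ref{strength 6}, \ref{middle 5}, \ref{small 5} and~\ref{large 5}, and I would deduce from the simultaneous absence of all of these that the connected acyclic weighted graph $G$ must be one of those in Figure~\ref{fig:finite E}. Since $W$ is irreducible and $G$ is acyclic, $G$ is a tree, and I would dispose of $|S|=1$ ($G=A_1$) and $|S|=2$ (a single edge of weight $m$, which is $A_2$, $B_2$ or $I_2(m)$ according as $m=3$, $m=4$ or $m\ge 5$) at once; from then on I assume $|S|\ge 3$, so that in particular every edge of $G$ has an endpoint of degree $\ge 2$. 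Call an edge \emph{heavy} if its weight is at least $4$.

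First I would constrain the heavy edges. Lemma~\ref{strength 6} immediately rules out any edge of weight $\ge 6$ (hence also weight $\infty$), since such an edge together with a further edge at a degree‑$\ge 2$ endpoint is exactly its forbidden configuration. Next I would show that if $G$ has a vertex $v$ of degree $\ge 3$ then every edge of $G$ has weight $3$: choosing a heavy edge $e$ at minimal distance from $v$, the geodesic from $v$ to $e$ carries only weight‑$3$ edges by minimality, so $v$ (with two of its other neighbours) together with this geodesic realises the configuration of Lemma~\ref{affine B} when $e$ has weight $4$, and — after Lemma~\ref{middle 5} forces a weight‑$5$ edge to be pendant — the configuration of Lemma~\ref{small 5} or Lemma~\ref{large 5} when $e$ has weight $5$; in either case $W$ would be $\la(2)$‑infinite.

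With this in hand I would split into two main cases. If $G$ has a vertex of degree $\ge 3$, then all edges have weight $3$, and Lemma~\ref{two branches} — including its degenerate instance $n=1$, the star $K_{1,4}=\tilde D_4$ — forbids both a vertex of degree $\ge 4$ and the presence of two distinct branch vertices; hence $G$ has a unique branch vertex $v$, of degree exactly $3$, obtained by gluing three paths of lengths $p,q,r\ge 1$ at $v$. If $\min(p,q,r)\ge 2$, then $v$ together with the length‑$2$ initial segments of the three branches is the affine diagram $\tilde E_6$ of Lemma~\ref{affine E6}, a contradiction; so some branch has length $1$ and $G=E_{q,r}$ for suitable $q,r\ge 1$. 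Otherwise $G$ is a path. If it has no heavy edge it is $A_n$. If it has exactly one heavy edge, that edge has weight $4$ or $5$: a weight‑$5$ edge is pendant by Lemma~\ref{middle 5}, making $G$ a path with a weight‑$5$ end edge, i.e.\ $H_n$ with $n\ge 3$; a single weight‑$4$ edge must occupy the first or second position counted from one end of the path, the remaining positions being excluded by the lemma forbidding a weight‑$4$ edge with at least two further vertices on each side, and the two surviving positions give exactly $B_n$ and $F_n$.

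The remaining case — $G$ a path with at least two heavy edges — is where the argument needs the most care, and I expect it to be the main obstacle. If some heavy edge has weight $5$ it is pendant (Lemma~\ref{middle 5}), and together with the weight‑$3$ stretch of the path leading to the nearest other heavy edge (chosen so that the intervening edges are all weight $3$) it realises the configuration of Lemma~\ref{two strong bonds}; so every heavy edge has weight exactly $4$. Listing the weight‑$4$ edges along the path as $e_{i_1},\dots,e_{i_k}$, Lemma~\ref{nonextreme 4s}, applied to consecutive pairs and exploiting the extra vertex automatically present on the outer side of any interior weight‑$4$ edge, forces $k=2$ and forces both weight‑$4$ edges to be end edges of the path, after which Lemmas~\ref{affine C3} and~\ref{affine C4} force at least two weight‑$3$ edges between them; thus $G=\tilde C_n$ with $n\ge 5$. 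Combining the cases yields the proposition. The delicate point throughout is the bookkeeping: one must carefully match every possible placement of one or two weight‑$4$ edges (and the position of a weight‑$5$ edge, and each branch vertex) against the forbidden configurations, and check the degenerate small instances of Lemmas~\ref{affine B}, \ref{two branches}, \ref{affine C3}, \ref{affine C4} and~\ref{nonextreme 4s}, in order to see that the only survivors are exactly $A_n$, $B_n$, $\tilde C_n$, $E_{q,r}$, $F_n$, $H_n$ and $I_2(m)$.
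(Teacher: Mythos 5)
Your proposal follows essentially the same route as the paper: both proofs deduce the shape of the tree $G$ purely from the forbidden-subgraph lemmas by an exhaustive case analysis (the paper organizes the cases by the maximal edge weight $h\in\{3,4,5,{\ge}6\}$, you organize them by branch structure and then by the heavy edges, but the lemmas invoked and the logical content are the same). The one step that would fail as written is the claim that a vertex $v$ of degree $\ge 3$ together with the heavy edge $e$ nearest to it always realises the configuration of Lemma~\ref{affine B} (resp.\ Lemma~\ref{small 5} or~\ref{large 5}): when $e$ is incident to $v$ and $v$ carries a second heavy edge, the two ``other neighbours'' of $v$ cannot both be attached by weight-$3$ edges, so none of those lemmas applies to that induced subtree. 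That sub-case is instead killed by Lemma~\ref{affine C3} (two weight-$4$ edges meeting at a vertex) or by Lemma~\ref{two strong bonds} with $n=1$ (if one of the two incident heavy edges has weight $5$) --- exactly as the paper does explicitly in its $h=4$ case --- and with that patch your case analysis closes up correctly.
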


      \begin{proof}
        Suppose $G$ is acyclic and $W$ is $\la(2)$-finite. Since $W$ is
        irreducible, $G$ is connected and hence a tree. 

        Let $h$ be the largest weight of an edge in $G$. This is well-defined
        because $G$ contains finitely many vertices and hence edges. If $h\ge
        6$, all other edges in $G$ must have weight 3, for otherwise we must be
        able to find a
        subgraph of the form shown in Lemma \ref{two strong bonds} so that $W$
        would be $\la(2)$-infinite. Lemma
        \ref{strength 6} then further implies that $G$ must be exactly of rank 2,
        therefore $G$ is of the form $I_2(h)$ from Figure \ref{fig:finite E}.

        Next, suppose $h=5$. Then again, in light of Lemma \ref{two strong bonds}, $G$ must have only one edge of weight 5, and all other edges of $G$ must have
        weight 3. Moreover, no vertices of $G$ can have degree 3 or higher by
        lemmas \ref{small 5} and
        \ref{large 5}, therefore $G$ must be a path graph. By Lemma
        \ref{middle 5}, the unique edge of weight 5 cannot have edges both to its left
        and to its right in the path graph, therefore $G$ is 
        either $I_2(5)$ or $H_n$ for some $n\ge 3$. 

        Now suppose $h=4$. We claim that $G$ must be a path graph. Otherwise,
        let $v$ be a vertex of degree at least $3$, then either $v$ is incident to at
        least two
        edges of weight 4 and $W$ is $\la(2)$-infinite by Lemma \ref{affine
        C3}, or, if $v$ is incident to one or no edge of weight 4, 
        then $W$ is $\la(2)$-infinite by Lemma \ref{affine B} as $G$ must contain a
        subgraph of the form shown in the lemma. Given that $G$ is a path
        graph, we also claim that $G$ can contain at
        most two edges of weight 4. Otherwise, since $G$ cannot contain a
        subgraph of
        the form shown in Lemma \ref{affine C3}, there must be at least one of
        weight 3 between each pair of
        edges of weight 4. This would force $G$ to contain a subgraph of the form
        shown in Lemma \ref{nonextreme 4s}, so $W$ would be
        $\la(2)$-infinite, a
        contradiction. Moreover, lemmas  \ref{affine C3},
        \ref{affine C4} and \ref{nonextreme 4s} also imply that in the
        case where $G$ contains two edges of weight 4, they must appear on the two
        ends of the path graph and $G$ must of the form $\tilde C_n$ for some
        $n\ge 5$. Finally, if $G$ contains exactly one edge of weight 4, then Lemma
        \ref{affine F5 lemma} implies that $G$ must be of the form $B_n$ for some
        $n\ge 2$ or $F_n$ for some $n\ge 4$.

        Finally, we consider the case $h=3$. If $G$ contains no vertex of degree 3
        or higher, then $G$ is a path graph and hence of the form $A_n$ from Figure
        \ref{fig:finite E} for some $n\ge 2$. On the other hand, by Lemma \ref{two
        branches}, $G$ cannot contain any vertex of degree 4, nor can it contain two
        vertices of degree at least 3, therefore if $G$ has a vertex of degree at
        least 3 at all, $G$ must be of the form shown in Figure \ref{mustbe
        graph}, where removal of the trivalent vertex results in three path graphs containing
        $p,q,r$ vertices for some $1\le p\le q\le r$. Note that $p\ge 2$ would
        imply that $G$ contains a subgraph of the form shown in Lemma \ref{affine E6}, therefore
        $p=1$ by the lemma. But then $G$ is of the form $E_{q,r}$ from Figure
        \ref{fig:finite E}.  This completes our proof. 
      \end{proof}

      \begin{figure}
        \begin{tikzpicture}

            \node[main node] (1) {};
            \node[main node] (0) [left=0.75cm of 1] {};
            \node[main node] (00) [left=0.5cm of 0] {};
            \node[main node] (2) [right=0.5cm of 1] {};
            \node[main node] (3) [right=0.5cm of 2] {};
            \node[main node] (6) [right=0.75cm of 3] {};
            \node[main node] (66)[right=0.5cm of 6] {};
            \node[main node] (8) [above=0.5cm of 2] {};
            \node[main node] (9) [above=0.75cm of 8] {};
            \node[main node] (99) [above=0.5cm of 9] {};

            \path[draw]
            (00)--(0)
            (1)--(2)--(3)
            (6)--(66)
            (2)--(8)
            (9)--(99);
            \path[draw,dashed]
            (8)--(9)
            (3)--(6)
            (0)--(1);
          \end{tikzpicture}
          \caption{}
          \label{mustbe graph}
        \end{figure}

             By proving propositions \ref{sufficient cycle}, \ref{sufficient prop}, \ref{necessary cycle} and
      \ref{necessary prop}, we have now completed the proof of Theorem
      \ref{first theorem}.
 
      \section{Reducible $\la(2)$-finite Coxeter groups}
      \label{reducible}
      We now prove Theorem \ref{second theorem}, which is restated below for
      convenience.

      \begin{thm*}
  Let $W$ be a reducible Coxeter group with Coxeter diagram $G$. Let $G_1,G_2,\cdots, G_n$ be the
  connected components of $G$, and let $W_1,W_2,\cdots, W_n$ be their
  corresponding Coxeter groups, respectively.  Then the following are equivalent.
  \begin{enumerate}
    \item $W$ is $\la(2)$-finite.
    \item The number $n$ is finite, i.e. $G$ has finitely many connected components, and
    $W_i$ is both $\la(1)$-finite and
      $\la(2)$-finite for each $1\le i\le n$. 
    \item The number $n$ is finite, and for each $1\le i\le n$, $G_i$ is a graph of the form $A_n (n\ge 1), B_n
      (n\ge 2),
      E_{q,r} (q,r\ge 1), F_n (n\ge 4), H_n(n\ge 3)$ or $I_2(m) (5\le m\le \infty)$, i.e.
      $G_i$ is a graph from Figure \ref{fig:finite E} other than $\tilde{C}_n
      (n\ge 5)$.
  \end{enumerate}
\end{thm*}
      \begin{proof}
        The equivalence of (2) and (3) is immediate from Proposition \ref{tree} and
        Theorem \ref{first theorem}, so we just need to prove the equivalence of (1)
        and (2). 
        
        We first prove that (1) implies (2).  Suppose $W$ is $\la(2)$-finite. Then clearly $W_i$ is $\la(2)$-finite for each $i$.
     Also note that if we pick an element $t_i$ from the generating set of $W_i$ for each
     $i$, then $\la(t_it_j)=2$ for all
     distinct $1\le i,j\le n$ by Corollary \ref{product a}, therefore $n$ must
     be finite. Finally, let $1\le i\le n$,
     and consider words of the form $tw_1$ where $w_1=s_1s_2\cdots s_q$ is the reduced word
        of an element of $\la$-value 1 in $W_i$ and $t$ is a vertex in
        $G_j$ for some $j\neq i$.  Clearly, the word $tw_1$ is still
        reduced. Furthermore, since
        $w_1$ must have a unique reduced word by Proposition \ref{subregular}, no two adjacent
        letters in $w_1$ can commute, i.e. $m(s_k,s_{k+1})\ge 3$, for all $1\le k\le
        q-1$. This means $tw_1$ can be reduced to $ts_1$ via suitable lower star operations  by Remark
        \ref{removal}, therefore $\la(tw_1)=\la(ts_1)=2$ by corollaries
        \ref{star a} and \ref{product a}.
        It follows that $W_i$ is $\la(1)$-finite, for otherwise we
        can find infinitely many distinct elements of the form $tw_1$ in $W$.

        It remains to prove that (2) implies (1). Suppose $W_i$ is both
        $\la(1)$-finite and $\la(2)$-finite for each $1\le i\le n$, and let
        $w\in W$ be an
        element of $\la$-value 2. Since  every generator of $W_i$ commutes with every generator
        of $W_j$ for any distinct $i,j$, $w$ admits a reduced word $w=w_1\cdot
        w_2\cdot\cdots\cdot w_n$ where each $w_i$ is a (possibly empty) reduced
        word for an element in $W_i$. Note that at most two of of $w_1,\cdots, w_n$
        can be nonempty, for otherwise if we have reduced words $w_i=r_1r_2\cdots, w_j=s_1s_2\cdots, w_k=t_1t_2\cdots$ for
        some $i<j<k$, then we may commute $s_1$ and $t_1$ past letters to their left
        to form the reduced word of the form $ w=w_1\cdots
        w_{i-1}(r_1s_1t_1)r_2\cdots$, therefore $\la(w)\ge \la(r_1s_1t_1)=3$ by
        corollaries \ref{weak order} and \ref{product a}. It follows $w$ must be
        of the form $w=w_i\cdot w_j$ for some $1\le i<j\le n$. By Corollary
        \ref{weak order}, this forces
        $\la(w_i)\le 2$ and $\la(w_j)\le 2$ now that $\la(w)=2$. Since $W_i$ is
        both $\la(1)$-finite and $\la(2)$-finite for
        all $1\le i\le n$, this implies that there are
        only finitely many possibilities for $w$, therefore $W$ is
        $\la(2)$-finite. This
        completes the proof.  
      \end{proof}

      \section{Concluding remarks}
      \label{conclusion}

We discuss below some open problems naturally arising from this paper. 

\subsection{Generalizing Shi's result}
The ``$\la=n$'' result of Shi from Proposition \ref{a from heap}, that is, the
result
that $\la(w)=n(w)$ for every fully commutative element $w$ in a Weyl
or affine Weyl group, is a very powerful tool used in this paper. One can
verify that all
witnesses of $\la$-value 2 given in Section \ref{sec:necessity} have
$n$-value 2, therefore we actually have $\la(w)=n(w)$ for every
witness $w$ we used, regardless of
whether we showed $\la(w)=2$ by using the result $\la(w)=n(w)$. In fact, 
the $\la=n$ result is exactly how we found the witnesses heuristically:
we constructed heaps of fully commutative elements with $n$-value 2, then tried
to verify that the elements have $\la$-value 2 by using various methods.

We extended Shi's
result to star reducible Coxeter groups in Proposition \ref{star reducible a},
and we believe it would be very interesting to know whether the result can be
further generalized to arbitrary Coxeter groups. As direct computations of
$\la$-values require understanding products of Kazhdan--Lusztig basis elements in Hecke
algebras, which is usually difficult, the generalization of the $\la=n$ result would provide a
powerful shortcut to computing $\la$-values for fully commutative elements.
Moreover, in Section 6.6 of the paper \cite{BJN}, Biagioli, Jouhet
and Nadeau mention that it would be
interesting to explore statistics on fully commutative elements which can be
studied naturally on heaps; the generalization would suggest that the $n$-value is a
  such a statistic with interesting connections to representation theory.

On a more technical level, a generalization of the $\la=n$ result would also be remarkable in the
following sense. 
If a word $w$ represents a fully commutative element in a Coxeter
group $W$ with Coxeter diagram $G$, then it also represents a fully commutative
element in any Coxeter group $W'$ whose Coxeter diagram $G'$ is obtained from
$G$ by increasing the weights of some edges. The increase in edge
weights does not affect the heap of $w$, hence these two elements have the
same $n$-value, yet it is not obvious why the increase should not affect the
$\la$-value.

\subsection{Enumerating elements of $\la$-value 2} Given our classification of
$\la(2)$-finite Coxeter groups, it is natural to wonder 
how many elements of
  $\la$-value 2 there are in each group $W$ from the classification. This can
  be related to the following questions. First, how is the set of the elements of $\la$-value 2 in $W$ partitioned into
 two-sided Kazhdan--Lusztig cells? 
 Second, for each two-sided Kazhdan--Lusztig cell $E$ of $W$ with $\la$-value
 2, how can we describe the structure of the associated subalgebra $J_E$ of the asymptotic
 Hecke algebra $J$ of $W$? By definition, the algebra $J_E$ is a free abelian group with a basis
 indexed by the elements of $E$, therefore such a 
description would allow us to recover the cardinality of $E$ as the rank of
$J_E$ and understanding all subalgebras of the form $J_E$ where $E$ is a cell
of $\la$-value 2 would allow us to recover the total number of elements of
$\la$-value 2. 

We intend to investigate the above questions
elsewhere.
We should mention that
the analogous questions
for elements of $\la$-value 1 have been solved in the following sense. Let
$W$ be an arbitrary Coxeter group, let 
$C=\{w\in W:\la(w)=1\}$, and let $U=\{w\in W: w\;\text{has a unique reduced
word}\}$. Then we have $U=C\sqcup \{1_W\}$ (see Proposition
\ref{subregular}), therefore $W$ is $\la(1)$-finite if and
only if $U$ is finite, and the problem of counting $C$ for
$\la(1)$-finite Coxeter 
groups is equivalent to that of counting $U$ for those Coxeter groups where $U$ is finite. The latter problem 
is solved by Hart in \cite{Hart}. As for the partition of $C$ into cells, it is
well-known that $C$ always forms a single two-sided cell. In addition, the
structure of the algebra $J_C$ for general Coxeter groups is studied in \cite{Xu}, and the structure of
$J_C$ for $\la(1)$-finite Coxeter groups can be easily deduced from the results of
\cite{Xu}. 

\subsection{Classification of $\la(3)$-finite Coxeter groups}
Yet another natural way to extend the results of this paper is to classify
$\la(3)$-finite Coxeter groups. Note that unlike
elements of $\la$-value 2, elements of $\la$-value 3 in a Coxeter group
are not necessarily fully
commutative (cf. Proposition \ref{a2 is fc}); for example, in the Coxeter group
of type $A_2$, the longest element has $\la$-value
3 by Proposition \ref{a for longest} but is not fully commutative.
This means that in general an element of $\la$-value 3 does not have an intrinsically
associated heap. As heaps of fully commutative elements are fundamental to our
arguments in this paper, a classification of $\la(3)$-finite
Coxeter
groups will likely require a different set of ideas from those used in this paper.

      \longthanks{
We would like to thank Victor Ostrik for stimulating interest in the
        subject of this paper. We thank Jian-Yi Shi for
        helpful discussions. The first named author would like to thank
        Ben Elias for inviting him to the Department of Mathematics at the
        University of Oregon, where the paper was initiated, and to thank the
        department for hospitality. }
        \bibliographystyle{amsplain-ac}
 \bibliography{a_ref.bib}

      \end{document}